\newtheorem{thm}{Theorem}[section]
\newtheorem*{thm*}{Theorem}
\newtheorem{lem}[thm]{Lemma}
\newtheorem*{lem*}{Lemma}
\newtheorem{cor}[thm]{Corollary}
\newtheorem{prop}[thm]{Proposition}
\theoremstyle{definition}
\newtheorem*{case*}{Case}
\newtheorem{defn}[thm]{Definition}
\newtheorem*{defn*}{Definition}
\newtheorem{exmp}[thm]{Example}
\newtheorem*{exmp*}{Example}
\newtheorem{step}{Step}\renewcommand{\thestep}{}
\theoremstyle{remark}
\newtheorem{case}{Case}\renewcommand{\thecase}{}
\newtheorem{rmk}[thm]{Remark}
\newtheorem*{rmk*}{Remark}
\def\alphenumi{
  \def\theenumi{\alph{enumi}}
  \def\p@enumi{\theenumi}
  \def\labelenumi{(\@alph\c@enumi)}}
\def\thecase{\@arabic\c@case}
\def\thestep{\@arabic\c@step}
\def\hhmm{\number\hh:\ifnum\mm<10{}0\fi\number\mm}
\let\oldmarginpar\marginpar
\renewcommand\marginpar[1]{\-\oldmarginpar[\raggedleft\footnotesize #1]%
{\raggedright\footnotesize #1}}
\renewcommand\emptyset{\varnothing}
\newcommand\CC{\mathbb{C}}
\newcommand\NN{\mathbb{N}}
\newcommand\RR{\mathbb{R}}
\newcommand\fb{{\mathfrak{b}}}
\newcommand\fK{{\mathfrak{K}}}
\newcommand\sC{{\mathscr{C}}}
\newcommand\sO{{\mathscr{O}}}
\newcommand\sQ{{\mathscr{Q}}}
\newcommand\sS{{\mathscr{S}}}
\newcommand\sU{{\mathscr{U}}}
\newcommand\sV{{\mathscr{V}}}
\newcommand\eps{\varepsilon}
\newcommand\less{\setminus}
\newcommand\diam{\operatorname{diam}}
\DeclareMathOperator{\mydirac}{\slashed{\partial}}
\newcommand\dist{\operatorname{dist}}
\newcommand{\esssup}{\operatornamewithlimits{ess\ sup}}
\DeclareMathOperator{\Int}{int}
\newcommand\loc{\operatorname{loc}}
\newcommand\supp{\operatorname{supp}}
\newcommand\tr{\operatorname{tr}}
\numberwithin{equation}{section}
\begin{document}

\title[Maximum principles for boundary-degenerate differential equations] {Perturbations of local maxima and comparison principles for boundary-degenerate linear differential equations}

\author[Paul M. N. Feehan]{Paul M. N. Feehan}
\address{Department of Mathematics, Rutgers, The State University of New Jersey, 110 Frelinghuysen Road, Piscataway, NJ 08854-8019}
\email{feehan@math.rutgers.edu}

\date{This version: April 23, 2020, incorporating final galley proof corrections. \emph{Transactions of the American Mathematical Society}, in press.}

\begin{abstract}
We develop strong and weak maximum principles for boundary-degenerate elliptic and parabolic linear second-order partial differential operators, $Au := -\tr(aD^2u)-\langle b, Du\rangle + cu$, with \emph{partial Dirichlet boundary conditions}. The coefficient, $a(x)$, is assumed to vanish along a non-empty open subset, $\partial_0\sO$, called the \emph{degenerate boundary portion}, of the boundary, $\partial\sO$, of the domain $\sO\subset\RR^d$, while $a(x)$ is non-zero at any point of the \emph{non-degenerate boundary portion}, $\partial_1\sO := \partial\sO\less\overline{\partial_0\sO}$. If an $A$-subharmonic function, $u$ in $C^2(\sO)$ or $W^{2,d}_{\loc}(\sO)$, is $C^1$ up to $\partial_0\sO$ and has a strict local maximum at a point in $\partial_0\sO$, we show that $u$ can be perturbed, by the addition of a suitable function $w\in C^2(\sO)\cap C^1(\RR^d)$, to a strictly $A$-subharmonic function $v=u+w$ having a local maximum in the interior of $\sO$. Consequently, we obtain strong and weak maximum principles for $A$-subharmonic functions in $C^2(\sO)$ and $W^{2,d}_{\loc}(\sO)$ which are $C^1$ up to $\partial_0\sO$. Points in $\partial_0\sO$ play the same role as those in the interior of the domain, $\sO$, and only the non-degenerate boundary portion, $\partial_1\sO$, is required for boundary comparisons. Moreover, we obtain a comparison principle for a solution and supersolution in $W^{2,d}_{\loc}(\sO)$ to a unilateral obstacle problem defined by $A$, again where only the non-degenerate boundary portion, $\partial_1\sO$, is required for boundary comparisons. Our results extend those in \cite{DaskalHamilton1998, Epstein_Mazzeo_2013, Feehan_maximumprinciple, Feehan_parabolicmaximumprinciple}, where $\tr(aD^2u)$ is in addition assumed to be continuous up to and vanish along $\partial_0\sO$ in order to yield comparable maximum principles for $A$-subharmonic functions in $C^2(\sO)$, while the results developed here for $A$-subharmonic functions in $W^{2,d}_{\loc}(\sO)$ are entirely new. Finally, we obtain analogues of all the preceding results for parabolic linear second-order partial differential operators, $Lu := -u_t - \tr(aD^2u)-\langle b, Du\rangle + cu$.
\end{abstract}

%
%
%
%

\subjclass[2010]{Primary 35B50, 35B51, 35J70, 35K65; secondary 35D40, 35J86, 35K85}

\keywords{Comparison principles, boundary-degenerate elliptic differential operators, boundary-degenerate parabolic differential operators, maximum principles, obstacle problems, viscosity solutions}

\thanks{The author was partially supported by NSF grant DMS-1237722 and a visiting faculty appointment in the Department of Mathematics at Columbia University.}

\maketitle
\tableofcontents
\listoffigures

\section{Introduction}
\label{sec:Introduction}
The classical weak maximum principle for an elliptic, possibly degenerate, linear, second-order partial differential operator in non-divergence form, $Au=-\tr(aD^2u)-\langle b, Du\rangle + cu$, provides uniqueness of solutions, $u$, to boundary value problems on an open subset $\sO\subset\RR^d$ with Dirichlet condition prescribed on the \emph{full boundary}, $\partial\sO$, when the solutions belong to $C^2(\sO)$ or $W^{2,d}_{\loc}(\sO)$ \cite{GilbargTrudinger}, or $C(\sO)$ if interpreted in the viscosity sense \cite{Crandall_Ishii_Lions_1992}. As noted\footnote{The idea appears to owe its origin to the article \cite{Keldys_1951} by M. V. Keldy{\v{s}}.}
by G. Fichera \cite{Fichera_1956, Fichera_1960} and O. A. Ole{\u\i}nik and E. V. Radkevi{\v{c}} \cite{Oleinik_Radkevic, Radkevich_2009a, Radkevich_2009b}, one can obtain uniqueness of solutions to boundary value problems with Dirichlet condition prescribed only along a \emph{part of the boundary}, $\partial_1\sO:=\partial\sO\less\overline{\partial_0\sO}$, for some non-empty, open subset $\partial_0\sO\subseteqq\partial\sO$, when the coefficient $a(x)$ vanishes along $\partial_0\sO$ (we call such an operator, $A$, \emph{boundary-degenerate}) and the \emph{Fichera function}\footnote{Namely, $\fb := (b^k-a^{kj}_{x_j})n_k$, where $(n_1,\ldots,n_d)$ is the inward-pointing unit normal vector field along $\partial_0\sO$ \cite[Equation (1.1.3)]{Radkevich_2009a}.}
$\fb$ defined by $A$ and $\partial_0\sO$
obeys the Fichera sign condition \cite[p. 308]{Radkevich_2009a} along $\partial_0\sO$. When the operator, $A$, is given in divergence form, so one can define a weak solution, $u \in W^{1,2}(\sO)$, to a boundary value problem, one can also obtain uniqueness of solutions with partial Dirichlet data when the Fichera sign condition holds along $\partial_0\sO$ \cite{Fichera_1956, Fichera_1960, Oleinik_Radkevic, Radkevich_2009a, Radkevich_2009b}. See Figure \ref{fig:domain}.

\begin{figure}
 \centering
 \begin{picture}(210,210)(0,0)
 \put(0,0){\includegraphics[scale=0.6]{./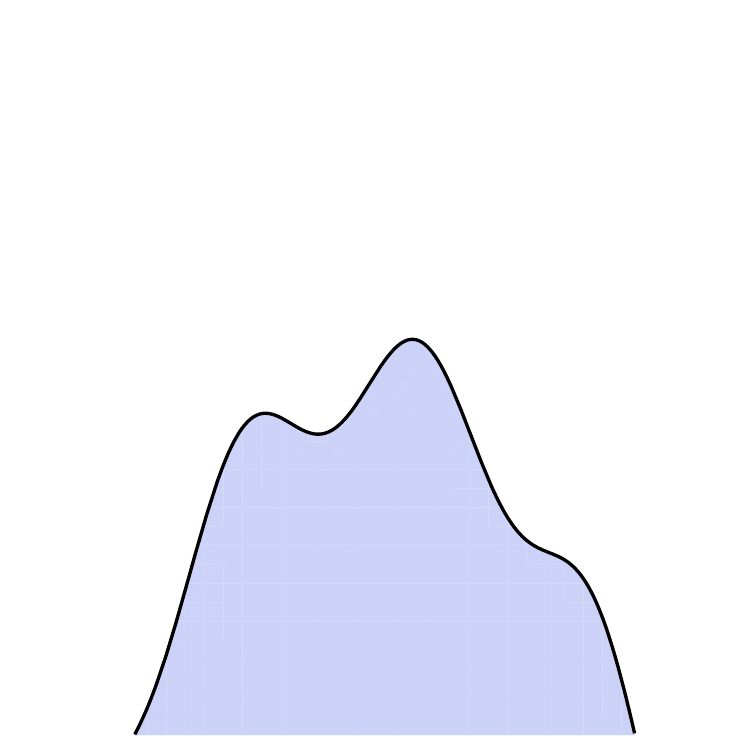}}
 \put(90,10){$\partial_0\sO$}
 \put(105,50){$\sO$}
 \put(110,95){$\partial_1\sO$}
 \put(150,80){$\RR^d$}
 \end{picture}
 \caption[A domain and its `degenerate' and `non-degenerate' boundaries.]{A subdomain, $\sO\subset\RR^d$, and its `degenerate' and `non-degenerate' boundaries, $\partial_0\sO$ and $\partial_1\sO$. In maximum and comparison principles, the degenerate boundary portion, $\partial_0\sO$, plays the same role as the interior of the domain, $\sO$.}
 \label{fig:domain}
\end{figure}

However, as observed by P. Daskalopoulos and R. Hamilton \cite{DaskalHamilton1998} in their work on the porous medium equation, when the solution, $u$, has sufficient regularity up to the degenerate boundary portion, $\partial_0\sO$, one obtains uniqueness of solutions to a boundary value problem with Dirichlet condition prescribed only along $\partial_1\sO$, \emph{irrespective} of whether or not the Fichera condition is obeyed along $\partial_0\sO$. Indeed, upon closer examination, the relevant regularity condition is that $u$ belong to $C^2_s(\underline\sO)$, namely (see \cite{Feehan_maximumprinciple})
\begin{subequations}
\label{eq:C2s_elliptic}
\begin{gather}
\label{eq:C2s_elliptic_C2interior_C1uptoboundary}
u \in C^2(\sO)\cap C^1(\underline\sO),
\\
\label{eq:C2s_elliptic_traD2u_continuous_upto_boundary}
\tr(aD^2u) \in C(\underline\sO), \quad\hbox{and}
\\
\label{eq:C2s_elliptic_traD2u_zero_boundary}
\tr(aD^2u) = 0 \quad\hbox{on } \partial_0\sO,
\end{gather}
\end{subequations}
where $\underline\sO = \sO\cup\partial_0\sO$.
This is the degenerate-boundary regularity condition for $u$ assumed in the weak maximum principles of Daskalopoulos and Hamilton \cite[Theorem I.3.1]{DaskalHamilton1998} and Epstein and Mazzeo \cite[Proposition 4.1.1]{Epstein_Mazzeo_annmathstudies}. Clearly, the condition $u \in C^2_s(\underline\sO)$ lies strictly between the conditions $u\in C^2(\sO)\cap C^1(\underline\sO)$ and $u \in C^2(\underline\sO)$. The `second-order boundary condition' \eqref{eq:C2s_elliptic_traD2u_zero_boundary} is a property of a function $u\in C^{2+\alpha}_s(\underline\sO)$ for $\alpha\in (0,1)$ (see \cite[pp. 901--902]{DaskalHamilton1998} for the definition of this H\"older space). However, the vanishing condition \eqref{eq:C2s_elliptic_traD2u_zero_boundary} is implied by the conditions \eqref{eq:C2s_elliptic_C2interior_C1uptoboundary} and \eqref{eq:C2s_elliptic_traD2u_continuous_upto_boundary} alone when $a(x)$ has suitable degeneracy and continuity properties near $\partial_0\sO$; see Lemma \ref{lem:Boundary_properties_C2s_functions}.

The significance of the conditions \eqref{eq:C2s_elliptic_C2interior_C1uptoboundary} and \eqref{eq:C2s_elliptic_traD2u_continuous_upto_boundary} is that they that ensure $Au \in C(\underline\sO)$ and the vanishing property \eqref{eq:C2s_elliptic_traD2u_zero_boundary} then allows one to deduce versions of the weak maximum principle for boundary-degenerate operators with relative ease --- at least for $u \in C^2_s(\underline\sO)$; the case $u \in W^{2,d}_{\loc}(\sO)\cap C^1(\underline\sO)$ is less straightforward. This paradigm, for $u \in C^2_s(\underline\sO)$, was developed further by the author in \cite{Feehan_maximumprinciple, Feehan_parabolicmaximumprinciple} for elliptic and parabolic boundary value and obstacle problems and closely related ideas have appeared in work of Daskalopoulos and the author \cite{Daskalopoulos_Feehan_statvarineqheston, Daskalopoulos_Feehan_optimalregstatheston} on the Heston stochastic volatility model \cite{Heston1993} in mathematical finance, the work of E. Ekstr\"om and J. Tysk \cite{Ekstrom_Tysk_bcsftse} on interest-rate models in mathematical finance, and the work of C. L. Epstein and R. Mazzeo \cite{Epstein_Mazzeo_2010, Epstein_Mazzeo_annmathstudies, Epstein_Mazzeo_2013} on Wright-Fisher diffusion models in mathematical biology. An important variant of this concept, for weak solutions to the porous medium equation, was developed by H. Koch \cite{Koch} and was explored further by the author in \cite{Feehan_maximumprinciple}.

However, it is a delicate question to determine the \emph{minimal regularity condition} on the solution, $u$, up to the degenerate boundary portion, $\partial_0\sO$, needed to ensure uniqueness when a Dirichlet boundary condition is only imposed along the non-degenerate boundary portion, $\partial_1\sO$. If we ask for too much regularity (such as $C^2$ up to the boundary), the boundary value problem may not be well-posed; if we ask for too little regularity (such as $C^0$ up to the boundary), we are led back to the imposition of a classical Dirichlet condition on the full boundary, without physical motivation provided by applications or that limits the solution to be at most continuous up to the boundary. As we have hinted, the case of $u \in W^{2,d}_{\loc}(\sO)\cap C^1(\underline\sO)$ is more difficult, where we only have $\tr(aD^2u) \in L^d_{\loc}(\sO)$ and thus suitable replacements for \eqref{eq:C2s_elliptic_traD2u_continuous_upto_boundary} and \eqref{eq:C2s_elliptic_traD2u_zero_boundary} are less obvious: fortunately, as we shall see, the latter two conditions are actually \emph{redundant} when the coefficients $a$ and $b$ of $A$ satisfy some mild regularity conditions along $\partial_0\sO$.

Given an open subset $\sO\subset\RR^d$, we shall say
that\footnote{The terminology appears to be due to Littman \cite{Littman_1959}.}
a function $u \in C^2(\sO)$ (respectively, $W^{2,d}_{\loc}(\sO)$) is \emph{(strictly) $A$-subharmonic} if $Au\leq 0$ (respectively, $Au < 0$) (a.e.) on $\sO$. Similarly, given an open subset $\sQ\subset\RR^{d+1}$, we shall say that a function $u \in C^2(\sQ)$ (respectively, $W^{2,d+1}_{\loc}(\sQ)$) is \emph{(strictly) $L$-subharmonic} if $Lu\leq 0$ (respectively, $Lu < 0$) (a.e.) on $\sQ$.  (The parabolic function space notation is explained in \S \ref{subsec:Parabolic_sobolev_embedding}.)

The purpose of this article is to explain how one may weaken the degenerate-boundary regularity condition for an $A$-subharmonic function $u \in C^2_s(\underline\sO)$ (as in \cite{Feehan_maximumprinciple, Feehan_parabolicmaximumprinciple}) to $C^2(\sO)\cap C^1(\underline\sO)$ at the expense of imposing mild regularity conditions on the coefficients $a$ and $b$ of $A$ near $\partial_0\sO$. The result will be a collection of useful strong and weak maximum principles, complementing those in \cite{Feehan_maximumprinciple, Feehan_parabolicmaximumprinciple}, in the elliptic case for $A$-subharmonic functions in $C^2(\sO)\cap C^1(\underline\sO)$ and $W^{2,d}_{\loc}(\sO)\cap C^1(\underline\sO)$ and in the parabolic case for $L$-subharmonic functions in $C^2(\sQ)\cap \sC^1(\underline\sQ)$ and $W^{2,d+1}_{\loc}(\sQ)\cap \sC^1(\underline\sQ)$ (see Definition \ref{defn:C1_C2_function_parabolic}). There are no analogues of the maximum principles that we develop in this article for $A$-subharmonic functions in $W^{2,d}_{\loc}(\sO)\cap C^1(\underline\sO)$ or $L$-subharmonic functions in $W^{2,d+1}_{\loc}(\sQ)\cap \sC^1(\underline\sQ)$.

Furthermore, we obtain comparison principles for solutions and supersolutions in $W^{2,d}_{\loc}(\sO)$ or $W^{2,d+1}_{\loc}(\sQ)$ to obstacle problems for boundary-degenerate elliptic or parabolic operators, respectively, in non-divergence form and with Dirichlet boundary conditions prescribed only along $\partial_1\sO$ or $\mydirac_1\sQ$. This application is particularly appealing since the optimal interior regularity of solutions to obstacle problems is $C^{1,1}(\sO)=W^{2,\infty}_{\loc}(\sO)$ in the elliptic case or $C^{1,1}(\sQ)=W^{2,\infty}_{\loc}(\sQ)$ in the parabolic case, so maximum principles for $A$ or $L$-subharmonic functions in $C^2(\sO)$ or $C^2(\sQ)$ are not directly applicable.

To achieve these enhanced maximum principles, we shall use two key ideas which appear fundamentally more difficult than those required for $u$ in $C^2_s(\underline\sO)$ (respectively, $C^2_s(\underline\sQ)$):
\begin{enumerate}
\item A Hopf boundary point lemma developed by the author in \cite{Feehan_maximumprinciple} (respectively, \cite{Feehan_parabolicmaximumprinciple}) for a boundary-degenerate elliptic (respectively, parabolic) linear, second-order partial differential operator, and
\item A construction in this article of a perturbation of an $A$-subharmonic (respectively, $L$-subharmonic) function, $u$, having a strict local maximum at point in the degenerate boundary portion, $\partial_0\sO$ (respectively, $\mydirac_0\!\sQ)$), by a function $w \in C^2(\sO)\cap C^1(\RR^d)$ (respectively, $C^2(\sQ)\cap \sC^1(\RR^{d+1})$) such that $v=u+w$ is a strictly $A$-subharmonic (respectively, $L$-subharmonic) function with a local maximum in the interior, $\sO$ (respectively, $\sQ$).
\end{enumerate}
This perturbation construction leads to a contradiction to the classical maximum principle and we then obtain the desired conclusions.

While the focus of this article is on the development of strong and weak maximum principles for subsolutions in $C^2(\sO)$ or $W^{2,d}_{\loc}(\sO)$ to \emph{linear} boundary-degenerate elliptic equations in non-divergence form (and similarly for boundary-degenerate parabolic equations), it appears likely that our approach can be extended to give comparison principles for viscosity subsolutions and supersolutions to \emph{fully nonlinear} equations on $\sO$ with fully nonlinear boundary conditions imposed only on $\partial_1\sO$, provided the concept of viscosity solution \cite[Definition 2.2]{Crandall_Ishii_Lions_1992} is appropriately modified. Similarly, one can expect analogues of the classical Aleksandrov-Bakelman-Pucci estimates (compare \cite[Theorem 9.1]{GilbargTrudinger} for functions in $W^{2,d}_{\loc}(\sO)$ or \cite[Theorem 5.8]{Han_Lin_2011} for viscosity subsolutions), where the role of the full boundary, $\partial\sO$, would be replaced by the non-degenerate boundary portion, $\partial_1\sO$. These ideas will be developed in a separate article.

\subsection{Motivation from one-dimensional examples}
\label{subsec:Motivation}
We shall illustrate the essence of the preceding ideas with the aid of the homogeneous \emph{Kummer ordinary differential equation} \cite[\S 13]{Olver_Lozier_Boisvert_Clark} on the interval $(0,\ell)$, for $0<\ell<\infty$,
\begin{equation}
\label{eq:Kummer_homogeneous_differential_equation}
Au(x) := -xu''(x) - (b-x)u'(x) + au(x) = 0, \quad x\in (0, \ell),
\end{equation}
where $a\geq 0$ and $b \geq 0$ are constants. Equation \eqref{eq:Kummer_homogeneous_differential_equation} has two fundamental solutions, $M(a,b,x)$ and $U(a,b,x)$, called \emph{confluent hypergeometric functions}. Corollary \ref{cor:Confluent_hypergeometric_function_boundary_regularity} says that $M \in C^\infty[0,\infty)$, when $b>0$, while $U\in C[0,\infty)$ and $U\notin C^1[0,\infty)$ when $a>0$ and $b>0$. Therefore, requiring that $u \in C^2(0,\ell)\cap C^1[0,\ell)\cap C[0,\ell]$ and $u(\ell)=0$ ensures that if $u$ solves \eqref{eq:Kummer_homogeneous_differential_equation}, then $u\equiv 0$ on $(0,\ell)$.

The boundary regularity condition \eqref{eq:C2s_elliptic} for $u$, namely $u\in C^2_s[0,\ell)$, means that $u\in C^2(0,\ell)\cap C^1[0,\ell)$ and
\begin{equation}
\label{eq:C2s_dimension_one_left_endpoint}
xu'' \in C[0,\ell).
\end{equation}
The vanishing property \eqref{eq:C2s_elliptic_traD2u_zero_boundary}, namely
$$
xu''(x) = 0 \quad\hbox{at } x=0
$$
is implied in this case by the conditions \eqref{eq:C2s_elliptic_C2interior_C1uptoboundary} and \eqref{eq:C2s_elliptic_traD2u_continuous_upto_boundary}; see Lemma \ref{lem:Boundary_properties_C2s_functions}. When $b>0$, a simple maximum principle argument shows that if $a\geq 0$ and $Au<0$ on $[0,\ell)$ or $a > 0$ and $Au\leq 0$ on $[0,\ell)$, then $u \in C^2_s[0,\ell)$ cannot have a positive local maximum at $x=0$. For the case $a>0$, taking the limit of Equation \eqref{eq:Kummer_homogeneous_differential_equation} as $x\downarrow 0$ yields
$$
-bu'(0) + au(0) \leq 0,
$$
or $u'(0) \geq au(0)/b > 0$, a contradiction. Thus, one obtains uniqueness of solutions, $u \in C^2_s[0,\ell)\cap C[0,\ell]$, to \eqref{eq:Kummer_homogeneous_differential_equation}, without relying on the asymptotic expansions for $u(x)$ as $x\downarrow 0$ implicit in the assertions of Corollary \ref{cor:Confluent_hypergeometric_function_boundary_regularity}.

The distinction between the boundary regularity conditions, $u \in C^2(0,\ell)\cap C^1[0,\ell)$ and $u \in C^2_s[0,\ell)$, is of course sharp, as elementary examples illustrate (see Appendix \ref{sec:Properties_confluent_hypergeometric_functions}). The main contribution of this article is to develop a broader theory for maximum principles based on the phenomena illustrated by the Kummer equation in dimension one: \emph{the boundary regularity condition \eqref{eq:C2s_dimension_one_left_endpoint} is not required for maximum principles for $A$-subharmonic functions $u \in C^2(0,\ell)\cap C^1[0,\ell)\cap C[0,\ell]$ to \eqref{eq:Kummer_homogeneous_differential_equation}}. We note that the coefficients of $u''$ and $u'$ of $A$ in \eqref{eq:Kummer_homogeneous_differential_equation} are smooth up to $x=0$.

The \emph{hypergeometric differential equation} on the interval $(0,1)$ (see \cite[\S 15.10]{Olver_Lozier_Boisvert_Clark}),
\begin{equation}
\label{eq:Hypergeometric_homogeneous_differential_equation}
Au(x) := -x(1-x)u''(x) - (c-(a+b+1)x)u'(x) + abu(x) = 0, \quad x\in (0, 1),
\end{equation}
where $a,b,c \in \RR$ are constants, provides another example. When none of $c$, $c-a-b$, and $a-b$ are integer, Equation \eqref{eq:Hypergeometric_homogeneous_differential_equation} has fundamental solutions $f_1, f_2$ \cite[\S 15.10]{Olver_Lozier_Boisvert_Clark} which may be expressed in terms of hypergeometric functions \cite[\S 15.1 and \S 15.2]{Olver_Lozier_Boisvert_Clark}. However, the complicated dependence of the functions $f_1, f_2$ on the parameters $a,b,c$ make any explicit analysis (comparable to that for the confluent hypergeometric functions) of their asymptotic behavior as $x\downarrow 0$ or $x\uparrow 1$ very difficult, if not impossible, except for simple choices of the parameters.

Nevertheless, as a consequence of the weak maximum principles developed in this article, we also obtain uniqueness of a solution, $u\in C^2(0,1)\cap C^1[0,1]$, to \eqref{eq:Hypergeometric_homogeneous_differential_equation} without imposing any boundary conditions at $x=0$ or $x=1$, for suitable parameters, $a,b,c$. In this case, the boundary regularity condition \eqref{eq:C2s_elliptic} for $u$, namely $u\in C^2_s[0,1]$, means that $u\in C^2(0,1)\cap C^1[0,1]$ and
\begin{equation}
\label{eq:C2s_dimension_one_both_endpoints}
xu'' \in C[0,1],
\end{equation}
and thus, by Lemma \ref{lem:Boundary_properties_C2s_functions},
$$
xu''(x) = 0 \quad\hbox{at } x = 0 \quad\hbox{and}\quad (1-x)u''(x) = 0 \quad\hbox{at } x = 1.
$$
Our uniqueness result for \eqref{eq:Hypergeometric_homogeneous_differential_equation} is stronger than those suggested by the proofs of \cite[Theorem I.3.1]{DaskalHamilton1998} and \cite[Proposition 4.1.1]{Epstein_Mazzeo_annmathstudies} for a solution $u\in C^2_s[0,1]$, since we can omit the boundary regularity condition  \eqref{eq:C2s_dimension_one_both_endpoints} and instead achieve uniqueness of a solution $u\in C^2(0,1)\cap C^1[0,1]$.

Before outlining the main results of this article in more detail, we first discuss the elliptic and parabolic operators we shall consider, along with their associated boundary value and obstacle problems.

\subsection{Boundary value problems for second-order partial differential operators}
Let $\sS(d)\subset\RR^{d\times d}$ denote the subset of symmetric matrices and $\sS^+(d)\subset\sS(d)$ denote the subset of non-negative definite, symmetric matrices. We first discuss the elliptic problems.

\subsubsection{Elliptic operators and boundary problems}
Let $\sO\subset\RR^d$, $d\geq 1$, be an open, possibly unbounded subset with topological boundary $\partial\sO$. Given a map
$$
a:\sO \to \sS^+(d),
$$
we call
\begin{equation}
\label{eq:Degeneracy_locus_elliptic}
\partial_0\sO := \Int\left\{x\in\partial\sO: \lim_{\sO\ni y\to x}a(y) = 0\right\}
\end{equation}
the \emph{degenerate boundary portion} (slightly abusing terminology) defined by $a:\sO \to \sS^+(d)$, where $\Int X$ denotes the interior of a subset $X$ of a topological space. Throughout this article, we shall allow $\partial_0\sO$ to be non-empty and denote
\begin{equation}
\label{eq:Domain_plus_degenerate_boundary_elliptic}
\underline\sO := \sO\cup\partial_0\sO,
\end{equation}
and call
\begin{equation}
\label{eq:Nondegeneracy_locus_elliptic}
\partial_1\sO := \partial\sO \less \overline{\partial_0\sO} = \Int\left\{x\in\partial\sO: \lim_{\sO\ni x'\to x}a(x') \neq 0\right\},
\end{equation}
the \emph{non-degenerate boundary portion} defined by $a:\sO \to \sS^+(d)$.

Given a vector field
$b:\sO\to\RR^d$, and a function $c:\sO\to [0,\infty)$, we shall derive maximum principles for the operator,
\begin{equation}
\label{eq:Generator}
Au := -\tr(aD^2u) - \langle b,Du\rangle + cu,
\end{equation}
where $D^2u$ and $Du$ denote the Hessian matrix and gradient of a suitably regular function $u$ on $\sO$, respectively, so $\tr(aD^2u) = a^{ij}u_{x_ix_j}$ and $\langle b,Du\rangle = b^iu_{x_i}$, where $x=(x_1,\ldots,x_d)$ and Einstein's summation convention is used. We suppose that the coefficients, $a,b,c$, are defined on $\sO$ in the case of maximum principles for $A$-subharmonic functions in $C^2(\sO)$ and are measurable and defined a.e. on $\sO$ in the case of maximum principles for those in $W^{2,d}_{\loc}(\sO)$.

In older literature, $A$ in \eqref{eq:Generator} is called an elliptic linear second-order partial differential operator with non-negative characteristic form \cite{Oleinik_Radkevic}. We say that $A$ is \emph{boundary degenerate} when $\partial_0\sO$ is non-empty, recalling that the term `degenerate'\footnote{A fully nonlinear equation, $F(\cdot,u,Du,D^2u)=0$ on $\sO$, is \emph{degenerate elliptic} in the terminology of \cite[Equation (0.3)]{Crandall_Ishii_Lions_1992} if $F(x,r,p,X)$ is a decreasing function of $X\in\sS(d)$, the vector subspace of symmetric matrices in $\RR^{d\times d}$. When $F$ is also linear, that is, $F(x,r,p,X) = -\tr(a(x)X)-\langle b(x),p\rangle+c(x)$, the equation $F=0$ on $\sO$ is degenerate elliptic if and only if $a(x)\geq 0$ for all $x\in\sO$, and thus also degenerate elliptic if $F$ is uniformly elliptic on $\sO$ in the terminology of \cite[Example 1.2]{Crandall_Ishii_Lions_1992}, so $\lambda_0I\leq a \leq \Lambda_0I$ on $\sO$ (where $I$ is the identity matrix and $\lambda_0, \Lambda_0$ are positive constants) or uniformly and strictly elliptic in the terminology of \cite[p. 31]{GilbargTrudinger}.}
has a different meaning in \cite[p. 2]{Crandall_Ishii_Lions_1992}, where an operator which is strictly elliptic is merely a particular type of degenerate-elliptic operator.

We shall consider the question of uniqueness of solutions to the elliptic equation,
\begin{equation}
\label{eq:Elliptic_equation}
Au = f \quad \hbox{(a.e.) on }\sO,
\end{equation}
and obstacle problem,
\begin{equation}
\label{eq:Elliptic_obstacle_problem}
\min\{Au-f, \ u-\psi\} = 0 \quad \hbox{a.e. on }\sO,
\end{equation}
with \emph{partial} Dirichlet boundary condition,
\begin{equation}
\label{eq:Elliptic_boundary_condition}
u = g \quad \hbox{on } \partial_1\sO,
\end{equation}
for $u$, given a suitably regular source function $f$ on $\sO$, boundary data $g$ on $\partial_1\sO$, and obstacle function $\psi$ on $\sO$ which is compatible with $g$ in the sense that
\begin{equation}
\label{eq:Elliptic_boundarydata_obstacle_compatibility}
\psi\leq g \quad\hbox{on } \partial_1\sO.
\end{equation}
Next, we discuss parabolic versions of these questions.

\subsubsection{Parabolic operators and boundary problems}
\label{subsubsec:Parabolic_operator_boundary_problem}
For the reader's convenience, we recall the background from \cite[Section 1.1]{Feehan_parabolicmaximumprinciple}. Consider a possibly non-cylindrical open subset $\sQ\subset\RR^{d+1}$ with topological boundary $\partial \sQ$, where $d\geq 1$. Given $P^0=(t^0,x^0)\in\RR^{d+1}$ and $R>0$, define
\begin{equation}
\label{eq:Parabolic_cylinder}
Q_R(P^0) := \left\{(t,x)\in \RR^{d+1}: \max\left\{|x-x^0|, \, |t-t^0|^{1/2}\right\} < R, \ t>t^0\right\},
\end{equation}
where our time convention is opposite to that of Lieberman \cite[p. 5]{Lieberman} since we consider terminal rather than initial boundary problems in this article. Following \cite[p. 7]{Lieberman}, we call
\begin{equation}
\label{eq:Parabolic boundary}
\mydirac \sQ := \{P^0=(t^0,x^0)\in\partial \sQ: Q_\eps(P^0)\cap \sQ \neq \emptyset, \ \forall\,\eps > 0\}
\end{equation}
the \emph{parabolic boundary} of $\sQ$.

We identify the vector spaces $\RR$ and $\RR^d$ with the hyperplanes $\RR\times\{0\}$ and $\{0\}\times\RR^d\subset\RR^{d+1}$ of temporal and spatial vectors, respectively. When the boundary of $\sQ$ has a tangent plane at a point $P \in \partial \sQ$, we write the \emph{inward}-pointing  normal vector as $n_0(P)e_0 + \vec n(P)\in\RR^{d+1}$, where $\vec n(P) = \sum_{i=1}^dn^i(P)e_i$ and $e_0,e_1,\ldots,e_d$ is the standard basis of $\RR^{d+1}$. Given a map
$$
a:\sQ \to \sS^+(d),
$$
we call
\begin{equation}
\label{eq:Degeneracy_locus_parabolic}
\mydirac_0\!\sQ := \Int\left\{P\in\partial \sQ: \lim_{\sO\ni P'\to P}a(P') = 0\right\}\cap \Int\{P\in\partial \sQ: n_0(P)=0\},
\end{equation}
the \emph{degenerate parabolic boundary} (again slightly abusing terminology) defined by $a:\sQ \to \sS^+(d)$. Throughout this article we shall allow $\mydirac_0\!\sQ$ to be non-empty and denote
\begin{equation}
\label{eq:Domain_plus_degenerate_boundary_parabolic}
\underline \sQ := \sQ\cup\mydirac_0\!\sQ.
\end{equation}
We also call
\begin{equation}
\label{eq:Parabolic_nondegeneracy_locus}
\mydirac_1\!\sQ := \Int\left\{P\in\partial \sQ: \lim_{\sO\ni P'\to P}a(P') \neq 0\right\}\cup \Int\{P\in\partial \sQ: n_0(P)\neq 0\}
\end{equation}
the \emph{non-degenerate parabolic boundary} defined by $a:\sQ \to \sS^+(d)$.

\begin{exmp}[Boundaries for parabolic cylinders]
\label{exmp:Boundary_cylinder_parabolic}
\cite[Example 1.3]{Feehan_parabolicmaximumprinciple}
Given a parabolic cylinder, $\sQ=(0,T)\times\sO=\sO_T$, for some $T>0$ and open subset $\sO\subset\RR^d$, then
\begin{align*}
\mydirac \sQ &= \left(\{T\}\times\partial\sO\right) \cup \left(\{T\}\times\sO\right) \cup \left((0,T)\times\partial\sO\right)
\\
&= \left(\{T\}\times\bar\sO\right) \cup \left((0,T)\times\partial\sO\right),
\end{align*}
where (in the terminology of \cite[p. 7]{Lieberman}), the subset $\{T\}\times\sO$ is the \emph{top} of $\sQ$, and $(0,T)\times\partial\sO$ is the \emph{side} of $\sQ$, and $\{T\}\times\partial\sO$ is the \emph{corner} of $\sQ$.

Unlike its elliptic counterpart, we note that the non-degenerate boundary portion,
\begin{equation}
\label{eq:Nondegenerate_boundaryportion_nonempty_parabolic}
\mydirac_1\!\sQ \hbox{ is always non-empty}.
\end{equation}
For example, when $\sQ=(0,T)\times\sO$, then
$$
\mydirac_1\!\sQ \supset \{T\}\times\sO,
$$
since $n_0(P)=-1$ when $P\in \{T\}\times\sO$, and again keeping in mind our convention of considering terminal, rather than initial boundary problems, in this article because of their association with optimal stopping problems in probability theory.
Furthermore, when $a(t,x)$ is independent of $t\in\RR$ and we write $a(t,x)=a(x)$, for all $(t,x)\in\sQ$, then we can define $\partial_0\sO$ and $\partial_1\sO$ as in the elliptic case and observe that
$$
\mydirac_0\!\sQ = (0,T)\times\partial_0\sO
\quad\hbox{and}\quad
\underline \sQ = (0,T)\times (\sO\cup \partial_0\sO) = (0,T)\times\underline\sO = \underline\sO_T,
$$
while
$$
\mydirac_1\!\sQ = \left(\{T\}\times(\sO\cup\partial_1\sO)\right) \cup \left((0,T)\times\partial_1\sO\right).
$$
This concludes our example.
\end{exmp}

In the sequel, we shall allow $\sQ\subset\RR^{d+1}$ to be an arbitrary open subset. Given a vector field
$b:\sQ\to\RR^{d+1}$, and a function $c:\sQ\to \RR$, we shall derive maximum principles for the operator,
\begin{equation}
\label{eq:Generator_parabolic}
Lu := -u_t - \tr(aD^2u) - \langle b,Du\rangle + cu,
\end{equation}
where $D^2u$ and $Du$ denote the Hessian matrix and gradient of a suitably regular function $u$ on $\sQ$ with respect to the spatial coordinates, respectively. We suppose that the coefficients, $a,b,c$, are defined on $\sQ$ in the case of maximum principles for $L$-subharmonic functions in $C^2(\sQ)$ and are measurable and defined a.e. on $\sQ$ in the case of maximum principles for those in $W^{2,d+1}_{\loc}(\sQ)$.

In older literature, $L$ in \eqref{eq:Generator_parabolic} is called a parabolic linear second-order partial differential operator with non-negative characteristic form \cite{Oleinik_Radkevic}. We shall call $L$ \emph{boundary degenerate} when $\mydirac_0\!\sQ$ is non-empty, again noting the distinction between the way we use the term `degenerate' here and the sense in which this term is used in \cite{Crandall_Ishii_Lions_1992}, where an operator which strictly parabolic is merely a particular type of degenerate parabolic operator.

We shall consider the question of uniqueness of solutions to the parabolic equation,
\begin{equation}
\label{eq:Parabolic_equation}
Lu = f \quad \hbox{(a.e.) on }\sQ,
\end{equation}
and obstacle problem,
\begin{equation}
\label{eq:Parabolic_obstacle_problem}
\min\{Lu-f, \ u-\psi\} = 0 \quad \hbox{a.e. on }\sQ,
\end{equation}
with \emph{partial} Dirichlet boundary condition,
\begin{equation}
\label{eq:Parabolic_boundary_condition}
u = g \quad \hbox{on } \mydirac_1\!\sQ,
\end{equation}
for $u$, given a suitably regular source function $f$ on $\sQ$, boundary data $g$ on $\mydirac_1\!\sQ$, and obstacle function $\psi$ on $\sQ$ which is compatible with $g$ in the sense that
\begin{equation}
\label{eq:Parabolic_boundarydata_obstacle_compatibility}
\psi\leq g \quad\hbox{on } \mydirac_1\!\sQ.
\end{equation}
We now discuss some of the properties of the coefficients of the operators $A$ and $L$.

\subsection{Properties of the coefficients of the elliptic and parabolic operators}
\label{subsec:Properties_coefficients}
It is convenient to collect here most of the conditions which we shall impose in various sections of this article on the coefficients of the elliptic operator, $A$, in \eqref{eq:Generator} or parabolic operator, $L$, in \eqref{eq:Generator_parabolic}. However, we emphasize that when stating any specific result we \emph{only} impose those few selected conditions that are \emph{explicitly referenced}, unless it we state that the condition holds throughout the article.

\subsubsection{Properties of the coefficients of the elliptic operator}
\label{subsubsec:Properties_coefficients_elliptic}
Consider the coefficients of the elliptic operator, $A$, in \eqref{eq:Generator}. Let $\lambda(x)$ denote the smallest eigenvalue of the matrix, $a(x)$, for each $x\in\sO$, and let
$$
\lambda_*:\sO\to[0,\infty)
$$
be the lower semi-continuous envelope\footnote{When $f:X\to[0,\infty)$ is a measurable function on a measure space $(X,\Sigma,\mu)$, then $f_*:X\to[0,\infty)$ is the largest lower-semicontinuous function on $X$ such that $f_*\leq f$ $\mu$-a.e. on $X$.}
of the resulting least eigenvalue function, $\lambda:\sO\to\RR$, for $a:\sO\to\sS^+(d)$. To achieve certain intermediate results, we may require that $a:\sO\to\sS^+(d)$ be \emph{locally strictly elliptic on the interior}, $\sO$, in the sense that \cite[p. 31]{GilbargTrudinger}
\begin{equation}
\label{eq:a_locally_strictly_elliptic}
\lambda_* > 0 \quad\hbox{on } \sO \quad\hbox{(interior local strict ellipticity)}.
\end{equation}
We again remind the reader that conditions such as \eqref{eq:a_locally_strictly_elliptic} are not assumed unless explicitly referenced and, indeed, the main results of this article allow $\lambda_*\equiv 0$ on $\sO$.

Throughout the article we shall require that\footnote{It is likely that $C^1$ would suffice, but an assumption that $\partial_0\sO$ is a boundary portion of class $C^{1,\alpha}$ simplifies the proofs --- see Lemma \ref{lem:Straightening_boundary}.}
\begin{equation}
\label{eq:C1alpha_degenerate_boundary_elliptic}
\partial_0\sO \quad\hbox{is } C^{1,\alpha},
\end{equation}
for some $\alpha\in(0,1)$ and let $\vec n$ denote the \emph{inward}-pointing unit normal vector field along $\partial_0\sO$.

The vector field, $\vec n:\partial_0\sO\to\RR^d$, may be extended to a tubular neighborhood $N(\partial_0\sO)$ of $\partial_0\sO \subset \underline\sO$. We can then split the vector field, $b:N(\partial_0\sO)\to\RR^d$, into its normal and tangential components, with respect to the extended vector field, $\vec n:N(\partial_0\sO)\to\RR^d$, so
\begin{equation}
\label{eq:b_splitting_elliptic}
b^\perp := \langle b, \vec n \rangle \quad\hbox{and}\quad b^\parallel := b - b^\perp \vec n \quad\hbox{on } N(\partial_0\sO).
\end{equation}
We may require that the vector field $b^\perp$ obey one of the following conditions,
\begin{align}
\label{eq:b_perp_nonnegative_boundary_elliptic}
b^\perp &\geq 0 \quad \hbox{on } \partial_0\sO \quad\hbox{or}
\\
\label{eq:b_perp_positive_boundary_elliptic}
\tag{\ref*{eq:b_perp_nonnegative_boundary_elliptic}$'$}
b^\perp &> 0 \quad \hbox{on } \partial_0\sO.
\end{align}
Similarly, we may require that the function $c$ obey one of the following conditions,
\begin{align}
\label{eq:c_nonnegative_elliptic}
c &\geq 0 \quad \hbox{(a.e.) on } \sO \quad\hbox{or}
\\
\label{eq:c_lower*_positive_elliptic}
\tag{\ref*{eq:c_nonnegative_elliptic}$'$}
c_* &> 0 \quad \hbox{(a.e.) on } \sO,
\end{align}
where $c_*:\sO\to[0,\infty)$ is the lower semicontinuous envelope of $c$.

We may also require that $c$ is (essentially) locally bounded above on $\underline \sO$, that is, setting $c^+ = c \vee 0$ on $\sO$,
\begin{equation}
\label{eq:c_locally_bounded_above_near_boundary_elliptic}
c^+ \in L^\infty_{\loc}(\underline\sO),
\end{equation}
where we slightly abuse notation by writing $w\in L^\infty_{\loc}(\underline\sO)$ as an abbreviation for saying that $w$ is a locally bounded function on $\underline\sO$, irrespective of whether $w$ is measurable or everywhere-defined.
We shall often require that the coefficient $a$ be continuous along $\partial_0\sO$,
\begin{equation}
\label{eq:a_continuous_degen_boundary_elliptic}
a \in C(\partial_0\sO;\sS^+(d)).
\end{equation}
We say that the coefficients, $a:N(\partial_0\sO)\to\sS^+(d)$ and $b^\parallel:N(\partial_0\sO)\to\RR^d$, are \emph{locally Lipschitz along $\partial_0\sO$} if, for each precompact open subset, $\Sigma\Subset\partial_0\sO$, there are positive constants, $K=K(\Sigma)$ and $\delta=\delta(\Sigma)$, such that
\begin{align}
\label{eq:a_locally_lipschitz_elliptic}
|a(x) - a(x^0)| &\leq K|x-x^0|,
\\
\label{eq:b_tangential_locally_lipschitz_elliptic}
|b^\parallel(x) - b^\parallel(x^0)| &\leq K|x-x^0|, \quad\hbox{for } x^0\in\Sigma \hbox{ and (a.e.) } x\in N_\delta(\Sigma),
\end{align}
where, for any $\Sigma\subseteqq\partial_0\sO$,
\begin{equation}
\label{eq:Tubular_neighborhood}
N_\delta(\Sigma) := \{x \in \underline\sO: \dist(x,\Sigma) < \delta\},
\end{equation}
and $\dist(x,x^0) := |x-x^0|$ denotes the Euclidean distance function between points in $\RR^d$. Since $a=0$ on $\partial_0\sO$ by definition \eqref{eq:Degeneracy_locus_elliptic}, then \eqref{eq:a_locally_lipschitz_elliptic} is equivalent to
$$
|a(x)| \leq K\dist(x,\Sigma), \quad\hbox{for (a.e.) } x\in N_\delta(\Sigma).
$$
Of course, the conditions \eqref{eq:a_locally_lipschitz_elliptic} and \eqref{eq:b_tangential_locally_lipschitz_elliptic} would be consequences of assumptions that $a \in C^{0,1}(\underline\sO;\sS^+(d))$ and $b^\parallel \in C^{0,1}(\underline\sO;\RR^d)$, but such conditions are stronger than one often wishes to impose.

\subsubsection{Properties of the coefficients of the parabolic operator}
\label{subsubsec:Properties_coefficients_parabolic}
Compare \cite[Section 1.2]{Feehan_parabolicmaximumprinciple} for a related description of coefficient properties. Consider the coefficients of the parabolic operator, $L$, in \eqref{eq:Generator_parabolic}. Let $\lambda(P)$ denote the smallest eigenvalue of the matrix, $a(P)$, for each $P\in\sQ$, and let
$$
\lambda_*:\sQ\to[0,\infty)
$$
be the lower semi-continuous envelope of the resulting least eigenvalue function, $\lambda:\sQ\to[0,\infty]$, for $a:\sQ\to\sS^+(d)$. To achieve certain intermediate results, we may require that $a:\sQ\to\sS^+(d)$ be \emph{locally strictly parabolic on the interior}, $\sQ$, in the sense that
\begin{equation}
\label{eq:a_locally_strictly_parabolic}
\lambda_* > 0 \quad\hbox{on } \sQ \quad\hbox{(interior local strict parabolicity)}.
\end{equation}
Throughout the article we shall require that\footnote{Again, it is likely that $C^1$ would suffice, but an assumption that $\mydirac_0\!\sQ$ is a boundary portion of class $C^{1,\alpha}$ simplifies the proofs --- see Lemma \ref{lem:Straightening_boundary}.}
\begin{equation}
\label{eq:C1alpha_degenerate_boundary_parabolic}
\mydirac_0\!\sQ \quad\hbox{is } C^{1,\alpha},
\end{equation}
and let $\vec n$ denote the \emph{inward}-pointing unit normal vector field along $\mydirac_0\!\sQ$.

The vector field, $\vec n:\mydirac_0\!\sQ\to\RR^d$, may be extended to a tubular neighborhood $N(\mydirac_0\!\sQ)$ of $\mydirac_0\!\sQ \subset \underline\sO$, recalling that $n_0=0$ along $\mydirac_0\!\sQ$ by definition \eqref{eq:Degeneracy_locus_parabolic}. We can then split the vector field, $b:N(\mydirac_0\!\sQ)\to\RR^d$, into its normal and tangential components, with respect to the extended vector field, $\vec n:N(\mydirac_0\!\sQ)\to\RR^d$, so
\begin{equation}
\label{eq:b_splitting_parabolic}
b^\perp := \langle b, \vec n \rangle \vec n \quad\hbox{and}\quad b^\parallel := b - b^\perp \quad\hbox{on } N(\mydirac_0\!\sQ).
\end{equation}
We may require that the vector field $b^\perp$ obey one of the following conditions,
\begin{align}
\label{eq:b_perp_nonnegative_boundary_parabolic}
b^\perp &\geq 0 \quad \hbox{on } \mydirac_0\!\sQ, \quad\hbox{or}
\\
\label{eq:b_perp_positive_boundary_parabolic}
\tag{\ref*{eq:b_perp_nonnegative_boundary_parabolic}$'$}
b^\perp &> 0 \quad \hbox{on } \mydirac_0\!\sQ.
\end{align}
Similarly, we may require that the function $c$ obey one of the following conditions,
\begin{align}
\label{eq:c_nonnegative_parabolic}
c &\geq 0 \quad \hbox{(a.e.) on } \sQ \quad\hbox{or}
\\
\label{eq:c_lower*_positive_parabolic}
\tag{\ref*{eq:c_nonnegative_parabolic}$'$}
c_* &> 0 \quad \hbox{(a.e.) on } \sQ,
\end{align}
where $c_*:\sQ\to[0,\infty)$ is the lower semicontinuous envelope of $c$.

We may require that $c$ is (essentially) locally bounded above on $\underline \sQ$, that is, setting $c^+ = c \vee 0$ on $\sQ$,
\begin{align}
\label{eq:c_locally_bounded_above_domain_plus_degen_boundary_parabolic}
c^+ &\in L^\infty_{\loc}(\underline\sQ).
\end{align}
We shall often require that the coefficient $a$ be continuous along $\mydirac_0\!\sQ$,
\begin{equation}
\label{eq:a_continuous_degen_boundary_parabolic}
a \in C(\mydirac_0\!\sQ;\sS^+(d)).
\end{equation}
We say that the coefficients, $a:N(\mydirac_0\!\sQ)\to\sS^+(d)$, respectively $b^\parallel:N(\mydirac_0\!\sQ)\to\RR^d$, are \emph{locally Lipschitz along $\mydirac_0\!\sQ$} if for each precompact open subset, $\Sigma\Subset\mydirac_0\!\sQ$, there are positive constants, $K=K(\Sigma)$ and $\delta=\delta(\Sigma)$, such that
\begin{align}
\label{eq:a_locally_lipschitz_parabolic}
|a(t, x) - a(t^0,x^0)| &\leq K\left(|t-t^0|^{1/2}+|x-x^0|\right),
\\
\label{eq:b_tangential_locally_lipschitz_parabolic}
|b^\parallel(t,x) - b^\parallel(t^0,x^0)| &\leq K\left(|t-t^0|^{1/2}+|x-x^0|\right),
\\
\notag
&\qquad\hbox{for all } (t^0,x^0)\in\Sigma \hbox{ and (a.e.) } (t,x)\in N_\delta(\Sigma),
\end{align}
where
$$
N_\delta(\Sigma) := \{P \in \underline \sQ: \dist_p(P,\Sigma) < \delta\},
$$
and $\dist_p((t,x),(t^0,x^0)) := |t-t^0|^{1/2}+|x-x^0|$ denotes the parabolic distance function between points in $\RR^{d+1}$. Since $a=0$ on $\mydirac_0\!\sQ$ by definition \eqref{eq:Degeneracy_locus_parabolic}, then \eqref{eq:a_locally_lipschitz_parabolic} is equivalent to
$$
|a(t,x)| \leq K\dist_p((t,x),\Sigma), \quad\hbox{for (a.e.) } (t,x)\in N_\delta(\Sigma).
$$
As in the elliptic case, the conditions \eqref{eq:a_locally_lipschitz_parabolic} and \eqref{eq:b_tangential_locally_lipschitz_parabolic} would be consequences of assumptions that $a \in C^{0,1}(\underline\sQ;\sS^+(d))$ and $b^\parallel \in C^{0,1}(\underline\sQ;\RR^d)$, but such conditions are stronger than one often wishes to impose.

\subsection{Summary of main results}
\label{subsec:Summary}
We begin with a statement of the principal technical tool used to prove the main results of this article for elliptic partial differential operators --- the concept of perturbing a strict local maximum point for an $A$-subharmonic function on the degenerate boundary portion to a nearby maximum point in the interior for a strictly $A$-subharmonic function obtained from the original by adding a suitable smooth function.

Given $1\leq p\leq\infty$, we let $W^{2,p}(\sO)$ (respectively, $W^{2,p}_{\loc}(\sO)$) denote the Sobolev space of measurable functions, $u$ on $\sO$, such that $u$ and its weak first and second derivatives, $u_{x_i}$ and $u_{x_ix_j}$ for $1\leq i,j\leq d$, belong to $L^p(\sO)$  (respectively, $L^p_{\loc}(\sO)$) \cite[\S 3.1]{Adams_1975}.

\begin{thm}[Perturbing a degenerate-boundary local maximum for an $A$-subharmonic function to an interior local maximum]
\label{thm:Viscosity_maximum_elliptic}
Let $\sO\subset\RR^d$ be an open subset and $A$ be as in \eqref{eq:Generator}. Assume that the coefficients of $A$ obey \eqref{eq:a_locally_strictly_elliptic}, \eqref{eq:c_nonnegative_elliptic}, \eqref{eq:b_perp_positive_boundary_elliptic}, \eqref{eq:c_locally_bounded_above_near_boundary_elliptic}, \eqref{eq:a_continuous_degen_boundary_elliptic}, \eqref{eq:a_locally_lipschitz_elliptic}, and \emph{one} of the following two conditions:
\begin{gather}
\label{eq:b_tangential_component_zero_degenerate_boundary_elliptic}
b^\parallel \hbox{ obeys }\eqref{eq:b_tangential_locally_lipschitz_elliptic}
\quad\hbox{and}\quad
b^\parallel = 0 \quad\hbox{on }\partial_0\sO, \quad\hbox{or}
\\
\label{eq:b_C2_elliptic}
\tag{\ref*{eq:b_tangential_component_zero_degenerate_boundary_elliptic}$'$}
b\in C^2(\underline\sO;\RR^d).
\end{gather}
Assume that $u \in C^2(\sO)\cap C^1(\underline\sO)$
(respectively,\footnote{Our hypothesis that $u\in C^1(\underline\sO)$ could be relaxed to the slightly more technical hypothesis that $Du$ be defined and continuous along $\partial_0\sO$.}
$u\in W^{2,d}_{\loc}(\sO)\cap C^1(\underline\sO)$). Finally, assume that $Au\leq 0$ (a.e.) on $\sO$. Suppose $u$ has a strict local maximum at $\bar x^0 \in \partial_0\sO$ relative to $\sO$, that is, $u<u(\bar x^0)$ on $B_\delta(\bar x^0)\cap\sO$ for a small enough $\delta>0$; we allow the sign of $u(\bar x^0)$ to be arbitrary when $c=0$ on $\sO$ and require that $u(\bar x^0)\geq 0$ when $c\geq 0$ on $\sO$. Then there are a connected open subset, $\sU\subset\RR^d$, such that $\bar x^0 \in \bar \sU \subset \underline\sO$ and a function $w\in C^2(\sO)\cap C^1(\RR^d)$, such that $w(\bar x^0)=0$ and $Aw < 0$ (a.e.) on $\sU$, so $v := u+w$ obeys
$$
Av < 0 \quad\hbox{(a.e.) on } \sU,
$$
and $v$ on $\bar \sU$ attains a maximum $v(x^0) > u(\bar x^0)$ at an interior point $x^0\in \sU$.
\end{thm}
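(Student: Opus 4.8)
The plan is to build the perturbation $w$ near $\bar x^0\in\partial_0\sO$ out of two pieces: a "downward-bending" quadratic/normal term that uses the strict positivity $b^\perp>0$ along $\partial_0\sO$ (so that $A$ sees a definite negative contribution from the first-order normal derivative even though $a$ degenerates), and a small correction handling the tangential directions where $A$ is genuinely elliptic on the interior. First I would use \eqref{eq:C1alpha_degenerate_boundary_elliptic} together with Lemma~\ref{lem:Straightening_boundary} to pass to local coordinates $x=(x',x_d)$ in which $\partial_0\sO=\{x_d=0\}$ and $\sO\subset\{x_d>0\}$ near $\bar x^0=0$, with $\vec n=e_d$ along the boundary. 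In these coordinates the hypotheses \eqref{eq:a_continuous_degen_boundary_elliptic}, \eqref{eq:a_locally_lipschitz_elliptic} give $|a(x)|\le K x_d$, \eqref{eq:b_perp_positive_boundary_elliptic} gives $b^d(x)=b^\perp\ge b_0>0$ on a neighborhood of $0$, and \eqref{eq:b_tangential_component_zero_degenerate_boundary_elliptic} (or \eqref{eq:b_C2_elliptic}) gives $|b^\parallel(x)|\le K x_d$ near the boundary.

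Next I would try the ansatz
$$
w(x) = -\alpha x_d + \beta x_d^2 - \gamma |x'|^2,
$$
(possibly with the $x_d^2$-term and a higher-power $x_d$-term adjusted), with small positive constants $\alpha,\beta,\gamma$ to be chosen. Note $w(0)=0$ and $w\in C^\infty(\RR^d)\subset C^2(\sO)\cap C^1(\RR^d)$. Applying $A$: the term $-\tr(aD^2w)$ is controlled since $D^2 w$ is constant and $|a(x)|\le Kx_d$, so $|\tr(aD^2w)|\le C(\beta+\gamma)K x_d$; the term $-\langle b,Dw\rangle$ has principal part $-b^d\cdot(-\alpha+2\beta x_d)= \alpha b^d - 2\beta x_d b^d \ge \alpha b_0 - C\beta x_d$ near the boundary, plus the tangential contribution $-\langle b^\parallel, -2\gamma x'\rangle$ which is $O(\gamma|x'|\,x_d)$ by the tangential bound, hence negligible; and $cw\le 0$ wherever $w\le 0$ and is bounded by $\|c^+\|_{L^\infty_{\loc}}|w|$ in general, using \eqref{eq:c_locally_bounded_above_near_boundary_elliptic}. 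Choosing first $\alpha$ small enough that $w$ does not destroy the strict maximum of $u+w$ too far from $0$, then $\beta,\gamma$ small relative to $\alpha b_0$, and finally shrinking the neighborhood $\sU$, I get $Aw\le -\alpha b_0/2 + (\text{terms that are } o(1) \text{ as } \sU \text{ shrinks}) < 0$ on $\sU$, hence $Av=Au+Aw<0$ on $\sU$ (a.e.\ in the $W^{2,d}_{\loc}$ case, since $Au\le 0$ a.e.\ and $Aw$ is classical). The term $cw$ is where the sign hypothesis on $u(\bar x^0)$ enters indirectly: when $c\ge 0$ one needs $v(x^0)\ge 0$ to run the final step cleanly, which is guaranteed because $u(\bar x^0)\ge 0$ and $v(x^0)>u(\bar x^0)$.

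Then I would locate the new maximum: by construction $w(0)=0$ and $\partial_{x_d}w(0)=-\alpha<0$, so moving from $0$ a short distance into $\sO$ along $e_d$ strictly increases $w$, while $u$ decreases by only $O(x_d)$ with a constant that, by the $C^1(\underline\sO)$ regularity of $u$ and the Hopf-type input (item~(1) in the introduction, i.e.\ the Hopf lemma of \cite{Feehan_maximumprinciple_v1}), can be made irrelevant — more simply, since $u\in C^1(\underline\sO)$ we have $u(x)=u(0)+\langle Du(0),x\rangle+o(|x|)$, and choosing $\alpha$ larger than $|Du(0)|$ makes $v(x)=u(x)+w(x)>u(0)$ for $x=te_d$ with $t>0$ small. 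Therefore $\sup_{\bar\sU} v > u(0)=u(\bar x^0)$. On the other hand, on the portion of $\partial\sU$ lying in $\sO$ we arrange (by taking $\sU$ to be a small half-ball-type region and using that $u$ has a \emph{strict} local max at $0$, so $u\le u(0)-\eta$ there, together with $|w|$ small) that $v< u(0)$; and the portion of $\bar\sU$ on $\partial_0\sO$ contributes values $\le u(0)$ since $w\le 0$ there (choosing the quadratic signs so $w\le 0$ on $\{x_d=0\}$) and $u\le u(0)$. Hence the maximum of $v$ over $\bar\sU$ is attained at an interior point $x^0\in\sU$ with $v(x^0)>u(\bar x^0)$, which is the assertion.

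\textbf{Main obstacle.} The delicate point is the simultaneous balancing of three small parameters against the degeneracy: $\alpha$ must be large enough to beat $|Du(0)|$ and push the max inward, yet $\alpha$ (through $x_d$) must not let the $x_d^2$ and $|x'|^2$ curvature terms, nor the $cw$ term, overwhelm the good term $\alpha b_0$; meanwhile the region $\sU$ must be chosen small enough that (i) $|a|\le Kx_d$ and the tangential Lipschitz bounds hold, (ii) $b^d\ge b_0>0$, (iii) $u$'s strict-maximum gap $\eta$ dominates $|w|$ on the interior part of $\partial\sU$, and (iv) the $C^1$ Taylor remainder of $u$ is controlled — all of these are compatible because the "bad" terms are each $O(x_d)$ or $o(1)$ relative to the $O(1)$ gain $\alpha b_0$, but verifying the a.e.\ inequality $Av<0$ in the $W^{2,d}_{\loc}$ case and the boundary comparison on $\partial\sU$ requires care with the geometry of $\sU$ after the $C^{1,\alpha}$ flattening, where $\partial_0\sO$ need not be exactly flat and the extended normal field $\vec n$ introduces lower-order perturbations that must be absorbed into the same estimates.
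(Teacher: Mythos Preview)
Your proposal contains a sign error that breaks both of the main claims simultaneously. With your ansatz $w(x)=-\alpha x_d+\beta x_d^2-\gamma|x'|^2$, your own computation gives the principal term of $-\langle b,Dw\rangle$ as $-b^d\cdot(-\alpha+2\beta x_d)=\alpha b^d>0$ near $\partial_0\sO$, so $Aw\ge \alpha b_0-o(1)>0$, not $Aw\le -\alpha b_0/2$. And since $\partial_{x_d}w(0)=-\alpha<0$, the function $w$ \emph{decreases} as you move into $\sO$ along $e_d$; combined with $u_{x_d}(0)\le 0$, this gives $v_{x_d}(0)<0$, so $v$ cannot rise above $u(0)$ along the inward normal. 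You need the opposite sign on the linear term.

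Even after flipping to $w=+\alpha x_d+\cdots$, your balancing scheme (``$\alpha$ small enough \ldots then $\beta,\gamma$ small'') does not close. To push the maximum of $v$ into $\sU$ you need $\alpha>|u_{x_d}(0)|=|p|$, but then on the top face $\{x_d=h\}$ of any small cylinder the Taylor expansion gives $v(x',h)\approx u(0)+(\alpha-|p|)h>u(0)$, so the maximum escapes through the top; a \emph{small} positive $\beta x_d^2$ only makes this worse, and a small $\gamma$ does not confine in the tangential directions. The paper resolves this tension differently: it first invokes the degenerate Hopf Lemma~\ref{lem:Degenerate_hopf_lemma_elliptic} to obtain $p:=u_{x_d}(0)<0$ strictly, then takes
\[
w(x)=(\eta-p)\,x_d-\tfrac{Q}{2}|x|^2
\]
with a \emph{small} $\eta>0$ and a \emph{large} $Q$. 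The linear coefficient $\eta-p$ is tuned so that $v_{x_d}(0)=\eta>0$ (barely positive), while the large negative quadratic $-\tfrac{Q}{2}|x|^2$ forces the maximum to occur inside a cylinder of height $\hat x_d=O(1/Q)$. The verification that $v\le u(0)$ on the boundary of that cylinder (bottom, top, and a curved side determined by an auxiliary radius function $\rho(x_d)$) is the bulk of the paper's proof and is where $\eta$ and $Q$ are chosen. The Hopf lemma is not a side remark here: the strict inequality $p<0$ is what makes the window $\alpha\in(|p|,|p|+\eta]$ available and allows the $-b^d(\eta-p)$ term to dominate in $Aw$.
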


\begin{rmk}[Relaxing the regularity condition on $b$]
\label{rmk:Viscosity_maximum_elliptic_relaxing_b_regularity}
While the proofs of the strong maximum principles (Theorem \ref{thm:Strong_maximum_principle_C2_elliptic} for $A$-subharmonic functions in $C^2(\sO)$ and \ref{thm:Strong_maximum_principle_W2d_elliptic} for $A$-subharmonic functions in $W^{2,d}_{\loc}(\sO)$) requires that $b$ obeys both \eqref{eq:b_tangential_locally_lipschitz_elliptic} and \eqref{eq:b_tangential_component_zero_degenerate_boundary_elliptic} or that $b$ obeys \eqref{eq:b_C2_elliptic}, these constraints can be removed in later versions of our weak maximum principles (Theorems \ref{thm:Weak_maximum_principle_C2_elliptic_domain_relaxed} and \ref{thm:Weak_maximum_principle_W2d_elliptic_domain_relaxed}).
\end{rmk}

\begin{rmk}[Structure of the perturbation]
\label{rmk:Viscosity_maximum_elliptic_w_structure}
In the special case that $\sO\subset\RR^{d-1}\times\RR_+$ and $\partial_0\sO = \Int\{\bar\sO\cap(\RR^{d-1}\times\{0\})\}$, the function $w$ in Theorem \ref{thm:Viscosity_maximum_elliptic} is a quadratic polynomial.
\end{rmk}

As we shall discover, Theorem \ref{thm:Viscosity_maximum_elliptic} --- when coupled with our version of the Hopf boundary-point Lemma \ref{lem:Degenerate_hopf_lemma_elliptic} for a boundary-degenerate elliptic operator $A$ --- provides a powerful tool for the development of strong maximum principles for $A$-subharmonic functions in $C^2(\sO)\cap C^1(\underline\sO)$ or $W^{2,d}_{\loc}(\sO)\cap C^1(\underline\sO)$; the proof of the strong maximum principle \cite[Theorem 4.8]{Feehan_maximumprinciple} for $A$-subharmonic functions in $C^2_s(\underline\sO)$ indicates that the Hopf boundary-point lemma alone is insufficient to obtain the enhanced versions of the strong maximum principle given in this article (Theorems \ref{thm:Strong_maximum_principle_C2_elliptic} and \ref{thm:Strong_maximum_principle_W2d_elliptic}).

We also have a parabolic analogue of Theorem \ref{thm:Viscosity_maximum_elliptic}; see \S \ref{subsec:Parabolic_sobolev_embedding} for definitions of spaces of continuous functions, H\"older spaces, and Sobolev spaces suitable for functions $u(t,x)$, where $(t,x) \in \RR^{d+1}$, in the context of parabolic partial differential equations.

\begin{thm}[Perturbing a degenerate-boundary local maximum for an $L$-subharmonic function to an interior local maximum]
\label{thm:Viscosity_maximum_parabolic}
Let $\sQ\subset\RR^{d+1}$ be an open subset and $L$ be as in \eqref{eq:Generator_parabolic}. Assume that the coefficients of $L$ obey \eqref{eq:a_locally_strictly_parabolic}, \eqref{eq:c_nonnegative_parabolic}, \eqref{eq:b_perp_positive_boundary_parabolic}, \eqref{eq:c_locally_bounded_above_domain_plus_degen_boundary_parabolic}, \eqref{eq:a_continuous_degen_boundary_parabolic}, \eqref{eq:a_locally_lipschitz_parabolic}, and \emph{one} of the following two conditions:
\begin{gather}
\label{eq:b_tangential_component_zero_degenerate_boundary_parabolic}
b^\parallel \hbox{ obeys }\eqref{eq:b_tangential_locally_lipschitz_parabolic}
\quad\hbox{and}\quad
b^\parallel = 0 \quad\hbox{on }\mydirac_0\!\sQ, \quad\hbox{or}
\\
\label{eq:b_C2_parabolic}
\tag{\ref*{eq:b_tangential_component_zero_degenerate_boundary_parabolic}$'$}
b\in C^2(\underline\sQ;\RR^d).
\end{gather}
Assume that $u \in C^2(\sQ)\cap \sC^1(\underline \sQ)$
(respectively,\,\footnote{Our hypothesis that $u\in \sC^1(\underline\sQ)$ could be relaxed to the slightly more technical hypothesis that $u_t$ and $Du$ be defined and continuous along $\mydirac_0\!\sQ$.}
$u\in W^{2,d+1}_{\loc}(\sQ)\cap \sC^1(\underline \sQ)$). Finally, assume that $Lu\leq 0$ (a.e.) on $ \sQ$. Suppose $u$ has a strict local maximum at $\bar P^0 \in \mydirac_0\!\sQ$ relative to $\sQ$, that is, $u<u(\bar P^0)$ on $B_\delta(\bar P^0)\cap\sQ$ for a small enough $\delta>0$; we allow the sign of $u(\bar P^0)$ to be arbitrary when $c=0$ on $\sQ$ and require that $u(\bar P^0)\geq 0$ when $c\geq 0$ on $\sQ$. Then there are a connected open subset, $\sV\subset\RR^{d+1}$, such that $\bar P^0 \in \bar\sV \subset \underline \sQ$ and a function $w\in C^2(\sQ)\cap C^1(\RR^{d+1})$ such that $w(\bar P^0)=0$ and $Lw < 0$ (a.e.) on $\sV$, so $v := u+w$ obeys
$$
Lv < 0 \quad\hbox{(a.e.) on } \sV,
$$
and $v$ on $\bar \sV$ attains a maximum $v(P^0) > u(\bar P^0)$ at an interior point $P^0\in \sV$.
\end{thm}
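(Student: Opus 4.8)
The argument is the parabolic counterpart of the proof of Theorem~\ref{thm:Viscosity_maximum_elliptic}, with the time variable $t$ playing the role of an additional \emph{tangential} coordinate along $\mydirac_0\!\sQ$: since $n_0=0$ on $\mydirac_0\!\sQ$, near $\bar P^0$ the set $\sQ$ looks like a half-space $\{x_d>0\}$ with $t$ ranging freely in an interval about $\bar t^0$, so no temporal face of $\partial\sQ$ interferes. First I would localize near $\bar P^0$ and invoke Lemma~\ref{lem:Straightening_boundary} to straighten the $C^{1,\alpha}$ hypersurface $\mydirac_0\!\sQ$, so that, writing $P=(t,x',x_d)$, we may assume $\bar P^0=(\bar t^0,0)$, $\mydirac_0\!\sQ\subset\{x_d=0\}$, $\sQ\subset\{x_d>0\}$ locally, $\vec n=e_d$, and that on a box $\cN_\rho:=\{|x'|<\rho,\ 0<x_d<\rho,\ |t-\bar t^0|<\rho\}\cap\sQ$ (with $\rho$ small) the transformed coefficients satisfy $|a(P)|\le Kx_d$ by \eqref{eq:a_continuous_degen_boundary_parabolic}--\eqref{eq:a_locally_lipschitz_parabolic}, $b^{(d)}=b^\perp\ge\beta_0>0$ by \eqref{eq:b_perp_positive_boundary_parabolic}, $0\le c\le\Lambda$ by \eqref{eq:c_nonnegative_parabolic} and \eqref{eq:c_locally_bounded_above_domain_plus_degen_boundary_parabolic}, and $|b^\parallel(P)|\le Kx_d$ from \eqref{eq:b_tangential_component_zero_degenerate_boundary_parabolic} (or merely $|b^\parallel|\le B_1$ under \eqref{eq:b_C2_parabolic}, which only costs a smaller $\rho$ below). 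The straightening map is taken smooth in $\sQ$ and $C^{1,\alpha}$ up to $\mydirac_0\!\sQ$, so polynomials in the new coordinates pull back to functions in $C^2(\sQ)\cap C^1(\RR^{d+1})$. Since $\bar P^0$ is a strict local maximum of $u$ relative to $\sQ$ and $u\in\sC^1(\underline\sQ)$, the restriction of $u$ to $\{x_d=0\}$ has an interior maximum at $\bar P^0$, whence $u_t(\bar P^0)=0$, $D_{x'}u(\bar P^0)=0$, and $u_{x_d}(\bar P^0)\le 0$.

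\textbf{Key preliminary (the main obstacle).} The crucial step is to upgrade $u_{x_d}(\bar P^0)\le 0$ to $\gamma_0:=-u_{x_d}(\bar P^0)>0$. This is the boundary-degenerate parabolic Hopf lemma (the parabolic counterpart of Lemma~\ref{lem:Degenerate_hopf_lemma_elliptic}, cf.\ \cite{Feehan_parabolicmaximumprinciple}); here is its essence. Because $b^\perp(\bar P^0)>0$, the barrier $\psi_\delta:=u(\bar P^0)-\delta x_d$ obeys $L\psi_\delta=\delta b^{(d)}+cu(\bar P^0)-\delta cx_d\ge\delta(\beta_0-\Lambda x_d)>0$ near $\bar P^0$ (using $u(\bar P^0)\ge 0$ when $c\ge 0$), so $h_\delta:=u-\psi_\delta$ is strictly $L$-subharmonic near $\bar P^0$. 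Were $u_{x_d}(\bar P^0)$ zero, the $C^1$-expansion of $u$ at $\bar P^0$ would give $h_\delta(\bar t^0,0',x_d)=\delta x_d+o(x_d)>0$ for small $x_d>0$, while $h_\delta\le 0$ on the flat part of $\partial B^+_\eta(\bar P^0)$ and $h_\delta\le-\vartheta(\eta)+\delta\eta<0$ on its spherical part (with $\vartheta(\eta)>0$ coming from the \emph{strictness} of the maximum and continuity of $u$, after choosing $\delta<\vartheta(\eta)/\eta$); hence $h_\delta$ would attain an interior maximum in $B_\eta(\bar P^0)\cap\{x_d>0\}\subset\sQ$, contradicting the classical strong maximum principle when $u\in C^2(\sQ)$, or its analogue for strong solutions when $u\in W^{2,d+1}_{\loc}(\sQ)$ --- both applicable since $L$ is locally uniformly parabolic in $\sQ$ by \eqref{eq:a_locally_strictly_parabolic}. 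Therefore $\gamma_0>0$.

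\textbf{Construction of the perturbation.} Granting $\gamma_0>0$, take $w$ to be the quadratic polynomial in the straightened coordinates
$$
w(t,x):=\eps x_d-Nx_d^2-N\bigl(|x'|^2+(t-\bar t^0)^2\bigr),
$$
and set $g:=w+\gamma_0 x_d$ and $\mu:=\eps-\gamma_0$, so $g=\mu x_d-Nx_d^2-N(|x'|^2+(t-\bar t^0)^2)$ is concave, vanishes on $\partial\{g>0\}$, and attains a strict maximum $\mu^2/(4N)$ at $P_*=(\bar t^0,0',\mu/(2N))$, while the $C^1$-expansion of $u$ yields $v-v(\bar P^0)=g+r$ on $\underline\sQ$ with $|r(P)|=o(|P-\bar P^0|)$. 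A direct computation using the coefficient bounds above gives, on $\cN_\rho$,
$$
Lw\le 2N\bigl((d+1)K+\beta_0\bigr)x_d-\beta_0\eps+E(\rho),
$$
where $E(\rho)\downarrow 0$ as $\rho\downarrow 0$ for fixed $\eps,N$; hence $Lw<0$ on $\{0<x_d<h_N\}\cap\cN_\rho$ with $h_N:=\beta_0\eps/(4N((d+1)K+\beta_0))$, once $E(\rho)<\beta_0\eps/2$. Now choose $\eps$ just above $\gamma_0$ --- specifically $\gamma_0<\eps<\gamma_0/(1-\vartheta_0)$ for a small constant $\vartheta_0\in(0,1/4)$ depending only on $d,K,\beta_0,\Lambda$ --- so that $\mu=\eps-\gamma_0$ is small enough that the whole ``teardrop'' $\sV:=\{g>0\}$, which is convex, connected, of height $\mu/N$, lies in $\{0<x_d<h_N\}$; then fix $N$ large and $\rho:=2\mu/N$, so that $\sV\subset\cN_\rho$ and $E(2\mu/N)<\beta_0\eps/2$. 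With these choices $Lw<0$ on $\sV$, hence $Lv=Lu+Lw\le Lw<0$ (a.e.) on $\sV$; moreover $\bar\sV=\{g\ge 0\}$ meets $\{x_d=0\}$ only at $\bar P^0$, so $\bar P^0\in\bar\sV\subset\underline\sQ$, and $w(\bar P^0)=0$. Finally, $v(P_*)-v(\bar P^0)\ge\mu^2/(4N)-o(1/N)>0$ for $N$ large, whereas $v-v(\bar P^0)=r=o(1/N)$ on $\partial\sV=\{g=0\}$; hence $v$ attains its maximum over the compact set $\bar\sV$ at an interior point $P^0\in\sV$ with $v(P^0)>v(\bar P^0)=u(\bar P^0)$, as required.

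\textbf{Where the difficulty lies.} Besides the Hopf step, the delicate point is precisely this calibration: the perturbation must be concave enough (large $N$) both to bend $v$ back to an interior maximum and to wall that maximum off from the lateral and temporal faces of $\cN_\rho$, yet the region $\{Lw<0\}$ retreats toward $\mydirac_0\!\sQ$ at rate $\sim 1/N$ because the degenerate second-order term contributes $+\,2Na^{dd}\sim 2NKx_d$ to $Lw$ --- controlled only by the Lipschitz bound $|a|\le Kx_d$, not by $a(\bar P^0)=0$ alone --- so the vertex $x_d=\mu/(2N)$ of $g$ survives inside $\{Lw<0\}$ only because $\mu=\eps-\gamma_0$ can be taken small relative to $\eps$, which genuinely requires $\gamma_0>0$. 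The remaining bookkeeping is routine: checking that the transformed operator stays within the hypotheses under the $C^{1,\alpha}$ straightening (Lemma~\ref{lem:Straightening_boundary}; under \eqref{eq:b_C2_parabolic} one may first apply a $C^2$ shear making $b^\parallel=0$ on $\mydirac_0\!\sQ$), and noting that the construction of $w$ and the estimate $Lv\le Lw$ are identical in the $W^{2,d+1}_{\loc}(\sQ)$ case, where only the maximum principle invoked in the Hopf step is taken in the strong-solutions sense.
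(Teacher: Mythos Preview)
Your approach is correct and shares the paper's overall strategy---invoke the degenerate-parabolic Hopf lemma to secure $\gamma_0=-u_{x_d}(\bar P^0)>0$, add a quadratic perturbation $w$ with $w(\bar P^0)=0$ whose normal linear coefficient slightly exceeds $\gamma_0$, and then trap an interior maximum of $v=u+w$ inside a small region where $Lw<0$---but the implementation differs from the paper in two respects worth recording.

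First, the perturbation and the region. The paper takes
\[
w(t,x)=-\zeta t+(\eta-p)x_d-\tfrac{Q}{2}|x|^2
\]
(linear in $t$, a single concavity parameter $Q$) and works on a \emph{spatial cylinder with a curved lateral wall} $|x'|=\rho(x_d)$, analysing four boundary faces separately (temporal face $\{t=\tau\}$, bottom $\{x_d=0\}$, top $\{x_d=\hat x_d\}$, and the curved side). You instead take $w$ quadratic in $t$ as well, set $g:=w+\gamma_0 x_d$, and choose $\sV=\{g>0\}$, which is a small Euclidean ball tangent to $\{x_d=0\}$ at $\bar P^0$. Because $g$ vanishes identically on $\partial\sV$, your boundary analysis collapses to the single estimate $v-v(\bar P^0)=r=o(|P-\bar P^0|)$ on $\partial\sV$, which is decidedly cleaner than the paper's four-case computation. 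The price is a more delicate parameter calibration: you need $\mu=\eps-\gamma_0$ small relative to $\eps$ (that is, $\eps$ just above $\gamma_0$) so that the ball's height $\mu/N$ fits inside $\{Lw<0\}$, whose thickness also shrinks like $1/N$; the paper decouples these scales by letting the cylinder height $\hat x_d$ and radius $\rho(x_d)$ be chosen independently.

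Second, a minor bookkeeping point. In your inequality $Lw\le 2N((d{+}1)K+\beta_0)x_d-\beta_0\eps+E(\rho)$, the term $-w_t=2N(t-\bar t^0)$ contributes $2N\rho$ to $E(\rho)$, and with your later choice $\rho=2\mu/N$ this gives $4\mu$, which does \emph{not} tend to $0$ with $N$; so ``$E(\rho)\downarrow 0$ for fixed $N$'' is not the mechanism that saves you. What actually works is that on $\sV$ one has $|t-\bar t^0|\le\mu/(2N)$, whence $|-w_t|\le\mu$, and this is dominated by $\beta_0\eps$ precisely because $\mu<\vartheta_0\eps$ with $\vartheta_0$ small---the same smallness you already imposed. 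So your argument is sound once you track the $-w_t$ contribution through $\mu$ rather than through $E(\rho)$; you may want to rewrite that estimate directly on $\sV$ rather than on the box $\cN_\rho$.
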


\begin{rmk}[Relaxing the regularity condition on $b$]
\label{rmk:Viscosity_maximum_parabolic_relaxing_b_regularity}
While the proofs of the strong maximum principles (Theorems \ref{thm:Strong_max_principle_C2_parabolic_c_geq_zero}, \ref{thm:Strong_max_principle_C2_parabolic_c_arb_sign}, \ref{thm:Strong_max_principle_C2_parabolic_c_geq_zero_refined}, and \ref{thm:Strong_max_principle_C2_parabolic_c_arb_sign_refined} for $L$-subharmonic functions in $C^2(\sQ)$ and Theorem \ref{thm:Strong_max_principle_W2d+1_parabolic_c_geq_zero} for $L$-subharmonic functions in $W^{2,d+1}_{\loc}(\sQ)$) requires that $b$ obeys both \eqref{eq:b_tangential_locally_lipschitz_parabolic} and \eqref{eq:b_tangential_component_zero_degenerate_boundary_parabolic} or that $b$ obeys \eqref{eq:b_C2_parabolic}, these constraints can be removed in later versions of our weak maximum principles (Theorems \ref{thm:Weak_maximum_principle_C2_parabolic_domain_relaxed} and \ref{thm:Weak_maximum_principle_W2d+1_parabolic_domain_relaxed}).
\end{rmk}

\begin{rmk}[Structure of the perturbation]
\label{rmk:Viscosity_maximum_parabolic_w_structure}
In the special case that $\sQ = (0,T)\times\sO$ and $\sO\subset\RR^{d-1}\times\RR_+$, so $\mydirac_0\!\sQ = (0,T)\times\partial_0\sO$, the function $w$ in Theorem \ref{thm:Viscosity_maximum_parabolic} is a quadratic polynomial.
\end{rmk}

We recall from \cite[Definition 1.1]{Feehan_parabolicmaximumprinciple} that $u$ belongs to $C^2_s(\underline\sQ)$ if
\begin{subequations}
\label{eq:C2s_parabolic}
\begin{gather}
\label{eq:C2s_parabolic_C2interior_C1uptoboundary}
u \in C^2(\sQ)\cap \sC^1(\underline\sQ),
\\
\label{eq:C2s_parabolic_traD2u_continuous_upto_boundary}
\tr(aD^2u) \in C(\underline\sQ), \quad\hbox{and}
\\
\label{eq:C2s_parabolic_traD2u_zero_boundary}
\tr(aD^2u) = 0 \quad\hbox{on } \mydirac_0\!\sQ,
\end{gather}
\end{subequations}
Again, the vanishing condition \eqref{eq:C2s_parabolic_traD2u_zero_boundary} is implied by the conditions \eqref{eq:C2s_parabolic_C2interior_C1uptoboundary} and \eqref{eq:C2s_parabolic_traD2u_continuous_upto_boundary} alone when $a(t,x)$ has suitable degeneracy and continuity properties near $\mydirac_0\!\sQ$; see Lemma \ref{lem:Boundary_properties_C2s_functions}.

Theorem \ref{thm:Viscosity_maximum_parabolic} --- when coupled with our version of the Hopf boundary-point Lemma \ref{lem:Degenerate_hopf_lemma_parabolic} for a boundary-degenerate parabolic operator $L$ --- provides a powerful tool for the development of strong maximum principles for $L$-subharmonic functions in $C^2(\sQ)\cap \sC^1(\underline\sQ)$ or $W^{2+1,d}_{\loc}(\sQ)\cap \sC^1(\underline\sQ)$; as in the elliptic case, the proof of the strong maximum principles \cite[Theorems 5.13, 5.15, 5.16, and 5.17]{Feehan_parabolicmaximumprinciple} for $L$-subharmonic functions in $C^2_s(\underline\sQ)$ indicates that the Hopf boundary-point lemma alone is insufficient to obtain the enhanced versions of the strong maximum principle given in this article.

While the proofs of Theorems \ref{thm:Viscosity_maximum_elliptic} and \ref{thm:Viscosity_maximum_parabolic} do not appeal to any particular result from the theory of viscosity solutions, they evoke its philosophy and methods; compare \cite[Example 2.1 and Theorem 3.2]{Crandall_Ishii_Lions_1992}.

\subsection{Applications of Theorems \ref{thm:Viscosity_maximum_elliptic} and \ref{thm:Viscosity_maximum_parabolic} to strong and weak maximum principles and comparison principles for obstacle problems}
\label{subsec:Applications}
The purpose of Theorem \ref{thm:Viscosity_maximum_elliptic} is to transform the problem of analyzing the behavior of an $A$-subharmonic function in $C^2(\sO)$ or $W^{2,d}_{\loc}(\sO)$, near the degenerate boundary portion, $\partial_0\sO$, to the far simpler one of analyzing its behavior in the interior, $\sO$ --- at least near a point on the degenerate boundary portion, $\partial_0\sO$, where the $A$-subharmonic function has a strict local maximum. The proof of Theorem \ref{thm:Viscosity_maximum_elliptic} in turn relies in an essential way on our Hopf boundary-point lemma for the operator, $A$, at such a point \cite[Lemma 4.1]{Feehan_maximumprinciple}. We shall state here just a sample of the strong and weak maximum principles which quickly follow from Theorem \ref{thm:Viscosity_maximum_elliptic}, but refer the reader to sections \ref{subsec:Strong_maximum_principle_elliptic}, \ref{subsec:Weak_maximum_principle_elliptic}, and \ref{subsec:Relaxing_coefficients_elliptic} for many other variants.

We first state an analogue of the classical strong maximum principle for $A$-subharmonic functions in $W^{2,d}_{\loc}(\sO)$ due to Aleksandrov \cite[Theorem 9.6]{GilbargTrudinger}, Bony \cite[Theorem 2]{Bony_1967},  Lions \cite{Lions_1983}, and others. Recall from the Sobolev Embedding \cite[Theorem 5.6, Part I (C)]{Adams_1975} that $W^{2,d}_{\loc}(\sO) \hookrightarrow C(\sO)$, but $W^{2,d}_{\loc}(\sO) \not\hookrightarrow C^1(\sO)$. Note however that Theorem \ref{thm:Strong_maximum_principle_W2d_elliptic} makes a stronger assertion than its classical counterparts since, even if $u$ is only known to attain its global maximum in the degenerate boundary portion, $\partial_0\sO$, we can still conclude that $u$ is constant on $\sO$.

\begin{thm}[Strong maximum principle for $A$-subharmonic functions in $W^{2,d}_{\loc}$]
\label{thm:Strong_maximum_principle_W2d_elliptic}
Let $\sO\subset\RR^d$ be a domain.\footnote{We reserve the term `domain' in $\RR^d$ for a \emph{connected}, open subset.}
Assume the hypotheses of Theorem \ref{thm:Viscosity_maximum_elliptic} for the coefficients of $A$ in \eqref{eq:Generator}, whose coefficients we suppose to be measurable. Suppose that $u\in W^{2,d}_{\loc}(\sO)\cap C^1(\underline\sO)$
and\,\footnote{Our hypothesis that $u\in C^1(\underline\sO)$ could be relaxed to the slightly more technical hypothesis that $Du$ be defined and continuous along $\partial_0\sO$.}
$Au\leq 0$ a.e. on $\sO$.
If $c=0$ (respectively, $c\geq 0$) a.e. on $\sO$ and $u$ attains a global maximum (respectively, non-negative global maximum) in $\underline\sO$, then $u$ is constant on $\sO$.
\end{thm}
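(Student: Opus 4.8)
\emph{Proof proposal.} The plan is to reduce everything to the classical strong maximum principle for $W^{2,d}_{\loc}$ functions by using Theorem~\ref{thm:Viscosity_maximum_elliptic} to convert a degenerate-boundary maximum into an interior one. First I would set $M:=\sup_{\underline\sO}u$ (finite and attained by hypothesis, and $\geq 0$ when $c\geq 0$) and split into two cases according to whether $u$ attains $M$ at an interior point of $\sO$ or only on $\partial_0\sO$. In the first case, $u(x_1)=M$ for some $x_1\in\sO$, I would simply invoke the classical strong maximum principle for $W^{2,d}_{\loc}$ functions --- Aleksandrov \cite[Theorem~9.6]{GilbargTrudinger}, Bony \cite[Theorem~2]{Bony_1967}, Lions \cite{Lions_1983} --- which applies since $Au\leq 0$ a.e., $a$ is locally strictly elliptic on $\sO$ by \eqref{eq:a_locally_strictly_elliptic}, and $M$ is non-negative when $c\geq 0$; this gives $u\equiv M$ on the connected set $\sO$, as required.

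The substance is the second case, $u<M$ on all of $\sO$, which I would rule out by contradiction. Since $M$ is attained on $\underline\sO=\sO\cup\partial_0\sO$, there is $\bar x^0\in\partial_0\sO$ with $u(\bar x^0)=M$, and because $u<M$ \emph{throughout} $\sO$ we obtain $u<u(\bar x^0)$ on $B_\delta(\bar x^0)\cap\sO$ for every small $\delta>0$ --- so $\bar x^0$ is a \emph{strict} local maximum of $u$ relative to $\sO$, in exactly the sense required by Theorem~\ref{thm:Viscosity_maximum_elliptic}; moreover $u(\bar x^0)=M\geq 0$ if $c\geq 0$, while the sign of $u(\bar x^0)$ is unrestricted if $c=0$, matching that theorem's sign hypothesis. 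Applying Theorem~\ref{thm:Viscosity_maximum_elliptic} (its coefficient hypotheses being assumed here) produces a connected open $\sU$ with $\bar x^0\in\bar\sU\subset\underline\sO$ and $w\in C^2(\sO)\cap C^1(\RR^d)$ with $w(\bar x^0)=0$ and $Aw<0$ a.e. on $\sU$, such that $v:=u+w\in W^{2,d}_{\loc}(\sO)$ satisfies $Av<0$ a.e. on $\sU$ and attains $\max_{\bar\sU}v=v(x^0)>M$ at an interior point $x^0\in\sU\cap\sO$.

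To close the contradiction I would fix $r>0$ with $\overline{B_r(x^0)}\subset\sU\cap\sO$; then $v\leq v(x^0)$ on $\overline{B_r(x^0)}$ with equality at the interior point $x^0$, and $v(x^0)>M\geq 0$ is non-negative in the case $c\geq 0$. Since $v\in W^{2,d}_{\loc}(\sO)$, $Av\leq 0$ a.e. on $B_r(x^0)$, and $a$ is strictly elliptic there, the classical strong maximum principle applied once more forces $v\equiv v(x^0)$ on $B_r(x^0)$; but then $Av=c\,v(x^0)\geq 0$ a.e. on $B_r(x^0)$ (using $c\geq 0$ and $v(x^0)\geq 0$, or $c=0$), contradicting $Av<0$ a.e. on $\sU\supseteq B_r(x^0)$. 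Hence the second case is impossible and $u\equiv M$ on $\sO$.

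I expect the only real subtlety to be the bookkeeping around the maximum: recognizing that when $M$ is not attained in $\sO$, the hypothesis ``attains a global maximum in $\underline\sO$'' forces $u<M$ on the \emph{entire} interior (not merely near $\bar x^0$), which is precisely what makes the boundary maximum strict and Theorem~\ref{thm:Viscosity_maximum_elliptic} applicable, together with keeping track of the non-negativity of the various maxima in the $c\geq 0$ case. All of the genuine analytic work --- the perturbation construction and, within it, the degenerate Hopf boundary-point lemma \cite[Lemma~4.1]{Feehan_maximumprinciple_v1} --- has been packaged into Theorem~\ref{thm:Viscosity_maximum_elliptic}, so this argument itself is short.
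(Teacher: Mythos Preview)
Your proposal is correct and follows essentially the same approach as the paper: split into the interior-maximum case (handled by \cite[Theorem~9.6]{GilbargTrudinger}) and the degenerate-boundary-maximum case (handled by Theorem~\ref{thm:Viscosity_maximum_elliptic} plus the classical strong maximum principle), exactly as in the paper's proof of Theorem~\ref{thm:Strong_maximum_principle_C2_elliptic}. The only cosmetic difference is your endgame contradiction---you deduce $v$ constant on a small ball and then pit $Av=c\,v(x^0)\geq 0$ against $Av<0$, whereas the paper deduces $v$ constant on all of $\sU$ and then pits $v(x^0)=v(\bar x^0)=u(\bar x^0)$ (using $w(\bar x^0)=0$) against $v(x^0)>u(\bar x^0)$; both are valid and equally short.
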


Given Theorem \ref{thm:Strong_maximum_principle_W2d_elliptic} and its variant for $A$-subharmonic functions in $C^2(\sO)$ (Theorem \ref{thm:Strong_maximum_principle_C2_elliptic}), one obtains weak maximum principles for $A$-subharmonic functions in $C^2(\sO)$ or $W^{2,d}_{\loc}(\sO)$ for a boundary-degenerate elliptic operator, $A$, and Dirichlet condition only prescribed along $\partial_1\sO$ --- see Theorems \ref{thm:Weak_maximum_principle_C2_elliptic_domain} and \ref{thm:Weak_maximum_principle_W2d_elliptic_domain} and Corollaries \ref{cor:Weak_maximum_principle_C2_elliptic_opensubset} and \ref{cor:Weak_maximum_principle_W2d_elliptic_opensubset}, respectively.

The interior local strict ellipticity and regularity requirements on $a, b$ which were assumed to prove the strong maximum principles can be relaxed in the case of the weak maximum principles by employing a priori estimates developed in \cite{Feehan_maximumprinciple, Feehan_parabolicmaximumprinciple} and ideas from \cite[\S 9.1]{GilbargTrudinger}. For example, we have

\begin{thm}[Weak maximum principle on domains for $A$-subharmonic functions in $C^2(\sO)$ and relaxed conditions on $a, b$]
\label{thm:Weak_maximum_principle_C2_elliptic_domain_relaxed}
Let $\sO\subset\RR^d$ be a bounded domain and $A$ be as in \eqref{eq:Generator}. Assume that the coefficients of $A$ obey \eqref{eq:b_perp_nonnegative_boundary_elliptic}, \eqref{eq:c_lower*_positive_elliptic}, \eqref{eq:c_locally_bounded_above_near_boundary_elliptic}, \eqref{eq:a_locally_lipschitz_elliptic}, and
\begin{equation}
\label{eq:b_continuous_on_domain_plus_degenerate_boundary}
b \in C(\underline\sO;\RR^d),
\end{equation}
and \emph{one} of the following two conditions:
\begin{gather}
\label{eq:a_lipschitz_on_domain_plus_degenerate_boundary}
a \in C^{0,1}(\underline\sO) \quad\hbox{(locally Lipschitz on $\underline\sO$), or}
\\
\label{eq:a_locally_strictly_elliptic_on_domain}
\tag{\ref*{eq:a_lipschitz_on_domain_plus_degenerate_boundary}$'$}
a \quad\hbox{obeys \eqref{eq:a_locally_strictly_elliptic} (locally strictly elliptic on $\sO$)}.
\end{gather}
Suppose $u \in C^2(\sO)\cap C^1(\underline\sO)$ and $\sup_\sO u < \infty$. If $Au\leq 0$ on $\sO$ and $u^* \leq 0$ on $\partial_1\sO$, then $u\leq 0$ on $\sO$.
\end{thm}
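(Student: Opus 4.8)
The plan is to deduce this from the weak maximum principle established under the \emph{full} hypotheses of Theorem~\ref{thm:Viscosity_maximum_elliptic}, namely Theorem~\ref{thm:Weak_maximum_principle_C2_elliptic_domain}, by approximating the coefficients and passing to a limit, in the spirit of \cite[\S 9.1]{GilbargTrudinger} and of the a priori maximum estimates for boundary-degenerate operators developed in \cite{Feehan_maximumprinciple_v1}. Suppose, for contradiction, that $\Theta:=\sup_\sO u>0$; note \eqref{eq:c_lower*_positive_elliptic} forces $c\geq c_*>0$ a.e., so the zeroth-order coefficient of $A$ is nonnegative and $u\leq\Theta$ on $\sO$, hence on $\underline\sO$ by continuity. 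First I would push the non-degenerate boundary inward to a region on which $u$ is well controlled: since $u^*\leq 0$ on $\partial_1\sO$, for each $\eta\in(0,\Theta)$ the super-level set $\sO_\eta:=\{u>\eta\}$ satisfies $\overline{\sO_\eta}\cap\partial_1\sO=\emptyset$, so $\overline{\sO_\eta}$ is a compact subset of $\underline\sO$ (modulo a standard treatment of the interface $\partial(\partial_0\sO)$), with $\partial_0\sO_\eta\subseteq\partial_0\sO$, with $\partial_1\sO_\eta=\{u=\eta\}\cap\sO$ an interior hypersurface, and with $A(u-\eta)=Au-\eta c\leq 0$ a.e.\ on $\sO_\eta$. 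After smoothing $\partial_1\sO_\eta$ by Sard's theorem, it suffices to prove $\sup_{\sO_\eta}u\leq\eta$ for all small $\eta>0$.

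On $\sO_\eta$ I would build, for small $\eps>0$, an operator $A_\eps$ of the form \eqref{eq:Generator} that satisfies \emph{all} hypotheses of Theorem~\ref{thm:Viscosity_maximum_elliptic} and converges to $A$ as $\eps\downarrow 0$. For the drift: extend $b$ continuously to $\RR^d$, mollify, and add $\eps$ times a smoothed, cut-off extension of the inward unit normal along $\partial_0\sO_\eta$; the resulting $b_\eps\in C^2(\underline{\sO_\eta};\RR^d)$ converges to $b$ uniformly on $\overline{\sO_\eta}$ and, with the mollification scale taken small relative to $\eps$, obeys $b_\eps^\perp\geq\eps/2>0$ on $\partial_0\sO_\eta$ by \eqref{eq:b_perp_nonnegative_boundary_elliptic}; hence $A_\eps$ obeys \eqref{eq:b_perp_positive_boundary_elliptic} and \eqref{eq:b_C2_elliptic}. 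For the diffusion, set $a_\eps:=a$ in case~\eqref{eq:a_locally_strictly_elliptic_on_domain}, and $a_\eps:=a+\eps\,\dist(\cdot,\partial\sO)\,I$ in case~\eqref{eq:a_lipschitz_on_domain_plus_degenerate_boundary}, the latter being locally strictly elliptic on $\sO_\eta$, still vanishing along $\partial_0\sO_\eta$, and still obeying \eqref{eq:a_locally_lipschitz_elliptic} because $a\in C^{0,1}(\underline\sO)$ and $\dist(\cdot,\partial\sO)\leq\dist(\cdot,\Sigma)$ for $\Sigma\Subset\partial_0\sO$; conditions \eqref{eq:c_nonnegative_elliptic} and \eqref{eq:c_locally_bounded_above_near_boundary_elliptic} are untouched. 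Then $u$ satisfies $A_\eps u\leq g_\eps$ a.e.\ on $\sO_\eta$, with $g_\eps:=-\tr\big((a_\eps-a)D^2u\big)-\langle b_\eps-b,Du\rangle$. An a priori maximum estimate for such $A_\eps$-subsolutions on $\sO_\eta$ --- of the type developed in \cite{Feehan_maximumprinciple_v1} and underlying Theorem~\ref{thm:Weak_maximum_principle_C2_elliptic_domain}, in which only $\partial_1\sO_\eta$ enters --- then gives $\sup_{\sO_\eta}u\leq\eta+C_\eta\|g_\eps^+\|$ with $C_\eta$ independent of $\eps$; sending $\eps\downarrow 0$ and then $\eta\downarrow 0$ produces $\Theta\leq 0$, a contradiction.

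The hard part, I expect, is showing $\|g_\eps^+\|\to 0$ as $\eps\downarrow 0$. The term $\langle b_\eps-b,Du\rangle$ is benign: $Du$ is bounded on the compact set $\overline{\sO_\eta}\subset\underline\sO$ and $b_\eps\to b$ uniformly there, while the $\eps\vec n$-correction contributes only $O(\eps)$. The delicate term is $\tr\big((a_\eps-a)D^2u\big)=\eps\,\dist(\cdot,\partial\sO)\,\tr(D^2u)$ in case~\eqref{eq:a_lipschitz_on_domain_plus_degenerate_boundary}, since $D^2u$ is not assumed to stay bounded near $\partial_0\sO_\eta$. Taming it is precisely where the exact (weighted) shape of the a priori estimate of \cite{Feehan_maximumprinciple_v1} matters: its weight degenerates near $\partial_0\sO_\eta$ at the Lipschitz rate of \eqref{eq:a_locally_lipschitz_elliptic}, matched by the factor $\dist(\cdot,\partial\sO)$ in $a_\eps-a$, and one couples this with a two-scale limit in which the collar around $\partial_0\sO_\eta$ where $a_\eps-a$ is non-negligible is sent to $\partial_0\sO_\eta$ more slowly than $\eps\to 0$, so that $\eps$ times the finite supremum of $|D^2u|$ over the remaining compact region still tends to $0$. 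The rest --- smoothing $\sO_\eta$, handling the interface $\partial(\partial_0\sO)$, and checking the hypotheses of Theorem~\ref{thm:Weak_maximum_principle_C2_elliptic_domain} for $A_\eps$ --- is routine bookkeeping.
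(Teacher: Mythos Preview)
Your overall strategy---perturb the coefficients so that the full hypotheses of Theorem~\ref{thm:Viscosity_maximum_elliptic} hold, invoke Theorem~\ref{thm:Weak_maximum_principle_C2_elliptic_domain} together with the a priori estimate of Proposition~\ref{prop:Elliptic_weak_max_principle_apriori_estimates}\,\eqref{item:Subsolution_Au_arb_sign}, and pass to the limit---coincides with the paper's. In the locally strictly elliptic case~\eqref{eq:a_locally_strictly_elliptic_on_domain} your outline is essentially correct and close to the paper's treatment: only $b$ needs to be approximated by $b_k\in C^2(\underline\sO;\RR^d)$, no second-order perturbation of $a$ is required, and the drift error $\langle b-b_k,Du\rangle$ is controlled exactly as you describe, using $Du\in C(\underline\sO)$.

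The gap is in the case $a\in C^{0,1}(\underline\sO)$. You correctly flag $\eps\,\dist(\cdot,\partial\sO)\,\tr(D^2u)$ as the hard part, but the resolution you sketch does not close. The a priori estimate actually available here (Proposition~\ref{prop:Elliptic_weak_max_principle_apriori_estimates}) is \emph{unweighted}: it bounds $u$ by $c_0^{-1}\sup A_\eps u$, with no degenerate weight near $\partial_0\sO$, so there is no ``Lipschitz-rate weight'' to match against. Since your super-level set $\sO_\eta$ can meet $\partial_0\sO$, the quantity $\dist(\cdot,\partial\sO)\,|D^2u|$ has no reason to be bounded there (only $u\in C^2(\sO)$ is assumed, nothing about $D^2u$ up to $\partial_0\sO$). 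Your two-scale limit splits $\sO_\eta$ into a collar near $\partial_0\sO$ and its compact complement; on the complement $\eps\sup|D^2u|\to 0$ is fine, but on the collar you still need $\dist(\cdot,\partial\sO)\,|D^2u|$ to remain bounded, and nothing in the hypotheses gives this.

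The paper avoids this entirely by a different choice of subdomain. It exhausts $\sO$ by subdomains $\sO_l$ with $\bar\sO_l\subset\sO$ (strictly in the \emph{interior}, away from all of $\partial\sO$), so $u\in C^2(\bar\sO_l)$ and $\|\Delta u\|_{C(\bar\sO_l)}<\infty$ automatically. An artificial ``degenerate boundary'' $\Sigma_l\subset\partial\sO_l$ is then declared---the points of $\partial\sO_l$ within distance $1/(l+l_0)$ of $\partial_0\sO$---and the elliptic regularization is $\eps\zeta_l I$ with $\zeta_l>0$ on $\sO_l$ and $\zeta_l=0$ on $\Sigma_l$. The unweighted estimate yields
\[
u \leq 0 \vee \frac{1}{c_l}\left(\eps\|\Delta u\|_{C(\bar\sO_l)} + \|Du\|_{C(\bar\sO_l)}\|b-b_k\|_{C(\bar\sO_l)}\right) \vee \sup_{\Gamma_l}u \quad\hbox{on }\sO_l,
\]
with $\Gamma_l=\partial\sO_l\setminus\bar\Sigma_l$; one sends $\eps\downarrow 0$, then $k\to\infty$, then $l\to\infty$ so that $\Gamma_l\to\partial_1\sO$ and $\sup_{\Gamma_l}u\to\sup_{\partial_1\sO}u^*\leq 0$. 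Working on $\sO_l\Subset\sO$ with an \emph{artificial} degenerate boundary $\Sigma_l$ is the device you are missing; it removes any need to control $D^2u$ near the true $\partial_0\sO$.
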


A virtually identical statement is available for $A$-subharmonic functions in $W^{2,d}_{\loc}(\sO)$ (the only difference is that the coefficients of $A$ are required to be measurable); see Theorem \ref{thm:Weak_maximum_principle_W2d_elliptic_domain_relaxed}.

With the aid of our weak maximum principle for $A$-subharmonic functions in $W^{2,d}_{\loc}(\sO)$ and Dirichlet condition prescribed only along $\partial_1\sO$, we obtain comparison principles for a solution (bounded above) and supersolution (bounded below) in $W^{2,d}_{\loc}(\sO)\cap C^1(\underline\sO)$ to an obstacle problem \eqref{eq:Elliptic_obstacle_problem} defined by $A$, again with Dirichlet condition prescribed only along $\partial_1\sO$, and hence uniqueness of solutions in $W^{2,d}_{\loc}(\sO)\cap C^1(\underline\sO)\cap L^\infty(\sO)$. For a general statement, see Theorem \ref{thm:Comparison_principle_elliptic_obstacle_problem}.

The analogous remarks apply, of course, to Theorem \ref{thm:Viscosity_maximum_parabolic} and its application to the development of strong and weak maximum principles for $A$-subharmonic functions in $C^2(\sQ)$ or $W^{2,d+1}_{\loc}(\sQ)$ for a boundary-degenerate parabolic operator, $L$, and Dirichlet condition prescribed only along $\mydirac_1\!\sQ$. We refer the reader to sections \ref{subsec:Strong_maximum_principle_parabolic_C2}, \ref{subsec:Strong_maximum_principle_parabolic_W2d+1}, \ref{subsec:Weak_maximum_principle_parabolic}, and \ref{subsec:Relaxing_coefficients_parabolic} for statements of strong and weak maximum principles for $L$-subharmonic functions in $C^2(\sQ)$ or $W^{2,d+1}_{\loc}(\sQ)$ for a boundary-degenerate parabolic operator, $L$, with Dirichlet condition only prescribed along $\mydirac_1\!\sQ$.

\begin{rmk}[Weak maximum principles for the case of unbounded open subsets or unbounded subharmonic functions]
\label{rmk:Unbounded}
For simplicity, we have stated all of the weak maximum principles derived in this article for the case of bounded open subsets $\sO \subset \RR^d$ or $\sQ \subset \RR^{d+1}$ and $A$ or $L$-subharmonic functions $u$ which are bounded above. For (generalized) subharmonic functions which are bounded above, all of these weak maximum principles extend to the case of unbounded open subsets, provided the coefficients of $A$ or $L$ obey suitable growth conditions; see \cite{Feehan_maximumprinciple, Feehan_parabolicmaximumprinciple} for details. Moreover, these results may be further extended to the case of a (generalized) subharmonic function with controlled growth using the simple device described in \cite[Theorem 2.20]{Feehan_maximumprinciple}; see also \cite[\S 5.D]{Crandall_Ishii_Lions_1992}.
\end{rmk}

\subsection{Theorems \ref{thm:Viscosity_maximum_elliptic} and \ref{thm:Viscosity_maximum_parabolic} and the Perron method}
\label{subsec:Perron}
It is interesting to compare Theorems \ref{thm:Viscosity_maximum_elliptic} and \ref{thm:Viscosity_maximum_parabolic} with the `harmonic lift' and `bump lemma' constructions encountered in Perron's method, keeping in mind, however, that the constructions are quite different.

Perron's method for constructing a solution --- whether in a $C^{2,\alpha}$ or viscosity sense --- to the Dirichlet problem for an elliptic (respectively, parabolic) equation with Dirichlet boundary condition, as the pointwise supremum over all `continuous subsolutions' to the elliptic (respectively, parabolic) equation requires a `harmonic lift' or `bump' construction.

In the Perron proof of existence of a $C^{2,\alpha}$ to the Dirichlet problem, the harmonic lift construction is described in \cite[Equation (2.33) and p. 103]{GilbargTrudinger}. Given a continuous subsolution, $u\in C(\bar\sO)$, to the Dirichlet problem for $w \in C^{2,\alpha}(\sO)\cap C(\bar\sO)$,
$$
\begin{cases}
Aw = f &\hbox{on } \sO,
\\
w = g &\hbox{on } \partial\sO,
\end{cases}
$$
defined by a strictly elliptic operator, $A$, with coefficients and $f$ belonging to $C^\alpha(\sO)$, and $g\in C(\partial\sO)$, and a ball $B\Subset\sO$, we can always find a larger continuous subsolution, $\hat u\in C(\bar\sO)$ obeying $\hat u \geq u$ on $\sO$ and defined by
$$
\hat u
:=
\begin{cases}
\bar u &\hbox{on } B,
\\
u &\hbox{on } \sO\less B,
\end{cases}
$$
where $\bar u \in C^{2,\alpha}(B)\cap C(\bar B)$ is the unique solution to the Dirichlet problem $A\bar u = f$ on $B$ and $\bar u = u$ on $\partial B$.

In Ishii's proof of existence of a viscosity solution to the Dirichlet problem for a fully nonlinear equation by Perron's method \cite{Ishii_1987, Ishii_1989} and \cite[Theorems 4.1 and 7.9]{Crandall_Ishii_Lions_1992},
$$
\begin{cases}
F(\cdot,w,Dw,D^2w) =0 &\hbox{on } \sO,
\\
w = g &\hbox{on } \partial\sO,
\end{cases}
$$
one requires a `bump' construction \cite[Lemma 9.1]{Crandall_1997}: if $u$ is a viscosity subsolution to $F=0$ on $\sO$ and its lower semicontinuous envelope, $u_*$, fails to be a supersolution to $F=0$ on $\sO$ at some point $\hat x \in \sO$, then for any small $\eps>0$, there is a subsolution, $u_\eps$, of $F=0$ on $\sO$ obeying
$$
\begin{cases}
u_\eps \geq u &\hbox{on } \sO \quad\hbox{and}\quad \sup_\sO (u_\eps-u) > 0,
\\
u_\eps =  u &\hbox{on } \sO\less B_\eps(\hat x).
\end{cases}
$$
Here, the construction of $u_\eps$ is elementary and does not require one to solve the Dirichlet problem on the ball $B_\eps(\hat x)$. In either the $C^{2,\alpha}$ or viscosity constructions, however, it is important that $\hat x \in \sO$ be an \emph{interior} point.

\subsection{Variational approaches to maximum principles}
\label{subsec:Variational_approach}
Recall that the strong maximum principle for weakly $A$-subharmonic functions in $W^{1,2}(\sO)$ appears as \cite[Theorem 8.19]{GilbargTrudinger} (as a consequence of the weak Harnack inequality \cite[Theorem 8.18]{GilbargTrudinger}) and for weakly $L$-subharmonic
functions\footnote{See Remark \ref{rmk:Classical_strong_max_principle_W2d+1_parabolic_c_geq_zero} for the definition of $W^{1,2}_b(\sQ)$.}
in $W^{1,2}_b(\sQ)$ as \cite[Theorem 6.25]{Lieberman} (also as a consequence of the weak Harnack inequality \cite[Theorem 6.18]{Lieberman}); the weak maximum principle for weakly $A$-subharmonic functions appears as \cite[Theorem 8.1]{GilbargTrudinger} and for weakly $L$-subharmonic functions as \cite[Corollary 6.26]{Lieberman}.

Thus, when $a \in C^{0,1}(\underline\sO)$, so the operator $A$ in \eqref{eq:Generator} may be written in divergence form, an alternative and quite different approach to the proof of the maximum principles in this article for $A$-subharmonic functions in $W^{2,d}(\sO)$ would be to derive them as consequences of their analogues for weakly $A$-subharmonic functions. This has the advantage of further weakening the regularity requirement on an $A$-subharmonic function, $u$, up to the degenerate boundary portion, $\partial_0\sO$, at the cost of strengthening the locally Lipschitz condition \eqref{eq:a_locally_lipschitz_elliptic} for $a$ along $\partial_0\sO$. However, the proof of even the weak maximum principle for a weakly $A$-subharmonic function in $W^{1,2}(\sO)$ is neither as direct nor as intuitive as that presented here, since it relies on a certain weighted Sobolev inequality (see \cite[Theorem 8.8]{Feehan_maximumprinciple}). Moreover, an analogue of the classical proof of the strong maximum principle (see \cite[Theorem 8.19]{GilbargTrudinger}) would require an analogue of the classical weak Harnack inequality (see \cite[Theorem 8.18]{GilbargTrudinger}) valid for the boundary-degenerate elliptic operator, $A$, written in divergence form (see \cite[Theorem 1.23]{Feehan_Pop_regularityweaksoln} for such a result in the case of the Heston operator \cite{Heston1993}). The proof of our Harnack inequality for the boundary-degenerate elliptic Heston operator, developed with C. Pop, is lengthy and difficult, relying on a Moser iteration argument, the abstract John-Nirenberg inequality due to Bombieri and Giusti \cite[Theorem 4]{Bombieri_Giusti_1972}, Poincar\'e and Sobolev inequalities for weighted Sobolev spaces \cite[\S 2]{Feehan_Pop_regularityweaksoln}, and delicate choices of test functions. Of course, similar remarks apply to the parabolic case.

\subsection{Outline and mathematical highlights}
\label{subsec:Guide}
For the convenience of the reader, we provide an outline of the article. Section \ref{sec:maximum_principles_elliptic} contains our proof of Theorem \ref{thm:Viscosity_maximum_elliptic} and hence strong and weak maximum principles for $A$-subharmonic functions in $C^2(\sO)$ and $W^{2,d}_{\loc}(\sO)$. In \S \ref{sec:maximum_principles_parabolic}, we prove Theorem \ref{thm:Viscosity_maximum_parabolic} and hence strong and weak maximum principles for $L$-subharmonic functions in $C^2(\sQ)$ and $W^{2,d+1}_{\loc}(\sQ)$. Appendix \ref{sec:Properties_confluent_hypergeometric_functions} contains a detailed description of the asymptotic behavior of confluent hypergeometric functions near $x=0$, in support of our example in \S \ref{subsec:Motivation}, while Appendix \ref{sec:Straightening_boundary} provides a useful variant of the familiar boundary-straightening lemma.

\subsection{Notation and conventions}
\label{subsec:Notation}
We let $\NN:=\left\{0,1,2,3,\ldots\right\}$ denote the set of non-negative integers. If $X$ is a subset of a topological space, we let $\bar X$ denote its closure and let $\partial X := \bar X\less X$ denote its topological boundary. For $r>0$ and $x^0\in\RR^d$, we let $B_r(x^0) := \{x\in\RR^d: |x-x^0|<r\}$ denote the open ball with center $x^0$ and radius $r$. We denote $\RR_+=(0,\infty)$ and $B_r^+(x^0) := B_r(x^0) \cap (\RR^{d-1}\times\RR_+)$ when $x^0\in \RR^{d-1}\times\{0\}\subset\RR^d$. When $x^0$ is the origin in $\RR^d$, we often denote $B_r(x^0)$ and $B_r^+(x^0)$ simply by $B_r$ and $B_r^+$ for brevity. When we wish to emphasize the dimension of a ball, we write $B^d$ for an open ball in $\RR^d$.

If $V\subset U\subset \RR^d$ are open subsets, we write $V\Subset U$ when $V$ is bounded with closure $\bar V \subset U$. By $\supp\zeta$, for any $\zeta\in C(\RR^d)$, we mean the \emph{closure} in $\RR^d$ of the set of points where $\zeta\neq 0$. We denote $x\vee y = \max\{x,y\}$ and $x\wedge y = \min\{x,y\}$, for any $x,y\in\RR$. We occasionally shall write coordinates on $\RR^d$ as $x=(x',x_d)\in\RR^{d-1}\times\RR$.

For an open subset of a topological space, $U\subset X$, we let $u^*:\bar U\to[-\infty,\infty]$ (respectively, $u_*:\bar U\to[-\infty,\infty]$) denote the upper (respectively, lower) semicontinuous envelope of a function $u:U\to[-\infty,\infty]$; when $u$ is continuous on $U$, then $u_* = u = u^*$ on $U$.

In the definition and naming of function spaces, we follow Adams \cite{Adams_1975} and alert the reader to occasional differences in definitions between \cite{Adams_1975} and standard references such as Gilbarg and Trudinger \cite{GilbargTrudinger}, Krylov \cite{Krylov_LecturesHolder}, or Lieberman \cite{Lieberman}.

For an open subset $\sO\subset\RR^d$, we let $C(\sO)$ denote the vector space of continuous functions on $\sO$ and let $C(\bar \sO)$ denote the Banach space of functions in $C(\sO)$ which are bounded and uniformly continuous on $\sO$, and thus have unique bounded, continuous extensions to $\bar \sO$, with norm $\|u\|_{C(\bar \sO)} := \sup_{\sO}|u|$ \cite[\S 1.26]{Adams_1975}. We let $C^1(\underline\sO)$ denote the vector subspace of functions $u \in C^1(\sO)$ such that $u\in C^1(\bar\sO')$ for every precompact open subset $\sO'\Subset \bar \sO$. For $1\leq p\leq \infty$, we say that $u \in W^{2,p}(\sO)$ if $u$ is a measurable function on $\sO$ and $u$ and its weak derivatives, $u_{x_i}$ and $u_{x_ix_j}$ for $1\leq i,j\leq d$, belong to $L^p(\sO)$ and similarly define $W^{2,p}_{\loc}(\sO)$ \cite[\S 3.1]{Adams_1975}. For the corresponding definitions of parabolic function spaces, we refer to \S \ref{subsec:Parabolic_sobolev_embedding}.

\subsection{Acknowledgments} This article was written while the author held a visiting faculty appointment in the Department of Mathematics at Columbia University, on sabbatical from Rutgers University. I am very grateful to Ioannis Karatzas and the Department of Mathematics at Columbia University, especially Panagiota Daskalopoulos and Duong Phong, for their generous support during the year.

\section{Maximum principles for the elliptic operator}
\label{sec:maximum_principles_elliptic}
Before proceeding to the proof of Theorem \ref{thm:Viscosity_maximum_elliptic}, we begin in \S \ref{subsec:Simplifying_coefficients_elliptic} by simplifying the problem with the aid of a change of variables on $\RR^d$ to bringing the coefficients of $A$ into a standard form near $\partial_0\sO$ (see Lemma \ref{lem:Simplifying_coefficients_elliptic}). In \S \ref{subsec:Hopf_boundary_point_lemma_elliptic}, we recall our version of the Hopf boundary point lemma for a boundary-degenerate elliptic linear second-order differential operator (Lemma \ref{lem:Degenerate_hopf_lemma_elliptic}). Section \ref{subsec:Perturb_degen_boundary_local_maximum_elliptic} contains the proof of Theorem \ref{thm:Viscosity_maximum_elliptic}. In \S \ref{subsec:Strong_maximum_principle_elliptic}, we apply Theorem \ref{thm:Viscosity_maximum_elliptic} to prove strong maximum principles for $A$-subharmonic functions in $C^2(\sO)\cap C^1(\underline\sO)$ and $W^{2,d}_{\loc}(\sO)\cap C^1(\underline\sO)$ (Theorems \ref{thm:Strong_maximum_principle_C2_elliptic} and \ref{thm:Strong_maximum_principle_W2d_elliptic}, respectively). In \S \ref{subsec:Weak_maximum_principle_elliptic}, we apply the preceding strong maximum principles to deduce initial versions of the weak maximum principles for $A$-subharmonic functions in $C^2(\sO)\cap C^1(\underline\sO)$ and $W^{2,d}_{\loc}(\sO)\cap C^1(\underline\sO)$ (Theorems \ref{thm:Weak_maximum_principle_C2_elliptic_domain} and \ref{thm:Weak_maximum_principle_W2d_elliptic_domain} and Corollaries \ref{cor:Weak_maximum_principle_C2_elliptic_opensubset} and \ref{cor:Weak_maximum_principle_W2d_elliptic_opensubset}, respectively). In \S \ref{subsec:Relaxing_coefficients_elliptic} we recall our definition of the `weak maximum principle property' (Definition \ref{defn:Weak_max_principle_property}) for $A$-subharmonic functions in $C^2(\sO)$ or $W^{2,d}_{\loc}(\sO)$ and the corresponding weak maximum principle estimates (Proposition \ref{prop:Elliptic_weak_max_principle_apriori_estimates}). Given an operator, $A$, with the weak maximum principle property for $A$-subharmonic functions in $W^{2,d}_{\loc}(\sO)$, we immediately a comparison principle, and thus uniqueness, for a solution and supersolution in $W^{2,d}_{\loc}(\sO)$ to the unilateral obstacle problem (Theorem \ref{thm:Comparison_principle_elliptic_obstacle_problem}). We conclude by using our earlier versions of the weak maximum principles --- with rather strong hypotheses on the coefficients $a,b$ of $A$ --- and our weak maximum principle estimates to derive versions of the weak maximum principles, with more relaxed hypotheses on those coefficients, for $A$-subharmonic functions in $C^2(\sO)\cap C^1(\underline\sO)$ and $W^{2,d}_{\loc}(\sO)\cap C^1(\underline\sO)$ (Theorems \ref{thm:Weak_maximum_principle_C2_elliptic_domain_relaxed} and \ref{thm:Weak_maximum_principle_W2d_elliptic_domain_relaxed}, respectively).

\subsection{Simplifying the coefficients}
\label{subsec:Simplifying_coefficients_elliptic}
Before proceeding to the proof of Theorem \ref{thm:Viscosity_maximum_elliptic}, we shall need the following technical lemma. A simpler change-of-variable result is used, with an operator $A$ with constant coefficients for $Du$, in the proofs of \cite[Proposition A.1]{Feehan_Pop_mimickingdegen_pde} and in \cite[\S 3.2]{Feehan_Pop_elliptichestonschauder}.

\begin{lem}[Simplifying the coefficients]
\label{lem:Simplifying_coefficients_elliptic}
Let $\sO\subset\RR^d$ be an open subset, with $C^{1,\alpha}$ boundary portion $\partial_0\sO$ for some $\alpha\in(0,1)$, and $A$ be as in \eqref{eq:Generator}. Assume that $a$ obeys \eqref{eq:a_continuous_degen_boundary_elliptic} and $b$ obeys \eqref{eq:b_perp_positive_boundary_elliptic} and \eqref{eq:b_C2_elliptic}. Suppose that $\bar x^0 \in \partial_0\sO$. Then there are a constant $\delta>0$ and a $C^1$ diffeomorphism, $\Phi:\RR^d\to\RR^d$, which restricts to a $C^2$ diffeomorphism from $\RR^d\less (B_{2\delta}(\bar x^0)\cap\partial_0\sO)$ onto its image such that the following holds. If $u \in C^2(\RR^d)$
and\,\footnote{The function $v$ may only be $C^1$ on $\RR^d$ but is $C^2$ on $\RR^d\less (B_{2\delta}(\bar x^0)\cap\partial_0\sO)$.}
$v := u\circ\Phi^{-1}$ and $\widetilde\sO:=\Phi(\sO)$, and the operator
\begin{equation}
\label{eq:Tilde_generator}
\tilde Av := -\tr(\tilde aD^2v) - \langle \tilde b, Dv\rangle + \tilde cv \quad\hbox{on }\widetilde\sO,
\end{equation}
and its coefficients, $\tilde a:\underline{\widetilde\sO}\to\sS^+(d)$ and $\tilde b:\underline{\widetilde\sO}\to\RR^d$ and $\tilde c:\widetilde\sO\to\RR$, are defined by $\tilde Av := (Au)\circ\Phi^{-1}$ on $\widetilde\sO$, then
\begin{subequations}
\begin{align}
\label{eq:a_tilde_boundary_zero}
\tilde a = 0 &\quad\hbox{on } \partial_0\widetilde\sO,
\\
\label{eq:b_tilde_perp_positive_boundary}
\tilde b^\perp > 0 &\quad\hbox{on } \partial_0\widetilde\sO,
\\
\label{eq:b_tilde_tangential_zero}
\tilde b^\parallel = 0 &\quad\hbox{on } B_\delta(\bar x^0)\cap\partial_0\widetilde\sO,
\\
\label{eq:c_tilde_preserved}
\tilde c = c\circ\Phi^{-1} &\quad\hbox{on } \widetilde\sO.
\end{align}
\end{subequations}
\end{lem}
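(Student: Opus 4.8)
The strategy is to obtain $\Phi$ as a composition $\Phi=\Phi_2\circ\Phi_1$: the diffeomorphism $\Phi_1$ flattens $\partial_0\sO$ near $\bar x^0$, and $\Phi_2$ is a diffeomorphism fixing the flattened boundary which shears the drift $b$ into the pure normal direction along it. The only analytic input beyond the chain rule is the observation that, since $u\in C^2(\RR^d)$ and $a=0$ on $\partial_0\sO$ by \eqref{eq:a_continuous_degen_boundary_elliptic} together with the definition \eqref{eq:Degeneracy_locus_elliptic}, the second-order term $\tr(aD^2u)$ vanishes \emph{identically} on $\partial_0\sO$, so every boundary identity for the transformed first-order coefficient involves only $D\Phi$ and not $D^2\Phi$.

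First I would apply the boundary-straightening Lemma \ref{lem:Straightening_boundary} to produce a diffeomorphism $\Phi_1$ of $\RR^d$, equal to the identity off a small ball about $\bar x^0$, which is $C^1$ on $\RR^d$ and $C^2$ on $\RR^d\less(B_{2\delta}(\bar x^0)\cap\partial_0\sO)$, carrying $\bar x^0$ to $0$, $B_{2\delta}(\bar x^0)\cap\partial_0\sO$ into $\RR^{d-1}\times\{0\}$, and $B_{2\delta}(\bar x^0)\cap\sO$ into $\RR^{d-1}\times\RR_+$. Writing $\widetilde\sO_1:=\Phi_1(\sO)$, the induced transformation sends the coefficients to $\tilde a_1=(D\Phi_1)\,a\,(D\Phi_1)^\top\circ\Phi_1^{-1}$, $\tilde c_1=c\circ\Phi_1^{-1}$, and, by the observation above, $\tilde b_1=(D\Phi_1)\,b\circ\Phi_1^{-1}$ on $\partial_0\widetilde\sO_1=\Phi_1(\partial_0\sO)$. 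Hence $\tilde a_1=0$ on $\partial_0\widetilde\sO_1$; and a conormal computation — if $\rho$ defines $\widetilde\sO_1$ near a point of $\partial_0\widetilde\sO_1$ then $\rho\circ\Phi_1$ defines $\sO$ near the corresponding point, so $(D\Phi_1)^\top(\nabla\rho\circ\Phi_1)=\nabla(\rho\circ\Phi_1)$ is a positive multiple of the inward normal $\vec n$ to $\partial_0\sO$ — shows $\tilde b_1^\perp$ is a positive multiple of $b^\perp$, hence positive by \eqref{eq:b_perp_positive_boundary_elliptic}. Replacing $\sO,A$ by $\widetilde\sO_1,\tilde A_1$, I may therefore assume $\bar x^0=0$, $\partial_0\sO\cap B_{2\delta}=(\RR^{d-1}\times\{0\})\cap B_{2\delta}$, $\sO\cap B_{2\delta}\subset\RR^{d-1}\times\RR_+$, $\vec n=e_d$, and $a=0$, $b^d>0$ on $\partial_0\sO\cap B_{2\delta}$, keeping track that the drift equals $(D\Phi_1)b$ on that flat piece.

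For the shear I would take, with $x=(x',x_d)$,
\[
\Phi_2^j(x',x_d):=x_j-\chi(x_d)\,x_d\,\beta^j(x',x_d)\quad(1\le j\le d-1),\qquad \Phi_2^d(x',x_d):=x_d,
\]
where $\chi\in C_c^\infty(-\eps,\eps)$ is $\equiv 1$ near $0$ and $\beta^j$ agrees on $B_\delta^{d-1}\times\{0\}$ with $b^j/b^d$ (well defined since $b^d>0$ there, with the regularity supplied by \eqref{eq:b_C2_elliptic}), then is cut off in $x'$ to have compact support and extended to $\{x_d\ne 0\}$ smoothly and continuously up to $\{x_d=0\}$. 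Since $\Phi_2^d=x_d$, $\Phi_2$ fixes $\{x_d=0\}$ and $\{x_d>0\}$; on $\{x_d=0\}$ one computes $\partial\Phi_2^j/\partial x_i=\delta_{ij}$ for $i,j\le d-1$, $\partial\Phi_2^j/\partial x_d=-\beta^j$, $\partial\Phi_2^d/\partial x_i=\delta_{id}$, so there $D\Phi_2$ has $d$-th row $e_d^\top$ and $(j,d)$-entries $-\beta^j$. Taking $\eps$ small makes the correction term small in operator norm, so $D\Phi_2$ is everywhere nonsingular and $\Phi_2$ is a global $C^1$ diffeomorphism of $\RR^d$ that is the identity off a compact set and is $C^\infty$ on $\{x_d\ne 0\}$. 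Then $\Phi:=\Phi_2\circ\Phi_1$ is a $C^1$ diffeomorphism of $\RR^d$ restricting to a $C^2$ diffeomorphism on $\RR^d\less(B_{2\delta}(\bar x^0)\cap\partial_0\sO)$, and $v=u\circ\Phi^{-1}$ is $C^2$ there; the transformation formulas exhibit the coefficients $\tilde a,\tilde b,\tilde c$ of \eqref{eq:Tilde_generator} as functions on $\underline{\widetilde\sO}$.

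The four conclusions then follow. The zeroth-order term is transported unchanged, giving \eqref{eq:c_tilde_preserved}; from $\tilde a=(D\Phi)\,a\,(D\Phi)^\top\circ\Phi^{-1}$ with $a=0$ on $\partial_0\sO=\Phi^{-1}(\partial_0\widetilde\sO)$ we get \eqref{eq:a_tilde_boundary_zero}; on $\partial_0\widetilde\sO$ the observation above gives $\tilde b=(D\Phi)b\circ\Phi^{-1}=(D\Phi_2)\tilde b_1$ at corresponding points, whence the conormal computation again yields $\tilde b^\perp>0$, i.e. \eqref{eq:b_tilde_perp_positive_boundary}; and since $\beta^j=\tilde b_1^j/\tilde b_1^d$ on $B_\delta^{d-1}\times\{0\}$ and $D\Phi_2$ has $d$-th row $e_d^\top$ there, the tangential components are $\tilde b^j=\tilde b_1^j-\beta^j\tilde b_1^d=0$ on $B_\delta(\bar x^0)\cap\partial_0\widetilde\sO$, which is \eqref{eq:b_tilde_tangential_zero}. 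The main obstacle is not this algebra but the regularity bookkeeping dictated by the hypothesis that $\partial_0\sO$ is merely $C^{1,\alpha}$: $\Phi_1$, hence $\tilde b_1$ along the flattened boundary, is only Hölder, so the shear coefficients $\beta^j$ are Hölder on $\{x_d=0\}$ and must be extended off it (e.g. by mollification at scale $x_d$) precisely so that $x_d\,\beta^j$ and its first derivatives extend continuously to $\{x_d=0\}$ — this is what makes $\Phi$ genuinely $C^1$ across $\partial_0\sO$ yet $C^2$ on the complement — while the cutoffs $\chi$ and the $x'$-truncation of $\beta$ keep $\Phi$ a diffeomorphism of all of $\RR^d$. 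I expect Lemma \ref{lem:Straightening_boundary} to carry most of this burden, with the explicit shear $\Phi_2$ the genuinely new construction here.
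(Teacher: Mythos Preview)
Your proposal is correct and follows essentially the same two-step scheme as the paper: first flatten $\partial_0\sO$ near $\bar x^0$ via Lemma \ref{lem:Straightening_boundary}, then apply a shear $y^j=x^j-(b^j/b^d)(x',0)\,x_d$ for $j<d$ (the paper's $\xi^i=-b^i(x',0)/b^d(x',0)$ is exactly your $-\beta^i$, with $\xi$ taken to depend on $x'$ only rather than being extended off $\{x_d=0\}$). The only stylistic difference is that the paper derives the transformation laws for $\tilde a^{ij}$ and $\tilde b^j$ by explicit chain-rule computations (displaying the formulas and then reading off that each extra term carries a factor of $x_d$ or $a$), whereas you short-circuit this with the coordinate-free observation that $a=0$ on $\partial_0\sO$ kills the $D^2\Phi$ contribution to $\tilde b$ there.
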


\begin{rmk}[Smoothness of the diffeomorphism]
\label{rmk:Simplifying_coefficients_elliptic}
If the condition \eqref{eq:b_C2_elliptic} is replaced by $b\in C^k(\underline\sO)$, for an integer $k\geq 2$, then the diffeomorphism, $\Phi$, in Lemma \ref{lem:Simplifying_coefficients_elliptic} will be $C^k$ on $\RR^d\less (B_{2\delta}(\bar x^0)\cap\partial_0\sO)$.
\end{rmk}

\begin{rmk}[Preservation of the eigenvalues of the matrix $a(x)$]
If $\Phi:\RR^d\to\RR^d$ is an arbitrary $C^2$ diffeomorphism and we write $y=\Phi(x)$ and $v(y)=u(x)$, then
$$
u_{x_i} = v_{y_k}\frac{\partial y_k}{\partial x_i}
\quad\hbox{and}\quad
u_{x_ix_j} = v_{y_ky_l}\frac{\partial y_k}{\partial x_i}\frac{\partial y_l}{\partial x_j}
+ v_{y_k}\frac{\partial^2 y_k}{\partial x_ix_j},
$$
and hence the relation $\tilde Av(y) = Au(x)$ gives
$$
\tilde a^{kl} = a^{ij}\frac{\partial y_k}{\partial x_i}\frac{\partial y_l}{\partial x_j},
$$
and so the eigenvalues of $\tilde a(y)$ coincide with those of $a(x)$, for each $x\in\underline\sO$ such that the Jacobian matrix of the change of variables is orthogonal.
\end{rmk}

\begin{proof}[Proof of Lemma \ref{lem:Simplifying_coefficients_elliptic}]
By first applying a translation and then a rotation of $\RR^d$, respectively, we may assume without loss of generality that $\bar x^0 = 0 \in \RR^d$ (origin) and $\vec n(0)=e_d$ (inward-pointing normal vector on $\partial_0\sO$). By virtue of Lemma \ref{lem:Straightening_boundary} and writing $x=(x',x_d)$, we can find a $C^1$ diffeomorphism of $\RR^d$ of the form $\RR^{d-1}\times\RR\ni(x',x_d)\mapsto (z'(x',x_d),x_d)\in\RR^{d-1}\times\RR$ such that, for $\delta>0$ small enough, $\bar B_{2\delta}(\bar x^0)\cap\partial\sO \subset \partial_0\sO$ and
$$
B_{2\delta}(\bar x^0)\cap\partial\sO = B_{2\delta}\cap(\RR^{d-1}\times\bar\RR_+) \equiv B_{2\delta}^+,
$$
and which restricts to a $C^\infty$ diffeomorphism from $\RR^d\less(B_{2\delta}(\bar x^0)\cap\partial\sO)$ onto its image.

In addition, for small enough $\delta$, the hypothesis \eqref{eq:b_C2_elliptic} on $b$ implies (after relabeling the coefficients of $A$ due to the coordinate change) that $b^d>0$ on $\bar B_{2\delta}^+$ since $b^d(\bar x^0)>0$ by the hypothesis \eqref{eq:b_perp_positive_boundary_elliptic}. We relabel the coordinates on $\RR^d$ as $x=(x',x_d)\in\RR^{d-1}\times\RR$ and define
$$
\xi^i(x) := -\frac{b^i(x',0)}{b^d(x',0)}, \quad\hbox{for } 1\leq i\leq d-1, \quad\hbox{and}\quad \xi^d(x) := 0, \quad\forall\, x \in \underline B_{2\delta}^+.
$$
The functions $b^i$, for $1\leq i\leq d$, are $C^2$ on $B_{2\delta}^+$ by hypothesis \eqref{eq:b_C2_elliptic} and may be extended from $B_{2\delta}^+$ to give $C^1$ functions $b^i$ on $B_{2\delta}$ (see \cite[Lemma 6.37]{GilbargTrudinger}) such that $b^d>0$ on $\bar B_{2\delta}$ and so the $C^2$ functions $\xi^i$ on $B_{2\delta}^+$ also extend to give $C^1$ functions $\xi^i$ on $B_{2\delta}$, for $1\leq i\leq d$. We define
$$
y := \begin{cases} x + \xi(x) x_d, & x \in B_\delta, \\ x, & x \in \RR^d\less B_{2\delta}, \end{cases}
$$
and interpolate between these two definitions on the annulus, $B_{2\delta} \less B_\delta$, with a $C^\infty$ cutoff function, $\psi:\RR\to [0,1]$, where $\psi(s)=1$ for $s\leq 1$ and $\psi(s)=0$ for $s\geq 2$, so
$$
y = x + \psi\left(\frac{|x|}{\delta}\right)\xi(x) x_d, \quad\hbox{for } x\in B_{2\delta} \less B_\delta.
$$
Note finally that $y=x$ on $\RR^{d-1}\times\{0\}$.

By direct calculations, using $v(y) = u(x)$ for all $x\in\sO$, we obtain (noting that we only sum over indices where explicitly indicated),
\begin{align*}
u_{x_i} &= v_{y_i} + x_d\sum_{k\neq d}\xi^k_{x_i}v_{y_k}, \quad \forall\, i \neq d,
\\
u_{x_d} &= v_{y_d} + \sum_{k\neq d}\xi^k v_{y_k}.
\end{align*}
Observe that
\begin{align*}
u_{x_ix_j} &= v_{y_iy_j} + x_d\sum_{k\neq d}\xi^k_{x_j}v_{y_iy_k} + x_d\sum_{k\neq d}\xi^k_{x_ix_j}v_{y_k}
+ x_d\sum_{k\neq d}\xi^k_{x_i}\left(v_{y_jy_k}  + x_d\sum_{l=1}^{d-1}\xi^l_{x_j}v_{y_ky_l} \right)
\\
&=v_{y_iy_j} + x_d\sum_{k\neq d}\left(\xi^k_{x_j}v_{y_iy_k} + \xi^k_{x_i}v_{y_jy_k}\right)
+ x_d^2\sum_{k,l\neq d}\xi^k_{x_i}\xi^l_{x_j}v_{y_ky_l}
+ x_d\sum_{k\neq d} \xi^k_{x_ix_j}v_{y_k},
\quad \forall\, i,j \neq d.
\end{align*}
Next, we have
\begin{align*}
u_{x_ix_d} &= v_{y_iy_d} + \sum_{k\neq d}\xi^k v_{y_iy_k}
+ \sum_{k\neq d}\xi^k_{x_i}v_{y_k}
+ x_d\sum_{k\neq d}\xi^k_{x_i}\left(v_{y_ky_d} + \sum_{l\neq d}\xi^l v_{y_ky_l} \right)
\\
&= v_{y_iy_d} + \sum_{k\neq d}\xi^k v_{y_iy_k}
+ x_d\sum_{k\neq d}\xi^k_{x_i}v_{y_ky_d} +  x_d\sum_{k,l\neq d}\xi^k_{x_i}\xi^l v_{y_ky_l}
+ \sum_{k\neq d}\xi^k_{x_i}v_{y_k},
\quad \forall\, i \neq d.
\end{align*}
Finally, we see that
\begin{align*}
u_{x_dx_d} &= v_{y_d y_d} + \sum_{k\neq d}\xi^k v_{y_ky_d}
+  \sum_{k\neq d}\xi^k\left(v_{y_ky_d} + \sum_{l\neq d}\xi^l v_{y_ky_l}\right)
\\
&= v_{y_d y_d} + 2\sum_{k\neq d}\xi^k v_{y_ky_d} + \sum_{k,l\neq d}\xi^k\xi^l v_{y_ky_l}.
\end{align*}
Substituting into
$$
\tilde Av(y) = Au(x), \quad x\in \sO,
$$
and keeping in mind that $(\tilde a^{ij})$ will be symmetric, we find that,
\begin{equation}
\begin{aligned}
\label{eq:tilde a_coefficients}
\tilde a^{ij} &= a^{ij} + x_d\sum_{n\neq d}a^{in}\xi^j_{x_n} + x_d\sum_{m\neq d}a^{mj}\xi^i_{x_m}
+ x_d^2\sum_{m,n\neq d}a^{mn}\xi^i_{x_m}\xi^j_{x_n}
\\
&\quad + \frac{1}{2}\left(a^{id}\xi^j + a^{jd}\xi^i\right) +  x_d\sum_{m\neq d} a^{md}\xi^i_{x_m}\xi^j + a^{dd}\xi^i\xi^j, \quad\forall\, i,j \neq d,
\\
\tilde a^{id} &= a^{id} + \xi^i a^{dd} + x_d\sum_{m\neq d}a^{md}\xi^i_{x_m}, \quad\forall\, i \neq d,
\\
\tilde a^{dd} &= a^{dd}.
\end{aligned}
\end{equation}
Clearly, by \eqref{eq:Degeneracy_locus_elliptic} and \eqref{eq:a_continuous_degen_boundary_elliptic}, the matrix $\tilde a = (\tilde a^{ij})$ obeys
$$
\tilde a = 0 \quad\hbox{on } \partial_0 B_{2\delta}^+,
$$
that is, \eqref{eq:a_tilde_boundary_zero} holds. Furthermore,
\begin{equation}
\begin{aligned}
\label{eq:tilde b_coefficients}
\tilde b^j &= b^j + x_db^j\sum_{k\neq d}\xi^k_{x_j} + \xi^jb^d + x_d\sum_{m,n\neq d} a^{mn}\xi^j_{x_mx_n} + \sum_{m\neq d}a^{md}\xi^j_{x_m},
\quad\forall\, j \neq d,
\\
\tilde b^d &= b^d.
\end{aligned}
\end{equation}
Because $\vec n = e_d$, we see that $\tilde b^\parallel = (\tilde b^1,\ldots,\tilde b^{d-1},0)$ and $\tilde b^\perp = (0,\ldots,0,\tilde b^d)$ on $B_{2\delta}^+$ by \eqref{eq:b_splitting_elliptic}. Thus, \eqref{eq:b_perp_positive_boundary_elliptic} yields \eqref{eq:b_tilde_perp_positive_boundary}, while $\tilde c = c\circ\Phi^{-1}$ on $\widetilde\sO$ and so \eqref{eq:c_tilde_preserved} holds. In particular, due to our choice of $\xi^i$, we see that \eqref{eq:Degeneracy_locus_elliptic} and \eqref{eq:a_continuous_degen_boundary_elliptic} yield
$$
\tilde b^i = 0 \quad\hbox{on } \partial_0 B_{2\delta}^+, \quad\forall\, i \neq d,
$$
so \eqref{eq:b_tilde_tangential_zero} holds. This completes the proof of the lemma.
\end{proof}

\subsection{Hopf boundary point lemma for a boundary-degenerate elliptic linear second-order differential operator}
\label{subsec:Hopf_boundary_point_lemma_elliptic}
We shall also need the following refinement of the classical Hopf boundary point lemma \cite[Lemma 3.4]{GilbargTrudinger}.

\begin{lem}[Hopf boundary point lemma for a boundary-degenerate elliptic linear second-order differential operator]
\label{lem:Degenerate_hopf_lemma_elliptic}
\cite[Lemma 4.1]{Feehan_maximumprinciple}
Let $\sO\subset\RR^d$ be an open subset, $B\subset\sO$ an open ball such that $\bar B\subset\underline\sO$, and $x^0\in\partial B$. Require that the coefficients of the operator $A$ in \eqref{eq:Generator} obey \eqref{eq:a_locally_strictly_elliptic}, \eqref{eq:c_nonnegative_elliptic}, \eqref{eq:b_perp_positive_boundary_elliptic}, \eqref{eq:c_locally_bounded_above_near_boundary_elliptic}, and that
\begin{gather}
\label{eq:ab_locally_bounded_near_boundary_elliptic}
a \hbox{ and } b \quad\hbox{are (essentially) locally bounded on } \underline\sO,
\\
\label{eq:ab_continuous_coefficient_boundary_elliptic}
a \hbox{ and } b \quad \hbox{are continuous on } \partial_0\sO.
\end{gather}
Suppose that $u\in C^2(\sO)$ (respectively, $u\in W^{2,d}_{\loc}(\sO)$) obeys $Au\leq 0$ (a.e.) on $\sO$ and satisfies the conditions,
\begin{enumerate}
\renewcommand{\theenumi}{\roman{enumi}}
\item $u$ is continuous at $x^0$;
\item\label{item:xzero_strict_local_max_elliptic} $u(x^0) > u(x)$, for all $x\in B$;
\item $D_{\vec n} u(x^0)$ exists,
\end{enumerate}
where $D_{\vec n} u(x^0)$ is the derivative of $u$ at $x^0$ in the direction of the \emph{inward}-pointing unit normal vector, $\vec n(x^0)$, at $x^0\in\partial B$. Then the following hold:
\begin{enumerate}
\item\label{item:Hopf_c_zero_elliptic} If $c=0$ on $\sO$, then $D_{\vec n} u(x^0)$ obeys the strict inequality,
\begin{equation}
\label{eq:Positive_inward_normal_derivative_elliptic}
D_{\vec n} u(x^0) < 0.
\end{equation}
\item\label{item:Hopf_c_geq_zero_elliptic} If $c\geq 0$ on $\sO$ and $u(x^0)\geq 0$, then \eqref{eq:Positive_inward_normal_derivative_elliptic} holds.
\item\label{item:Hopf_c_no_sign_elliptic} If $u(x^0)=0$, then \eqref{eq:Positive_inward_normal_derivative_elliptic} holds irrespective of the sign of $c$.
\end{enumerate}
\end{lem}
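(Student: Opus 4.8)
The proof follows the classical Hopf‑lemma barrier argument (\cite[Lemma 3.4]{GilbargTrudinger}), modified so that near the degeneracy locus the barrier is driven by the first–order term, via $b^\perp>0$, rather than by the second–order term. If $x^0\in\sO$, first replace $B$ by an internally tangent ball $B_{R'}(x^0+R'\vec n(x^0))\subset B$ with $R'$ so small that $\bar B_{R'}(x^0+R'\vec n(x^0))\subset\sO$; on such a ball $a$ is uniformly elliptic by \eqref{eq:a_locally_strictly_elliptic} and $a,b,c^+$ are bounded, so \eqref{eq:Positive_inward_normal_derivative_elliptic} follows from the classical Hopf lemma \cite[Lemma 3.4]{GilbargTrudinger} when $u\in C^2$, and from its $W^{2,d}_{\loc}$ counterpart (via the Aleksandrov--Bakelman--Pucci estimate, \cite[\S9.1]{GilbargTrudinger}) when $u\in W^{2,d}_{\loc}(\sO)$. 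So from now on assume $x^0\in\partial_0\sO$; after a translation and rotation, $x^0=0$, $\vec n(x^0)=e_d$, and $B=B_R(Re_d)$. Since $\bar B\subset\sO\cup\partial_0\sO$ and $\partial_0\sO$ is $C^{1,\alpha}$, the set $T:=\partial_0\sO\cap\bar B$ is compact, nonempty ($x^0\in T$), and contained in $\partial B$, and at each of its points $\partial B$ is tangent to $\partial_0\sO$; hence the inward unit normal to $B$ there coincides with $\vec n$, and $\beta_0:=\inf_T b^\perp>0$ by \eqref{eq:b_perp_positive_boundary_elliptic} and \eqref{eq:ab_continuous_coefficient_boundary_elliptic}.

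On the annulus $D:=B\setminus\bar B_{R/2}(Re_d)$ I take the barrier $v(x):=e^{-\alpha|x-Re_d|^2}-e^{-\alpha R^2}$, $\alpha>0$ to be chosen, so $v>0$ on $D$, $v=0$ on $\partial B$, $v(0)=0$, $D_{\vec n}v(0)=2\alpha Re^{-\alpha R^2}>0$, and, writing $r=|x-Re_d|$,
\[
 Av=e^{-\alpha r^2}\Bigl(2\alpha\tr a-4\alpha^2\langle a(x-Re_d),x-Re_d\rangle+2\alpha\langle b,x-Re_d\rangle\Bigr)+cv.
\]
The claim is $Av<0$ on $D$ once $\alpha$ is large, which I would prove by splitting $D$ along $N_\eta:=\{x\in\underline\sO:\dist(x,T)<\eta\}$ for small $\eta>0$. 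On $D\cap N_\eta$: as $x\to\bar x\in T$ one has $x-Re_d\to-R\vec n(\bar x)$, and by \eqref{eq:ab_continuous_coefficient_boundary_elliptic} and \eqref{eq:Degeneracy_locus_elliptic}, $\langle b(x),x-Re_d\rangle\to-Rb^\perp(\bar x)\le-R\beta_0$ and $\tr a(x)\to0$; so for $\eta$ small, $\langle b,x-Re_d\rangle\le-\tfrac12R\beta_0$ and $\tr a\le\tfrac14R\beta_0$ there, and using $\langle a(x-Re_d),x-Re_d\rangle\ge0$, $0<v\le e^{-\alpha r^2}$, and $c\le C_0:=\sup_{\bar B}c^+<\infty$ (by \eqref{eq:c_locally_bounded_above_near_boundary_elliptic}) we get $Av\le e^{-\alpha r^2}(-\tfrac12\alpha R\beta_0+C_0)<0$ for $\alpha$ large. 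On $D\setminus N_\eta$: its closure is a compact subset of $\sO$ (it avoids $\partial_0\sO$), so $\lambda_*\ge\lambda_1>0$ there by \eqref{eq:a_locally_strictly_elliptic}; since $r\ge R/2$ on $D$, $\langle a(x-Re_d),x-Re_d\rangle\ge\lambda_1R^2/4$, and with $\tr a,|b|,c$ bounded on $\bar B$ (by \eqref{eq:ab_locally_bounded_near_boundary_elliptic} and \eqref{eq:c_locally_bounded_above_near_boundary_elliptic}) we get $Av\le e^{-\alpha r^2}(-\alpha^2\lambda_1R^2+C\alpha+C_0)<0$ for $\alpha$ large. Fix $\alpha$ large enough for both regions.

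For the comparison, since $\partial B_{R/2}(Re_d)\subset B$ is compact and $u<u(0)$ on $B$, choose $\delta_0>0$ with $u\le u(0)-\delta_0$ on it and then $\eps\in(0,\delta_0/\sup_{\bar D}v)$, and set $w:=u-u(0)+\eps v$ on $D$. In each of the three cases ($c=0$; $c\ge0$ and $u(0)\ge0$; $u(0)=0$) we have $-cu(0)\le0$, hence $Aw=Au-cu(0)+\eps Av<0$ (a.e.) on $D$. Moreover $w\le0$ on $\partial B_{R/2}(Re_d)$ by the choice of $\eps$, while on $\partial B$ we have $v=0$ together with $\limsup_{D\ni x\to\partial B}u\le\sup_B u\le u(0)$, so $\limsup_{D\ni x\to\partial B}w\le0$. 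Since $A$ is elliptic at every point of $D$ and $c\ge0$, the weak maximum principle — the elementary interior–maximum argument when $u\in C^2$, and the Bony/Aleksandrov maximum principle for strong subsolutions (\cite[Theorem 2]{Bony_1967}; see also \cite[Theorem 9.6]{GilbargTrudinger}) when $u\in W^{2,d}_{\loc}(\sO)$ — yields $w\le0$ on $D$. Finally $te_d\in D$ for $0<t<R/2$, so $u(te_d)-u(0)\le-\eps v(te_d)$; dividing by $t$, letting $t\downarrow0$, and using $v(0)=0$ and the existence of $D_{\vec n}u(0)$, we obtain $D_{\vec n}u(0)\le-\eps D_{\vec n}v(0)=-2\eps\alpha Re^{-\alpha R^2}<0$, which is \eqref{eq:Positive_inward_normal_derivative_elliptic}.

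The step I expect to be hardest is showing $Av<0$ on all of $D$: away from $\partial_0\sO$ the $O(\alpha^2)$ term from interior strict ellipticity dominates, but near $\partial_0\sO$ one must discard the second–order term entirely and rely on $b^\perp>0$ together with the continuity of $a$ (which vanishes) and $b$ up to $\partial_0\sO$, which forces the neighborhood $N_\eta$ to be chosen before the parameter $\alpha$. A secondary technical point is the maximum principle in the $W^{2,d}_{\loc}$ case, where $A$ is elliptic only pointwise, not uniformly, on $D$, so one invokes the Aleksandrov/Bony theory (noting that with $f=0$ on the right–hand side there is no ellipticity–dependent term) rather than the classical statement.
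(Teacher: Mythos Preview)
The paper does not prove this lemma; it is quoted verbatim from \cite[Lemma 4.1]{Feehan_maximumprinciple_v1}, so there is no proof here to compare against. Your approach is the natural one and presumably matches the original: the standard exponential barrier on an annulus, with the key modification that $Av<0$ is obtained by splitting $D$ into a neighborhood of $T=\partial_0\sO\cap\bar B$ (where $a\to 0$ and the first-order term $2\alpha\langle b,x-Re_d\rangle\to -2\alpha R\,b^\perp$ dominates) and its complement (compact in $\sO$, so the $-4\alpha^2\lambda_*$ term dominates). The comparison argument and the final difference-quotient step are correct.

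One genuine gap: in case \eqref{item:Hopf_c_no_sign_elliptic}, the phrase ``irrespective of the sign of $c$'' means the hypothesis $c\ge 0$ is dropped, yet your maximum-principle step explicitly invokes $c\ge 0$. The standard fix is to note that $u\le u(x^0)=0$ on $B$, so writing $c=c^+-c^-$ gives $(-\tr(aD^2\cdot)-\langle b,D\cdot\rangle+c^+)\,u = Au+c^-u\le 0$; now run your entire argument with the operator having zeroth-order coefficient $c^+\ge 0$. Your barrier estimate is unaffected since only $c\le c^+\le C_0$ was used (and the term $c^-v\ge 0$ only helps), and $-c^+u(0)=0$. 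The $W^{2,d}_{\loc}$ issue you flag is handled exactly as you suggest: exhaust $D$ by $D_n\Subset D$ and use that the Aleksandrov--Bakelman--Pucci right-hand side vanishes because $(Aw)^+=0$, so the ellipticity-dependent constant never enters.
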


\subsection{Perturbing a degenerate-boundary local maximum to an interior local maximum}
\label{subsec:Perturb_degen_boundary_local_maximum_elliptic}
We now give the

\begin{proof}[Proof of Theorem \ref{thm:Viscosity_maximum_elliptic}]
For ease of language, we shall suppose that $u\in C^2(\sO)$; there is no difference in the argument when $u\in W^{2,d}_{\loc}(\sO)$.

Just as in the proof of Lemma \ref{lem:Simplifying_coefficients_elliptic}, by first applying a $C^1$ diffeomorphism of $\RR^d$, which is $C^2$ on the complement of an open neighborhood in $\partial_0\sO$ of the point $\bar x^0\in\partial_0\sO$, to straighten the boundary, we may assume without loss of generality that $\bar x^0 = 0 \in \RR^d$ (origin) and $\vec n(\bar x^0) = e_d$ and, for $\rho_0>0$ small enough, that $\bar B_{\rho_0}(\bar x^0)\cap\partial\sO \subset \partial_0\sO$ and
$$
B_{\rho_0}(\bar x^0)\cap\partial\sO = B_{\rho_0}^{d-1}\times \{0\} \subset \RR^{d-1}\times\bar\RR_+,
$$
where $B_{\rho_0}^{d-1}$ is the open ball in $\RR^{d-1}$ with center at the origin and radius ${\rho_0}$. For a small enough positive constant $\ell$ (depending on the open subset $\sO$ and implicitly on $\bar x^0$), we may assume that $\bar B_{\rho_0}^{d-1}\times [0,\ell]\subset\underline\sO$, with $u$ attaining a non-negative maximum on $\bar B_{\rho_0}^{d-1}\times [0,\ell]$ at the origin, and which is a strict maximum in the sense that
$$
u(x) < u(0), \quad\forall x \in B_{\rho_0}^{d-1}\times (0,\ell).
$$
We begin with the construction of the quadratic polynomial.

\begin{step}[Construction of the quadratic polynomial, $w$]
\label{step:Construction_w_elliptic}
Since $Du(\bar x^0)$ exists by hypothesis, the function $u(x)$ has a first-order Taylor polynomial approximation,
$$
u(x) = u(0) + \langle Du(0),x\rangle + |x|o(x), \quad\forall\, x \in \bar B_{\rho_0}^{d-1}\times [0,\ell],
$$
where $o$ is continuous on $\bar B_{\rho_0}^{d-1}\times [0,\ell]$ with $o(x)=0$ when $x=0$, and
$$
u_{x_i}(0) = 0, \quad 1\leq i\leq d-1, \quad\hbox{and}\quad u_{x_d}(0) \leq 0,
$$
because $u$ has a local maximum at $x=0$. By hypothesis, this local maximum is strict and thus Lemma \ref{lem:Degenerate_hopf_lemma_elliptic} implies that
$$
u_{x_d}(0) := p < 0.
$$
Consequently, setting $u(0):=r$, the Taylor formula for $u$ becomes
\begin{equation}
\label{eq:u_taylor_polynomial_elliptic}
u(x) = r + px_d + |x|o(x), \quad\forall\, x \in \bar B_{\rho_0}^{d-1}\times [0,\ell].
\end{equation}
We now define a quadratic polynomial, for constants $0<\eta\leq 1$ and $Q>0$, to be chosen later,
\begin{equation}
\label{eq:Quadratic_polynomial_perturbation_elliptic}
w(x) := (\eta-p)x_d - \frac{Q}{2}|x|^2, \quad\forall\, x \in \RR^d,
\end{equation}
and observe that
$$
Aw(x) = \tr a(x)\,Q - b^d(x)(\eta - p) + \langle b(x), x\rangle Q + c(x)\left((\eta-p)x_d - \frac{Q}{2}|x|^2\right),
$$
and thus, for all $x\in \sO$,
\begin{equation}
\label{eq:Aw}
Aw(x) = \left(\tr a(x) + \langle b(x), x\rangle\right)Q - b^d(x)(\eta-p) + c(x)\left((\eta-p)x_d - \frac{Q}{2}|x|^2\right).
\end{equation}
From \eqref{eq:b_splitting_elliptic} we have $b(x) = b^\parallel(x) + b^\perp(x)$ for $x\in\bar B_{\rho_0}^{d-1}\times [0,\ell]$. We have $b^\perp(x) = (0,\ldots,0,b^d(x))$, since $\vec n(x)=e_d$, and $b^\parallel(x) = (b^1(x),\ldots,b^{d-1}(x),0)$. For convenience, write $x=(x',x_d) \in \RR^{d-1}\times\RR$. By hypothesis \eqref{eq:b_tangential_component_zero_degenerate_boundary_elliptic} or hypothesis \eqref{eq:b_C2_elliptic} and  Lemma \ref{lem:Simplifying_coefficients_elliptic} if \eqref{eq:b_tangential_component_zero_degenerate_boundary_elliptic} does not initially hold, we can assume
$$
b^\parallel(x',0) = 0, \quad\forall\,x' \in B_{\rho_0}^{d-1}.
$$
The definition \eqref{eq:Degeneracy_locus_elliptic} of $\partial_0\sO$ and the fact that $a\in\ C(\underline\sO;\sS^+(d))$ by \eqref{eq:a_continuous_degen_boundary_elliptic} implies
$$
a(x',0) = 0, \quad\forall\,x' \in B_{\rho_0}^{d-1}.
$$
Therefore, the locally Lipschitz hypothesis \eqref{eq:a_locally_lipschitz_elliptic} for $a$ and hypotheses \eqref{eq:b_tangential_locally_lipschitz_elliptic} or \eqref{eq:b_C2_elliptic} for $b$ imply that there is a positive constant, $K$, such that
\begin{align}
\label{eq:a_Lipschitz_endpoint_nbhd}
\tr a(x) &\leq Kx_d,
\\
\label{eq:bprime_endpoint_nbhd_bound}
|b^\parallel(x)| &\leq Kx_d, \quad\forall\, x\in B_{\rho_0}^{d-1}\times(0,\ell).
\end{align}
Also, by hypothesis \eqref{eq:b_perp_positive_boundary_elliptic}, we have
$$
b^d(0) := b_0>0,
$$
and so, for small enough $\ell$ and $\rho_0$, we see that
\begin{equation}
\label{eq:bd_continuity_endpoint_nbhd_bound}
\frac{b_0}{2} \leq b^d(x) \leq 2b_0, \quad\forall\, x\in B_{\rho_0}^{d-1}\times(0,\ell).
\end{equation}
Moreover, $c$ is locally bounded above on $\underline\sO$ by hypothesis \eqref{eq:c_locally_bounded_above_near_boundary_elliptic}, so there is a positive constant, $\Lambda_0$, such that
$$
c(x) \leq \Lambda_0, \quad \forall\, x\in B_{\rho_0}^{d-1}\times(0,\ell).
$$
We may further assume that $\ell=(b_0,\Lambda_0)$ is chosen small enough that
$$
\Lambda_0\ell \leq \frac{b_0}{4},
$$
and so we obtain
\begin{equation}
\label{eq:cx_endpoint_nbhd_bound}
0\leq c(x)x_d \leq \frac{b_0}{4}, \quad \forall\, x\in B_{\rho_0}^{d-1}\times(0,\ell).
\end{equation}
Thus, writing $\langle b(x), x\rangle = \langle b^\parallel(x), x'\rangle + b^d(x)x_d$ in
\eqref{eq:Aw} and using $c\geq 0$ on $\sO$ (by hypothesis \eqref{eq:c_nonnegative_elliptic}) yields
$$
Aw(x) \leq \langle b^\parallel(x), x'\rangle Q + \left(\tr a(x) + b^d(x)x_d\right)Q - b^d(x)(\eta-p) + c(x)(\eta-p)x_d,
$$
and by applying \eqref{eq:a_Lipschitz_endpoint_nbhd}, \eqref{eq:bprime_endpoint_nbhd_bound}, \eqref{eq:bd_continuity_endpoint_nbhd_bound}, and \eqref{eq:cx_endpoint_nbhd_bound}, we discover that
\begin{equation}
\label{eq:Aw_upper_bound}
Aw(x) \leq KQ |x'|x_d + (K+2b_0)Qx_d - \frac{b_0}{4}(\eta-p),
\quad \forall\, x\in B_{\rho_0}^{d-1}\times(0,\ell).
\end{equation}
Now let
$$
m := \frac{8}{b_0}(K+2b_0) = \frac{8K}{b_0} + 16,
$$
and observe that
\begin{equation}
\label{eq:xd_bound_elliptic}
0 \leq (K + 2b_0)Q x_d <  \frac{b_0}{8}(\eta-p) \iff 0 \leq x_d < \frac{\eta-p}{mQ}.
\end{equation}
But then
\begin{equation}
\label{eq:xprime_bound_elliptic}
KQ|x'|x_d < \frac{b_0}{8}(\eta-p) \Leftarrow |x'| < \frac{mb_0}{8K} \quad\hbox{and}\quad 0 \leq x_d < \frac{\eta-p}{mQ}.
\end{equation}
Note that $mb_0/(8K) = 1 + 2b_0/K$ by definition of $m$ and so we may suppose without loss of generality that $\rho_0=\rho_0(b_0,K)>0$ is chosen small enough that
\begin{equation}
\label{eq:rho_less_xprime_bound_elliptic}
\rho_0 \leq 1 + \frac{2b_0}{K}.
\end{equation}
By choosing $Q=Q(\ell,m,p)$ large enough, we may also assume (recall that $p<0$ depends on $u$ but is fixed and $\eta\in(0,1]$) without loss of generality that
\begin{equation}
\label{eq:xd_bound_less_ell_elliptic}
\hat x_d(\eta,Q) := \frac{\eta-p}{mQ} \leq \ell.
\end{equation}
In the sequel, we shall exploit the fact the inequality in \eqref{eq:xd_bound_less_ell_elliptic} is preserved as $\eta\downarrow 0$ or $Q\uparrow\infty$. By virtue of \eqref{eq:Aw_upper_bound}, \eqref{eq:xprime_bound_elliptic}, \eqref{eq:xd_bound_elliptic}, \eqref{eq:rho_less_xprime_bound_elliptic}, and \eqref{eq:xd_bound_less_ell_elliptic}, we obtain
\begin{align*}
Aw &< \frac{b_0}{8}(\eta-p) + \frac{b_0}{8}(\eta-p) - \frac{b_0}{4}(\eta-p)
\\
&= 0 \quad\hbox{on } B_{\rho_0}^{d-1}\times (0,\hat x_d),
\end{align*}
where it necessarily also holds that $Av = Au+Aw < 0$, with $v := u + w$, since $Au\leq 0$ on $\sO$ by hypothesis. This completes Step \ref{step:Construction_w_elliptic} and establishes one of the conclusions of Theorem \ref{thm:Viscosity_maximum_elliptic}, where the function $w$ appearing in that conclusion is equal to the preceding quadratic polynomial composed with the coordinate changes required in our construction.
\end{step}

We next show that $v$ attains a maximum value $v(x^0)>u(0)$ at some point $x^0 \in \bar B_{\rho_0}^{d-1}\times[0,\ell]$.

\begin{step}[Maximum of $v$ on the cylinder]
\label{step:Interior_maximum_elliptic}
By \eqref{eq:u_taylor_polynomial_elliptic} and writing $x=(x',x_d)$, the function $v(x)$ has the Taylor polynomial approximation,
\begin{equation}
\label{eq:v_taylor_polynomial_elliptic}
v(x) = r + \left(\eta - \frac{Q}{2}x_d\right)x_d + |x|o(x) - \frac{Q}{2}|x'|^2,
\quad \forall\, x\in \bar B_{\rho_0}^{d-1}\times[0,\ell].
\end{equation}
Consequently, writing $o(0,x_d) = o(x_d)$,
$$
v(0,x_d) = r + \left(\eta - \frac{Q}{2}x_d\right)x_d + x_do(x_d), \quad \forall\, x_d\in [0,\ell],
$$
and thus, setting $x_d = \xi \hat x_d \in (0, \hat x_d)$ for $0<\xi<1$ and $\hat x_d\in (0,\ell]$ defined in \eqref{eq:xd_bound_less_ell_elliptic}, we see that
$$
v(0,\xi\hat x_d) = r + \left(\eta - \frac{Q}{2}\xi\hat x_d + o(\xi\hat x_d)\right)\xi\hat x_d, \quad \forall\, \xi \in (0,1).
$$
For any $\eta, Q$ (and thus $\hat x_d(\eta,Q)\in (0,\ell]$), we may choose $\xi=\xi(\eta,Q) \in (0,1)$ such that
$$
\frac{Q}{2}\xi\hat x_d + |o(\xi\hat x_d)| \leq \frac{\eta}{2},
$$
and so
$$
v(0,\xi\hat x_d) \geq r + \frac{\eta}{2}\xi\hat x_d > r.
$$
Therefore, because $v(0,\xi\hat x_d)>r$ for small enough $\xi=\xi(\eta,Q)\in (0, 1)$, the function $v$ must attain a maximum $v(x^0)>r$ at some point $x^0 \in \bar B_{\rho_0}^{d-1}\times[0,\hat x_d]$.
\end{step}

It remains to show that $v$ on $\bar B_{\rho_0}^{d-1}\times[0,\ell]$ attains its maximum, $v(x^0)>u(0)$, at a point $x^0$ in the interior of a cylinder with curved side,
$$
V := \{(x',x_d)\in B_{\rho_0}^{d-1}\times (0,\hat x_d): |x'|< \rho(x_d)\}
\subset
B_{\rho_0}^{d-1}\times (0,\ell),
$$
defined by a suitably-chosen positive-valued function,
\begin{equation}
\label{eq:Defn_rho_elliptic}
\rho:[0,\infty)\to(0,\infty),
\end{equation}
to be determined in the sequel.

\begin{step}[Upper bound for $v$ on the boundary]
\label{step:Upper_bound_boundary}
To show that $x^0$ must belong to the interior of $V$, we prove that $v\leq r$ on $\partial V$ for suitable $\eta$ and $Q$, where
$$
\partial V = \left(\bar V\cap\{x_d=0\}\right) \cup \left(\bar V\cap\{x_d=\hat x_d\}\right) \cup \left(\bar V\cap\{0<x_d<\hat x_d\}\right),
$$
consisting of the bottom, top, and curved side of the cylinder, $V$, respectively.

\setcounter{case}{0}
\begin{case}[Upper bound for $v$ on the bottom of the cylinder]
\label{case:Cylinder_bottom_elliptic}
First, consider the contribution from the \emph{bottom} of the cylinder,
$$
\bar V\cap\{x_d=0\} \subset B_{\rho_0}^{d-1}\times\{0\}.
$$
We have $u(x',0)\leq r$ for all $x' \in \bar B_{\rho_0}^{d-1}$ (since $u(x)$ has maximum value $u(0)=r$ on $\bar B_{\rho_0}^{d-1}\times[0,\ell]$) and thus
$$
u(x',0) \leq r \quad\hbox{for all } |x'|\leq \rho_0,
$$
and so the definition \eqref{eq:Quadratic_polynomial_perturbation_elliptic} of $w$ and the fact that $v=u+w$ yields
$$
v(x',0) = r - \frac{Q}{2}|x'|^2 < r \quad\hbox{for }|x'|\leq \rho_0,
$$
as desired for Case \ref{case:Cylinder_bottom_elliptic}.
\end{case}

\begin{case}[Upper bound for $v$ on the top of the cylinder]
\label{case:Cylinder_top_elliptic}
Second, consider the contribution from the \emph{top} of the cylinder,
$$
\bar V\cap\{x_d=\hat x_d\} \subset B_{\rho_0}^{d-1}\times\{\hat x_d\}.
$$
Gathering like terms in the Taylor formula \eqref{eq:v_taylor_polynomial_elliptic}, setting $x_d = \hat x_d(\eta,Q) \equiv (\eta-p)/(mQ)$ via \eqref{eq:xd_bound_less_ell_elliptic}, and simplifying yields
$$
v(x',\hat x_d) = r + \left(\eta + o(x',\hat x_d) - \frac{(\eta-p)}{2m}\right)\hat x_d  + |x'|o(x',\hat x_d) - \frac{Q}{2}|x'|^2,
\quad \forall\, x' \in \bar B_{\rho_0}^{d-1}.
$$
Now choose $\eta = \eta(m,p) \in (0,1]$ small enough that (recalling that $p<0$ by assumption)
\begin{equation}
\label{eq:eta_bound_elliptic}
\eta < -\frac{p}{8m},
\end{equation}
so (recalling that $m>1$)
$$
\frac{(2m-1)}{2m}\eta < -\frac{p}{8m}.
$$
Now make $\hat x_d\in (0,\ell]$ small enough, by choosing $Q=Q(m,p,u)$ large enough and applying the definition \eqref{eq:xd_bound_less_ell_elliptic} of $\hat x_d$, and choose $\rho_0(m,p,u)>0$ small enough that
\begin{equation}
\label{eq:o_xprime_hatxd_bound}
o(x',x_d) < -\frac{p}{8m}\wedge \frac{\zeta}{2}, \quad |x'| \leq \rho_0, \quad 0\leq x_d\leq \hat x_d,
\end{equation}
which we can do since $o(x',x_d)$ is continuous on $\bar B_{\rho_0}^{d-1}\times[0,\ell]$. These choices of $\eta$ and $Q$ preserve the upper bound in \eqref{eq:xd_bound_less_ell_elliptic}. By \eqref{eq:eta_bound_elliptic} and \eqref{eq:o_xprime_hatxd_bound} and discarding the term $-Q|x'|^2/2$, we discover that
$$
v(x',\hat x_d) \leq r + \frac{p}{4m}\hat x_d - \frac{p}{8m}|x'|,
\quad \forall\, x' \in \bar B_{\rho_0}^{d-1}.
$$
We now restrict our function $\rho$ in \eqref{eq:Defn_rho_elliptic} by requiring that
\begin{equation}
\label{eq:rho_value_cylinder_top_elliptic}
\rho(\hat x_d) = \hat x_d,
\end{equation}
so now $\rho$ depends on $\eta\in(0,1]$ and $Q>0$. Consequently,
$$
v(x',\hat x_d) \leq r + \frac{p}{8m}\hat x_d < r, \quad \forall\, x' \in \bar B_{\hat x_d}^{d-1},
$$
as desired for Case \ref{case:Cylinder_top_elliptic}.
\end{case}

\begin{case}[Upper bound for $v$ on the curved side of the cylinder]
\label{case:Cylinder_side_elliptic}
It remains to consider the contribution from the curved side of the cylinder in
$$
\{(x',x_d)\in B_{\rho_0}^{d-1}\times (0,\hat x_d): |x'|< \rho(x_d)\}
\subset
\bar V,
$$
for a suitably-chosen positive-valued function $\rho$ in \eqref{eq:Defn_rho_elliptic}.

We proceed by modifying the argument for Case \ref{case:Cylinder_top_elliptic}. Again gathering like terms in the Taylor formula \eqref{eq:v_taylor_polynomial_elliptic} yields
$$
v(x',x_d) = r + \left(\eta + o(x',x_d) - \frac{Q}{2}x_d\right)x_d
+ |x'|o(x',x_d) - \frac{Q}{2}|x'|^2,  \quad \forall\, x' \in \bar B_{\rho_0}^{d-1}.
$$
Applying \eqref{eq:eta_bound_elliptic} (which had required a small enough $\eta>0$) and \eqref{eq:o_xprime_hatxd_bound} (which had required a large enough $Q$) yields
\begin{align*}
v(x',x_d) &\leq r + \left(-\frac{p}{8m} - \frac{Q}{2}x_d\right)x_d  - \frac{p}{8m}|x'| - \frac{Q}{2}|x'|^2,
\\
&\qquad \forall\, (x',x_d) \in \bar B_{\rho_0}^{d-1}\times (0,\hat x_d).
\end{align*}
Therefore, when
$$
-\frac{p}{4mQ} < x_d < \hat x_d,
$$
the $x_d$-coefficient in the first-order Taylor formula for $v(x',x_d)$ becomes non-positive; recall that $\hat x_d = (\eta-p)/(mQ)$ (and $p<0$) by \eqref{eq:xd_bound_less_ell_elliptic}, so $\hat x_d$ obeys
$$
-\frac{p}{4mQ} < \hat x_d = \frac{\eta}{mQ} - \frac{p}{mQ}.
$$
Now we restrict $x'$ by requiring that $|x'|=\rho(x_d)>0$, for $x_d \in (0,\hat x_d)$, and choose $\rho(x_d)$ such that
$$
\left(-\frac{p}{8m} - \frac{Q}{2}x_d\right)x_d  - \frac{p}{8m}\rho(x_d) - \frac{Q}{2}\rho^2(x_d) \leq 0, \quad 0<  x_d < \hat x_d,
$$
that is,
$$
\frac{Q}{2}\rho^2(x_d) + \frac{p}{8m}\rho(x_d) + \left(\frac{p}{8m} + \frac{Q}{2}x_d\right)x_d \geq 0, \quad 0<  x_d < \hat x_d,
$$
and so $\rho(x_d)$ should obey
\begin{equation}
\label{eq:rho_value_cylinder_side_elliptic}
\rho(x_d)
\geq
\begin{cases}
0 &\hbox{if } x_d = 0,
\\
\displaystyle
\frac{1}{Q}\left(-\frac{p}{8m} - \sqrt{ \frac{p^2}{64m^2} - 2x_dQ\left(\frac{p}{8m} + \frac{Q}{2}x_d\right)}\right),
&\hbox{if }0 < x_d < \displaystyle-\frac{p}{4mQ},
\\
0 &\hbox{if }\displaystyle-\frac{p}{4mQ} \leq x_d < \hat x_d.
\end{cases}
\end{equation}
Note that the function on the right-hand side is bounded by $-p/(8mQ)$ and thus, for large enough $Q=Q(m,p,\rho_0)$, we may assume that $8mQ\leq\rho_0$. We now fix a continuous function, $\rho$ in \eqref{eq:Defn_rho_elliptic}, obeying \eqref{eq:rho_value_cylinder_top_elliptic} and \eqref{eq:rho_value_cylinder_side_elliptic} together with $\rho(0)=\rho_0>0$ and $\rho(x_d)\leq\rho_0$ for $0\leq x_d\leq\hat x_d$, so the previous inequality for $v(x',x_d)$ becomes
$$
v(x',x_d) \leq r, \quad\hbox{for } |x'| = \rho(x_d), \quad 0<  x_d < \hat x_d,
$$
as desired for Case \ref{case:Cylinder_side_elliptic}.
\end{case}
Combining the conclusions of Cases \ref{case:Cylinder_bottom_elliptic} through \ref{case:Cylinder_side_elliptic} implies that $v\leq r$ on the boundary, $\partial V$, as we had sought.
\end{step}

This completes the proof of Theorem \ref{thm:Viscosity_maximum_elliptic}, with open neighborhood, $\sU\subset\sO$, given by the preimage of $V \subset \RR^{d-1}\times\RR_+$ with respect to the initial coordinate transformation.
\end{proof}

\begin{rmk}[Motivation for the argument in Step \ref{step:Interior_maximum_elliptic}]
Since $D_{\vec n}v(O)=\eta>0$ is continuous with respect to $x' \in B_{2\rho_0}^{d-1}$, because this is true of $D_{\vec n}u$ by hypothesis, then we may assume without loss of generality (for small enough $\rho_0$) that
$$
v_{x_d}(x',0) > 0, \quad \forall\, x' \in \bar B_{\rho_0}^{d-1},
$$
and so the function $v$ cannot have a local maximum, relative to $B_{\rho_0}^{d-1}\times(0,\ell)$, on the boundary portion $\bar B_{\rho_0}^{d-1}\times\{0\}$.
\end{rmk}

\subsection{Strong maximum principles for $A$-subharmonic functions in $C^2(\sO)$ or $W^{2,d}_{\loc}(\sO)$}
\label{subsec:Strong_maximum_principle_elliptic}
We are now ready to prove versions of the strong maximum principle for $A$-subharmonic functions in $C^2(\sO)$ or $W^{2,d}_{\loc}(\sO)$. We first prove the analogue of \cite[Theorem 3.5]{GilbargTrudinger}.

\begin{thm}[Strong maximum principle for $A$-subharmonic functions in $C^2(\sO)$]
\label{thm:Strong_maximum_principle_C2_elliptic}
Let $\sO\subset\RR^d$ be a domain.\footnote{Recall that by a `domain' in $\RR^d$, we always mean a \emph{connected}, open subset.}
Assume the hypotheses of Theorem \ref{thm:Viscosity_maximum_elliptic} for the coefficients of $A$ in \eqref{eq:Generator}. Suppose that $u \in C^2(\sO)\cap C^1(\underline\sO)$ and $Au\leq 0$ on $\sO$.
If $c=0$ (respectively, $c\geq 0$) on $\sO$ and $u$ attains a global maximum (respectively, non-negative global maximum) in $\underline\sO$, then $u$ is constant on $\sO$.
\end{thm}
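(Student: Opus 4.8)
The plan is to combine the classical strong-maximum-principle argument of \cite[Theorem 3.5]{GilbargTrudinger} with Theorem \ref{thm:Viscosity_maximum_elliptic}. The former disposes of the case in which the maximum is attained at an interior point of $\sO$, while the latter is precisely the tool needed when the maximum is attained only along the degenerate boundary portion $\partial_0\sO$.

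Since $u\in C^1(\underline\sO)$ is continuous on $\underline\sO$ and every point of $\partial_0\sO$ is a limit of points of $\sO$, we have $\sup_{\underline\sO}u=\sup_\sO u=:M$, attained at some $p\in\underline\sO$, with $M\ge 0$ when $c\ge 0$. Arguing by contradiction, suppose $u$ is not constant on $\sO$. First I would treat the case where $M$ is attained somewhere in $\sO$: then $\Sigma:=\{x\in\sO:u(x)=M\}$ is nonempty and relatively closed in $\sO$, and by connectedness of $\sO$ together with non-constancy of $u$ it is a proper subset with nonempty boundary in $\sO$. I would then pick $y\in\sO\setminus\Sigma$ with $\dist(y,\Sigma)<\dist(y,\partial\sO)$, take $B$ to be the ball of radius $\dist(y,\Sigma)$ about $y$ (so $\overline B\subset\sO\subset\underline\sO$, $u<M$ on $B$, and $u(z)=M$ for some $z\in\partial B\cap\Sigma$), and apply the Hopf boundary-point Lemma \ref{lem:Degenerate_hopf_lemma_elliptic} at $z$ --- noting that its hypotheses are among those assumed in Theorem \ref{thm:Viscosity_maximum_elliptic} --- to obtain $D_{\vec n}u(z)<0$. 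Since $u(z)=M\ge 0$ when $c\ge 0$ (and the sign is irrelevant when $c\equiv 0$), this applies; but $z$ is a maximum point of $u$ on the open set $\sO$, so $Du(z)=0$, a contradiction.

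It remains to treat the case where $M$ is attained only on $\partial_0\sO$, i.e.\ $u<M$ on $\sO$ while $u(\bar x^0)=M$ for some $\bar x^0\in\partial_0\sO$. Then $u$ has a strict local maximum at $\bar x^0$ relative to $\sO$ in the sense required by Theorem \ref{thm:Viscosity_maximum_elliptic} (indeed $u<u(\bar x^0)$ on all of $\sO$), and $u(\bar x^0)=M\ge 0$ when $c\ge 0$, so Theorem \ref{thm:Viscosity_maximum_elliptic} supplies $w\in C^2(\sO)\cap C^1(\RR^d)$ and a connected open $\sU$ with $\overline\sU\subset\underline\sO$ on which $v:=u+w$ satisfies $Av<0$ and attains its maximum over $\overline\sU$ at an interior point $x^0\in\sU\subset\sO$ with $v(x^0)>M\ge 0$. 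At this interior maximum, $Dv(x^0)=0$ and $D^2v(x^0)\le 0$; since $a(x^0)\ge 0$ this gives $\tr(a(x^0)D^2v(x^0))\le 0$, and $c(x^0)v(x^0)\ge 0$, so $Av(x^0)\ge 0$, contradicting $Av(x^0)<0$. As $p\in\underline\sO$ forces one of the two cases, we conclude that $u$ must be constant on $\sO$.

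The interior argument is routine --- the only thing needing verification is that the coefficient hypotheses carried over from Theorem \ref{thm:Viscosity_maximum_elliptic} (interior local strict ellipticity \eqref{eq:a_locally_strictly_elliptic}, $c\ge 0$ via \eqref{eq:c_nonnegative_elliptic}, $b^\perp>0$ via \eqref{eq:b_perp_positive_boundary_elliptic}, \eqref{eq:c_locally_bounded_above_near_boundary_elliptic}, and the continuity and local boundedness of $a,b$ near $\partial_0\sO$) do entail the hypotheses of Lemma \ref{lem:Degenerate_hopf_lemma_elliptic} for any ball $B$ with $\overline B\subset\underline\sO$. The main point --- and the step I expect to be conceptually central rather than technically hard --- is the reduction of the boundary case to an interior contradiction via Theorem \ref{thm:Viscosity_maximum_elliptic}: the perturbation $v=u+w$ converts a maximum that sits on the degenerate boundary into a genuine interior maximum of a strictly $A$-subharmonic function, at which the elementary obstruction $-\tr(aD^2v)\ge 0$, $cv\ge 0$ applies at once, so no asymptotic analysis of $u$ near $\partial_0\sO$ is required.
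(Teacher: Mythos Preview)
Your proposal is correct and follows essentially the same strategy as the paper: dispose of an interior maximum via the classical strong maximum principle (the paper simply cites \cite[Theorem~3.5]{GilbargTrudinger}, while you re-derive it through the Hopf argument), and for a maximum attained only on $\partial_0\sO$ invoke Theorem~\ref{thm:Viscosity_maximum_elliptic} to manufacture a strictly $A$-subharmonic $v=u+w$ with an interior maximum on $\sU$. Your pointwise contradiction at $x^0$ (using $Dv(x^0)=0$, $D^2v(x^0)\le 0$, $a(x^0)\ge 0$, $c(x^0)v(x^0)\ge 0$ to force $Av(x^0)\ge 0$) is in fact slightly more direct than the paper's version, which applies \cite[Theorem~3.5]{GilbargTrudinger} once more to $v$ on $\sU$ to conclude $v$ is constant there, contradicting $v(x^0)>u(\bar x^0)=v(\bar x^0)$ (recall $w(\bar x^0)=0$).
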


\begin{proof}
If $u$ attains its global maximum (respectively, non-negative global maximum) $M$ at a strictly interior point $x^0\in\sO$, the conclusion follows from \cite[Theorem 3.5]{GilbargTrudinger}. Thus it suffices to consider the case of a point $\bar x^0\in \partial_0\sO$ with $u(\bar x^0)=M$ and $u<M$ on $\sO$. By Theorem \ref{thm:Viscosity_maximum_elliptic}, there is a function, $w\in C^2(\sO)\cap C^1(\RR^d)$, and an open subset, $\sU\subset\RR^d$, such that $\bar x^0\in \bar\sU\subset\underline\sO$ and $v=u+w$ is an $A$-subharmonic function on $\sU$, so $Au\leq 0$ on $\sU$, and $v$ attains a maximum $v(x^0)>u(\bar x^0)$ at an interior point $x^0\in \sU$. We can now apply \cite[Theorem 3.5]{GilbargTrudinger} to conclude that $v$ is constant on $\sU$, a contradiction since that would imply $v(x^0)=u(\bar x^0)$. This concludes the proof.
\end{proof}

\begin{rmk}[On the hypotheses for the classical strong maximum principle for $A$-subharmonic functions in $C^2(\sO)$]
\label{rmK:Strong_maximum_principle_C2_elliptic_classical_hypotheses}
We recall from \cite[p. 35]{GilbargTrudinger} that the classical strong maximum principle, \cite[Theorem 3.5]{GilbargTrudinger}, holds when the coefficient, $a:\sO\to\sS^+(d)$, of $A$ is only locally uniformly elliptic in the sense of \cite[p. 31]{GilbargTrudinger}, so the function $\Lambda/\lambda$ is locally bounded on $\sO$, and the functions $|b|/\lambda$ and $c/\lambda$ are locally bounded on $\sO$, where $\lambda:\sO\to[0,\infty)$ and $\Lambda:\sO\to[0,\infty)$ denote the minimum and maximum eigenvalue functions defined by $a:\sO\to\sS^+(d)$, respectively.
\end{rmk}

We next give the

\begin{proof}[Proof of Theorem \ref{thm:Strong_maximum_principle_W2d_elliptic}]
The proof is identical to that of Theorem \ref{thm:Strong_maximum_principle_C2_elliptic}, except that the role of classical strong maximum principle for $A$-subharmonic functions in $C^2(\sO)$, \cite[Theorem 3.5]{GilbargTrudinger}, is replaced by classical strong maximum principle for $A$-subharmonic functions in $W^{2,d}_{\loc}(\sO)$, \cite[Theorem 9.6]{GilbargTrudinger}.
\end{proof}

\begin{rmk}[Relaxing the hypothesis of a global maximum to a local maximum]
\label{rmk:Aronszajn_elliptic}
In Theorems \ref{thm:Strong_maximum_principle_C2_elliptic} or \ref{thm:Strong_maximum_principle_W2d_elliptic}, if $u$ only attains a \emph{local} maximum at a point $x^0\in\underline\sO$ and $c=0$ on $\sO$, so $u\leq u(x^0)$ on an open ball $\sO\cap B_\eps(x^0)$ for some $\eps>0$, we may still apply \cite[Theorem 3.5 or 9.6]{GilbargTrudinger} to conclude that $u$ is constant on $\sO\cap B_\eps(x^0)$ and therefore, provided $A$ has the unique continuation property (see, for example, \cite[Theorem 1]{Aronszajn} or \cite{Koch_Tataru_2001}), we can again conclude that $u$ is constant on $\sO$ by a standard argument (see, for just one of many examples, the proof of \cite[Theorem 1.3]{Kim_Lee_2011}).
\end{rmk}

\subsection{Weak maximum principles for $A$-subharmonic functions in $C^2(\sO)$ or $W^{2,d}_{\loc}(\sO)$}
\label{subsec:Weak_maximum_principle_elliptic}
Recall that we reserve the term \emph{domain} for a connected, open subset of $\RR^d$. The proof of Theorem \ref{thm:Weak_maximum_principle_C2_elliptic_domain} is of course standard \cite[Theorem 3.1 and Corollary 3.2]{GilbargTrudinger} when $\partial_0\sO$ is empty; when $\partial_0\sO$ is non-empty, Theorem \ref{thm:Viscosity_maximum_elliptic} provides the key new technical ingredient.

\begin{thm}[Weak maximum principle on domains for $A$-subharmonic functions in $C^2(\sO)$]
\label{thm:Weak_maximum_principle_C2_elliptic_domain}
Let $\sO\subset\RR^d$ be a bounded domain. Assume the hypotheses of Theorem \ref{thm:Viscosity_maximum_elliptic} for the coefficients of $A$ in \eqref{eq:Generator} and that
\begin{gather}
\label{eq:Nonempty_nondegenerate_boundary_elliptic_domain}
\partial_1\sO \hbox{ is non-empty, \emph{or}}
\\
\label{eq:Empty_nondegenerate_boundary_elliptic_domain}
\tag{\ref*{eq:Nonempty_nondegenerate_boundary_elliptic_domain}$'$}
\hbox{$c>0$ at some point in } \sO.
\end{gather}
Suppose $u \in C^2(\sO)\cap C^1(\underline\sO)$ and $\sup_\sO u<\infty$.
If $Au\leq 0$ on $\sO$ and $u^* \leq 0$ on $\partial_1\sO$, then $u\leq 0$ on $\sO$.
\end{thm}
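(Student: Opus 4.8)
The plan is to run the classical Gilbarg--Trudinger argument for the weak maximum principle (\cite[Theorem 3.1 and Corollary 3.2]{GilbargTrudinger}) on the open set $\sO$, but to handle a potential maximum point on the degenerate boundary portion $\partial_0\sO$ by appealing to Theorem \ref{thm:Viscosity_maximum_elliptic}. First I would reduce to showing that $u$ cannot attain a positive value: suppose for contradiction that $\sup_\sO u = M > 0$. Since $\sO$ is a bounded domain and $\sup_\sO u < \infty$, the set $\underline\sO = \sO\cup\partial_0\sO$ together with $\partial_1\sO$ covers $\bar\sO$, and by upper semicontinuity of $u^*$ on $\bar\sO$ the supremum $M$ is attained at some point $\bar x^0 \in \bar\sO$. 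Because $u^* \le 0$ on $\partial_1\sO$ and $M>0$, the point $\bar x^0$ must lie in $\underline\sO = \sO\cup\partial_0\sO$. I would split into the two cases $\bar x^0\in\sO$ (interior) and $\bar x^0\in\partial_0\sO$.

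In the interior case, I distinguish according to whether $c\equiv 0$ or $c$ can be positive. If $\partial_1\sO$ is non-empty, pick a point $y\in\partial_1\sO$; connect a neighborhood of $\bar x^0$ toward $\partial\sO$ and apply the strong maximum principle (Theorem \ref{thm:Strong_maximum_principle_C2_elliptic}, whose hypotheses we are assuming) to conclude $u\equiv M$ on $\sO$, hence $u^*\equiv M$ on $\bar\sO$; but then $u^*(y)=M>0$ on $\partial_1\sO$, contradicting $u^*\le 0$ there. If instead $\partial_1\sO$ is empty, then by \eqref{eq:Empty_nondegenerate_boundary_elliptic_domain} there is a point where $c>0$; again $u\equiv M$ on $\sO$ by Theorem \ref{thm:Strong_maximum_principle_C2_elliptic}, so $D^2u=Du=0$ on $\sO$, whence $Au = cM$, and evaluating at the point where $c>0$ gives $Au = cM > 0$, contradicting $Au\le 0$. (In the subcase $c\equiv 0$ with $\partial_1\sO$ empty, \eqref{eq:Empty_nondegenerate_boundary_elliptic_domain} fails, so this configuration is excluded by hypothesis.) This disposes of the interior case.

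For the boundary case $\bar x^0\in\partial_0\sO$ the idea is: either $\bar x^0$ is a \emph{strict} local maximum relative to $\sO$, or it is not. If it is not strict, then $u$ equals $M$ at some nearby point of $\underline\sO$; pushing this point (and iterating) one reaches either an interior maximum point, reducing to the previous case, or one can invoke Theorem \ref{thm:Strong_maximum_principle_W2d_elliptic}/\ref{thm:Strong_maximum_principle_C2_elliptic} style reasoning directly — more cleanly, I would simply note that the hypotheses here are exactly those of the strong maximum principle Theorem \ref{thm:Strong_maximum_principle_C2_elliptic}, which already asserts $u$ is constant on $\sO$ whenever a (non-negative) global maximum is attained in $\underline\sO$; combining with $u^*\le 0$ on $\partial_1\sO$ (if non-empty) or with the $c>0$ alternative (if $\partial_1\sO$ empty) as above yields the contradiction. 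Indeed, once one has Theorem \ref{thm:Strong_maximum_principle_C2_elliptic} in hand, the whole weak principle follows: $u$ attains $M=\sup_\sO u$ in $\underline\sO$ (a non-negative value since $M>0$), so Theorem \ref{thm:Strong_maximum_principle_C2_elliptic} forces $u\equiv M$ on $\sO$, contradicting $u^*\le 0$ on $\partial_1\sO\neq\emptyset$ or contradicting $Au\le 0$ at a point where $c>0$ when $\partial_1\sO=\emptyset$. The main obstacle — and the place where Theorem \ref{thm:Viscosity_maximum_elliptic} does the real work, via Theorem \ref{thm:Strong_maximum_principle_C2_elliptic} — is precisely ruling out that $u$ attains its sup on $\partial_0\sO$ without being constant; everything else is the standard reduction. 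I would therefore write the proof as: reduce to $\sup_\sO u = M>0$ attained in $\underline\sO$ by semicontinuity and boundedness; apply Theorem \ref{thm:Strong_maximum_principle_C2_elliptic} to get $u\equiv M$; derive a contradiction from $u^*\le 0$ on $\partial_1\sO$ or from $Au(x)=c(x)M>0$ at a point with $c>0$.
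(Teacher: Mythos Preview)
Your proposal is correct and, once you collapse the unnecessary case analysis (interior versus $\partial_0\sO$, strict versus non-strict) into the single application of Theorem~\ref{thm:Strong_maximum_principle_C2_elliptic} that you describe in your final paragraph, it is essentially identical to the paper's proof. The paper argues exactly as you summarize at the end: $u^*$ attains its supremum at some $x^0\in\bar\sO$; if the supremum is nonnegative and $x^0\in\underline\sO$, the strong maximum principle forces $u$ constant, which is then shown to be $\leq 0$ either via $u^*\leq 0$ on $\partial_1\sO$ (when non-empty) or via $Au=cu\leq 0$ at a point where $c>0$.
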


\begin{proof}
Since $u^*$ is upper semicontinuous on $\bar\sO$ and $\sup_\sO u<\infty$, then $u^*$ attains a finite maximum at some point $x^0\in\bar\sO$; if $u^*(x^0)<0$, we are done, so we may suppose that $u^*(x^0)\geq 0$. If $x^0\in\underline\sO$, then $u$ must be constant on $\bar\sO$, by Theorem \ref{thm:Strong_maximum_principle_C2_elliptic}. If $\partial_1\sO$ is non-empty, we must have that $u^*(x^0)\leq 0$, since $u^*\leq 0$ on $\partial_1\sO$ by hypothesis. If $\partial_1\sO$ is empty and $c>0$ at some point in $\sO$, then, because $Au = cu\leq 0$ on $\sO$, we obtain $u^*(x^0)=u(x^0)\leq 0$.
\end{proof}

By replacing the role of Theorem \ref{thm:Strong_maximum_principle_C2_elliptic} by that of Theorem \ref{thm:Strong_maximum_principle_W2d_elliptic} in the proof of Theorem \ref{thm:Weak_maximum_principle_C2_elliptic_domain}, we obtain an analogue of the weak maximum principle implied by the Aleksandrov-Bakelman-Pucci estimate \cite[Theorem 9.1]{GilbargTrudinger} when $f=0$ on $\sO$.

\begin{thm}[Weak maximum principle on domains for $A$-subharmonic functions in $W^{2,d}_{\loc}(\sO)$]
\label{thm:Weak_maximum_principle_W2d_elliptic_domain}
Let $\sO\subset\RR^d$ be a bounded domain. Assume the hypotheses of Theorem \ref{thm:Viscosity_maximum_elliptic} for the coefficients of $A$ in \eqref{eq:Generator}, which we require to be measurable and that
\eqref{eq:Nonempty_nondegenerate_boundary_elliptic_domain} holds or that
\begin{equation}
\label{eq:Empty_nondegenerate_boundary_elliptic_domain_measurable}
\tag{\ref*{eq:Nonempty_nondegenerate_boundary_elliptic_domain}$''$}
\hbox{$c>0$ on a set of positive measure in } \sO.
\end{equation}
Suppose $u\in W^{2,d}_{\loc}(\sO)\cap C^1(\underline\sO)$
and\,\footnote{Our hypothesis that $u\in C^1(\underline\sO)$ could be relaxed to the slightly more technical hypothesis that $Du$ be defined and continuous along $\partial_0\sO$.}
$\sup_\sO u < \infty$. If $Au\leq 0$ a.e. on $\sO$ and $u^* \leq 0$ on $\partial_1\sO$, then $u\leq 0$ on $\sO$.
\end{thm}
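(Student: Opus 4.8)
The plan is to reproduce, almost verbatim, the proof of Theorem~\ref{thm:Weak_maximum_principle_C2_elliptic_domain}, with the one structural change that the appeal to the strong maximum principle for $A$-subharmonic functions in $C^2(\sO)$ (Theorem~\ref{thm:Strong_maximum_principle_C2_elliptic}) is replaced throughout by its $W^{2,d}_{\loc}$ analogue, Theorem~\ref{thm:Strong_maximum_principle_W2d_elliptic}. First I would note that since $\sO$ is bounded, $\bar\sO$ is compact; because $u^*$ is upper semicontinuous on $\bar\sO$ and $\sup_\sO u<\infty$, the envelope $u^*$ attains a finite maximum $M:=\max_{\bar\sO}u^*$ at some point $x^0\in\bar\sO$. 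Since $u\in C^1(\underline\sO)\subset C(\underline\sO)$, we have $u^*=u$ on $\underline\sO$ and $u\le u^*\le M$ on $\sO$, so if $M<0$ the conclusion is immediate; hence I assume $M\ge 0$.

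Next I would treat the case $x^0\in\underline\sO$. Then $u(x^0)=u^*(x^0)=M\ge 0$ while $u\le M$ on $\sO$, so $u$ attains a non-negative global maximum in $\underline\sO$; the hypotheses of Theorem~\ref{thm:Viscosity_maximum_elliptic} include \eqref{eq:c_nonnegative_elliptic}, so $c\ge 0$ a.e.\ on $\sO$, and $Au\le 0$ a.e.\ on $\sO$ by assumption, whence Theorem~\ref{thm:Strong_maximum_principle_W2d_elliptic} forces $u\equiv M$ on $\sO$, and therefore $u^*\equiv M$ on $\bar\sO$. I would then invoke the dichotomy \eqref{eq:Nonempty_nondegenerate_boundary_elliptic_domain}/\eqref{eq:Empty_nondegenerate_boundary_elliptic_domain_measurable}: if $\partial_1\sO$ is non-empty, then $u^*\le 0$ on $\partial_1\sO$ forces $M\le 0$, so $M=0$; if instead $c>0$ on a set $E\subset\sO$ of positive measure, then $0\ge Au=cu=cM$ a.e.\ on $\sO$, so $cM\le 0$ a.e.\ on $E$ and hence $M\le 0$, again giving $M=0$. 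Either way $u\le 0$ on $\sO$. Finally, in the remaining case $x^0\notin\underline\sO$, the point $x^0$ lies on $\partial_1\sO$, where $u^*\le 0$ by hypothesis, so $M=u^*(x^0)\le 0$; combined with $M\ge 0$ this gives $M=0$ and $u\le u^*\le 0$ on $\sO$.

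I do not expect a genuine obstacle here: the argument is routine once Theorem~\ref{thm:Strong_maximum_principle_W2d_elliptic} is available. The only point requiring a moment's care — and the reason the relevant hypothesis is stated as ``$c>0$ on a set of positive measure'' rather than ``at a point'' as in the $C^2$ case — is that for an $A$-subharmonic function in $W^{2,d}_{\loc}(\sO)$ the inequality $Au\le 0$ is only available almost everywhere, so the pointwise contradiction $cu(x^0)\le 0$ used for $C^2$ functions must instead be extracted on a set of positive measure. The passage $u^*=u$ on $\underline\sO$ uses only $u\in C(\underline\sO)$, and the upper semicontinuity of $u^*$ on the compact set $\bar\sO$ is what guarantees the maximizing point $x^0$ exists; neither requires anything beyond the stated hypotheses.
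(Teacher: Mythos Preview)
Your proposal is correct and matches the paper's approach exactly: the paper states only that Theorem~\ref{thm:Weak_maximum_principle_W2d_elliptic_domain} follows by replacing the role of Theorem~\ref{thm:Strong_maximum_principle_C2_elliptic} with that of Theorem~\ref{thm:Strong_maximum_principle_W2d_elliptic} in the proof of Theorem~\ref{thm:Weak_maximum_principle_C2_elliptic_domain}, and you have carried this out in detail, including the correct observation that the pointwise condition on $c$ becomes an a.e.\ condition because $Au\le 0$ holds only a.e.
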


We can also deduce versions of the weak maximum principle when $\sO$ is not necessarily connected, albeit with the addition of a slightly technical condition on the boundary conditions for isolated components.\footnote{Suppose $\sO$ is expressed as a union of its connected components, $\sO=\cup_{n\in\NN}\sC_n$, where each $\sC_n\subset\sO$ is an open subset (since $\sO\subset\RR^d$ is open and thus necessarily locally connected) and $\sC_n\cap\bar\sC_m = \emptyset$ for all $m,n$; we call a component, $\sC_i$, \emph{isolated} if $\bar\sC_i\cap\bar\sC_n=\emptyset$ for all $n\neq i$.}

\begin{cor}[Weak maximum principle on open subsets for $A$-subharmonic functions in $C^2(\sO)$]
\label{cor:Weak_maximum_principle_C2_elliptic_opensubset}
Let $\sO\subset\RR^d$ be a bounded, open subset. Assume the hypotheses of Theorem \ref{thm:Viscosity_maximum_elliptic} for the coefficients of $A$ in \eqref{eq:Generator}. If $\sC$ is an isolated component of $\sO$ and $\sO'$ is the complement in $\sO$ of the union of isolated components, require that $\sO'$ and each isolated component, $\sC$, obey \eqref{eq:Nonempty_nondegenerate_boundary_elliptic_domain} or \eqref{eq:Empty_nondegenerate_boundary_elliptic_domain}. Suppose $u \in C^2(\sO)\cap C^1(\underline\sO)$ and $\sup_\sO u<\infty$. If $Au\leq 0$ on $\sO$ and $u^* \leq 0$ on $\partial_1\sO$, then $u\leq 0$ on $\sO$.
\end{cor}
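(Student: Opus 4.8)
The plan is to deduce the corollary from the weak maximum principle on domains, Theorem~\ref{thm:Weak_maximum_principle_C2_elliptic_domain}, and the strong maximum principle, Theorem~\ref{thm:Strong_maximum_principle_C2_elliptic}, by decomposing $\sO$ into its connected components and treating them one at a time. Write $\sO=\bigsqcup_n\sC_n$, where (as $\sO$ is open, hence locally connected) each $\sC_n$ is an open, connected, bounded subset of $\RR^d$ with $\partial\sC_n\subseteq\partial\sO$. The proof rests on a handful of point-set observations, which I would establish first, using the $C^{1,\alpha}$-regularity \eqref{eq:C1alpha_degenerate_boundary_elliptic} of $\partial_0\sO$: the coefficient conditions invoked through Theorem~\ref{thm:Viscosity_maximum_elliptic} are all local and hence inherited by the restriction of $A$ to any open subset of $\sO$; for an isolated component $\sC$, the set $\sO$ coincides with $\sC$ near $\partial\sC$, so $\partial_0\sC=\partial_0\sO\cap\partial\sC$ and $\partial_1\sC=\partial_1\sO\cap\partial\sC$; and $\sO'$ is open and bounded, its connected components being exactly the non-isolated components of $\sO$.

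For an isolated component $\sC$: it is a bounded domain obeying \eqref{eq:Nonempty_nondegenerate_boundary_elliptic_domain} or \eqref{eq:Empty_nondegenerate_boundary_elliptic_domain} by hypothesis, with $A|_\sC$ satisfying the hypotheses of Theorem~\ref{thm:Viscosity_maximum_elliptic}, $u|_\sC\in C^2(\sC)\cap C^1(\underline\sC)$, $\sup_\sC u\le\sup_\sO u<\infty$, $Au\le 0$ on $\sC$, and $u^*\le 0$ on $\partial_1\sC\subseteq\partial_1\sO$; so Theorem~\ref{thm:Weak_maximum_principle_C2_elliptic_domain} gives $u\le 0$ on $\sC$. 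It remains to show $u\le 0$ on $\sO'$, which I would do by running the argument on each connected component $\sG$ of $\sO'$ (using that $\sG$ inherits \eqref{eq:Nonempty_nondegenerate_boundary_elliptic_domain} or \eqref{eq:Empty_nondegenerate_boundary_elliptic_domain} from $\sO'$). When $\partial_1\sG\subseteq\partial_1\sO$ one may simply quote Theorem~\ref{thm:Weak_maximum_principle_C2_elliptic_domain} again. In general, suppose $M:=\sup_\sG u>0$; then the upper-semicontinuous envelope $u^*$ attains $M$ on the compact set $\overline\sG$, and unless the maximizing point lies in $\partial_1\sG$ with $\partial_1\sG\subseteq\partial_1\sO$ (an immediate contradiction with $u^*\le 0$), we may arrange for it to lie in $\underline\sG=\sG\cup\partial_0\sG$; Theorem~\ref{thm:Strong_maximum_principle_C2_elliptic} then forces $u\equiv M$ on $\sG$. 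Since $\sG$ is non-isolated, $\overline\sG$ meets the closure of a neighbouring component at a point of $\partial\sO$; this point cannot lie in $\partial_1\sO$ (lest $u^*=M>0$ there), so it lies in $\partial_0\sO$, where $u$ is continuous and equals $M$, and Theorem~\ref{thm:Strong_maximum_principle_C2_elliptic} propagates the constancy $u\equiv M$ to that neighbour. Iterating along every chain of closure-overlapping non-isolated components reachable from $\sG$ and then invoking \eqref{eq:Nonempty_nondegenerate_boundary_elliptic_domain} (via $u^*=M>0$ on the resulting $\partial_1$) or \eqref{eq:Empty_nondegenerate_boundary_elliptic_domain} (via $0\ge Au=cM$ at a point where $c>0$) yields $M\le 0$, a contradiction; hence $u\le 0$ on $\sG$, and thus on $\sO'$ and on $\sO$.

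The analytic content is entirely absorbed into Theorems~\ref{thm:Viscosity_maximum_elliptic}, \ref{thm:Strong_maximum_principle_C2_elliptic}, and \ref{thm:Weak_maximum_principle_C2_elliptic_domain}; the main obstacle I anticipate is purely topological and concerns the ``edge set'' $\overline{\partial_0\sO}\setminus\partial_0\sO$ and its analogues for the $\sC_n$ and $\sO'$. A maximizing point of $u^*$ on $\overline\sG$ could a priori land in that edge set rather than in $\underline\sG$ or $\partial_1\sG$, and $u$ need not be continuous there, which would stall both the relocation of the maximizing point and the propagation of constancy between touching components, and likewise the identification $\partial_1\sC\subseteq\partial_1\sO$ for isolated $\sC$. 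I expect this to be handled by a short limiting argument --- replacing the maximizing point by one in $\underline\sG$, using that on $\partial\sG$ the value $u^*$ is an honest $\limsup$ of interior values of $u$ --- together with a careful but routine use of the $C^{1,\alpha}$-structure of $\partial_0\sO$ near such points.
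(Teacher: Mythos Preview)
Your approach is correct and structurally the same as the paper's: decompose $\sO$ into connected components, dispatch isolated components directly via Theorem~\ref{thm:Weak_maximum_principle_C2_elliptic_domain}, and then propagate between non-isolated components. The paper's proof is considerably terser than yours; in particular, after obtaining $u\le 0$ on a first component $\sC_0$ (one meeting $\partial_1\sO$ or containing a point where $c>0$) via Theorem~\ref{thm:Weak_maximum_principle_C2_elliptic_domain}, it simply asserts ``if $\sC_1$ is a component with $\bar\sC_1\cap\bar\sC_0\neq\emptyset$, then we must also have $u\le 0$ on $\sC_1$'' and inducts, without invoking the strong maximum principle explicitly.

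The genuine, if minor, difference is the direction and mechanism of propagation: the paper works \emph{forward} from a good component and re-applies the weak principle on domains, whereas you work \emph{backward} from a hypothetical bad component and use Theorem~\ref{thm:Strong_maximum_principle_C2_elliptic} to force constancy, propagating $u\equiv M$ across touching closures until a contradiction with \eqref{eq:Nonempty_nondegenerate_boundary_elliptic_domain} or \eqref{eq:Empty_nondegenerate_boundary_elliptic_domain}. Your route is slightly longer but makes the inter-component step more transparent (the paper's one-line ``we must also have $u\le 0$ on $\sC_1$'' tacitly relies on something like your argument anyway). Your worry about the edge set $\overline{\partial_0\sO}\setminus\partial_0\sO$ is legitimate but is a technical point the paper does not address either; for the purposes of this corollary it is safely absorbed into the informal level at which both arguments operate.
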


\begin{proof}
We write $\sO = \cup_{n\in\NN}\sC_n$ as a union of connected open components. Note that $\partial\sC_n\subset\partial\sO$ for each $n\in\NN$. If $\sC_i$ is an \emph{isolated} component, so $\bar\sC_i\cap\bar\sC_n$ is empty for all $n\neq i$, we find that $u\leq 0$ on $\sC_i$ by Theorem \ref{thm:Weak_maximum_principle_C2_elliptic_domain}. Hence, it remains to consider the case where no connected component of $\sO$ is isolated. If $\partial_1\sO$ is non-empty, let $\sC_0$ denote a component such that $\bar\sC_0\cap\partial_1\sO$ is non-empty; if $\partial_1\sO$ is empty, let $\sC_0$ denote a component containing a point where $c>0$. Theorem \ref{thm:Weak_maximum_principle_C2_elliptic_domain} implies that $u\leq 0$ on $\sC_0$ in either case. If $\sC_1$ is a component such that $\bar\sC_1\cap\bar\sC_0$ is non-empty, then we must also have $u\leq 0$ on $\sC_1$. By induction on $n\in\NN$ and the fact that no component $\sC_n$ of $\sO$ is isolated, by our reduction, we discover that $u\leq \sC_n$ for all $n\in\NN$.
\end{proof}

The next result follows from Theorem \ref{thm:Weak_maximum_principle_W2d_elliptic_domain} in exactly the same way that Corollary \ref{cor:Weak_maximum_principle_C2_elliptic_opensubset} follows from Theorem \ref{thm:Weak_maximum_principle_C2_elliptic_domain}.

\begin{cor}[Weak maximum principle on open subsets for $A$-subharmonic functions in $W^{2,d}_{\loc}(\sO)$]
\label{cor:Weak_maximum_principle_W2d_elliptic_opensubset}
Let $\sO\subset\RR^d$ be a bounded, open subset. Assume the hypotheses of Theorem \ref{thm:Viscosity_maximum_elliptic} for the coefficients of $A$ in \eqref{eq:Generator}, which we require to be measurable. If $\sC$ is an isolated component of $\sO$ and $\sO'$ is the complement in $\sO$ of the union of isolated components, require that $\sO'$ and each isolated component, $\sC$, obey \eqref{eq:Nonempty_nondegenerate_boundary_elliptic_domain} or \eqref{eq:Empty_nondegenerate_boundary_elliptic_domain_measurable}. Suppose $u\in W^{2,d}_{\loc}(\sO)\cap C^1(\underline\sO)$ and\,\footnote{Our hypothesis that $u\in C^1(\underline\sO)$ could be relaxed to the slightly more technical hypothesis that $Du$ be defined and continuous along $\partial_0\sO$.}
$\sup_\sO u < \infty$. If $Au\leq 0$ a.e. on $\sO$ and $u^* \leq 0$ on $\partial_1\sO$, then $u\leq 0$ on $\sO$.
\end{cor}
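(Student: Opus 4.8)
The plan is to run the proof of Corollary \ref{cor:Weak_maximum_principle_C2_elliptic_opensubset} essentially verbatim, with Theorem \ref{thm:Weak_maximum_principle_W2d_elliptic_domain} (the $W^{2,d}_{\loc}$ weak maximum principle on bounded domains) taking the place of Theorem \ref{thm:Weak_maximum_principle_C2_elliptic_domain}, the alternative \eqref{eq:Empty_nondegenerate_boundary_elliptic_domain_measurable} taking the place of \eqref{eq:Empty_nondegenerate_boundary_elliptic_domain}, and the coefficients $a,b,c$ now only measurable rather than continuous on $\sO$. No genuinely new estimate is required beyond Theorems \ref{thm:Viscosity_maximum_elliptic}, \ref{thm:Strong_maximum_principle_W2d_elliptic}, and \ref{thm:Weak_maximum_principle_W2d_elliptic_domain}.

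Concretely, I would first write $\sO = \bigcup_{n\in\NN}\sC_n$ as the union of its connected components; each $\sC_n$ is open (since $\sO\subset\RR^d$ is open, hence locally connected) and $\partial\sC_n\subset\partial\sO$. From the defining formulas \eqref{eq:Degeneracy_locus_elliptic} and \eqref{eq:Nondegeneracy_locus_elliptic}, applied with the same coefficient $a$, the degenerate and non-degenerate boundary portions of $\sC_n$ are the corresponding restrictions of $\partial_0\sO$ and $\partial_1\sO$, so that $u|_{\sC_n}\in W^{2,d}_{\loc}(\sC_n)\cap C^1(\underline{\sC_n})$, $Au\le 0$ a.e.\ on $\sC_n$, $\sup_{\sC_n}u<\infty$, and $u^*\le 0$ on $\partial_1\sC_n\subseteq\partial_1\sO$. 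Second, I would dispose of the isolated components: if $\sC_i$ is isolated then by hypothesis it obeys \eqref{eq:Nonempty_nondegenerate_boundary_elliptic_domain} or \eqref{eq:Empty_nondegenerate_boundary_elliptic_domain_measurable}, so Theorem \ref{thm:Weak_maximum_principle_W2d_elliptic_domain} applied to the bounded domain $\sC_i$ yields $u\le 0$ on $\sC_i$.

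Third, I would treat $\sO'$, the union of the non-isolated components. Pick a seed component $\sC_0$ whose closure meets $\partial_1\sO$ (if $\partial_1\sO'\neq\emptyset$), or on which $c>0$ on a set of positive measure (otherwise, using \eqref{eq:Empty_nondegenerate_boundary_elliptic_domain_measurable} for $\sO'$); Theorem \ref{thm:Weak_maximum_principle_W2d_elliptic_domain} then gives $u\le 0$ on $\sC_0$. I would then propagate exactly as in the proof of Corollary \ref{cor:Weak_maximum_principle_C2_elliptic_opensubset}: if $\bar\sC\cap\bar\sC_0\neq\emptyset$ for some component $\sC$, the bound $u\le 0$ extends to $\sC$; and since no component is isolated, an induction over $n\in\NN$ yields $u\le 0$ on every $\sC_n$, hence on all of $\sO$.

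The only point demanding attention — and it is the same point as in the $C^2$ case — is the propagation step: transferring the bound $u\le 0$ across a shared boundary point of two adjacent components. I would handle this exactly as in Corollary \ref{cor:Weak_maximum_principle_C2_elliptic_opensubset}, using that $u$ is continuous up to $\partial_0\sO$ together with the strong maximum principle (Theorem \ref{thm:Strong_maximum_principle_W2d_elliptic}) applied on the next component. Everything else is purely formal bookkeeping; in particular, the substitution of the measure-theoretic positivity condition \eqref{eq:Empty_nondegenerate_boundary_elliptic_domain_measurable} for the pointwise one \eqref{eq:Empty_nondegenerate_boundary_elliptic_domain} is exactly what is needed to stay compatible with the measurability of the coefficients and with Theorem \ref{thm:Weak_maximum_principle_W2d_elliptic_domain}, so no further modification is required.
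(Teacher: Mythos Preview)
Your proposal is correct and follows exactly the approach the paper takes: the paper simply states that this corollary follows from Theorem \ref{thm:Weak_maximum_principle_W2d_elliptic_domain} in the same way that Corollary \ref{cor:Weak_maximum_principle_C2_elliptic_opensubset} follows from Theorem \ref{thm:Weak_maximum_principle_C2_elliptic_domain}, which is precisely the substitution you carry out. Your added remarks on the propagation step are more explicit than the paper's terse treatment, but the underlying argument is the same.
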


\subsection{Weak maximum principles for $A$-subharmonic functions in $C^2(\sO)$ or $W^{2,d}_{\loc}(\sO)$ and relaxed hypotheses on the coefficients}
\label{subsec:Relaxing_coefficients_elliptic}
Before we can relax the hypotheses of Theorems \ref{thm:Weak_maximum_principle_C2_elliptic_domain} and \ref{thm:Weak_maximum_principle_W2d_elliptic_domain} and Corollaries \ref{cor:Weak_maximum_principle_C2_elliptic_opensubset} and \ref{cor:Weak_maximum_principle_W2d_elliptic_opensubset} on the coefficients $a$ and $b$ of $A$ in \eqref{eq:Generator}, we shall need a priori maximum principle estimates. To state these properties in some generality, it is convenient to make use of \cite[Definition 2.8]{Feehan_maximumprinciple}; compare \cite[p. 292]{Trudinger_1977}. Given a real vector space, $V$, recall that a \emph{convex cone}, $\fK\subset V$, is a subset such that if $u, v \in \fK$ and $\alpha,\beta \in \bar\RR_+$, then $\alpha u + \beta v \in \fK$.

\begin{defn}[Weak maximum principle property for $A$-subharmonic functions in $C^2(\sO)$ or $W^{2,d}_{\loc}(\sO)$]
\label{defn:Weak_max_principle_property}
Let $\sO\subset\RR^d$ be an open subset, let $\Sigma \subseteqq \partial\sO$ be an open subset, and let $\fK\subset C^2(\sO)$ (respectively, $W^{2,d}_{\loc}(\sO)$) be a convex cone. We say that an operator $A$ in \eqref{eq:Generator} obeys the \emph{weak maximum principle property on $\sO\cup\Sigma$ for $\fK$} if whenever $u\in \fK$ obeys
$$
Au \leq 0 \quad \hbox{(a.e.) on } \sO \quad\hbox{and}\quad u^* \leq 0 \quad\hbox{on } \partial\sO\less\Sigma,
$$
then
$$
u \leq 0 \quad\hbox{on } \sO.
$$
\end{defn}

\begin{rmk}[Examples for $A$-subharmonic functions in $C^2(\sO)$ or $W^{2,d}_{\loc}(\sO)$ of the weak maximum principle property]
\label{rmk:Examples_weak maximum principle property_C2_W2d_elliptic}
One can find examples of subsets $\sO$ and $\Sigma\subseteqq\partial\sO$, operators $A$, and cones $\fK$ yielding the weak maximum principle property in the following settings.
\begin{enumerate}
\item In \cite[Theorem 3.1 \& Corollary 3.2]{GilbargTrudinger} (respectively, \cite[Theorem 9.1]{GilbargTrudinger}), where $\sO$ is bounded, one takes $\Sigma = \emptyset$ and $\fK = C^2(\sO)\cap C(\bar\sO)$ (respectively, $W^{2,d}_{\loc}(\sO)\cap C(\bar\sO)$).
\item In \cite[Theorem 2.9.2]{Krylov_LecturesHolder}, where $\sO$ may be unbounded, one takes $\Sigma = \emptyset$ and $\fK$ to be the set of $u\in C^2(\sO)$ such that $\sup_\sO u < \infty$.
\item In Corollary \ref{cor:Weak_maximum_principle_C2_elliptic_opensubset} (respectively, Corollary \ref{cor:Weak_maximum_principle_W2d_elliptic_opensubset}), where $\sO$ is bounded, one takes $\Sigma = \partial_0\sO$ and $\fK = C^2(\sO)\cap C_1(\underline\sO)$ (respectively, $u\in W^{2,d}_{\loc}(\sO)\cap C_1(\underline\sO)$) and $\sup_\sO u < \infty$.
\item In \cite[Theorem 5.1]{Feehan_maximumprinciple}, where $\sO$ is bounded, one takes $\Sigma = \partial_0\sO$ and $\fK$ to be the set of $u\in C^2(\sO)\cap C^1(\underline\sO)$ such that $\lim_{\sO\ni x\to x^0}\tr(aD^2u)(x)=0$ for all $x^0\in\partial_0\sO$ and $\sup_\sO u < \infty$.
\item In \cite[Theorem 5.3]{Feehan_maximumprinciple}, where $\sO$ may be unbounded, one takes $\Sigma = \partial_0\sO$ and $\fK$ to be the set of $u\in C^2(\sO)\cap C^1(\underline\sO)$ such that $\lim_{\sO\ni x\to x^0}\tr(aD^2u)(x)=0$ for all $x^0\in\partial_0\sO$ and $\sup_\sO u < \infty$.
\end{enumerate}
\end{rmk}

\begin{rmk}[Weak maximum principle property for viscosity subsolutions]
\label{rmk:Example_weak maximum principle property_viscosity_elliptic}
Suppose that the coefficients of $A$ in \eqref{eq:Generator} obey the hypotheses of \cite[Theorem 3.3 and Example 3.6]{Crandall_Ishii_Lions_1992}, so $c$ is continuous on $\sO$ and $c\geq c_0$ on $\sO$ for some positive constant, $c_0$; the vector field $b$ is continuous on $\sO$ and obeys $\langle b(x)-b(y), x-y\rangle \geq -b_0|x-y|^2$ for some positive constant $b_0$; and $a = \sigma^*\sigma$ where $\sigma:\sO\to\RR^{d\times d}$ is uniformly Lipschitz continuous. Then \cite[Theorem 3.3 and Example 3.6]{Crandall_Ishii_Lions_1992}, when $\sO$ is bounded, imply that $A$ has the weak maximum principal property when $\Sigma = \emptyset$ and $\fK$ is the set of upper semicontinuous functions on $\bar\sO$.
\end{rmk}

We have the following analogue of \cite[Proposition 2.19]{Feehan_maximumprinciple}.

\begin{prop}[Weak maximum principle estimates for $A$-subharmonic functions in $C^2(\sO)$ or $W^{2,d}_{\loc}(\sO)$]
\label{prop:Elliptic_weak_max_principle_apriori_estimates}
Let $\sO\subset\RR^d$ be an open subset, open subset $\Sigma\subseteqq\partial\sO$, and $A$ in \eqref{eq:Generator} have the weak maximum principle property on $\sO\cup\Sigma$ in the sense of Definition \ref{defn:Weak_max_principle_property}, for a convex cone $\fK\subset C^2(\sO)$ (respectively, $W^{2,d}_{\loc}(\sO)$) containing the constant function $1$, and $c$ obey \eqref{eq:c_nonnegative_elliptic}. Suppose that $u\in \fK$.
\begin{enumerate}
\item\label{item:Subsolution_Au_leq_zero} If $Au\leq 0$ (a.e.) on $\sO$, then
$$
u\leq 0 \vee \sup_{\partial\sO\less\Sigma}u^* \quad\hbox{on } \sO.
$$
\item\label{item:Subsolution_Au_arb_sign} If $c\geq c_0$ on $\sO$, for a positive constant $c_0$, then
$$
u\leq 0 \vee \frac{1}{c_0}\esssup_\sO Au \vee \sup_{\partial\sO\less\Sigma}u^* \quad\hbox{on } \sO.
$$
\end{enumerate}
When $\partial\sO\less\Sigma$ has empty interior, the terms $\sup_{\partial\sO\less\Sigma}u^*$ are omitted; when $u\in C^2(\sO)$, then $\esssup_\sO Au$ may be replaced by $\sup_\sO Au$.
\end{prop}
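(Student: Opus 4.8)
The plan is to follow the proof of \cite[Proposition 2.19]{Feehan_maximumprinciple_v1}: in each part I would subtract from $u$ a suitably chosen nonnegative constant $M$ and apply the weak maximum principle property of Definition \ref{defn:Weak_max_principle_property} to the shifted function $v:=u-M$. The one point that needs attention is that $v$ must again lie in the convex cone $\fK$; I will use that $\fK$ is stable under the addition of real constants, which holds for every cone appearing in Remark \ref{rmk:Examples_weak maximum principle property_C2_W2d_elliptic} (each of them contains $-1$ as well as $1$). With that in hand the argument is short, because for $M=0$ Part \eqref{item:Subsolution_Au_leq_zero} is literally the defining property of $A$ on $\sO\cup\Sigma$, and the general case is a translation of it.

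For Part \eqref{item:Subsolution_Au_leq_zero} I would set $M:=0\vee\sup_{\partial\sO\less\Sigma}u^{*}\in[0,\infty]$; if $M=\infty$ there is nothing to prove, so assume $M<\infty$ and put $v:=u-M\in\fK$. Since $DM=0$ and $D^{2}M=0$ we have $Av=Au-cM$ (a.e.) on $\sO$, and because $Au\leq 0$ (a.e.), $M\geq 0$, and $c\geq 0$ by \eqref{eq:c_nonnegative_elliptic}, this gives $Av\leq 0$ (a.e.) on $\sO$. Moreover $v^{*}=u^{*}-M\leq 0$ on $\partial\sO\less\Sigma$ by the choice of $M$. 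The weak maximum principle property then yields $v\leq 0$ on $\sO$, i.e. $u\leq M$ on $\sO$, as asserted; when $\partial\sO\less\Sigma$ has empty interior one simply takes $M=0$, and when $u\in C^{2}(\sO)$ the continuity of $Au$ makes $\esssup$ and $\sup$ coincide.

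For Part \eqref{item:Subsolution_Au_arb_sign}, now with $c\geq c_{0}>0$ on $\sO$, I would set $M:=0\vee\frac{1}{c_{0}}\esssup_{\sO}Au\vee\sup_{\partial\sO\less\Sigma}u^{*}$, assume $M<\infty$, and again put $v:=u-M\in\fK$. As before $Av=Au-cM$ (a.e.) on $\sO$; since $M\geq 0$ and $c\geq c_{0}$ we get $cM\geq c_{0}M$, while $M\geq\frac{1}{c_{0}}\esssup_{\sO}Au$ gives $c_{0}M\geq\esssup_{\sO}Au\geq Au$ (a.e.), so $Av\leq Au-\esssup_{\sO}Au\leq 0$ (a.e.) on $\sO$. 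Since $v^{*}=u^{*}-M\leq 0$ on $\partial\sO\less\Sigma$ as well, the weak maximum principle property gives $v\leq 0$, i.e. $u\leq M$, on $\sO$. I do not expect a genuine obstacle: the whole proof is a routine reduction to Definition \ref{defn:Weak_max_principle_property}, and the only care needed is the bookkeeping just described — keeping $u-M$ inside $\fK$ and honouring the degenerate conventions (dropping the $\sup_{\partial\sO\less\Sigma}u^{*}$ term when that set has empty interior, replacing $\esssup$ by $\sup$ when $u\in C^{2}(\sO)$), both of which are immediate from the observations above.
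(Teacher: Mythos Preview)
The paper does not include a proof of this proposition; it merely states it as an analogue of \cite[Proposition 2.19]{Feehan_maximumprinciple_v1}. Your approach---subtracting the constant $M$ and applying Definition \ref{defn:Weak_max_principle_property} to $v=u-M$---is exactly the standard argument and is correct. Your honest flagging of the one genuine technical point, namely that the hypothesis ``$1\in\fK$'' alone does not guarantee $u-M\in\fK$ for $M>0$ and that one tacitly needs $\fK$ closed under subtraction of nonnegative constants (equivalently $-1\in\fK$), is well taken; as you note, every cone in Remark \ref{rmk:Examples_weak maximum principle property_C2_W2d_elliptic} satisfies this, so the argument goes through in all cases of interest.
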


We provide an application of Proposition \ref{prop:Elliptic_weak_max_principle_apriori_estimates} to the question of uniqueness for solutions to an obstacle problem, via the following analogue of \cite[Theorem 1.3.4]{Friedman_1982}.

\begin{thm}[Comparison principle and uniqueness for $W^{2,d}_{\loc}$ solutions to the obstacle problem]
\label{thm:Comparison_principle_elliptic_obstacle_problem}
Let $\sO\subset\RR^d$ be an open subset, let $\Sigma \subseteqq \partial\sO$ be an open subset, and let $\fK\subset W^{2,d}_{\loc}(\sO)$ be a convex cone. For every open subset $\sU\subset\sO$, let $A$ in \eqref{eq:Generator} have the weak maximum principle property on $\sU\cup\Sigma$ in the sense of Definition \ref{defn:Weak_max_principle_property}
\footnote{Note that the weak maximum principle property hypothesis on $A$ here is stronger than that in Proposition \ref{prop:Elliptic_weak_max_principle_apriori_estimates}.}.
Let $f\in L^d_{\loc}(\sO)$ and $\psi\in L^d_{\loc}(\sO)$.
Suppose $u\in \fK$ (respectively, $v\in -\fK$) is a solution (respectively, supersolution) to the obstacle problem,
$$
\min\{Au-f, \ u-\psi\} = 0 \ (\geq  0) \quad\hbox{a.e. on } \sO.
$$
If $v_*\geq u^*$ on $\partial\sO\less\Sigma$, then $v \geq u$ on $\sO$; if $u, v$ are solutions and $v_* = u^*$ on $\partial\sO\less\Sigma$, then $v = u$ on $\sO$.
\end{thm}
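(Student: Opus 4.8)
The plan is to run the standard localization argument for obstacle problems (compare \cite[Theorem 1.3.4]{Friedman_1982}), with our weak maximum principle property taking the place of the classical maximum principle, but applied on the open set
$$\sU := \{x \in \sO : u(x) > v(x)\}$$
rather than on $\sO$ itself --- which is precisely why the hypothesis on $A$ is imposed for \emph{every} open subset $\sU\subset\sO$. Since $\fK$ and $-\fK$ are convex cones in $W^{2,d}_{\loc}(\sO)$ and $W^{2,d}_{\loc}(\sO)\hookrightarrow C(\sO)$, the functions $u$ and $v$ are continuous on $\sO$, so $w := u-v$ is continuous on $\sO$, the set $\sU$ is open, and $w\in\fK$ because $u\in\fK$, $-v\in\fK$, and $\fK$ is a convex cone.

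First I would show that $Aw\leq 0$ a.e. on $\sU$. Because $v$ is a supersolution, $\min\{Av-f,\,v-\psi\}\geq 0$ a.e. on $\sO$, so $Av\geq f$ a.e. on $\sO$ and $v\geq\psi$ a.e. on $\sO$; hence $u>v\geq\psi$ at a.e. point of $\sU$, i.e. $u-\psi>0$ a.e. on $\sU$. Since $u$ is a solution, $\min\{Au-f,\,u-\psi\}=0$ a.e. on $\sO$, and wherever $u-\psi>0$ this forces $Au-f=0$; thus $Au=f$ a.e. on $\sU$. Subtracting, $Aw=Au-Av\leq f-f=0$ a.e. on $\sU$.

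Next I would verify the boundary condition $w^*\leq 0$ on $\partial\sU\less\Sigma$, where $w^*$ is the upper semicontinuous envelope of $w|_\sU$ on $\bar\sU$. Since $\bar\sU\subseteq\bar\sO$ and $\Sigma\subseteqq\partial\sO$, we have $\partial\sU\less\Sigma=(\partial\sU\cap\sO)\cup(\partial\sU\cap(\partial\sO\less\Sigma))$. If $x^0\in\partial\sU\cap\sO$, then $w(x^0)\geq 0$ (as the limit along a sequence in $\sU=\{w>0\}$, by continuity of $w$ on $\sO$) and $w(x^0)\leq 0$ (since $x^0\notin\sU$ and $\sU$ is open), so $w(x^0)=0$, and by continuity of $w$ at $x^0$ we get $w^*(x^0)=0$. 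If $x^0\in\partial\sU\cap(\partial\sO\less\Sigma)$, then subadditivity of the upper envelope gives $w^*(x^0)\leq u^*(x^0)-v_*(x^0)\leq 0$, using the hypothesis $v_*\geq u^*$ on $\partial\sO\less\Sigma$. Hence $w^*\leq 0$ on $\partial\sU\less\Sigma$. Since $\sU$ is an open subset of $\sO$, $A$ has the weak maximum principle property on $\sU\cup\Sigma$ for $\fK$ by hypothesis, and Definition \ref{defn:Weak_max_principle_property} applied to $w$ (which obeys $Aw\leq 0$ a.e. on $\sU$ and $w^*\leq 0$ on $\partial\sU\less\Sigma$) yields $w\leq 0$ on $\sU$. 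But $w>0$ on $\sU$ by definition, so $\sU=\emptyset$, i.e. $u\leq v$ on $\sO$, which is the asserted comparison. For the uniqueness assertion, if $u$ and $v$ are both solutions then each is in particular a supersolution; the comparison just proved gives $v\geq u$, and the identical argument with the roles of $u$ and $v$ (and of the cones $\fK$ and $-\fK$) interchanged gives $u\geq v$, whence $u=v$ on $\sO$.

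The step I expect to be the main obstacle is the passage to $\sU=\{u>v\}$ and its boundary analysis: one must exploit the supersolution inequality $v\geq\psi$ to conclude that the solution $u$ lies strictly above the obstacle on all of $\sU$ --- so that $Au=f$ there, which is what makes $Aw\leq 0$ hold --- and then propagate the envelope bound $w^*\leq 0$ correctly across the two structurally different portions of $\partial\sU$, namely the interior portion $\partial\sU\cap\sO$ (handled by continuity and openness of $\sU$) and the genuine-boundary portion $\partial\sU\cap(\partial\sO\less\Sigma)$ (handled by the hypothesis on the boundary envelopes). Once these are in place, the conclusion is an immediate invocation of the weak maximum principle property on $\sU\cup\Sigma$.
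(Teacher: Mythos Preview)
Your proposal is correct and is precisely the argument the paper has in mind: the paper's own proof consists of a single sentence referring to \cite[Theorem 1.3.4]{Friedman_1982} with the classical maximum principle replaced by the weak maximum principle property of Definition \ref{defn:Weak_max_principle_property}, and you have written out that argument in detail --- the localization to $\sU=\{u>v\}$, the use of $v\geq\psi$ to force $Au=f$ a.e.\ on $\sU$, the two-part boundary analysis of $\partial\sU\less\Sigma$, and the symmetry argument for uniqueness.
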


\begin{proof}
The proof is identical to that of \cite[Theorem 1.3.4]{Friedman_1982}, except that we appeal to our version of the weak maximum principle (Definition \ref{defn:Weak_max_principle_property}) rather than \cite[Theorem 9.1]{GilbargTrudinger} and so the role of $\partial\sO$ is replaced by $\partial\sO\less\Sigma$.
\end{proof}

We now use Proposition \ref{prop:Elliptic_weak_max_principle_apriori_estimates} to extend our previous versions of the weak maximum principle. We begin with a simple extension of the classical maximum weak maximum principle for $A$-subharmonic functions in $C^2(\sO)$ \cite[Theorem 3.1 and Corollary 3.2]{GilbargTrudinger} using elliptic regularization (see, for example, the proof of \cite[Theorem 6.5]{Crandall_Ishii_Lions_1992}; other versions of the extension are noted in a remark immediately following the statement of \cite[Theorem 3.1]{GilbargTrudinger} and in \cite[p. 33, top of page]{GilbargTrudinger}.

\begin{thm}[Classical weak maximum principle for $A$-subharmonic functions in $C^2(\sO)$ and nonnegative characteristic form]
\label{thm:Classical_weak_maximum_principle_C2_elliptic_relaxed}
Let $\sO\subset\RR^d$ be a bounded, open subset and $A$ in \eqref{eq:Generator} with $a:\sO\to\sS(d)$ nonnegative, and $b$ locally bounded on $\sO$, and $c$ obeys \eqref{eq:c_lower*_positive_elliptic}, that is, $c_*>0$ on $\sO$. Suppose $u\in C^2(\sO)$ and $\sup_\sO u < \infty$. If $Au \leq 0$ on $\sO$ and $u_*\leq 0$ on $\partial\sO$, then $u\leq 0$ on $\sO$.
\end{thm}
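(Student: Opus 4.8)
The plan is to argue by contradiction and reduce everything to the elementary interior maximum principle; no serious machinery is needed. Suppose $\sup_\sO u=:M>0$. The key assertion is that $M$ must then be attained at an \emph{interior} point, and this is impossible: if $u(x^0)=M$ with $x^0\in\sO$, then $Du(x^0)=0$ and $D^2u(x^0)\le 0$, so $\tr\big(a(x^0)D^2u(x^0)\big)\le 0$ because $a(x^0)\ge 0$; hence
\[
Au(x^0)=-\tr(aD^2u)(x^0)+c(x^0)u(x^0)\ge c(x^0)M\ge c_*(x^0)M>0,
\]
using $c_*>0$ on $\sO$ and $M>0$, which contradicts $Au\le 0$. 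This is the only place the hypotheses on $a$ and $c$ enter, and neither the strict ellipticity of $a$ nor the local boundedness of $b$ plays any role here.

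The substance of the theorem — and the step I expect to be the main obstacle — is that $\sup_\sO u$ need not be attained, since $u$ is assumed merely to be $C^2$ on the open set $\sO$ and need not extend continuously to $\partial\sO$. To get around this I would use that $\sO$ and $u$ are bounded, together with the boundary hypothesis. For $t\in(0,M)$ the superlevel set $\sO_t:=\{x\in\sO:u(x)\ge t\}$ is relatively closed in $\sO$, and the point is to show $\overline{\sO_t}\cap\partial\sO=\emptyset$; granting this, $\sO_t$ is compact, $\sup_{\sO_t}u=M$, so $u$ attains $M$ at some $x^0\in\sO_t\subset\sO$, and since $u<t\le M$ off $\sO_t$ this $x^0$ is a global, hence local, maximum of $u$ in the interior — the situation excluded above. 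For the claim $\overline{\sO_t}\cap\partial\sO=\emptyset$: if $\sO\ni x_k\to z\in\partial\sO$ with $u(x_k)\ge t>0$, one derives a contradiction with the boundary bound, localizing if necessary to the connected components of $\{u>0\}$. On such a component $W$ the function $u$ is a subsolution of the operator $A_0v:=-\tr(aD^2v)-\langle b,Dv\rangle$ obtained by dropping the zeroth-order term of $A$, so by the same interior computation $u$ has no interior local maximum in $W$; since $u>0$ on $W$ and $u=0$ on $\partial W\cap\sO$, this forces $\overline W$ to meet $\partial\sO$, and the behaviour of $u$ along $W$ near $\partial\sO$ is then controlled.

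An alternative route, the one hinted at just before the statement, is elliptic regularization: the operator $A_\eps:=A-\De$ is strictly elliptic, still has $c$ of the right sign and $b$ locally bounded, so the classical weak maximum principle \cite[Corollary 3.2]{GilbargTrudinger} — or \cite[Theorem 2.9.2]{Krylov_LecturesHolder} for the version permitting $\sup_\sO u<\infty$ — applies to it, and one lets $\eps\downarrow 0$. The decisive technical point in that approach is again the one isolated in the previous paragraph, namely ensuring that the relevant supremum is realized at an interior point of $\sO$.
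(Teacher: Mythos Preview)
Your direct argument is correct and more elementary than the paper's. Once you know the supremum $M>0$ is attained at an interior point $x^0$, the computation $Au(x^0)\ge c(x^0)M>0$ is exactly the contradiction you want. And the step you flag as the obstacle is in fact immediate: the boundary hypothesis should read $u^*\le 0$ on $\partial\sO$ (the $u_*$ in the statement is a typo; note the paper's own proof uses $u^*$), and with that, $x_k\to z\in\partial\sO$ with $u(x_k)\ge t>0$ gives $u^*(z)\ge t>0$ directly. Your detour through connected components of $\{u>0\}$ and the operator $A_0$ is therefore unnecessary; it also ends on a vague note (``the behaviour of $u$ along $W$ near $\partial\sO$ is then controlled'') that never quite closes the loop, so I would drop it entirely.

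The paper takes a genuinely different route. It perturbs to $A_\eps u:=Au+\eps\Delta u$ (with $\Delta u:=-\sum u_{x_ix_i}$, so $A_\eps$ is strictly elliptic), exhausts $\sO$ by $\sO_l\Subset\sO$, and on each $\sO_l$ applies the a~priori estimate of Proposition~\ref{prop:Elliptic_weak_max_principle_apriori_estimates}\eqref{item:Subsolution_Au_arb_sign} together with \cite[Corollary~3.2]{GilbargTrudinger} to get
\[
u \le 0\vee \sup_{\partial\sO_l} u \vee \frac{\eps}{c_l}\sup_{\sO_l}\Delta u \quad\hbox{on }\sO_l,
\]
then lets $\eps\downarrow 0$ and $l\to\infty$. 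Because $u\in C^2(\bar\sO_l)$, the supremum-attainment issue never arises; that is what the exhaustion buys. Your approach is shorter and self-contained; the paper's is longer but serves as the template immediately reused in the proof of Theorem~\ref{thm:Weak_maximum_principle_C2_elliptic_domain_relaxed}, where the same regularization-and-exhaustion scaffolding carries the approximation of $b$ by $C^2$ vector fields.

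One small correction to your alternative-route paragraph: the paper's regularization carries an $\eps$ (your $A_\eps:=A-\Delta$ drops it), and the ``decisive technical point'' in that approach is not the interior attainment of the supremum---exhaustion sidesteps that---but rather controlling $\sup_{\partial\sO_l}u$ as $l\to\infty$, which is handled by $\lim_{l\to\infty}\sup_{\partial\sO_l}u=\sup_{\partial\sO}u^*\le 0$.
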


\begin{proof}
For any $\eps>0$, let $A_\eps u := Au + \eps\Delta u$, where $\Delta u := -\sum_{i=1}^du_{x_ix_i}$. Let $\{\sO_l\}_{l\in\NN}$ be an exhaustion of $\sO$ by open subsets such that $\sO_l\subseteqq \sO_{l+1}$ and $\sO = \cup_{l\in\NN}\sO_l$ and $\sO_l\Subset \sO$ for all $l\geq 0$. Since $c_*>0$ on $\sO$ by hypothesis, there is a positive constant, $c_l$, such that $c\geq c_l$ on $\sO_l$, for each $l\in\NN$. By \cite[Corollary 3.2]{GilbargTrudinger} and Proposition \ref{prop:Elliptic_weak_max_principle_apriori_estimates} \eqref{item:Subsolution_Au_arb_sign}, we have
$$
u \leq 0\vee \sup_{\partial \sO_l}u \vee\frac{1}{c_l}\sup_{\sO_l} A_\eps u \leq 0\vee \sup_{\partial \sO_l}u \vee\frac{\eps}{c_l}\sup_{\sO_l} \Delta u\quad\hbox{on }\sO_l,
$$
for all $\eps>0$ and $l\in \NN$. Letting $\eps\downarrow 0$ and noting that $u\in C^2(\bar\sO_l)$, we obtain
$$
u \leq 0\vee \sup_{\partial \sO_l}u, \quad\forall\, l \in \NN.
$$
But the preceding inequality holds for every $l\in\NN$ and as $\sO = \cup_{l\in\NN}\sO_l$ and taking the limit as $l\to\infty$, we obtain
$$
u \leq 0 \vee \lim_{l\to\infty}\sup_{\partial\sO_l}u \quad\hbox{on } \sO.
$$
Since
$$
\lim_{l\to\infty}\sup_{\partial\sO_l}u = \sup_{\partial\sO}u^*,
$$
and $u*\leq 0$ on $\partial\sO$, we obtain
$$
u \leq 0 \quad\hbox{on } \sO,
$$
as desired.
\end{proof}

We can now give the

\begin{proof}[Proof of Theorem \ref{thm:Weak_maximum_principle_C2_elliptic_domain_relaxed}]
We proceed by modifying the proof of Theorem \ref{thm:Classical_weak_maximum_principle_C2_elliptic_relaxed}. We consider two cases.

\setcounter{case}{0}
\begin{case}[$a \in C^{0,1}(\underline\sO)$]
\label{case:a_lipschitz}
Let $\{\sO_l\}_{l\in\NN}$ be an exhaustion of $\sO$ by subdomains such that $\sO_l\subseteqq \sO_{l+1}$ and $\sO = \cup_{l\in\NN}\sO_l$ and $\sO_l\Subset\sO$ and $\partial\sO_l$ is $C^1$, for all $l\geq 0$. Choose $l_0\in\NN$ large enough that $\diam \sO <2/l_0$ and, for each $l\in\NN$, define $\Sigma_l\subset\partial\sO_l$ to be the interior of the set of points $x\in\partial\sO_l$ such that $\dist(x,\partial_0\sO) < 1/(l+l_0)$. Define $\Gamma_l := \partial\sO_l\less\bar\Sigma_l$. Therefore, $\Sigma_l\to\partial_0\sO$ and $\Gamma_l\to \partial_1\sO$ as $l\to\infty$. For each $l\in\NN$, choose $\zeta_l\in C^\infty(\RR^d)$ such that $0<\zeta_l\leq 1$ on $\sO_l$ and $\zeta = 0$ on $\Sigma_l$.

Because $u\in C^1(\underline\sO)$ by hypothesis and $b \in C(\underline\sO;\RR^d)$ by hypothesis \eqref{eq:b_continuous_on_domain_plus_degenerate_boundary}, we have
$$
Du, \ b \in C(\bar\sO_l;\RR^d).
$$
Choose a sequence of vector fields obeying \eqref{eq:b_C2_elliptic}, namely $\{b_k\}_{k\in\NN} \subset C^2(\underline\sO;\RR^d)$, such that $b_k$ converges to $b$ in $C(\underline\sO;\RR^d)$ (that is, uniformly on compact subsets of $\underline\sO$) as $k\to\infty$, and each $b_k^\perp$ obeys \eqref{eq:b_perp_positive_boundary_elliptic} with $\partial_0\sO$ replaced by $\Sigma_l$ for large enough $l$, say $l \geq l_0$ for a suitable $l_0\in\NN$, so $b_k^\perp>0$ on $\Sigma_l$. We can ensure that the preceding condition on $b_k^\perp$ holds since $b \in C(\underline\sO;\RR^d)$ and $b^\perp\geq 0$ on $\partial_0\sO$ by hypothesis \eqref{eq:b_perp_nonnegative_boundary_elliptic} and $\Sigma_l\to\partial_0\sO$ as $l\to\infty$.

We see that, for each $l\in\NN$,
$$
\|Du\|_{C(\bar\sO_l)} < \infty \quad\hbox{and}\quad \|b-b_k\|_{C(\bar\sO_l)} \to 0 \quad\hbox{as } k\to \infty.
$$
For each $\eps>0$ and $k\in\NN$, define
$$
A_{\eps kl}u := Au + \eps\zeta_l\Delta u + \langle b-b_k, Du\rangle \quad\hbox{on }\sO,
$$
and observe that
$$
A_{\eps kl}u = - \tr((a+\eps\zeta_l I_d)D^2u) - \langle b_k, Du\rangle + cu \quad\hbox{on } \sO,
$$
where $I_d\in \sS(d)$ is the identity matrix. Clearly, the coefficients of $A_{\eps kl}$ obey \eqref{eq:a_locally_strictly_elliptic}, \eqref{eq:b_perp_positive_boundary_elliptic},
\eqref{eq:a_locally_lipschitz_elliptic} along $\Sigma_l$ (in place of along $\partial_0\sO$), and \eqref{eq:b_C2_elliptic}, in addition, of course, to \eqref{eq:c_locally_bounded_above_near_boundary_elliptic} and \eqref{eq:Empty_nondegenerate_boundary_elliptic_domain}. The weak maximum principle, in the form of Theorem \ref{thm:Weak_maximum_principle_C2_elliptic_domain}, now applies, with $\sO$ replaced by $\sO_l$ and $A$ replaced by $A_{\eps kl}$.

Because $Au\leq 0$ on $\sO$, we have
$$
A_{\eps kl}u \leq \eps\zeta_l\Delta u + \langle b-b_k, Du\rangle \quad\hbox{on }\sO.
$$
Since $c_*>0$ on $\sO$ by hypothesis, there is a positive constant, $c_l$, such that $c\geq c_l$ on $\sO_l$, for each $l\in\NN$. Therefore, the weak maximum principle estimate, Proposition \ref{prop:Elliptic_weak_max_principle_apriori_estimates} \eqref{item:Subsolution_Au_arb_sign} (with $\sO$ replaced by $\sO_l$ and $A$ replaced by $A_{\eps kl}$), gives
\begin{align*}
u &\leq 0 \vee \frac{1}{c_l}\sup_{\sO_l} A_{\eps kl} u \vee \sup_{\Gamma_l}u
\\
&\leq 0 \vee \frac{1}{c_l}\left(\eps\|\Delta u\|_{C(\bar\sO_l)} + \|Du\|_{C(\bar\sO_l)} \|b-b_k\|_{C(\bar\sO_l)} \right) \vee \sup_{\Gamma_l}u
\quad\hbox{on } \sO_l.
\end{align*}
First taking the limit as $\eps\downarrow 0$ yields,
$$
u \leq 0 \vee \frac{1}{c_l}\|Du\|_{C(\bar\sO)} \|b-b_k\|_{C(\bar\sO)} \vee \sup_{\Gamma_l}u,
\quad\forall\, k, l \in \NN,
$$
and then taking the limit as $k\to \infty$ gives
$$
u \leq 0 \vee \sup_{\Gamma_l}u \quad\hbox{on } \sO_l, \quad\forall\, l \in \NN.
$$
We conclude the proof of this case just as in the proof of Theorem \ref{thm:Classical_weak_maximum_principle_C2_elliptic_relaxed}.
\end{case}

\begin{case}[$a$ obeys \eqref{eq:a_locally_strictly_elliptic}]
We choose $\{\sO_l\}_{l\in\NN}$ and $\{b_k\}_{k\in\NN}$ as in Case \ref{case:a_lipschitz}, except that we now require $\sO_l\Subset\underline\sO$ rather than $\sO_l\Subset\sO$, for all $l\geq 0$, but may omit the requirement that each $\partial\sO_l$ is $C^1$. Let $\{\psi_l\}_{l\in\NN} \subset C^\infty(\RR^d)$ be a sequence of cutoff functions such that
$0\leq\psi_l\leq 1$ on $\RR^d$, and $\psi_l=1$ on $\sO_l$, and $\bar\sO\cap\supp\psi_l\subset \underline\sO$. Because $u\in C^1(\underline\sO)$ by hypothesis and $b \in C(\underline\sO;\RR^d)$ by \eqref{eq:b_continuous_on_domain_plus_degenerate_boundary}, we have
$$
\psi_l Du, \ \psi_l b \in C(\bar\sO;\RR^d).
$$
We see that, for each $l\in\NN$,
$$
\|\psi_l Du\|_{C(\bar\sO)} < \infty \quad\hbox{and}\quad \|\psi_l(b-b_k)\|_{C(\bar\sO)} \to 0 \quad\hbox{as } k\to \infty.
$$
For each $k,l\in\NN$, define
$$
A_{kl}u := Au + \psi_l^2\langle b-b_k, Du\rangle \quad\hbox{on }\sO,
$$
and observe that
$$
A_{kl}u = - \tr(aD^2u) - \langle b_k, Du\rangle + cu \quad\hbox{on } \sO_l,
$$
while
$$
A_{kl}u \leq \psi_l^2\langle b-b_k, Du\rangle \quad\hbox{on }\sO.
$$
The weak maximum principle, in the shape of Theorem \ref{thm:Weak_maximum_principle_C2_elliptic_domain}, again applies, with $\sO$ replaced by $\sO_l$ and $A$ replaced by $A_{kl}$ and the remainder of the argument is very similar to that in Case \ref{case:a_lipschitz}.
\end{case}
Combining the conclusions of the two cases completes the proof.
\end{proof}

We also have the

\begin{thm}[Weak maximum principle on domains for $A$-subharmonic functions in $W^{2,d}_{\loc}(\sO)$ and relaxed conditions on $a,b$]
\label{thm:Weak_maximum_principle_W2d_elliptic_domain_relaxed}
Assume the hypotheses of Theorem \ref{thm:Weak_maximum_principle_C2_elliptic_domain_relaxed} on $\sO$ and $A$, except that the coefficients of $A$ are now required to be measurable. Suppose $u \in W^{2,d}_{\loc}(\sO)\cap C^1(\underline\sO)$ and $\sup_\sO u<\infty$. If $Au\leq 0$ a.e. on $\sO$ and $u^* \leq 0$ on $\partial_1\sO$, then $u\leq 0$ on $\sO$.
\end{thm}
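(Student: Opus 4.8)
The plan is to run the proof of Theorem~\ref{thm:Weak_maximum_principle_C2_elliptic_domain_relaxed} essentially line by line, substituting the $W^{2,d}_{\loc}$ counterparts of each ingredient: the classical maximum-modulus estimate for $A$-subharmonic functions in $C^2(\sO)$, \cite[Corollary~3.2]{GilbargTrudinger}, is replaced by the Aleksandrov--Bakelman--Pucci estimate \cite[Theorem~9.1]{GilbargTrudinger}; the initial weak maximum principle Theorem~\ref{thm:Weak_maximum_principle_C2_elliptic_domain} is replaced by its $W^{2,d}_{\loc}$ analogue Theorem~\ref{thm:Weak_maximum_principle_W2d_elliptic_domain}; and the a priori estimates Proposition~\ref{prop:Elliptic_weak_max_principle_apriori_estimates} are invoked unchanged, since that proposition is already stated for a convex cone $\fK\subset W^{2,d}_{\loc}(\sO)$. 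As before, the two alternative hypotheses \eqref{eq:a_lipschitz_on_domain_plus_degenerate_boundary} and \eqref{eq:a_locally_strictly_elliptic_on_domain} on $a$ are handled in separate cases.

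Concretely, when $a\in C^{0,1}(\underline\sO)$ one fixes an exhaustion $\{\sO_l\}_{l\in\NN}$ of $\sO$ by subdomains with $\sO_l\Subset\sO$, cutoff functions $\zeta_l$ vanishing on a shrinking collar $\Sigma_l\subset\partial\sO_l$ of $\partial_0\sO$ and positive on $\sO_l$, with $\Gamma_l:=\partial\sO_l\setminus\bar\Sigma_l\to\partial_1\sO$, and a sequence $\{b_k\}\subset C^2(\underline\sO;\RR^d)$ converging to $b$ locally uniformly with $b_k^\perp>0$ on $\Sigma_l$ for $l$ large (possible since $b\in C(\underline\sO;\RR^d)$, $b^\perp\ge 0$ on $\partial_0\sO$, and $\Sigma_l\to\partial_0\sO$). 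One then forms $A_{\eps kl}u := Au+\eps\zeta_l\Delta u+\langle b-b_k,Du\rangle$, whose coefficients satisfy the hypotheses required to apply Theorem~\ref{thm:Weak_maximum_principle_W2d_elliptic_domain} on $\sO_l$ with $\Sigma_l$ in the role of $\partial_0\sO$, and feeds the inequality $A_{\eps kl}u\le \eps\zeta_l\Delta u+\langle b-b_k,Du\rangle$ (which holds a.e.\ because $Au\le 0$ a.e.) into Proposition~\ref{prop:Elliptic_weak_max_principle_apriori_estimates}\eqref{item:Subsolution_Au_arb_sign} on $\sO_l$, letting $\eps\downarrow 0$, then $k\to\infty$, then $l\to\infty$, exactly as in the $C^2$ argument, to conclude $u\le 0$ on $\sO$. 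When instead $a$ obeys \eqref{eq:a_locally_strictly_elliptic}, one uses an exhaustion with $\sO_l\Subset\underline\sO$, global cutoffs $\psi_l\equiv 1$ on $\sO_l$ with $\bar\sO\cap\supp\psi_l\subset\underline\sO$, and the operators $A_{kl}u := Au+\psi_l^2\langle b-b_k,Du\rangle$ (no second-order regularization being needed), and argues identically; combining the two cases finishes the proof.

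The one genuinely new point, and hence the main obstacle, is that $\tr(aD^2u)$ --- and in particular $\Delta u$ --- now lies only in $L^d_{\loc}(\sO)$ rather than in $C(\sO)$, so the sup-norm bounds used in the proof of Theorem~\ref{thm:Weak_maximum_principle_C2_elliptic_domain_relaxed} (such as $\eps\|\Delta u\|_{C(\bar\sO_l)}\to 0$) are no longer available and must be replaced by $L^d$-norm bounds: the regularization errors $\eps\zeta_l\Delta u+\langle b-b_k,Du\rangle$ tend to $0$ in $L^d(\sO_l)$ as $\eps\downarrow 0$ and then $k\to\infty$ (using $\Delta u\in L^d(\sO_l)$ since $\sO_l\Subset\sO$, $Du\in C(\bar\sO_l)$ since $u\in C^1(\underline\sO)$, and $b_k\to b$ locally uniformly), and the resulting a priori control of $u$ is then taken from the $L^d$-dependence of the Aleksandrov--Bakelman--Pucci estimate \cite[Theorem~9.1]{GilbargTrudinger}. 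Equivalently, one may first run the $C^2$ argument verbatim for a mollification $u_m$ of $u$ on each $\sO_l\Subset\sO$ and then pass to the limit $m\to\infty$, using $u_m\to u$ in $W^{2,d}(\sO_l)$ and in $C^1(\bar\sO_l)$ together with $\sup_\sO u<\infty$ and $u^*\le 0$ on $\partial_1\sO$. Granting this, every boundary-degenerate ingredient --- Theorems~\ref{thm:Viscosity_maximum_elliptic} and \ref{thm:Weak_maximum_principle_W2d_elliptic_domain} and Proposition~\ref{prop:Elliptic_weak_max_principle_apriori_estimates} --- enters exactly as in the elliptic $C^2$ case.
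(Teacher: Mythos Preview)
Your proposal follows the paper's approach exactly: the paper's own proof is the single sentence ``the proof is the same as that of Theorem~\ref{thm:Weak_maximum_principle_C2_elliptic_domain_relaxed}, except that the role of Theorem~\ref{thm:Weak_maximum_principle_C2_elliptic_domain} is replaced by that of Theorem~\ref{thm:Weak_maximum_principle_W2d_elliptic_domain},'' and your two cases, choice of exhaustion, approximating fields $b_k$, perturbed operators $A_{\eps kl}$ (or $A_{kl}$), and appeal to Proposition~\ref{prop:Elliptic_weak_max_principle_apriori_estimates} reproduce that argument faithfully.

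You go further than the paper by flagging that in Case~1 the step $\esssup_{\sO_l}(\eps\zeta_l\Delta u)\to 0$ is not immediate when $u$ is only $W^{2,d}_{\loc}$, since $\Delta u$ need not be essentially bounded on $\sO_l$; the paper simply glosses over this. Your instinct to repair it is sound, but note that the two fixes you sketch each require additional care: the Aleksandrov--Bakelman--Pucci estimate \cite[Theorem~9.1]{GilbargTrudinger} controls $u$ via the \emph{full} boundary $\partial\sO_l$ rather than the partial boundary $\Gamma_l=\partial\sO_l\setminus\bar\Sigma_l$, so one must still explain why the contribution from $\Sigma_l$ is harmless; and mollification of $u$ does not preserve $Au\le 0$ for a variable-coefficient operator, so one would need a commutator estimate $\|Au_m-(Au)_m\|_{L^d(\sO_l)}\to 0$ under the available regularity of $a,b,c$. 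Either route can be made to work, but the details are not as automatic as ``verbatim'' suggests.
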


\begin{proof}
The proof is the same as that of Theorem \ref{thm:Weak_maximum_principle_C2_elliptic_domain_relaxed}, except that the role of Theorem \ref{thm:Weak_maximum_principle_C2_elliptic_domain} is replaced by that of Theorem \ref{thm:Weak_maximum_principle_W2d_elliptic_domain}.
\end{proof}

\begin{rmk}[Alternative hypotheses on $b$ and $c$]
\label{rmk:Weak_maximum_principle_C2_elliptic_relaxed}
There is a trade-off between requirements of uniform positive lower bounds on $\langle b,\vec n\rangle$ and $c$ which can sometimes be exploited. For example, suppose $\sO\subset\RR^{d-1}\times(0,\nu)$, for some positive constant $\nu$, and $\partial_0\sO\subset \RR^{d-1}\times\{0\}$, so $\vec n = e_d$. If $\sup_\sO a^{dd} < \infty$, then a requirement that $c\geq c_0$ on $\sO$ for some positive constant $c_0$ (which obviously implies \eqref{eq:c_lower*_positive_elliptic}), can be relaxed to $c\geq 0$ on $\sO$ if $b^d \geq b_0$ on $\sO$, for some positive constant $b_0$. See the proof of \cite[Lemma A.1]{Feehan_Pop_elliptichestonschauder}.
\end{rmk}

One can immediately deduce versions of the weak maximum principle (Theorems \ref{thm:Weak_maximum_principle_C2_elliptic_domain_relaxed} or \ref{thm:Weak_maximum_principle_W2d_elliptic_domain_relaxed}) when $\sO$ is not necessarily connected, by the same argument as used in the proofs of Corollaries \ref{cor:Weak_maximum_principle_C2_elliptic_opensubset} or \ref{cor:Weak_maximum_principle_W2d_elliptic_opensubset}; moreover, the slightly stronger hypothesis \eqref{eq:c_lower*_positive_elliptic} on $c$ yields statements which are more easily applied.

\begin{cor}[Weak maximum principle on open subsets for $A$-subharmonic functions in $C^2(\sO)$ and relaxed conditions on $a, b$]
\label{cor:Weak_maximum_principle_C2_elliptic_opensubset_relaxed}
Let $\sO\subset\RR^d$ be a bounded, open subset and assume the hypotheses of Theorem \ref{thm:Weak_maximum_principle_C2_elliptic_domain_relaxed} for $A$ in \eqref{eq:Generator}. Suppose $u \in C^2(\sO)\cap C^1(\underline\sO)$ and $\sup_\sO u < \infty$. If $Au\leq 0$ on $\sO$ and $u^* \leq 0$ on $\partial_1\sO$, then $u\leq 0$ on $\sO$.
\end{cor}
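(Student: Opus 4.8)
The plan is to reduce to the connected case, Theorem~\ref{thm:Weak_maximum_principle_C2_elliptic_domain_relaxed}, by passing to the connected components of $\sO$, exactly as in the proof of Corollary~\ref{cor:Weak_maximum_principle_C2_elliptic_opensubset} but with Theorem~\ref{thm:Weak_maximum_principle_C2_elliptic_domain_relaxed} in place of Theorem~\ref{thm:Weak_maximum_principle_C2_elliptic_domain}. The argument here is actually slightly simpler than for Corollary~\ref{cor:Weak_maximum_principle_C2_elliptic_opensubset}: hypothesis~\eqref{eq:c_lower*_positive_elliptic}, namely $c_*>0$ on $\sO$, forces $c>0$ at some point of every nonempty open subset of $\sO$, so condition~\eqref{eq:Empty_nondegenerate_boundary_elliptic_domain} holds automatically on every subdomain and we need not separately impose an ``\eqref{eq:Nonempty_nondegenerate_boundary_elliptic_domain} or \eqref{eq:Empty_nondegenerate_boundary_elliptic_domain}'' hypothesis on the isolated components of $\sO$.

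First I would write $\sO=\bigcup_{n\in\NN}\sC_n$ as the union of its connected components. Since $\sO\subset\RR^d$ is open (hence locally connected), each $\sC_n$ is an open, bounded domain with $\partial\sC_n\subseteq\partial\sO$, and all the pointwise and local conditions on the coefficients of $A$ required by Theorem~\ref{thm:Weak_maximum_principle_C2_elliptic_domain_relaxed} --- \eqref{eq:b_perp_nonnegative_boundary_elliptic}, \eqref{eq:c_lower*_positive_elliptic}, \eqref{eq:c_locally_bounded_above_near_boundary_elliptic}, \eqref{eq:a_locally_lipschitz_elliptic}, \eqref{eq:b_continuous_on_domain_plus_degenerate_boundary}, together with \eqref{eq:a_lipschitz_on_domain_plus_degenerate_boundary} or \eqref{eq:a_locally_strictly_elliptic_on_domain} --- restrict from $A$ on $\sO$ to $A$ on $\sC_n$, as do the hypotheses $u\in C^2(\sO)\cap C^1(\underline\sO)$, $\sup_\sO u<\infty$, and $Au\le 0$ on $\sO$. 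The only assertion that needs a short argument is the inclusion $\partial_1\sC_n\subseteq\partial_1\sO\cup\bigcup_{m\neq n}\bar\sC_m$: if $x\in\partial_1\sC_n$ but $x\notin\partial_1\sO$, then $x\in\overline{\partial_0\sO}$ while $x\notin\overline{\partial_0\sC_n}$, and since the limit $\lim_{\sO\ni y\to x}a(y)$ depends only on the values of $a$ on $\sO$ near $x$, this is possible only when the closure of some component $\sC_m$ with $m\neq n$ also contains $x$.

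With this inclusion in hand, the remaining steps copy the proof of Corollary~\ref{cor:Weak_maximum_principle_C2_elliptic_opensubset}. For an isolated component $\sC_i$ (one with $\bar\sC_i\cap\bar\sC_n=\emptyset$ for all $n\neq i$) the inclusion gives $\partial_1\sC_i\subseteq\partial_1\sO$, so $u^*\le 0$ on $\partial_1\sC_i$ and Theorem~\ref{thm:Weak_maximum_principle_C2_elliptic_domain_relaxed}, applied with $\sO$ replaced by $\sC_i$, yields $u\le 0$ on $\sC_i$. When no component of $\sO$ is isolated, I would instead run the induction of Corollary~\ref{cor:Weak_maximum_principle_C2_elliptic_opensubset}: starting from a component $\sC_0$, one has $u^*\le 0$ on $\partial_1\sC_0$ (the portion lying in $\partial_1\sO$ by hypothesis, and the portion lying in $\bigcup_{m\neq 0}\bar\sC_m$ after first handling the components whose closures meet $\bar\sC_0$), so Theorem~\ref{thm:Weak_maximum_principle_C2_elliptic_domain_relaxed} gives $u\le 0$ on $\sC_0$ and hence $u^*\le 0$ on $\bar\sC_0$; iterating over the neighbouring components, their neighbours, and so on by induction on $n\in\NN$ produces $u\le 0$ on every $\sC_n$, i.e.\ $u\le 0$ on $\sO$.

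The main obstacle is the bookkeeping in this last step: organizing the component-by-component induction so that, before invoking the domain-case weak maximum principle on $\sC_n$, one has already established $u^*\le 0$ on the whole of $\partial_1\sC_n$. That mechanism is the same one already used, without difficulty, in Corollaries~\ref{cor:Weak_maximum_principle_C2_elliptic_opensubset} and \ref{cor:Weak_maximum_principle_W2d_elliptic_opensubset}; the only genuinely new ingredient is the elementary inclusion $\partial_1\sC_n\subseteq\partial_1\sO\cup\bigcup_{m\neq n}\bar\sC_m$ noted above, which is immediate from definitions~\eqref{eq:Degeneracy_locus_elliptic} and \eqref{eq:Nondegeneracy_locus_elliptic}.
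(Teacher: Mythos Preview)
Your proposal is correct and follows essentially the same approach as the paper: the paper does not give a separate proof but simply remarks that one repeats the component-by-component argument of Corollaries~\ref{cor:Weak_maximum_principle_C2_elliptic_opensubset} and~\ref{cor:Weak_maximum_principle_W2d_elliptic_opensubset} with Theorem~\ref{thm:Weak_maximum_principle_C2_elliptic_domain_relaxed} in place of Theorem~\ref{thm:Weak_maximum_principle_C2_elliptic_domain}, and observes that the hypothesis \eqref{eq:c_lower*_positive_elliptic} makes the statement cleaner --- exactly the simplification you identify.
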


\begin{cor}[Weak maximum principle on open subsets for $A$-subharmonic functions in $W^{2,d}_{\loc}(\sO)$ and relaxed conditions on $a, b$]
\label{cor:Weak_maximum_principle_W2d_elliptic_opensubset_relaxed}
Let $\sO\subset\RR^d$ be a bounded, open subset and assume the hypotheses of Theorem \ref{thm:Weak_maximum_principle_W2d_elliptic_domain_relaxed} for $A$ in \eqref{eq:Generator}. Suppose $u \in W^{2,d}_{\loc}(\sO)\cap C^1(\underline\sO)$ and $\sup_\sO u < \infty$. If $Au\leq 0$ a.e. on $\sO$ and $u^* \leq 0$ on $\partial_1\sO$, then $u\leq 0$ on $\sO$.
\end{cor}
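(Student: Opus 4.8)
The plan is to deduce the statement on a general bounded open set from its counterpart on bounded domains, Theorem \ref{thm:Weak_maximum_principle_W2d_elliptic_domain_relaxed}, by the same component-decomposition argument that passes from Theorem \ref{thm:Weak_maximum_principle_W2d_elliptic_domain} to Corollary \ref{cor:Weak_maximum_principle_W2d_elliptic_opensubset}. Write $\sO=\bigcup_{n\in\NN}\sC_n$, where the $\sC_n$ are the connected components of $\sO$; since $\sO\subset\RR^d$ is bounded and open (hence locally connected), each $\sC_n$ is a bounded domain with $\partial\sC_n\subset\partial\sO$, and the coefficient hypotheses of Theorem \ref{thm:Weak_maximum_principle_W2d_elliptic_domain_relaxed} --- being conditions on $a$, $b$, $c$ in the interior or along the (now relative) degenerate boundary portion --- are inherited by each $\sC_n$. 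Likewise $u|_{\sC_n}\in W^{2,d}_{\loc}(\sC_n)\cap C^1(\underline{\sC_n})$, $Au\leq 0$ a.e. on $\sC_n$, and $\sup_{\sC_n}u\leq\sup_\sO u<\infty$.

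The point of assuming the stronger hypothesis \eqref{eq:c_lower*_positive_elliptic} here is that it removes any need for an extra condition on the individual components: since $c\geq c_*>0$ a.e. on $\sO$, the function $c$ is positive on a set of positive measure in every $\sC_n$, so each $\sC_n$ satisfies \eqref{eq:Empty_nondegenerate_boundary_elliptic_domain_measurable}, and hence Theorem \ref{thm:Weak_maximum_principle_W2d_elliptic_domain_relaxed} applies verbatim to $A$ on any component $\sC_n$ for which $u^*\leq 0$ on $\partial_1\sC_n$. Establishing this boundary inequality is the only bookkeeping required. At a point $x\in\partial_1\sC_n$ that also lies in $\partial_1\sO$ it is immediate, since $\limsup_{\sC_n\ni y\to x}u(y)\leq\limsup_{\sO\ni y\to x}u(y)=u^*(x)\leq 0$; the remaining points of $\partial_1\sC_n$ lie in $\overline{\partial_0\sO}\less\partial_0\sO$ and can occur only where $\bar\sC_n$ meets $\bar\sC_m$ for some $m\neq n$. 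For an isolated component (in the sense recalled in Corollary \ref{cor:Weak_maximum_principle_W2d_elliptic_opensubset}) no such points occur, so Theorem \ref{thm:Weak_maximum_principle_W2d_elliptic_domain_relaxed} yields $u\leq 0$ on it directly. For the remaining components one runs the induction from the proof of Corollary \ref{cor:Weak_maximum_principle_W2d_elliptic_opensubset}: choose any component $\sC_0$ (now admissible by \eqref{eq:Empty_nondegenerate_boundary_elliptic_domain_measurable}), obtain $u\leq 0$ on $\sC_0$ and hence $u^*\leq 0$ on $\bar\sC_0$; then for each $\sC_1$ with $\bar\sC_1\cap\bar\sC_0\neq\emptyset$ the data $u^*\leq 0$ on $\partial_1\sC_1$ is supplied partly by $\partial_1\sO$ and partly by $\bar\sC_0$, giving $u\leq 0$ on $\sC_1$; iterating over $n$ and using that no component in the reduced family is isolated yields $u\leq 0$ on every $\sC_n$, hence on $\sO$.

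The main obstacle is this last propagation step --- checking that, once the isolated components are disposed of, the rest can be ordered so that each component's ``interface'' boundary points $\partial_1\sC_n\less\partial_1\sO$ are covered by closures of previously treated components. But this is word-for-word the argument already carried out in the proof of Corollary \ref{cor:Weak_maximum_principle_W2d_elliptic_opensubset}, and the hypothesis \eqref{eq:c_lower*_positive_elliptic} only simplifies it (every component, not merely a distinguished one, seeds the induction), so no genuinely new difficulty arises; combining the conclusions over all $n$ completes the proof.
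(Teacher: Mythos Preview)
Your proposal is correct and matches the paper's own approach: the paper gives no separate proof, merely remarking that the corollary follows from Theorem \ref{thm:Weak_maximum_principle_W2d_elliptic_domain_relaxed} by the component-decomposition argument of Corollary \ref{cor:Weak_maximum_principle_W2d_elliptic_opensubset}, with the hypothesis \eqref{eq:c_lower*_positive_elliptic} simplifying matters. One small redundancy: Theorem \ref{thm:Weak_maximum_principle_W2d_elliptic_domain_relaxed} already assumes \eqref{eq:c_lower*_positive_elliptic} directly rather than the alternative \eqref{eq:Nonempty_nondegenerate_boundary_elliptic_domain}/\eqref{eq:Empty_nondegenerate_boundary_elliptic_domain_measurable}, so your detour through \eqref{eq:Empty_nondegenerate_boundary_elliptic_domain_measurable} is unnecessary --- the hypotheses restrict to each $\sC_n$ without further comment.
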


\section{Maximum principles for the parabolic operator}
\label{sec:maximum_principles_parabolic}
Section \ref{subsec:Parabolic_sobolev_embedding} reviews definitions of the parabolic H\"older and parabolic Sobolev spaces, together with the parabolic Sobolev embedding
theorem\footnote{A precise statement appears very difficult to find in the literature, at least for researchers who do not read Russian.}
which we shall need in this section. Section \ref{subsec:Simplifying_coefficients_parabolic} provides a change of variables on $\RR^{d+1}$ to bringing the coefficients of $L$ into a standard form near $\mydirac_0\!\sQ$ (see Lemma \ref{lem:Simplifying_coefficients_parabolic}). In \S \ref{subsec:Hopf_boundary_point_lemma_parabolic}, we recall our version of the Hopf boundary point lemma for a boundary-degenerate parabolic linear second-order differential operator (Lemma \ref{lem:Degenerate_hopf_lemma_parabolic}). Section \ref{subsec:Perturb_degen_boundary_local_maximum_parabolic} contains the proof of Theorem \ref{thm:Viscosity_maximum_parabolic}. In \S \ref{subsec:Strong_maximum_principle_parabolic_C2}, we apply Theorem \ref{thm:Viscosity_maximum_parabolic} to prove strong maximum principles for $L$-subharmonic functions in $C^2(\sQ)\cap \sC^1(\underline\sQ)$ (Theorems \ref{thm:Strong_max_principle_C2_parabolic_c_geq_zero}, \ref{thm:Strong_max_principle_C2_parabolic_c_arb_sign},
\ref{thm:Strong_max_principle_C2_parabolic_c_geq_zero_refined}, and \ref{thm:Strong_max_principle_C2_parabolic_c_arb_sign_refined}), while \S \ref{subsec:Strong_maximum_principle_parabolic_W2d+1} contains the corresponding application to the strong maximum principle for $L$-subharmonic functions in $W^{2,d+1}_{\loc}(\sQ)\cap \sC^1(\underline\sQ)$ (Theorem \ref{thm:Strong_max_principle_W2d+1_parabolic_c_geq_zero}). In \S \ref{subsec:Weak_maximum_principle_parabolic}, we apply the preceding strong maximum principles to deduce initial versions of the weak maximum principles for $L$-subharmonic functions in $C^2(\sQ)\cap \sC^1(\underline\sQ)$ and $W^{2,d+1}_{\loc}(\sQ)\cap \sC^1(\underline\sQ)$ (Theorems \ref{thm:Weak_maximum_principle_C2_parabolic_domain} and \ref{thm:Weak_maximum_principle_W2d+1_parabolic_domain}, respectively). In \S \ref{subsec:Relaxing_coefficients_parabolic}, we recall our definition of the `weak maximum principle property' (Definition \ref{defn:Weak_max_principle_property_parabolic}) for $L$-subharmonic functions in $C^2(\sQ)$ or $W^{2,d+1}_{\loc}(\sQ)$ and the corresponding weak maximum principle estimates (Proposition \ref{prop:Parabolic_weak_max_principle_apriori_estimates}). Given an operator, $L$, with the weak maximum principle property for $L$-subharmonic functions in $W^{2,d+1}_{\loc}(\sQ)$, we deduce a comparison principle, and thus uniqueness, for a solution and supersolution in $W^{2,d+1}_{\loc}(\sQ)$ to the unilateral obstacle problem (Theorem \ref{thm:Comparison_principle_parabolic_obstacle_problem}). We conclude by using our earlier versions of the weak maximum principles --- with strong hypotheses on the coefficients $a,b$ of $L$ --- and our weak maximum principle estimates to derive versions of the weak maximum principles, with more relaxed hypotheses on those coefficients, for $L$-subharmonic functions in $C^2(\sQ)\cap \sC^1(\underline\sQ)$ and $W^{2,d+1}_{\loc}(\sQ)\cap \sC^1(\underline\sQ)$ (Theorems \ref{thm:Weak_maximum_principle_C2_parabolic_domain_relaxed} and \ref{thm:Weak_maximum_principle_W2d+1_parabolic_domain_relaxed}, respectively).

\subsection{Parabolic Sobolev embeddings}
\label{subsec:Parabolic_sobolev_embedding}
See \cite[Section 2.1]{Feehan_parabolicmaximumprinciple} for related background. For an open subset $\sQ\subset\RR^{d+1}$ and $p\geq 1$, we say that (following Lieberman \cite[p. 155]{Lieberman})
\begin{equation}
\label{eq:Defn_W2p}
u \in W^{2,p}(\sQ)
\end{equation}
if $u$ is a measurable function on $\sQ$ and $u$ and its weak derivatives, $u_t$ and $u_{x_i}$ and $u_{x_ix_j}$ for $1\leq i,j\leq d$, belong to $L^p(\sQ)$ and similarly define $W^{2,p}_{\loc}(\sQ)$. Here, $W^{2,p}(\sQ)$ is a \emph{parabolic Sobolev space} \cite[\S 2.2]{Krylov_LecturesSobolev}, \cite[\S 1.1]{LadyzenskajaSolonnikovUralceva}, because we only assume $u_t \in L^p(\sQ)$ and do not, in addition, assume that $u_{tt} \in L^p(\sQ)$ and $u_{tx_i} \in L^p(\sQ)$ for $1\leq i\leq d$.

We will need the Sobolev embedding theorem for parabolic Sobolev spaces, so we first recall the definitions of the parabolic H\"older spaces that we require. We denote the parabolic distance between points $(t,x),(\tau,y) \in \RR^{d+1}$ by
$$
|t-\tau|^{1/2} + |x-y|, \quad\forall\, (t,x),(\tau,y) \in \RR^{d+1}.
$$
We have the

\begin{defn}[Parabolic $C^\alpha$ seminorm, norm, and Banach space]
\label{defn:C_alpha}
\cite[p. 117]{Krylov_LecturesHolder}.
Given $\alpha \in (0,1)$ and an open subset $\sQ\subset\RR^{d+1}$, we say that $u\in C^\alpha(\bar \sQ)$ if $u\in C(\bar \sQ)$ and
$$
\|u\|_{C^\alpha(\bar \sQ)} < \infty,
$$
where
\begin{equation}
\label{eq:C_alpha_norm}
\|u\|_{C^\alpha(\bar \sQ)} := [u]_{C^\alpha(\bar \sQ)} + \|u\|_{C(\bar \sQ)},
\end{equation}
and
\begin{equation}
\label{eq:C_alpha_seminorm}
[u]_{C^\alpha(\bar \sQ)} := \sup_{\begin{subarray}{c} (t,x), (\tau,y)\in \sQ \\ (t,x) \neq (\tau,y)\end{subarray}}\frac{|u(t,x)-u(\tau,y)|}{(|t-\tau|^{1/2} + |x-y|)^\alpha}.
\end{equation}
We say that $u\in C^\alpha(\sQ)$ if $u\in C^\alpha(\bar \sQ')$ for all precompact open subsets $\sQ'\Subset \sQ$.
\end{defn}

We shall need parabolic variants of the definitions of $C^1$ and $C^2$ functions; note, in particular, the distinction between $C^1(\sQ)$ and $\sC^1(\sQ)$.

\begin{defn}[Parabolic $C^1$ and $C^2$ functions]
\label{defn:C1_C2_function_parabolic}
Let $\sQ\subset\RR^{d+1}$ be an open subset. We say that
\begin{enumerate}
  \item $u\in C^1(\sQ)$ (respectively, $C^1(\bar \sQ)$) if $u, u_{x_i} \in C(\sQ)$ for $1\leq i\leq d$ (respectively, $C(\bar \sQ)$;
  \item $u\in \sC^1(\sQ)$ (respectively, $\sC^1(\bar \sQ)$) if $u, u_t, u_{x_i} \in C(\sQ)$ for $1\leq i\leq d$ (respectively, $C(\bar \sQ)$;
  \item $u\in C^2(\sQ)$ (respectively, $C^2(\bar \sQ)$) if $u, u_t, u_{x_i}, u_{x_ix_j} \in C(\sQ)$ for $1\leq i,j\leq d$ (respectively, $C(\bar \sQ)$
\end{enumerate}
\end{defn}

\begin{defn}[Parabolic $C^{1,\alpha}$ and $C^{2,\alpha}$ norms and Banach spaces]
\label{defn:C_1_2_alpha}
\cite[\S 1.1]{LadyzenskajaSolonnikovUralceva}, \cite[p. 117]{Krylov_LecturesHolder}. We say that $u\in C^{1,\alpha}(\bar \sQ)$ if $u\in C^1(\bar \sQ)$ and
$$
\|u\|_{C^{1,\alpha}(\bar \sQ)} := \|u\|_{C^\alpha(\bar \sQ)} + \|Du\|_{C^\alpha(\bar \sQ)} < \infty,
$$
and say that $u\in C^{2,\alpha}(\bar \sQ)$ if $u\in C^2(\bar \sQ)$ and
$$
\|u\|_{C^{2,\alpha}(\bar \sQ)} := \|u\|_{C^\alpha(\bar \sQ)} + \|u_t\|_{C^\alpha(\bar \sQ)} + \|Du\|_{C^\alpha(\bar \sQ)} + \|D^2u\|_{C^\alpha(\bar \sQ)}  < \infty.
$$
We say that $u\in C^{1,\alpha}(\sQ)$ if $u\in C^{1,\alpha}(\bar \sQ')$ for all precompact open subsets $\sQ'\Subset \sQ$ and similarly define $C^{2,\alpha}(\sQ)$.
\end{defn}

We can now state the parabolic Sobolev embedding theorem.\footnote{While the embedding $W^{2,\infty}(\sO_T) \hookrightarrow C^{1,1}(\bar \sO_T)$ should also hold, this is not asserted by \cite[Lemma 2.3.3]{LadyzenskajaSolonnikovUralceva}.}

\begin{thm}[Continuous embedding for parabolic Sobolev spaces]
\cite[Lemma 2.3.3]{LadyzenskajaSolonnikovUralceva}
\label{thm:Parabolic_sobolev_embedding}
Let $\sO \subset \RR^d$ be an open subset which obeys an interior cone condition, with cone $K$, and $0<T<\infty$. If $\alpha\in [0,1)$, then there are continuous embeddings
\begin{align*}
W^{2,p}(\sO_T) \hookrightarrow C^\alpha(\bar \sO_T), \quad\hbox{for } p > \frac{d}{2} + 1 \quad\hbox{and}\quad 0 \leq \alpha \leq 2 - (d+2)/p,
\\
W^{2,p}(\sO_T) \hookrightarrow C^{1,\alpha}(\bar \sO_T), \quad\hbox{for } p > d+2 \quad\hbox{and}\quad 0 \leq \alpha \leq 1 - (d+2)/p,
\end{align*}
with Sobolev embedding constants depending on $d,K,p,T,\alpha$. The embeddings are compact when $\alpha < 2 - (d+2)/p$ or $\alpha < 1 - (d+2)/p$, respectively.
\end{thm}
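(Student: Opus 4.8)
The plan is to deduce the theorem from the Morrey-type mapping properties of the classical heat potential on $\RR^{d+1}$, working throughout in the parabolic scaling $(t,x)\mapsto(\lambda^2 t,\lambda x)$, under which $u_t$ and $D^2u$ carry the same weight and the ambient homogeneous dimension is $Q:=d+2$; with this normalization the claimed Hölder exponents $2-(d+2)/p$ and $1-(d+2)/p$ are precisely ``number of derivatives minus $Q/p$''. The first move is a \emph{localization and extension}: cover $\bar\sO_T$ by finitely many parabolic cylinders, pass to a subordinate partition of unity, and reduce to $u\in W^{2,p}(\sO_T)$ supported near a single point, then produce an extension $Eu\in W^{2,p}(\RR^{d+1})$ of compact support with $Eu=u$ on $\sO_T$ and $\|Eu\|_{W^{2,p}(\RR^{d+1})}\le C(d,K,p,T)\|u\|_{W^{2,p}(\sO_T)}$. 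Extension in the time variable is elementary---even reflection across $\{t=0\}$ and $\{t=T\}$ followed by a temporal cutoff preserves $W^{2,p}$ precisely because only \emph{one} time derivative is controlled, so no compatibility at the reflection hyperplanes is required. Extension in the spatial variables is where the interior cone condition on $\sO$ enters: it makes $\sO$ a Sobolev extension domain for $W^{2,p}_x(\sO)$, and a spatial extension of Calder\'on--Stein type commutes with $\partial_t$, so it acts fiberwise in $t$ and yields the desired $Eu$. A final translation in $t$ lets us assume $Eu\equiv 0$ for $t\le t_*$.

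Next, set $f:=(Eu)_t-\Delta(Eu)$; since $W^{2,p}$ controls $u_t$ and $D^2u$ but not $u_{tt}$ or $u_{tx_i}$, we have $f\in L^p(\RR^{d+1})$ with compact support, and because $Eu$ solves $(\partial_t-\Delta)(Eu)=f$ with vanishing data at $t=t_*$, Duhamel's principle (justified by uniqueness for the heat equation with $L^p$ right-hand side and an approximation argument) gives, with $\Gamma(t,x)=(4\pi t)^{-d/2}e^{-|x|^2/4t}$ for $t>0$ and $\Gamma\equiv 0$ otherwise,
$$Eu=\Gamma*f,\qquad D(Eu)=(D_x\Gamma)*f\qquad\text{on }\RR^{d+1},$$
with $*$ the space--time convolution. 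Writing $\rho(t,x):=|t|^{1/2}+|x|$ for the parabolic quasi-norm, the Gaussian bounds give $0\le\Gamma\le C\rho^{-d}=C\rho^{-(Q-2)}$ and $|D_x\Gamma|\le C\rho^{-(d+1)}=C\rho^{-(Q-1)}$, so $\Gamma$ and $D_x\Gamma$ are parabolic Riesz kernels of orders $2$ and $1$ on the space of homogeneous type $(\RR^{d+1},\rho,dt\,dx)$ of dimension $Q=d+2$. The corresponding fractional-integration (Hardy--Littlewood--Sobolev/Morrey) estimate says that convolution against such a kernel of order $\gamma\in(0,Q)$ maps $L^p$ continuously into $C^{\gamma-Q/p}(\RR^{d+1})$ whenever $\gamma-Q/p\in(0,1]$ (and into bounded uniformly continuous functions, with an explicit modulus, in the borderline and supercritical cases). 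Applying it with $\gamma=2$ gives $\|Eu\|_{C^{2-(d+2)/p}}\le C\|f\|_{L^p}\le C\|u\|_{W^{2,p}(\sO_T)}$ when $(d+2)/2<p\le d+2$; restricting to $\sO_T$ and using $C^{\gamma-Q/p}\hookrightarrow C^\alpha$ for $\alpha\le\gamma-Q/p$, $\alpha<1$, yields the first embedding for $p>d/2+1$. When $p>d+2$, apply the same estimate with $\gamma=1$ to $D(Eu)=(D_x\Gamma)*f$ to obtain $\|D(Eu)\|_{C^{1-(d+2)/p}}\le C\|u\|_{W^{2,p}(\sO_T)}$, i.e.\ $Eu\in C^{1,\,1-(d+2)/p}$, which after restriction is the second embedding.

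For compactness, if $\alpha<2-(d+2)/p$ (respectively $\alpha<1-(d+2)/p$) choose $\alpha'$ with $\alpha<\alpha'\le 2-(d+2)/p$ (respectively $\le 1-(d+2)/p$); the embedding factors as $W^{2,p}(\sO_T)\hookrightarrow C^{\alpha'}(\bar\sO_T)\hookrightarrow C^\alpha(\bar\sO_T)$ (respectively through $C^{1,\alpha'}(\bar\sO_T)$), and the final inclusion is compact by Arzel\`a--Ascoli, since a bounded subset of $C^{\alpha'}(\bar\sO_T)$ is equibounded and uniformly equi-H\"older of exponent $\alpha$, hence precompact in $C^\alpha(\bar\sO_T)$---and likewise for the gradients in the $C^{1,\alpha}$ case.

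The step I expect to be the main obstacle is the extension in the first paragraph: constructing a bounded $W^{2,p}$-extension operator for a domain satisfying only an \emph{interior cone condition} (rather than a Lipschitz condition), and doing so compatibly with the anisotropic parabolic structure so that the extended function genuinely lies in the \emph{parabolic} Sobolev space $W^{2,p}(\RR^{d+1})$. An approach that avoids a global extension is to prove the Morrey inequality directly on a ``parabolic cone'' $\mathcal C(P_0)\subset\sO_T$ furnished by the cone condition, representing $u(P_0)-u(P)$ as a cone average of an integral of first derivatives and, after one iteration, of $u_t$ and $D^2u$, and then invoking the same kernel bounds $\rho^{-(Q-2)},\rho^{-(Q-1)}$; the careful bookkeeping of the mixed time--space scaling along such a cone is then the technical heart of the argument.
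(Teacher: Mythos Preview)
Your proposal is a genuine proof sketch, whereas the paper does not prove this theorem at all: it is stated with a citation to \cite[Lemma 2.3.3]{LadyzenskajaSolonnikovUralceva}, and the only argument the paper supplies is a one-paragraph dictionary explaining which parameter choices ($l=1$ with $r=s=0$, respectively $r=0,s=1$) in that lemma yield the two embeddings, together with the remark that compactness follows from Arzel\`a--Ascoli when the H\"older exponent is strictly below the critical one. So your compactness paragraph matches the paper exactly, but everything preceding it is genuinely your own route.

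Your route---extension, heat-potential representation $Eu=\Gamma*f$ with $f=(Eu)_t-\Delta(Eu)$, then parabolic Morrey/Hardy--Littlewood--Sobolev on the homogeneous space $(\RR^{d+1},\rho,dt\,dx)$ of dimension $Q=d+2$---is sound and gives exactly the exponents $2-(d+2)/p$ and $1-(d+2)/p$ claimed. You have also correctly identified the one real difficulty: an \emph{interior cone condition} alone does not guarantee a bounded $W^{2,p}$-extension operator (that typically needs a uniform cone or $(\eps,\delta)$-domain hypothesis), so the first paragraph's global extension is not available in the stated generality. Your fallback---proving the Morrey inequality directly on a parabolic cone at each point, representing $u(P_0)-u(P)$ as a cone average of integrals of $u_t,D^2u$ and bounding with the same kernel estimates $\rho^{-(Q-2)},\rho^{-(Q-1)}$---is precisely the device that makes the embedding work under just the cone condition, and is in the spirit of the original Ladyzhenskaja--Solonnikov--Ural'ceva argument. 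With that substitution your proof goes through; the heat-kernel representation is then optional scaffolding rather than a necessary step.
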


The two embeddings in Theorem \ref{thm:Parabolic_sobolev_embedding} follow by taking $l=1$ and (i) $r=s=0$ and (ii) $r=0, s=1$, respectively, in the second displayed equation in \cite[Lemma 2.3.3]{LadyzenskajaSolonnikovUralceva} (but writing $\alpha$ for $\lambda$, and $p$ for $q$, and $d$ for $n$ to be consistent with our notation) and keeping in mind the definitions in \cite[Equations (1.1.4)]{LadyzenskajaSolonnikovUralceva} for the parabolic Sobolev space and \cite[Equations (1.1.10--12)]{LadyzenskajaSolonnikovUralceva} for the parabolic H\"older spaces. Compactness of the embeddings follows from the Arzel\`a-Ascoli Theorem when the inequalities in the upper bounds for $\alpha$ are strict.

\subsection{Simplifying the coefficients}
\label{subsec:Simplifying_coefficients_parabolic}
Before proceeding to the proof of Theorem \ref{thm:Viscosity_maximum_parabolic}, we shall need the following parabolic analogue of Lemma \ref{lem:Simplifying_coefficients_elliptic}. Recall that we write the coordinates on $\RR^d$ as $x=(x',x_d)\in\RR^{d-1}\times\RR$.

\begin{lem}[Simplifying the coefficients]
\label{lem:Simplifying_coefficients_parabolic}
Let $\sQ\subset\RR^{d+1}$ be an open subset, with $C^{1,\alpha}$ boundary portion $\mydirac_0\!\sQ$ for some $\alpha\in(0,1)$, and $L$ be as in \eqref{eq:Generator_parabolic}. Assume that $a$ obeys \eqref{eq:a_continuous_degen_boundary_parabolic} and $b$ obeys \eqref{eq:b_perp_positive_boundary_parabolic} and \eqref{eq:b_C2_parabolic}. Suppose that $\bar P^0 = (\bar t^0,\bar x^0) \in \mydirac_0\!\sQ$. Then there are a constant $\delta>0$ and a $C^1$ diffeomorphism, $\Phi:\RR^{d+1}\to\RR^{d+1}$, of the form $(t,x)\mapsto (t,y(t,x))$ and which restricts to a $C^2$ diffeomorphism from $\RR^{d+1}\less(B_{2\delta}(\bar P^0)\cap\mydirac_0\!\sQ)$ onto its image, such that the following holds. If $u \in C^2(\RR^{d+1})$
and\,\footnote{The function $v$ may only be $C^1$ on $\RR^{d+1}$ but is $C^2$ on the complement of $\left((-2\delta,2\delta)\times B_{2\delta}^d(\bar P^0)\right)\cap\mydirac_0\!\sQ$ in $\RR^{d+1}$.}
$v := u\circ\Phi^{-1}$ and $\widetilde\sQ:=\Phi(\sQ)$, and the operator
\begin{equation}
\label{eq:Tilde_generator_parabolic}
\tilde Lv := -v_t -\tr(\tilde aD^2v) - \langle \tilde b, Dv\rangle + \tilde cv \quad\hbox{on }\widetilde\sQ,
\end{equation}
and its coefficients, $\tilde a:\underline{\widetilde\sQ}\to\sS^+(d)$ and $\tilde b:\underline{\widetilde\sQ}\to\RR^d$ and $\tilde c:\widetilde\sQ\to\RR$, are defined by $\tilde Lv := (Lu)\circ\Phi^{-1}$ on $\widetilde\sQ$, then
\begin{subequations}
\begin{align}
\label{eq:a_tilde_boundary_zero_parabolic}
\tilde a = 0 &\quad\hbox{on } \mydirac_0\!\widetilde\sQ,
\\
\label{eq:b_tilde_perp_positive_boundary_parabolic}
\tilde b^\perp > 0 &\quad\hbox{on } \mydirac_0\!\widetilde\sQ,
\\
\label{eq:b_tilde_tangential_zero_parabolic}
\tilde b^\parallel = 0 &\quad\hbox{on } \left((\bar t^0-\delta,\bar t^0+\delta)\times B_\delta^d(\bar x^0)\right)\cap\mydirac_0\!\widetilde\sQ,
\\
\label{eq:c_tilde_preserved_parabolic}
\tilde c = c\circ\Phi^{-1} &\quad\hbox{on } \widetilde\sQ.
\end{align}
\end{subequations}
\end{lem}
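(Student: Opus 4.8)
The plan is to reduce the parabolic statement to the elliptic computation already carried out in the proof of Lemma~\ref{lem:Simplifying_coefficients_elliptic}, observing that the time variable plays no role in the coefficient-straightening step. First I would invoke Lemma~\ref{lem:Straightening_boundary} (in the form used at the start of the proof of Theorem~\ref{thm:Viscosity_maximum_parabolic}) to reduce to the case $\bar P^0=(\bar t^0,0)$ with $\vec n(\bar P^0)=e_d$, and, for a small enough $\delta>0$, to the flattened configuration in which $\left((\bar t^0-2\delta,\bar t^0+2\delta)\times B_{2\delta}^d\right)\cap\partial\sQ$ equals the portion of the hyperplane $\{x_d=0\}$ lying inside that box; since $n_0=0$ along $\mydirac_0\!\sQ$ by definition~\eqref{eq:Degeneracy_locus_parabolic}, the straightening diffeomorphism may be taken to be of the form $(t,x)\mapsto(t,z(t,x))$, i.e.\ to leave $t$ fixed, and it restricts to a $C^\infty$ diffeomorphism off the frozen boundary portion. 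Because the straightening only involves the spatial coordinates, all of the calculations for $u_{x_i}, u_{x_ix_j}$ from the elliptic proof go through verbatim, with the coefficients now allowed to depend on the (untouched) parameter $t$.

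Next I would carry out the shear transformation exactly as in the elliptic case, but with all coefficients regarded as functions of $(t,x)$. Using hypotheses \eqref{eq:b_perp_positive_boundary_parabolic} and \eqref{eq:b_C2_parabolic}, after relabeling I may assume $b^d>0$ on the closed box $\overline{(\bar t^0-2\delta,\bar t^0+2\delta)}\times \bar B_{2\delta}^+$; then I define
$$
\xi^i(t,x):=-\frac{b^i(t,x',0)}{b^d(t,x',0)},\quad 1\le i\le d-1,\qquad \xi^d:=0,
$$
extend each $b^i$, and hence each $\xi^i$, to a $C^1$ function on the full box via \cite[Lemma~6.37]{GilbargTrudinger}, and set $y=x+\psi(|x|/\delta)\,\xi(t,x)\,x_d$ with the same cutoff $\psi$ as before, so that $\Phi(t,x)=(t,y)$ is $C^1$ on $\RR^{d+1}$ and $C^2$ off $(B_{2\delta}(\bar P^0)\cap\mydirac_0\!\sQ)$ and fixes $\{x_d=0\}$. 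The only new feature is that $\xi$ now depends on $t$ as well, but since $\Phi$ does not alter $t$, the formulas \eqref{eq:tilde a_coefficients} and \eqref{eq:tilde b_coefficients} for $\tilde a^{ij}$ and $\tilde b^j$ hold unchanged for each fixed $t$ (the extra term $-v_t$ in $\tilde L$ is simply carried along, since $\partial_t$ commutes with the pure-spatial change of variables). Because the computation of $\tilde L v = (Lu)\circ\Phi^{-1}$ is pointwise in $t$ and the term $v_t$ receives no correction, we get $\tilde c=c\circ\Phi^{-1}$ on $\widetilde\sQ$, establishing \eqref{eq:c_tilde_preserved_parabolic}, and $\tilde a^{dd}=a^{dd}$, so by the definition~\eqref{eq:Degeneracy_locus_parabolic} of $\mydirac_0\!\sQ$ together with \eqref{eq:a_continuous_degen_boundary_parabolic} we conclude $\tilde a=0$ on $\mydirac_0\!\widetilde\sQ$, i.e.\ \eqref{eq:a_tilde_boundary_zero_parabolic}.

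Finally, since $\vec n=e_d$ after flattening, the splitting \eqref{eq:b_splitting_parabolic} gives $\tilde b^\perp=(0,\dots,0,\tilde b^d)$ and $\tilde b^\parallel=(\tilde b^1,\dots,\tilde b^{d-1},0)$ on the box; from \eqref{eq:tilde b_coefficients} one has $\tilde b^d=b^d$, so \eqref{eq:b_perp_positive_boundary_parabolic} yields $\tilde b^\perp>0$ on $\mydirac_0\!\widetilde\sQ$, which is \eqref{eq:b_tilde_perp_positive_boundary_parabolic}; and by the choice of $\xi^i$ combined with \eqref{eq:Degeneracy_locus_parabolic} and \eqref{eq:a_continuous_degen_boundary_parabolic}, each $\tilde b^i$ vanishes on $\left((\bar t^0-\delta,\bar t^0+\delta)\times B_\delta^d(\bar x^0)\right)\cap\mydirac_0\!\widetilde\sQ$, which is \eqref{eq:b_tilde_tangential_zero_parabolic}. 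I expect the only mild obstacle to be bookkeeping: verifying that the $t$-dependence of $\xi$ does not spoil the $C^1$/$C^2$ regularity claimed for $\Phi$ (it does not, since $b\in C^2(\underline\sQ;\RR^d)$ and the extension in \cite[Lemma~6.37]{GilbargTrudinger} respects the parameter), and confirming that, because $\mydirac_0\!\sQ$ is a spatial-type boundary portion with $n_0=0$, shrinking $\delta$ keeps the flattened box inside $\underline\sQ$ and keeps $\bar B_{2\delta}(\bar P^0)\cap\partial\sQ\subset\mydirac_0\!\sQ$. With these checks, the proof is complete.
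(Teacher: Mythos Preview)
Your overall scheme matches the paper's: flatten the boundary, then apply the shear $y=x+\psi\,\xi(t,x)\,x_d$ with $\xi^i=-b^i(t,x',0)/b^d(t,x',0)$. However, there is a genuine gap in the step where you handle the time derivative. You assert that ``$\partial_t$ commutes with the pure-spatial change of variables'' and hence that $v_t$ receives no correction and the elliptic formulas \eqref{eq:tilde a_coefficients}, \eqref{eq:tilde b_coefficients} for $\tilde b^j$ hold unchanged. But the change of variables is \emph{not} $t$-independent: since $\xi$ depends on $t$, so does $y=y(t,x)$, and therefore $u(t,x)=v(t,y(t,x))$ gives
\[
u_t = v_t + \sum_{i=1}^{d-1} v_{y_i}\,\frac{\partial y_i}{\partial t} + v_{y_d}\,\frac{\partial y_d}{\partial t}.
\]
The paper computes exactly this and observes that the extra terms $\sum_i v_{y_i}\,\partial y_i/\partial t$ are absorbed into $\tilde b$, so the elliptic formula \eqref{eq:tilde b_coefficients} for $\tilde b^j$ acquires additional contributions in the parabolic case.

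The reason your conclusions are nonetheless recoverable is that $\partial y_i/\partial t=\psi\,\xi^i_t\,x_d$ (plus a $\psi_t$ term if you cut off in $t$ as well) and $\partial y_d/\partial t=0$, so every such correction carries a factor of $x_d$ and vanishes on $\{x_d=0\}$; in particular $\tilde b^d=b^d$ and the boundary identities \eqref{eq:b_tilde_perp_positive_boundary_parabolic}, \eqref{eq:b_tilde_tangential_zero_parabolic} survive. But this has to be checked, not asserted by an incorrect commutation claim. A secondary bookkeeping point: your cutoff $\psi(|x|/\delta)$ is purely spatial, so $\Phi$ is modified for all $t$ and $\xi$ must be defined (and $C^2$) for all $t$; the paper instead cuts off in both $t$ and $x$ with $\psi:\RR^{d+1}\to[0,1]$, which keeps the modification inside the box $(-2\delta,2\delta)\times B_{2\delta}^d$ and matches the stated regularity of $\Phi$ off $B_{2\delta}(\bar P^0)\cap\mydirac_0\!\sQ$.
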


\begin{rmk}[Smoothness of the diffeomorphism]
\label{rmk:Simplifying_coefficients_parabolic}
If the condition \eqref{eq:b_C2_parabolic} is replaced by $b\in C^k(\underline\sQ)$, for an integer $k\geq 2$, then the diffeomorphism, $\Phi$, in Lemma \ref{lem:Simplifying_coefficients_parabolic} will be $C^k$ on the complement of $\left((-2\delta,2\delta)\times B_{2\delta}^d(\bar P^0)\right)\cap\mydirac_0\!\sQ$ in $\RR^{d+1}$.
\end{rmk}

\begin{proof}[Proof of Lemma \ref{lem:Simplifying_coefficients_parabolic}]
We just indicate the changes required in the proof of Lemma \ref{lem:Simplifying_coefficients_elliptic}. Recall from the definition \eqref{eq:Degeneracy_locus_parabolic} of $\mydirac_0\!\sQ$ that $n_0(P)=0$ for any $P \in \mydirac_0\!\sQ$, where $\vec n(P) = n_0(P)e_0+\vec n(P)\in\RR\times\RR^d$ denotes an inward-pointing normal vector at a point $P\in\partial\sQ$. By first applying a translation of $\RR^{d+1}=\RR\times\RR^d$ and then a rotation of the spatial factor, $\RR^d$, respectively, we may assume without loss of generality that $\bar P^0 = O \in \RR^{d+1}$ (origin) and $\vec n(O)=e_d$. By virtue of Lemma \ref{lem:Straightening_boundary}, we can find a $C^1$ diffeomorphism of $\RR^{d+1}=\RR\times\RR^{d-1}\times\RR$ of the form
$$
\RR\times\RR^{d-1}\times\RR\ni(t,x',x_d)\mapsto (t,z'(t,x',x_d),x_d)\in\RR^{d-1}\times\RR
$$
such that, for $\delta>0$ small enough,
$$
\left([-2\delta,2\delta]\times\bar B_{2\delta}^d(\bar P^0)\right)\cap\partial\sQ \subset \mydirac_0\!\sQ,
$$
and
$$
\left((-2\delta,2\delta)\times B_{2\delta}^d(\bar P^0)\right)\cap\mydirac\sQ
=
(-2\delta,2\delta)\times B_{2\delta}^d\cap(\RR\times\RR^{d-1}\times\bar\RR_+)
\equiv
(-2\delta,2\delta)\times B_{2\delta}^{d,+},
$$
and which restricts to a $C^\infty$ diffeomorphism from
$$
\RR^{d+1}\less \left\{\left((-2\delta,2\delta)\times B_{2\delta}^d(\bar P^0)\right)\cap\mydirac\sQ\right\}
$$
onto its image.

In addition, for small enough $\delta$, the hypothesis \eqref{eq:b_C2_parabolic} on $b$ implies (after relabeling the coefficients of $L$ due to the coordinate change) that $b^d>0$ on $[-2\delta,2\delta]\times\bar B_{2\delta}^+$ since $b^d(\bar P^0)>0$ by the hypothesis \eqref{eq:b_perp_positive_boundary_parabolic}. We relabel the coordinates on $\RR^d$ as $x=(x',x_d)\in\RR^{d-1}\times\RR$ and define
$$
\xi^i(t,x) := -\frac{b^i(t,x',0)}{b^d(t,x',0)}, \quad\hbox{for } 1\leq i\leq d-1, \quad\hbox{and}\quad \xi^d(t,x) := 0, \quad\forall\, (t,x) \in (-2\delta,2\delta)\times \underline B_{2\delta}^{d,+}.
$$
The functions $b^i$, for $1\leq i\leq d$, are $C^2$ on $(-2\delta,2\delta)\times B_{2\delta}^{d,+}$ by hypothesis \eqref{eq:b_C2_parabolic} and
may be extended from $(-2\delta,2\delta)\times B_{2\delta}^{d,+}$ to give $C^1$ functions $b^i$ on $(-2\delta,2\delta)\times B_{2\delta}$ (see \cite[Lemma 6.37]{GilbargTrudinger}) such that $b^d>0$ on $[-2\delta,2\delta]\times\bar B_{2\delta}$ and so the $C^2$ functions $\xi^i$ on $(-2\delta,2\delta)\times B_{2\delta}^{d,+}$ also extend to give $C^1$ functions $\xi^i$ on $(-2\delta,2\delta)\times B_{2\delta}$, for $1\leq i\leq d$. We define
$$
y := \begin{cases} x + \xi(t,x) x_d, & (t,x) \in (-\delta,\delta)\times B_\delta^d, \\ x, & (t,x) \in \RR^{d+1}\less \left((-2\delta,2\delta)\times B_{2\delta}^d\right), \end{cases}
$$
and interpolate between these two definitions on the cylindrical annulus,
$$
(-2\delta,2\delta)\times B_{2\delta}^d \less \left((-\delta,\delta)\times B_\delta^d\right),
$$
with a $C^\infty$ cutoff function, $\psi:\RR^{d+1}\to [0,1]$, where $\psi=1$ on $[-\delta,\delta]\times \bar B_\delta^d$ and $\psi=0$ on the complement of $(-2\delta,2\delta)\times B_{2\delta}^d$ in $\RR^{d+1}$, so
$$
y = x + \psi(t,x)\xi(t,x) x_d, \quad\hbox{for } (t,x)\in \RR^{d+1}.
$$
Note finally that $y=x$ on $\RR^{d-1}\times\{0\}$.

It remains to describe the differences in the effects of the combined coordinate changes on the coefficients of $L$ which were not already encountered in the proof of Lemma \ref{lem:Simplifying_coefficients_elliptic}. Setting $v(t,y',y_d) := u(t,x',x_d)$ and writing $y'=(y_1',\ldots,y_{d-1}')$, where $y_i=y_i(t,x',x_d)$ for $1\leq i\leq d$, we have
$$
u_t = v_t + \sum_{i=1}^{d-1}v_{y_i'}\frac{\partial y_i'}{\partial t} + v_{y_d}\frac{\partial y_d}{\partial t},
$$
and so the additional terms $\sum_{i=1}^{d-1}v_{y_i'}\partial y_i'/\partial t$ and $v_{y_d}\partial y_d/\partial t$ will simply contribute to the terms $\sum_{i=1}^{d-1}\tilde b^i v_{y_i'}$ and $\tilde b^d v_{y_d}$ in the expression \eqref{eq:Tilde_generator_parabolic} for $\tilde L$. The remainder of the proof is the same as that of Lemma \ref{lem:Simplifying_coefficients_elliptic}.
\end{proof}

\subsection{Hopf boundary point lemma for a boundary-degenerate parabolic linear second-order differential operator}
\label{subsec:Hopf_boundary_point_lemma_parabolic}
We shall also need the following boundary-degenerate parabolic analogue of our boundary-degenerate elliptic Hopf boundary point result, Lemma \ref{lem:Degenerate_hopf_lemma_elliptic}.

\begin{lem}[Hopf boundary point lemma for a boundary-degenerate parabolic linear second-order differential operator]
\label{lem:Degenerate_hopf_lemma_parabolic}
\cite[Lemma 5.8]{Feehan_parabolicmaximumprinciple}
Let $\sQ\subset\RR^{d+1}$ be an open subset and assume that the coefficients of $L$ in \eqref{eq:Generator_parabolic} obey \eqref{eq:a_locally_strictly_parabolic}, \eqref{eq:c_nonnegative_parabolic}, \eqref{eq:b_perp_positive_boundary_parabolic}\footnote{It is sufficient for the proof of Lemma \ref{lem:Degenerate_hopf_lemma_parabolic} that $\langle b,\vec n\rangle(P^0) > 0$ in the case $P^0 \in \mydirac_0\!\sQ$.}, \eqref{eq:c_locally_bounded_above_domain_plus_degen_boundary_parabolic}, and that
\begin{gather}
\label{eq:ab_locally_bounded_near_boundary_parabolic}
a \hbox{ and } b \quad\hbox{are (essentially) locally bounded on } \underline \sQ,
\\
\label{eq:ab_continuous_coefficient_boundary_parabolic}
a \hbox{ and } b \quad \hbox{are continuous on } \mydirac_0\!\sQ.
\end{gather}
Suppose that $\underline \sQ$ contains the closure $\bar B$ of an open ball,
$$
B := \left\{(t,x)\in\RR^{d+1}: |x-x^*|^2 + |t-t^*|^2 < R^2\right\} \subset \sQ,
$$
and $P^0 := (t^0,x^0)\in\partial B$ with $x^0\neq x^*$. Suppose that $u\in C^2(\sQ)$ (respectively, $u\in W^{2,d+1}_{\loc}(\sQ)$) obeys $Lu\leq 0$ (a.e.) on $\sQ$ and satisfies the conditions,
\begin{enumerate}
\renewcommand{\theenumi}{\roman{enumi}}
\item $u$ is continuous at $P^0$;
\item\label{item:xzero_strict_local_max_parabolic} $u(P^0) > u(P)$, for all $P\in B$;
\item $D_{\vec n} u(P^0)$ exists,
\end{enumerate}
where $D_{\vec n} u(P^0)$ is the derivative of $u$ at $P^0$ in the direction of the \emph{inward}-pointing unit normal vector, $\vec n(P^0)$, at $P^0\in\partial B$. Then the following hold:
\begin{enumerate}
\item\label{item:Hopf_c_zero_parabolic} If $c=0$ on $\sQ$, then $D_n u(P^0)$ obeys the strict inequality,
\begin{equation}
\label{eq:Positive_inward_normal_derivative_parabolic}
D_{\vec n} u(P^0) < 0.
\end{equation}
\item\label{item:Hopf_c_geq_zero_parabolic} If $c\geq 0$ on $\sQ$ and $u(P^0)\geq 0$, then \eqref{eq:Positive_inward_normal_derivative_parabolic} holds.
\item\label{item:Hopf_c_no_sign_parabolic} If $u(P^0)=0$, then \eqref{eq:Positive_inward_normal_derivative_parabolic} holds irrespective of the sign of $c$.
\end{enumerate}
\end{lem}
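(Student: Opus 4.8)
The plan is to adapt the classical Hopf barrier argument to the boundary-degenerate setting. First I would dispose of the case $P^0\in\sQ$: since $\sQ$ is open, one may shrink $B$ to an open ball $B'$ with $\bar B'\subset\sQ$ and $P^0\in\partial B'$, after which the assertion is the classical parabolic Hopf boundary point lemma for $L$, which is locally uniformly parabolic by \eqref{eq:a_locally_strictly_parabolic} and \eqref{eq:ab_locally_bounded_near_boundary_parabolic}; the hypothesis $x^0\neq x^*$ guarantees that the inward normal $\vec n(P^0)$ to $\partial B'$ is not purely temporal, so this classical lemma applies. The substantive case is $P^0\in\mydirac_0\!\sQ$. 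Because $B\subset\sQ$ is an interior ball touching $\partial\sQ$ at $P^0$ and $\partial\sQ$ is $C^{1,\alpha}$ there by \eqref{eq:C1alpha_degenerate_boundary_parabolic}, $\partial B$ and $\partial\sQ$ are tangent at $P^0$; since $n_0(P^0)=0$ on $\mydirac_0\!\sQ$ this forces the temporal coordinates of $P^0$ and the centre $P^*$ to coincide, so $\vec n(P^0)$ is purely spatial and $|x^0-x^*|=R$. Shrinking $B$ tangentially at $P^0$ if necessary, I may also assume $\bar B\cap\mydirac_0\!\sQ$ lies inside the neighbourhood of $P^0$ on which $\langle b,\vec n\rangle>0$, which is available from $\langle b,\vec n\rangle(P^0)>0$ (hypothesis \eqref{eq:b_perp_positive_boundary_parabolic}, cf.\ the footnote) together with the regularity of $b$ near $\mydirac_0\!\sQ$ recorded in \eqref{eq:ab_locally_bounded_near_boundary_parabolic}--\eqref{eq:ab_continuous_coefficient_boundary_parabolic}.

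Next I introduce the barrier $h(P):=e^{-\alpha|P-P^*|^2}-e^{-\alpha R^2}$, with $\alpha>0$ to be chosen large, on the lens-shaped region
$$
A:=\{P\in B:\ |P-P^*|>R/2,\ |x-x^*|>\gamma\},\qquad 0<\gamma<|x^0-x^*|,
$$
where the constraint $|x-x^*|>\gamma$ is permissible precisely because $x^0\neq x^*$ and keeps $\bar A$ away from the temporal poles of $B$; note $A\subset B\subset\sQ$, so $a>0$ on $A$ by \eqref{eq:a_locally_strictly_parabolic}. A direct computation gives
$$
Lh=e^{-\alpha|P-P^*|^2}\bigl[\,2\alpha(t-t^*)-4\alpha^2\langle a(x-x^*),x-x^*\rangle+2\alpha\tr a+2\alpha\langle b,x-x^*\rangle+c\,\bigr]-c\,e^{-\alpha R^2}.
$$
On the part of $\bar A$ at a fixed positive distance from $\mydirac_0\!\sQ$ one has $\langle a(x-x^*),x-x^*\rangle\geq\lambda_0\gamma^2>0$ by \eqref{eq:a_locally_strictly_parabolic} and compactness, so the $-4\alpha^2$ term dominates the remaining terms --- which are $O(\alpha)$ by \eqref{eq:ab_locally_bounded_near_boundary_parabolic} and \eqref{eq:c_locally_bounded_above_domain_plus_degen_boundary_parabolic} --- and the bracket is negative once $\alpha$ is large. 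On a thin strip of $A$ around $\bar A\cap\mydirac_0\!\sQ$ the term $\langle a(x-x^*),x-x^*\rangle$ need not be bounded below, but there $a\to0$ and $t-t^*\to0$, while the geometry forces $x-x^*$ to be nearly parallel to the inward normal of $\mydirac_0\!\sQ$, so $\langle b,x-x^*\rangle\to -R\langle b,\vec n\rangle<0$; hence the $O(\alpha)$ drift term $2\alpha\langle b,x-x^*\rangle$ now supplies the negativity, dominating the bounded zeroth-order term for $\alpha$ large. Since $c\geq0$ by \eqref{eq:c_nonnegative_parabolic}, the remaining piece $-c\,e^{-\alpha R^2}\leq0$, and therefore $Lh<0$ on $A$ for a suitable $\alpha$.

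With the barrier in hand I run the classical comparison argument. Set $w:=u-u(P^0)+\eps h$. On $\partial B\cap\bar A$ we have $h=0$ and $u\leq u(P^0)$ (by continuity and the strict-maximum hypothesis), while on the two interior faces $\{|P-P^*|=R/2\}\cap\bar A$ and $\{|x-x^*|=\gamma\}\cap\bar A$, which are compact subsets of $B$, we have $u\leq u(P^0)-m$ for some $m>0$; hence $w\leq0$ on $\partial A$ once $\eps>0$ is small. On $A$, $Lw=Lu-c\,u(P^0)+\eps Lh$; when $c=0$ the middle term vanishes, and when $c\geq0$ and $u(P^0)\geq0$ it is $\leq0$, so $Lw\leq\eps Lh<0$ on $A$ in both cases. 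Because $a>0$ on $A\subset\sQ$, the classical weak maximum principle --- the parabolic analogue of \cite[Theorem 3.1]{GilbargTrudinger} when $u\in C^2$, or an Aleksandrov--Bakelman--Pucci estimate in the spirit of \cite[Theorem 9.1]{GilbargTrudinger} when $u\in W^{2,d+1}_{\loc}$ --- gives $w\leq0$ on $A$, i.e.\ $u(P^0)-u\geq\eps h$ on $\bar A$. Evaluating along the inward-normal ray $P^0+s\vec n(P^0)\in A$, letting $s\downarrow0$, and using $h(P^0)=0$ together with $D_{\vec n}h(P^0)=2\alpha Re^{-\alpha R^2}>0$, we obtain $-D_{\vec n}u(P^0)\geq 2\alpha\eps Re^{-\alpha R^2}>0$, which is \eqref{eq:Positive_inward_normal_derivative_parabolic}. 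Finally, when $u(P^0)=0$ and $c$ has no prescribed sign, the strict-maximum hypothesis gives $u<0$ on $B$, so $c^-u\leq0$ and $u$ is also a subsolution on $\sQ$ for the operator $L$ with $c$ replaced by $c^+=c\vee0\geq0$; applying the already-proved $c\geq0$ case to this operator (with $u(P^0)=0\geq0$) completes the proof.

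The hard part will be the barrier estimate on the thin strip of $A$ abutting $\mydirac_0\!\sQ$, where the ellipticity term $-4\alpha^2\langle a(x-x^*),x-x^*\rangle$ is no longer available and the sign of $Lh$ must come from $-\langle b,Dh\rangle$ alone. Making this precise requires quantitative control of two things on that strip: first, that $x-x^*$ is close to the inward unit normal of $\mydirac_0\!\sQ$ --- a geometric estimate using the $C^{1,\alpha}$ regularity \eqref{eq:C1alpha_degenerate_boundary_parabolic} and, crucially, the hypothesis $x^0\neq x^*$, which places $P^0$ away from the temporal poles of $B$ (where tangency of $\partial B$ with $\mydirac_0\!\sQ$ would be impossible); and second, that $\langle b,\vec n\rangle$ is bounded below by a positive constant there, which is exactly where \eqref{eq:b_perp_positive_boundary_parabolic} and the coefficient-continuity hypothesis \eqref{eq:ab_continuous_coefficient_boundary_parabolic} enter. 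Everything else is a matter of fixing, in this order, $\gamma$, the width of the strip, $\alpha$, and finally $\eps$.
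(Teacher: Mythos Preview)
The paper does not prove this lemma; it simply quotes it from \cite[Lemma 5.8]{Feehan_parabolicmaximumprinciple} (just as the elliptic Lemma \ref{lem:Degenerate_hopf_lemma_elliptic} is quoted from \cite[Lemma 4.1]{Feehan_maximumprinciple_v1}). So there is no in-paper argument to compare against.

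Your proposal is the natural adaptation of the classical Hopf barrier construction and is essentially correct; it is also what one expects the cited reference to do. The two genuinely non-classical ingredients you identify are exactly the right ones: (a) the tangency argument forcing $t^0=t^*$ and aligning $\vec n_B(P^0)$ with the spatial normal to $\mydirac_0\!\sQ$, and (b) replacing the ellipticity-driven negativity of $Lh$ by the drift term $2\alpha\langle b,x-x^*\rangle\approx -2\alpha R\,b^\perp(P^0)$ on the thin strip abutting $\mydirac_0\!\sQ$, which is where \eqref{eq:b_perp_positive_boundary_parabolic} and \eqref{eq:ab_continuous_coefficient_boundary_parabolic} are used. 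A few small points worth tightening: after shrinking $B$ you should also record that $\bar B\cap\mydirac_0\!\sQ=\{P^0\}$ (a standard interior-ball/tangent-$C^1$-hypersurface fact), since this is what guarantees both that $u$ is continuous on all of $\bar A$ (you only assume continuity of $u$ at $P^0$, not at other degenerate-boundary points) and that the ``thin strip'' really is a neighbourhood of $P^0$ alone. Second, in the $W^{2,d+1}_{\loc}$ case you cannot appeal to a weak maximum principle on $A$ with uniform lower ellipticity bound, because $\lambda_*$ may vanish as you approach $P^0$; instead, argue directly that a positive interior maximum of $w$ at some $P_1\in A\subset\sQ$ is impossible by applying the classical (locally uniformly parabolic) strong maximum principle or the Bony/Aleksandrov pointwise principle on a small ball around $P_1$ in $\sQ$, where $\lambda_*$ is bounded below --- this, together with $Lw<0$ a.e.\ and $w\leq 0$ on $\partial A$, forces $w\leq 0$ on $\bar A$. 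With these clarifications the argument is complete.
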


\subsection{Perturbing a degenerate-boundary local maximum to an interior local maximum}
\label{subsec:Perturb_degen_boundary_local_maximum_parabolic}
We can now give the

\begin{proof}[Proof of Theorem \ref{thm:Viscosity_maximum_parabolic}]
For ease of language, we shall suppose that $u\in C^2(\sQ)$; as with Theorem \ref{thm:Viscosity_maximum_elliptic}, there is no difference in the argument when $u\in W^{2,p}_{\loc}(\sQ)$.

Just as in the proof of Lemma \ref{lem:Simplifying_coefficients_parabolic}, by first applying a $C^1$ diffeomorphism of $\RR^{d+1}$, which is $C^2$ on the complement of an open neighborhood in $\mydirac_0\!\sQ$ of the point $\bar P^0\in\mydirac_0\!\sQ$, to straighten the boundary, we may assume without loss of generality that $\bar P^0 = O \in \RR^{d+1}$ (origin) and $\vec n(\bar P^0) = e_d$ and, for a small enough constants $\tau>0$ and $\rho_0>0$ (depending on the geometry of $\partial \sQ$ and implicitly on $\bar P^0$), that
$$
\left([-\tau,\tau]\times\bar B_{\rho_0}^d\right)\cap\partial \sQ \subset \mydirac_0\!\sQ,
$$
and, in particular,
$$
\left((-\tau,\tau)\times B_{\rho_0}^d\right)\cap\mydirac_0\!\sQ = (-\tau,\tau)\times B_{\rho_0}^{d-1}\times \{0\} \subset \RR^{d-1}\times\bar\RR_+,
$$
where $B_{\rho_0}^d$ is the open ball in $\RR^d$ with center at the origin and radius ${\rho_0}$. For a small enough positive constant $\ell$ (depending on the open subset $\sQ$ and implicitly on $\bar P^0$), we may assume that $[-\tau,\tau]\times\bar B_{\rho_0}^{d-1}\times [0,\ell]\subset\underline \sQ$ with $u$ attaining a non-negative maximum on $[-\tau,\tau]\times\bar B_{\rho_0}^{d-1}\times [0,\ell]$ at the origin, $O$, and which is strict maximum in the sense that
$$
u(P) < u(O), \quad\forall P \in (-\tau,\tau)\times B_{\rho_0}^{d-1}\times (0,\ell).
$$
We begin with the construction of the quadratic polynomial.

\setcounter{step}{0}
\begin{step}[Construction of the quadratic polynomial, $w$]
\label{step:Construction_w_parabolic}
Since $u_t(\bar P^0)$ and $Du(\bar P^0)$ exist by hypothesis, the function $u(t,x)$ has a first-order Taylor polynomial approximation\footnote{We write the error term variously as $(t+|x|)o(t,x)$ or $(t+|x'|+x_d)o(t,x',x_d)$, depending on the context.},
$$
u(t,x) = u(O) + u_t(O)t + \langle Du(O),x\rangle + \left(t+|x|\right)o(t,x), \quad\forall\, (t,x) \in [-\tau,\tau]\times\bar B_{\rho_0}^{d-1}\times [0,\ell],
$$
where $o$ is continuous on $[-\tau,\tau]\times\bar B_{\rho_0}^{d-1}\times [0,\ell]$ with $o(t,x)=0$ when $(t,x)=O$, and
$$
u_t(O) = u_{x_i}(O) = 0, \quad 1\leq i\leq d-1, \quad\hbox{and}\quad u_{x_d}(O) \leq 0,
$$
because $u$ has a local maximum at $(t,x)=O$ relative to $(-\tau,\tau)\times B_{\rho_0}^{d-1}\times [0,\ell)$. By hypothesis, this local maximum is strict and so Lemma \ref{lem:Degenerate_hopf_lemma_parabolic} implies that
$$
u_{x_d}(O) := p < 0.
$$
Consequently, setting $u(O):=r$, the Taylor formula for $u$ becomes
\begin{equation}
\label{eq:u_taylor_polynomial_parabolic}
u(t,x) = r + px_d + \left(t+|x|\right)o(t,x), \quad\forall\, x \in \bar B_{\rho_0}^{d-1}\times [0,\ell].
\end{equation}
We now define a quadratic polynomial, for constants $0<\eta,\zeta\leq 1$ and $Q>0$, to be chosen later,
\begin{equation}
\label{eq:Quadratic_polynomial_perturbation_parabolic}
w(t,x) := -\zeta t + (\eta-p)x_d - \frac{Q}{2}|x|^2, \quad\forall\, (t,x) \in \RR^{d+1},
\end{equation}
and observe that
\begin{align*}
Lw(t,x) &= \zeta + \tr a(t,x)\,Q - b^d(t,x)(\eta - p) + \langle b(t,x), x\rangle Q
\\
&\quad + c(t,x)\left((\eta-p)x_d - \frac{Q}{2}|x|^2\right),
\end{align*}
and thus,
\begin{equation}
\label{eq:Lw}
\begin{aligned}
Lw(t,x) &= \zeta + \left(\tr a(t,x) + \langle b(t,x), x\rangle\right)Q - b^d(t,x)(\eta-p)
\\
&\quad + c(t,x)\left(\eta-p)x_d - \frac{Q}{2}|x|^2\right), \quad\forall (t,x) \in (-\tau,\tau)\times B_{\rho_0}^{d-1}\times (0,\ell).
\end{aligned}
\end{equation}
From \eqref{eq:b_splitting_parabolic} we have $b(P) = b^\parallel(P) + b^\perp(P)$ for $P\in [-\tau,\tau]\times\bar B_{\rho_0}^{d-1}\times [0,\ell]$. We have $b^\perp(P) = (0,\ldots,0,b^d(P))$, since $\vec n(P)=e_d$, and $b^\parallel(P) = (b^1(P),\ldots,b^{d-1}(P),0)$. For convenience, write $x=(x',x_d) \in \RR^{d-1}\times\RR$. By hypothesis \eqref{eq:b_tangential_component_zero_degenerate_boundary_parabolic} or hypothesis \eqref{eq:b_C2_parabolic} and  Lemma \ref{lem:Simplifying_coefficients_parabolic} if \eqref{eq:b_tangential_component_zero_degenerate_boundary_parabolic} does not initially hold, we can assume
$$
b^\parallel(t,x',0) = 0, \quad\forall\,(t,x') \in (-\tau,\tau)\times B_{\rho_0}^{d-1},
$$
The definition \eqref{eq:Degeneracy_locus_parabolic} of $\mydirac_0\!\sQ$ and the fact that $a\in\ C(\underline\sQ;\sS^+(d))$ by \eqref{eq:a_continuous_degen_boundary_parabolic} implies
$$
a(t,x',0) = 0, \quad\forall\,(t,x') \in (-\tau,\tau)\times B_{\rho_0}^{d-1},
$$
Therefore, the locally Lipschitz hypothesis \eqref{eq:a_locally_lipschitz_parabolic} for $a$ and hypotheses \eqref{eq:b_tangential_locally_lipschitz_parabolic} or \eqref{eq:b_C2_parabolic} for $b$ imply that there is a positive constant, $K$, such that
\begin{align}
\label{eq:a_Lipschitz_endpoint_nbhd_parabolic}
\tr a(t,x) &\leq Kx_d,
\\
\label{eq:bprime_endpoint_nbhd_bound_parabolic}
|b^\parallel(t,x)| &\leq Kx_d, \quad\forall\, (t,x) \in (-\tau,\tau)\times B_{\rho_0}^{d-1}\times(0,\ell).
\end{align}
Also, by hypothesis \eqref{eq:b_perp_positive_boundary_parabolic}, we have
$$
b^d(O) := b_0>0,
$$
and so, for small enough $\ell$ and $\rho_0$, we see that
\begin{equation}
\label{eq:bd_continuity_endpoint_nbhd_bound_parabolic}
\frac{b_0}{2} \leq b^d(t,x) \leq 2b_0, \quad\forall\, (t,x)\in (-\tau,\tau)\times B_{\rho_0}^{d-1}\times(0,\ell).
\end{equation}
Moreover, $c$ is locally bounded above on $\underline\sQ$ by hypothesis \eqref{eq:c_locally_bounded_above_domain_plus_degen_boundary_parabolic}, so there is a positive constant, $\Lambda_0$, such that
$$
c(t,x) \leq \Lambda_0, \quad \forall\, (t,x) \in (-\tau,\tau)\times B_{\rho_0}^{d-1}\times(0,\ell).
$$
We may further assume that $\ell=\ell(b_0,\Lambda_0)$ is chosen small enough that
$$
\Lambda_0\ell \leq \frac{b_0}{4},
$$
and so we obtain
\begin{equation}
\label{eq:cx_endpoint_nbhd_bound_parabolic}
0\leq c(t,x)x_d \leq \frac{b_0}{4}, \quad \forall\, (t,x) \in (-\tau,\tau)\times B_{\rho_0}^{d-1}\times(0,\ell).
\end{equation}
Thus, writing $\langle b(t,x), x\rangle = \langle b^\parallel(t,x), x'\rangle + b^d(t,x)x_d$ in
\eqref{eq:Lw} and using $c\geq 0$ on $\sQ$ (by hypothesis \eqref{eq:c_nonnegative_parabolic}) yields
\begin{align*}
Lw(t,x) &\leq \zeta + \langle b^\parallel(t,x), x'\rangle Q + \left(\tr a(t,x) + b^d(t,x)x_d\right)Q - b^d(t,x)(\eta-p)
\\
&\quad + c(t,x)(\eta-p)x_d,
\end{align*}
and by applying \eqref{eq:a_Lipschitz_endpoint_nbhd_parabolic}, \eqref{eq:bprime_endpoint_nbhd_bound_parabolic}, \eqref{eq:bd_continuity_endpoint_nbhd_bound_parabolic}, and \eqref{eq:cx_endpoint_nbhd_bound_parabolic}, we discover that
\begin{equation}
\label{eq:Lw_upper_bound_parabolic}
\begin{aligned}
Lw(t,x) &\leq \zeta + KQ |x'|x_d + (K+2b_0)Qx_d - \frac{b_0}{4}(\eta-p),
\\
&\qquad \forall\, (t,x) \in (-\tau,\tau)\times B_{\rho_0}^{d-1}\times(0,\ell).
\end{aligned}
\end{equation}
Now let
$$
m := \frac{16}{b_0}(K+2b_0) = \frac{16K}{b_0} + 32,
$$
and observe that
\begin{equation}
\label{eq:xd_bound_parabolic}
0 \leq (K + 2b_0)Q x_d <  \frac{b_0}{16}(\eta-p) \iff 0 \leq x_d < \frac{\eta-p}{mQ}.
\end{equation}
But then
\begin{equation}
\label{eq:xprime_bound_parabolic}
KQ|x'|x_d < \frac{b_0}{16}(\eta-p) \Leftarrow |x'| < \frac{mb_0}{16K} \quad\hbox{and}\quad 0 \leq x_d < \frac{\eta-p}{mQ}.
\end{equation}
Note that $mb_0/(16K) = 1 + 2b_0/K$ by definition of $m$ and so we may suppose without loss of generality that $\rho_0=\rho_0(b_0,K)>0$ is chosen small enough that
\begin{equation}
\label{eq:rho_less_xprime_bound_parabolic}
\rho_0 \leq 1 + \frac{2b_0}{K}.
\end{equation}
By choosing $Q=Q(\ell,m,p)$ large enough, we may also assume (recall that $p<0$ depends on $u$ but is fixed and $\eta\in(0,1]$) without loss of generality that
\begin{equation}
\label{eq:xd_bound_less_ell_parabolic}
0<\hat x_d(\eta,Q) := \frac{\eta-p}{mQ} \leq \ell.
\end{equation}
In the sequel, we shall exploit the fact the upper bound in \eqref{eq:xd_bound_less_ell_parabolic} is preserved as $\eta\downarrow 0$ or $Q\uparrow\infty$. By virtue of \eqref{eq:Lw_upper_bound_parabolic}, \eqref{eq:xprime_bound_parabolic}, \eqref{eq:xd_bound_parabolic}, \eqref{eq:rho_less_xprime_bound_parabolic}, and \eqref{eq:xd_bound_less_ell_parabolic}, we obtain
\begin{align*}
Lw &< \zeta + \frac{b_0}{16}(\eta-p) + \frac{b_0}{16}(\eta-p) - \frac{b_0}{4}(\eta-p)
\\
&= \zeta - \frac{b_0}{8}(\eta-p) \quad\hbox{on } (-\tau,\tau)\times B_{\rho_0}^{d-1}\times (0,\hat x_d),
\end{align*}
and thus, provided $\zeta=\zeta(b_0)>0$ is chosen small enough that
\begin{equation}
\label{eq:zeta_bound}
\zeta \leq -\frac{b_0}{8}p,
\end{equation}
so $\zeta < b_0(\eta-p)/8$, we find that
$$
Lw < 0 \quad\hbox{on } (-\tau,\tau)\times B_{\rho_0}^{d-1}\times (0,\hat x_d),
$$
where it necessarily also holds that $Lv = Lu+Lw < 0$, with $v := u + w$, since $Lu\leq 0$ on $\sQ$ by hypothesis. This completes Step \ref{step:Construction_w_parabolic} and establishes one of the conclusions of Theorem \ref{thm:Viscosity_maximum_parabolic}, where the function $w$ appearing in that conclusion is equal to the preceding quadratic polynomial composed with the coordinate changes required in our construction.
\end{step}

We next show that $v$ attains a maximum value $v(P^0)>u(O)$ at some point $P^0 \in [0,\tau]\times \bar B_{\rho_0}^{d-1}\times[0,\ell]$.

\begin{step}[Maximum of $v$ on the parabolic cylinder]
\label{step:Interior_maximum_parabolic}
By \eqref{eq:u_taylor_polynomial_parabolic} and writing $x=(x',x_d)$, the function $v(t,x',x_d)$ has the Taylor polynomial approximation,
\begin{equation}
\label{eq:v_taylor_polynomial_parabolic}
\begin{aligned}
v(t,x',x_d) &= r - \zeta t + \left(\eta - \frac{Q}{2}x_d\right)x_d + (t+|x'|+x_d)o(t,x',x_d) - \frac{Q}{2}|x'|^2,
\\
&\qquad \forall\, (t,x',x_d)\in [0,\tau]\times \bar B_{\rho_0}^{d-1}\times[0,\ell].
\end{aligned}
\end{equation}
Consequently, writing $o(0,0,x_d) = o(x_d)$,
$$
v(0,0,x_d) = r + \left(\eta - \frac{Q}{2}x_d\right)x_d + x_do(x_d), \quad \forall\, x_d\in [0,\ell],
$$
and thus, setting $x_d = \xi \hat x_d \in (0, \hat x_d)$ for $0<\xi<1$ and $\hat x_d\in (0,\ell]$ defined in \eqref{eq:xd_bound_less_ell_parabolic}, we see that
$$
v(0,0,\xi\hat x_d) = r + \left(\eta - \frac{Q}{2}\xi\hat x_d + o(\xi\hat x_d)\right)\xi\hat x_d, \quad \forall\, \xi \in (0,1).
$$
For any $\eta, Q$ (and thus $\hat x_d(\eta,Q)\in (0,\ell]$), we may choose $\xi=\xi(\eta,Q) \in (0,1)$ such that
$$
\frac{Q}{2}\xi\hat x_d + |o(\xi\hat x_d)| \leq \frac{\eta}{2},
$$
and so
$$
v(0,0,\xi\hat x_d) \geq r + \frac{\eta}{2}\xi\hat x_d > r.
$$
Therefore, because $v(0,0,\xi\hat x_d)>r$ for small enough $\xi=\xi(\eta,Q)\in (0, 1)$, the function $v$ must attain a maximum $v(t^0,x^0)>r$ at some point $P^0 = (t^0,x^0)\in [0,\tau]\times \bar B_{\rho_0}^{d-1}\times[0,\hat x_d]$.
\end{step}

It remains to show that $v$ on $[0,\tau]\times \bar B_{\rho_0}^{d-1}\times[0,\ell]$ attains its maximum, $v(P^0)>u(O)$, at a point $P^0$ in the parabolic interior, $[0,\tau)\times V$, of the parabolic cylinder $[0,\tau]\times \bar V$ with spatial cylinder $V$ having a curved side,
$$
V := \{(x',x_d)\in B_{\rho_0}^{d-1}\times (0,\hat x_d): |x'|< \rho(x_d)\}
\subset
B_{\rho_0}^{d-1}\times (0,\ell),
$$
defined by a suitably-chosen positive-valued function,
\begin{equation}
\label{eq:Defn_rho_parabolic}
\rho:[0,\infty)\to(0,\infty),
\end{equation}
to be determined later in the proof of Theorem \ref{thm:Viscosity_maximum_parabolic}.

\begin{step}[Upper bound for $v$ on the parabolic boundary]
\label{step:Upper_bound_parabolic_boundary}
To show that $(t^0,x^0)$ must belong to the \emph{parabolic interior}, that is $[0,\tau)\times V$, we prove that $v\leq r$ on
$$
\mydirac((0,\tau)\times V) = \left(\{\tau\}\times \bar V\right) \cup \left((0,\tau)\times \partial V\right),
$$
for suitable $\zeta$, $\eta$, and $Q$.

\setcounter{case}{0}
\begin{case}[Upper bound for $v$ on the temporal boundary]
\label{case:Cylinder_temporalboundary_parabolic}
We begin by examining $v$ on the temporal boundary, $\{\tau\}\times \bar V$, and prove the stronger result, that $v\leq r$ on
$$
\{\tau\}\times \bar B_{\hat\rho}^{d-1}\times[0,\hat x_d] \supset \{\tau\}\times \bar V.
$$
Observe that \eqref{eq:v_taylor_polynomial_parabolic} yields
\begin{align*}
v(\tau,x) &= r - \zeta \tau + \eta x_d + (\tau+|x|)o(\tau,x) - \frac{Q}{2}|x|^2,
\\
&\leq r - \zeta \tau + \eta x_d + (\tau+|x|)o(\tau,x), \quad \forall\, x \in \bar B_{\rho_0}^{d-1}\times[0,\hat x_d].
\end{align*}
Recall from \eqref{eq:xd_bound_less_ell_parabolic} that $\hat x_d(\eta,Q) = (\eta-p)/(mQ)$. Choose $\tau=\tau(u,\zeta)>0$ and $\rho_0=\rho_0(u,\zeta)$ small enough and $\hat x_d>0$ small enough (via large enough $Q=Q(m,p,u,\tau,\zeta)$ in \eqref{eq:xd_bound_less_ell_parabolic}) to ensure that
\begin{equation}
\label{eq:o_tau_x_bound}
o(\tau,x) \leq \frac{\zeta}{4}, \quad \forall\, x \in \bar B_{\rho_0}^{d-1}\times[0,\hat x_d].
\end{equation}
We now decrease $\hat x_d>$ (by increasing $Q=Q(m,p,u,\tau,\zeta)$ further) and decrease $\rho_0=\rho_0(u,\tau)>0$ enough, respectively, to ensure that
$$
|x'| \leq \frac{\tau}{2} \quad\hbox{and}\quad x_d \leq \frac{\zeta}{4}\tau \wedge \frac{\tau}{2}, \quad \forall\, x \in \bar B_{\rho_0}^{d-1}\times[0,\hat x_d],
$$
and thus, using $|x| \leq |x'| + x_d$ and recalling that $\eta \in (0,1]$,
$$
|x| \leq \tau  \quad\hbox{and}\quad \eta x_d \leq \frac{\zeta}{4}\tau, \quad \forall\, x \in \bar B_{\rho_0}^{d-1}\times[0,\hat x_d].
$$
Therefore, we obtain
\begin{align*}
v(\tau,x) &\leq r - \zeta \tau + \frac{\zeta}{4}\tau + \frac{\zeta}{4}\tau + \frac{\zeta}{4}\tau
\\
&= r - \frac{\zeta}{4}\tau
\\
&< r, \quad\forall\, x \in \bar B_{\rho_0}^{d-1}\times[0,\hat x_d],
\end{align*}
as desired for Case \ref{case:Cylinder_temporalboundary_parabolic}.
\end{case}

Next we examine $v$ on $(0,\tau)\times \partial V$ and focus on the three spatial boundary portions of $V$,
$$
\partial V = \left(\bar V\cap\{x_d=0\}\right) \cup \left(\bar V\cap\{x_d=\hat x_d\}\right) \cup \left(\bar V\cap\{0<x_d<\hat x_d\}\right),
$$
consisting of the bottom, top, and curved side of the spatial cylinder, $V$, respectively.

\begin{case}[Upper bound for $v$ on the bottom of the spatial cylinder]
\label{case:Cylinder_spatialbottom_parabolic}
First, consider the contribution from the \emph{bottom} of the spatial cylinder,
$$
\bar V\cap\{x_d=0\} \subset B_{\rho_0}^{d-1}\times\{0\}.
$$
We have $u(t,x',0)\leq r$ for all $(t,x') \in (0,\tau)\times\bar B_{\rho_0}^{d-1}$ (since $u(t,x)$ has maximum value $u(O)=r$ on $[0,\tau]\times \bar B_{\rho_0}^{d-1}\times[0,\ell]$) and thus
$$
u(t,x',0) \leq r \quad\hbox{for all } t \in (0,\tau) \hbox{ and } |x'|\leq \rho_0,
$$
and so the definition \eqref{eq:Quadratic_polynomial_perturbation_parabolic} of $w$ and the fact that $v=u+w$ yields
$$
v(t,x',0) = r - \zeta t - \frac{Q}{2}|x'|^2 < r \quad\hbox{for } 0<t<\tau \hbox{ and } |x'|\leq \rho_0,
$$
as desired for Case \ref{case:Cylinder_spatialbottom_parabolic}.
\end{case}

\begin{case}[Upper bound for $v$ on the top of the spatial cylinder]
\label{case:Cylinder_spatialtop_parabolic}
Second, consider the contribution from the \emph{top} of the cylinder,
$$
\bar V\cap\{x_d=\hat x_d\} \subset B_{\rho_0}^{d-1}\times\{\hat x_d\}.
$$
Gathering like terms in the Taylor formula \eqref{eq:v_taylor_polynomial_parabolic}, setting $x_d = \hat x_d(\eta,Q) \equiv (\eta-p)/(mQ)$ via \eqref{eq:xd_bound_less_ell_parabolic}, and simplifying yields
\begin{align*}
v(t,x',\hat x_d) &= r - (\zeta - o(t,x',\hat x_d))t + \left(\eta + o(t,x',\hat x_d) - \frac{(\eta-p)}{2m}\right)\hat x_d  + |x'|o(t,x',\hat x_d) - \frac{Q}{2}|x'|^2,
\\
&\qquad \forall\, (t,x') \in [0,\tau]\times \bar B_{\rho_0}^{d-1}.
\end{align*}
Now choose $\eta = \eta(m,p) \in (0,1]$ small enough that (recalling that $p<0$ by assumption)
\begin{equation}
\label{eq:eta_bound_parabolic}
\eta < -\frac{p}{8m},
\end{equation}
so (recalling that $m>1$)
$$
\frac{(2m-1)}{2m}\eta < -\frac{p}{8m}.
$$
Now make $\hat x_d\in (0,\ell]$ small enough, by choosing $Q=Q(m,p,u,\tau,\zeta)$ large enough and applying the definition \eqref{eq:xd_bound_less_ell_parabolic} of $\hat x_d$, and choose $\tau=\tau(m,p,u,\zeta)>0$ and $\rho_0(m,p,u,\zeta)>0$ small enough that
\begin{equation}
\label{eq:o_t_xprime_hatxd_bound}
o(t,x',x_d) < -\frac{p}{8m}\wedge \frac{\zeta}{2}, \quad 0\leq t\leq \tau, \quad |x'| \leq \rho_0, \quad 0\leq x_d\leq \hat x_d,
\end{equation}
which we can do since $o(t,x',x_d)$ is continuous on $[0,\tau]\times\bar B_{\rho_0}^{d-1}\times[0,\ell]$. These choices of $\eta$ and $Q$ preserve the upper bound in \eqref{eq:xd_bound_less_ell_parabolic}. By \eqref{eq:eta_bound_parabolic} and \eqref{eq:o_t_xprime_hatxd_bound} and discarding the term $-Q|x'|^2/2$, we discover that
$$
v(t,x',\hat x_d) \leq r - \frac{\zeta}{2}t + \frac{p}{4m}\hat x_d  -\frac{p}{8m}|x'|,
\quad \forall\, (t,x') \in [0,\tau]\times \bar B_{\rho_0}^{d-1}.
$$
We now restrict our function $\rho$ in \eqref{eq:Defn_rho_parabolic} by requiring that
\begin{equation}
\label{eq:rho_value_spatial_cylinder_top_parabolic}
\rho(\hat x_d) = \hat x_d,
\end{equation}
so now $\rho$ depends on $\eta\in(0,1]$ and $Q>0$. Consequently,
$$
v(t,x',\hat x_d) \leq r - \frac{\zeta}{2}t + \frac{p}{8m}\hat x_d < r, \quad \forall\, (t,x') \in (0,\tau)\times \bar B_{\hat x_d}^{d-1},
$$
as desired for Case \ref{case:Cylinder_spatialtop_parabolic}.
\end{case}

\begin{case}[Upper bound for $v$ on the curved side of the spatial cylinder]
\label{case:Cylinder_spatialside_parabolic}
It remains to consider the contribution from the curved side of the spatial cylinder in
$$
(0,\tau)\times\{(x',x_d)\in B_{\rho_0}^{d-1}\times (0,\hat x_d): |x'|< \rho(x_d)\}
\subset
(0,\tau)\times\bar V,
$$
for a suitably-chosen positive-valued function $\rho$ in \eqref{eq:Defn_rho_parabolic}.

We proceed by modifying the argument for Case \ref{case:Cylinder_spatialtop_parabolic}. Again gathering like terms in the Taylor formula \eqref{eq:v_taylor_polynomial_parabolic} yields
\begin{align*}
v(t,x',x_d) &= r - (\zeta - o(t,x',x_d))t + \left(\eta + o(t,x',x_d) - \frac{Q}{2}x_d\right)x_d
\\
&\quad + |x'|o(t,x',x_d) - \frac{Q}{2}|x'|^2,  \quad \forall\, (t,x') \in [0,\tau]\times \bar B_{\rho_0}^{d-1}.
\end{align*}
Applying \eqref{eq:eta_bound_parabolic} (which had required a small enough $\eta>0$) and \eqref{eq:o_t_xprime_hatxd_bound} (which had required a large enough $Q$) yields
\begin{align*}
v(t,x',x_d) &\leq r - \frac{\zeta}{2}t + \left(-\frac{p}{8m} - \frac{Q}{2}x_d\right)x_d  - \frac{p}{8m}|x'| - \frac{Q}{2}|x'|^2,
\\
&\qquad \forall\, (t,x',x_d) \in [0,\tau]\times \bar B_{\rho_0}^{d-1}\times (0,\hat x_d).
\end{align*}
Therefore, when
$$
-\frac{p}{4mQ} < x_d < \hat x_d,
$$
the $x_d$-coefficient in the first-order Taylor formula for $v(t,x',x_d)$ becomes non-positive; recall that $\hat x_d = (\eta-p)/(mQ)$ (and $p<0$) by \eqref{eq:xd_bound_less_ell_parabolic}, so $\hat x_d$ obeys
$$
-\frac{p}{4mQ} < \hat x_d = \frac{\eta}{mQ} - \frac{p}{mQ}.
$$
Now we restrict $x'$ by requiring that $|x'|=\rho(x_d)>0$, for $x_d \in (0,\hat x_d)$, and choose $\rho(x_d)$ such that
$$
\left(-\frac{p}{8m} - \frac{Q}{2}x_d\right)x_d  - \frac{p}{8m}\rho(x_d) - \frac{Q}{2}\rho^2(x_d) \leq 0, \quad 0<  x_d < \hat x_d,
$$
that is,
$$
\frac{Q}{2}\rho^2(x_d) + \frac{p}{8m}\rho(x_d) + \left(\frac{p}{8m} + \frac{Q}{2}x_d\right)x_d \geq 0, \quad 0<  x_d < \hat x_d,
$$
and so $\rho(x_d)$ should obey
\begin{equation}
\label{eq:rho_value_spatial_cylinder_side_parabolic}
\rho(x_d)
\geq
\begin{cases}
0 &\hbox{if } x_d = 0,
\\
\displaystyle
\frac{1}{Q}\left(-\frac{p}{8m} - \sqrt{ \frac{p^2}{64m^2} - 2x_dQ\left(\frac{p}{8m} + \frac{Q}{2}x_d\right)}\right),
&\hbox{if }0 < x_d < \displaystyle-\frac{p}{4mQ},
\\
0 &\hbox{if }\displaystyle-\frac{p}{4mQ} \leq x_d < \hat x_d.
\end{cases}
\end{equation}
Note that the function on the right-hand side is bounded by $-p/(8mQ)$ and thus, for large enough $Q=Q(m,p,\rho_0)$, we may assume that $8mQ\leq\rho_0$. We now fix a continuous function, $\rho$ in \eqref{eq:Defn_rho_parabolic}, obeying \eqref{eq:rho_value_spatial_cylinder_top_parabolic} and \eqref{eq:rho_value_spatial_cylinder_side_parabolic} together with $\rho(0)=\rho_0>0$ and $\rho(x_d)\leq\rho_0$ for $0\leq x_d\leq\hat x_d$, so the previous inequality for $v(t,x',x_d)$ becomes
$$
v(t,x',x_d) \leq r - \frac{\zeta}{2}t < r, \quad\hbox{for } 0 < t < \tau, \quad |x'| = \rho(x_d), \quad 0<  x_d < \hat x_d,
$$
as desired for Case \ref{case:Cylinder_spatialside_parabolic}.
\end{case}
Combining the conclusions of Cases \ref{case:Cylinder_temporalboundary_parabolic} through \ref{case:Cylinder_spatialside_parabolic} implies that $v\leq r$ on the parabolic boundary, $\mydirac((0,\tau)\times V)$, as we had sought.
\end{step}

This completes the proof of Theorem \ref{thm:Viscosity_maximum_parabolic}, with open subset, $\sV\subset\RR_+\times\RR^{d-1}\times\RR_+$, given by the preimage of $(0,\tau)\times V$ with respect to the initial coordinate transformation.
\end{proof}

\begin{rmk}[Motivation for the argument in Step \ref{step:Interior_maximum_parabolic}]
Since $D_{\vec n}v(O)=\eta>0$ and $D_{\vec n}v(t,x',0)$ is continuous with respect to $(t,x') \in (-\tau,\tau)\times B_{2\rho}^{d-1}$, because this is true of $D_{\vec n}u$ by hypothesis, then we may assume without loss of generality (for small enough $\tau$ and $\rho$) that
$$
v_{x_d}(t,x',0) > 0, \quad \forall\, (t,x') \in [0,\tau]\times \bar B_{\rho_0}^{d-1},
$$
and so the function $v$ cannot have a local maximum, relative to $(0,\tau)\times B_{\rho_0}^{d-1}\times(0,\ell)$, on the boundary portion $[0,\tau]\times \bar B_{\rho_0}^{d-1}\times\{0\}$.

Since $v_t(O)=-\zeta<0$ and $v_t$ is continuous with respect to $(t,x) \in (-\tau,\tau)\times B_{2\rho}^{d-1}\times[0,\ell]$, because this is true of $u_t$ by hypothesis, then we may assume without loss of generality (for small enough $\tau$, $\rho$, and $\ell$) that
$$
v_t(\tau,x)  < 0, \quad \forall\, x \in \bar B_{\rho_0}^{d-1}\times[0,\ell],
$$
and so the function $v$ cannot have a local maximum, relative to $(0,\tau)\times B_{\rho_0}^{d-1}\times(0,\ell)$, on the boundary portion $\{\rho\}\times \bar B_{\rho_0}^{d-1}\times[0,\ell]$.
\end{rmk}

\subsection{Strong maximum principles for $L$-subharmonic functions in $C^2(\sQ)$}
\label{subsec:Strong_maximum_principle_parabolic_C2}
Before we can state and prove the strong maximum principle for the parabolic operator, we adapt the following \emph{notational conventions} of \cite[p. 34]{FriedmanPDE}.

\begin{defn}[Connected subsets of $\sQ\subset\RR^{d+1}$]
\label{defn:Connected_subsets_parabolic}
For any point $P^0=(t^0,x^0) \in \underline \sQ$, we denote by $S(P^0)$ the set of all points $P \in \underline \sQ$ which can be connected to $P^0$ by a simple continuous curve in $\underline \sQ$ along which the time coordinate is non-\emph{increasing}\footnote{In \cite[p. 34]{FriedmanPDE}, the time coordinate is required to be non-\emph{decreasing}, consistent with Friedman's convention of considering an initial value problem rather than the convention of considering a terminal value problem in this article and \cite{Bensoussan_Lions}.}
from $P$ to $P^0$. By $C(P^0)$ we denote the connected component of $\underline \sQ\cap\{t=t^0\}$ which contains $P^0$.
\end{defn}

Clearly, $C(P^0) \subset S(P^0)$. Since $C(P^0)$ is a connected component of $\underline \sQ\cap\{t=t^0\}$, it is necessarily a closed subset of $\underline \sQ\cap\{t=t^0\}$ and, if the number of connected components is finite, then it is also an open subset.

\begin{exmp}[$S(P^0)$ and $C(P^0)$ when $\sQ$ is a parabolic cylinder]
\label{exmp:Connected_subsets_parabolic_cylinder}
Suppose, as in Example \ref{exmp:Boundary_cylinder_parabolic}, that $\sQ = (0,T)\times\sO = \sO_T$ for some spatial domain $\sO\subseteqq\RR^d$ and $T>0$. For any $P^0 = (t^0,x^0) \in \underline \sQ$, Definition \ref{defn:Connected_subsets_parabolic} yields
$$
S(P^0) = (0,t_0]\times\underline\sO \quad\hbox{and}\quad C(P^0) = \{t^0\}\times\underline\sO,
$$
and so
$$
S(P^0) = \left((0,t_0)\times\underline\sO\right) \cup \left(\{t^0\}\times\underline\sO\right) = \underline\sO_{t^0} \cup C(P^0).
$$
This concludes our example.
\end{exmp}

We are now ready to prove the strong maximum principles for $L$-subharmonic functions in $C^2(\sQ)$. We begin with the following analogue of \cite[Theorem 2.1]{FriedmanPDE}, \cite[Theorem 2.7]{Lieberman}.

\begin{thm}[Strong maximum principle for $L$-subharmonic functions in $C^2(\sQ)$ when $c\geq 0$]
\label{thm:Strong_max_principle_C2_parabolic_c_geq_zero}
Let $\sQ \subset \RR^{d+1}$ be an open subset. Assume the hypotheses of Theorem \ref{thm:Viscosity_maximum_parabolic} for the coefficients of $L$ \eqref{eq:Generator_parabolic}, including \eqref{eq:c_nonnegative_parabolic}, that is, $c\geq 0$ on $\sQ$. Suppose that $u \in C^2(\sQ)\cap \sC^1(\underline\sQ)$ and $Lu \leq 0$ on $\sQ$. If $u$ attains a positive global maximum at a point $P^0 \in \underline \sQ$, then $u = u(P^0)$ on $S(P^0)$.
\end{thm}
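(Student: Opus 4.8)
The plan is to reduce the statement to the classical strong maximum principle for parabolic operators applied on interior subdomains, using Theorem~\ref{thm:Viscosity_maximum_parabolic} to handle the case when the global maximum is attained on the degenerate parabolic boundary $\mydirac_0\!\sQ$. Write $M := u(P^0) > 0$ and let $E := \{P \in S(P^0) : u(P) = M\}$. The goal is to show $E = S(P^0)$. Since $u$ is continuous on $\underline\sQ$ (because $u\in \sC^1(\underline\sQ)$), $E$ is relatively closed in $S(P^0)$, and $E$ is nonempty since $P^0\in E$. So it suffices to show $E$ is relatively open in $S(P^0)$ and that $S(P^0)$ is connected in the appropriate (time-monotone path) sense; the latter is exactly the reachability structure built into the definition of $S(P^0)$, so the real work is a propagation statement: if $u(P_1) = M$ for some $P_1 \in S(P^0)$, then $u \equiv M$ on a full neighbourhood of $P_1$ in $S(P_1)$, and in fact on all of $S(P_1)$. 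I would carry out the propagation exactly as in Nirenberg's classical argument (compare \cite[Theorem~2.1]{FriedmanPDE}, \cite[Theorem~2.7]{Lieberman}): first show that along each time-slice, $u = M$ on the connected component $C(P_1)$, and then show that $u = M$ on slightly earlier slices using a barrier/ball argument, iterating to fill out $S(P_1)$.

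The first sub-step (propagation within a time slice) proceeds by contradiction in the standard way. Suppose $P_1 = (t_1,x_1)$ with $u(P_1) = M$ but $u < M$ somewhere on $C(P_1)$. One finds a closed spatial ball $\bar B' \subset C(P_1)$ (in the slice $\{t = t_1\}$) with $u < M$ on $B'$ and $u = M$ at a boundary point $x_2 \in \partial B'$; replacing $B'$ by a smaller ball tangent internally at $x_2$, one gets a spatial ball on whose interior $u < M$, with $u(x_2) = M$. Here is where I must be slightly careful: if $x_2$ lies in the interior $\sQ \cap \{t=t_1\}$ one applies the classical Hopf-type argument (the $\vec n$ direction is spatial so $u_t$ plays no role and the strict parabolicity hypothesis \eqref{eq:a_locally_strictly_parabolic} makes $A$ restricted to that slice uniformly elliptic), getting $D_{\vec n}u(t_1,x_2) > 0$, contradicting the fact that $x_2$ is an interior maximum of $u(t_1,\cdot)$. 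If instead $x_2 \in \mydirac_0\!\sQ$, then I invoke Theorem~\ref{thm:Viscosity_maximum_parabolic}: since $u$ has a strict local maximum at $\bar P^0 := (t_1,x_2) \in \mydirac_0\!\sQ$ relative to the spatial ball (hence relative to a neighbourhood in $\sQ$) and $u(\bar P^0) = M \geq 0$, the theorem produces $w \in C^2(\sQ)\cap C^1(\RR^{d+1})$ and an open $\sV$ with $\bar P^0 \in \bar\sV \subset \underline\sQ$ such that $v = u+w$ is strictly $L$-subharmonic on $\sV$ with an interior maximum $v(P_2) > u(\bar P^0) = M$ at $P_2 \in \sV$. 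But $Lv < 0$ a.e.\ on $\sV$ and $v \in C^2(\sV)$ attains an interior maximum, which directly contradicts the classical strong (indeed weak) maximum principle for strictly parabolic operators — or more simply, at the interior maximum $P_2$ one has $Dv(P_2) = 0$, $D^2v(P_2) \le 0$, $v_t(P_2) = 0$ (if $P_2$ is not at the earliest time of $\sV$; otherwise $v_t(P_2)\le 0$), and $c(P_2)v(P_2) \ge 0$ since $v(P_2) > 0$, forcing $Lv(P_2) \ge 0$, contradiction. Either way we conclude $u \equiv M$ on $C(P_1)$.

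The second sub-step (propagation backward in time) is the classical ball-rolling argument and requires no new ideas beyond what is already used in the interior theory: given that $u = M$ on the slice component $C(P_1)$, one constructs, for each point of $C(P_1)$, a closed spacetime ball contained in $\underline\sQ$ reaching into $\{t < t_1\}$, and a function $u$ with $Lu \le 0$ attaining its maximum at the top of such a ball must — again by a Hopf-type barrier, now genuinely spacetime, using strict parabolicity and the Hopf boundary point Lemma~\ref{lem:Degenerate_hopf_lemma_parabolic} at degenerate-boundary contact points — be constant on a neighbourhood below; iterating and using connectedness of $S(P_1)$ through time-monotone curves gives $u \equiv M$ on $S(P_1)$. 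Combining with $S(P_1) = S(P^0)$ when $P_1 \in S(P^0)$ (by transitivity of the reachability relation: a time-nonincreasing curve from $P$ to $P_1$ concatenated with one from $P_1$ to $P^0$ gives one from $P$ to $P^0$), we get $E = S(P^0)$, i.e.\ $u = u(P^0)$ on $S(P^0)$, as claimed.

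The main obstacle I anticipate is purely bookkeeping: making the contradiction in the degenerate-boundary case airtight. Theorem~\ref{thm:Viscosity_maximum_parabolic} requires a \emph{strict} local maximum of $u$ at $\bar P^0$ relative to $\sQ$, whereas the propagation argument naturally hands one only $u \le M$ with equality on a boundary point of the ball $B'$; one sidesteps this in the classical way by shrinking to an internally tangent ball $B''$ on whose open part $u < M$ strictly and with $u(\bar P^0) = M$ the unique maximum of $u|_{\bar B''}$ — but one must check this strictness survives restriction to the appropriate one-sided neighbourhood in $\sQ$ and that $\bar P^0$ genuinely lies in $\mydirac_0\!\sQ$ (which holds since $C(P_1) \subset \underline\sQ = \sQ \cup \mydirac_0\!\sQ$ and $x_2$ is not interior). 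A secondary, minor obstacle is verifying the earliest-time edge case for the interior maximum $P_2$ of $v$ on $\sV$: if $P_2$ happens to sit on the parabolic boundary $\mydirac\sV$, one replaces $\sV$ by a slightly smaller cylinder or simply notes that $v(P_2) > u(\bar P^0)$ forces $P_2$ into the parabolic interior since $v \le u$ on $\mydirac\sV \cap \mydirac_0\!\sQ$-type faces — but this is exactly the content already proved inside Theorem~\ref{thm:Viscosity_maximum_parabolic}, where $v \le r$ on $\mydirac((0,\tau)\times V)$, so the interior maximum is automatic and no extra care is needed.
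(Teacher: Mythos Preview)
Your approach differs substantially from the paper's, and there is a genuine gap in the degenerate-boundary case of your first sub-step.

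The paper's argument is a one-shot dichotomy, mirroring its elliptic Theorem~\ref{thm:Strong_maximum_principle_C2_elliptic}. If the positive global maximum $M$ is attained at some \emph{interior} point of $\sQ$, the classical parabolic strong maximum principle (\cite[Theorem~2.1]{FriedmanPDE}, \cite[Theorem~2.7]{Lieberman}) already gives $u\equiv M$ on $S(P^0)$ and there is nothing more to do. Otherwise $u<M$ on \emph{all} of $\sQ$, so the maximum on $\underline\sQ$ is attained at some $\bar P^0\in\mydirac_0\!\sQ$, and this is automatically a strict local maximum relative to the full spacetime set $\sQ$. Theorem~\ref{thm:Viscosity_maximum_parabolic} then applies directly, producing $v=u+w$ with $Lv<0$ on $\sV$ and an interior maximum $v(P^0)>u(\bar P^0)$; the classical strong maximum principle on $\sV$ forces $v$ constant there, contradicting $v(P^0)>u(\bar P^0)=v(\bar P^0)$. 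No Nirenberg-style propagation is redone; Theorem~\ref{thm:Viscosity_maximum_parabolic} is invoked exactly once.

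The gap in your argument is the parenthetical in Sub-step~1: ``strict local maximum at $\bar P^0$ \ldots\ relative to the spatial ball (hence relative to a neighbourhood in $\sQ$)''. This inference fails. Your ball $B'$ lies inside the single time-slice $\{t=t_1\}$, so $u<M$ on $B'$ gives no information about $u(t,x)$ for nearby $t\neq t_1$; the hypothesis of Theorem~\ref{thm:Viscosity_maximum_parabolic} demands $u<u(\bar P^0)$ on a full $(d{+}1)$-dimensional half-ball $B_\delta(\bar P^0)\cap\sQ$. A closely related problem affects your interior case: the elliptic Hopf lemma you invoke on the slice requires $u(t_1,\cdot)$ to be subharmonic for the frozen-time elliptic operator, but $Lu\le 0$ only gives $-\tr(aD^2u)-\langle b,Du\rangle+cu\le u_t$, with $u_t$ of uncontrolled sign on $B'$. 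The classical Nirenberg proof avoids both issues by working with \emph{spacetime} balls from the outset (this is the content of \cite[Chapter~2, Lemma~1]{FriedmanPDE}); the degenerate-boundary analogue would use Lemma~\ref{lem:Degenerate_hopf_lemma_parabolic} on a spacetime ball, not Theorem~\ref{thm:Viscosity_maximum_parabolic} on a spatial one. The paper's global dichotomy sidesteps all of this, since once $u<M$ holds on the entire interior $\sQ$, the spacetime strictness required by Theorem~\ref{thm:Viscosity_maximum_parabolic} is immediate.
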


\begin{proof}
The proof is identical to that of Theorem \ref{thm:Strong_maximum_principle_C2_elliptic}, except that the role of the Hopf boundary-point Lemma \ref{lem:Degenerate_hopf_lemma_elliptic} for the elliptic operator, $A$, is replaced by the Hopf boundary-point Lemma \ref{lem:Degenerate_hopf_lemma_parabolic} for the parabolic operator, $L$.
\end{proof}

We can also relax the requirement that $c\geq 0$ on $\sQ$ and give the following analogue of \cite[Theorem 2.3]{FriedmanPDE}.

\begin{thm}[Strong maximum principle for $L$-subharmonic functions in $C^2(\sQ)$ when $c$ has arbitrary sign]
\label{thm:Strong_max_principle_C2_parabolic_c_arb_sign}
Let $\sQ \subset \RR^{d+1}$ be an open subset. Assume the hypotheses of Theorem \ref{thm:Viscosity_maximum_parabolic} for the coefficients of $L$ \eqref{eq:Generator_parabolic}, though \eqref{eq:c_nonnegative_parabolic} may be omitted. Require also that
\begin{equation}
\label{eq:Continuous_c_coefficient_boundary}
c \quad \hbox{is continuous on } \mydirac_0\!\sQ.
\end{equation}
Suppose that $u \in C^2(\sQ)\cap \sC^1(\underline\sQ)$ and $Lu \leq 0$ on $\sQ$. If $u\leq 0$ on $\underline\sQ$ and $u(P^0)=0$ for some $P^0 \in \underline \sQ$, then $u = 0$ on $C(P^0)$.
\end{thm}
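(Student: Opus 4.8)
The plan is to reduce Theorem~\ref{thm:Strong_max_principle_C2_parabolic_c_arb_sign} to the case $c \ge 0$ already handled in Theorem~\ref{thm:Strong_max_principle_C2_parabolic_c_geq_zero}, via the classical device of absorbing a negative part of $c$ into the zeroth-order term by multiplying $u$ by an exponential weight. Since we only want the conclusion $u = 0$ on the single time-slice $C(P^0)$ (not on all of $S(P^0)$), we may work locally in time near $t = t^0$ and then propagate the conclusion along the connected set $C(P^0)$ by a standard open-closed argument.

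First I would fix $P^0 = (t^0,x^0) \in \underline\sQ$ with $u(P^0) = 0$ and $u \le 0$ on $\underline\sQ$, and choose a precompact open neighborhood $\sQ' \Subset \underline\sQ$ of $P^0$ on which $c$ is bounded below by some constant $-c_1$ with $c_1 \ge 0$; here we use hypothesis~\eqref{eq:c_locally_bounded_above_domain_plus_degen_boundary_parabolic} together with the new hypothesis~\eqref{eq:Continuous_c_coefficient_boundary}, which ensures $c$ is locally bounded (above and below) on $\underline\sQ$ rather than merely above. Set $\tilde u := e^{-c_1 t}\, u$, so that $\tilde u \le 0$ on $\underline\sQ$, $\tilde u(P^0) = 0$, and $\tilde u$ has the same zero set as $u$. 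A direct computation gives $L(e^{-c_1 t} \tilde u) = e^{-c_1 t}\bigl(L\tilde u - c_1 \tilde u\bigr)$, hence $\tilde L \tilde u := L\tilde u + (c_1 \cdot 0)\cdots$; more precisely, writing $\tilde L v := -v_t - \tr(aD^2 v) - \langle b, Dv\rangle + \tilde c v$ with $\tilde c := c + c_1$, one has $\tilde L\tilde u = e^{c_1 t}\, Lu \le 0$ on $\sQ$, and $\tilde c \ge 0$ on $\sQ'$ by the choice of $c_1$. Thus $\tilde u \in C^2(\sQ')\cap\sC^1(\underline{\sQ'})$ is $\tilde L$-subharmonic with $\tilde c \ge 0$, and $\tilde u$ attains its global maximum value $0$ at $P^0$. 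The operator $\tilde L$ still satisfies the hypotheses of Theorem~\ref{thm:Viscosity_maximum_parabolic} on $\sQ'$: the second-order and first-order coefficients are unchanged, and $\tilde c = c + c_1$ inherits local boundedness above from $c$ and continuity along $\mydirac_0\!\sQ$ from~\eqref{eq:Continuous_c_coefficient_boundary}.

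Now Theorem~\ref{thm:Strong_max_principle_C2_parabolic_c_geq_zero} does not directly apply because it requires a \emph{positive} global maximum, whereas here the maximum value is $0$. This is exactly the situation covered by parts~\eqref{item:Hopf_c_no_sign_parabolic} of the Hopf Lemma~\ref{lem:Degenerate_hopf_lemma_parabolic} and the analogous case in Theorem~\ref{thm:Viscosity_maximum_parabolic} (whose statement explicitly allows $u(\bar P^0) \ge 0$ together with $c \ge 0$, hence in particular $u(\bar P^0) = 0$). So I would run the same argument as in the proof of Theorem~\ref{thm:Strong_max_principle_C2_parabolic_c_geq_zero}: if $\tilde u$ attains its maximum at an interior point of $\sQ'$, the classical parabolic strong maximum principle \cite[Theorem~2.3]{FriedmanPDE} (valid for $\tilde c \ge 0$ and maximum value $\ge 0$) gives $\tilde u \equiv 0$ on $C(P^0)\cap\sQ'$; if the maximum is attained only at a point $\bar P^0 \in \mydirac_0\!\sQ$, then Theorem~\ref{thm:Viscosity_maximum_parabolic} applied to $\tilde u$ (with maximum value $0 \ge 0$, which the theorem permits) produces $w$ and an open set $\sV$ with $\bar P^0 \in \bar\sV$ on which $v := \tilde u + w$ is strictly $\tilde L$-subharmonic and attains an interior maximum $v(P^0_*) > \tilde u(\bar P^0) = 0$ — contradicting \cite[Theorem~2.3]{FriedmanPDE}, which forbids a strictly $\tilde L$-subharmonic function from attaining an interior maximum. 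Hence $\tilde u \equiv 0$ in a neighborhood of $P^0$ within its time-slice.

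Finally I would globalize along $C(P^0)$ by a connectedness argument. Let $Z := \{P \in C(P^0) : u(P) = 0\}$. This set is nonempty (it contains $P^0$), relatively closed in $C(P^0)$ by continuity of $u$, and relatively open: if $P_1 = (t^0, x_1) \in Z$, then $u$ attains its global maximum value $0$ at $P_1 \in \underline\sQ$ as well, so applying the local argument above at $P_1$ shows $u$ vanishes on a neighborhood of $P_1$ within $\underline\sQ \cap \{t = t^0\}$, i.e. a relative neighborhood of $P_1$ in $C(P^0)$. Since $C(P^0)$ is connected, $Z = C(P^0)$, which is the desired conclusion. The main obstacle I anticipate is purely bookkeeping: verifying that the exponentially-weighted operator $\tilde L$ genuinely satisfies \emph{all} the standing hypotheses of Theorem~\ref{thm:Viscosity_maximum_parabolic} on the local neighborhood $\sQ'$ — in particular that the replacement $c \rightsquigarrow c + c_1$ preserves \eqref{eq:c_nonnegative_parabolic} locally (by construction of $c_1$) and that one may shrink $\sQ'$ so that the local constants in \eqref{eq:a_locally_lipschitz_parabolic}, \eqref{eq:b_perp_positive_boundary_parabolic}, etc.\ are uniform — but none of this requires new ideas beyond those already used in the proof of Theorem~\ref{thm:Strong_max_principle_C2_parabolic_c_geq_zero}.
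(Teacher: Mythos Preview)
Your reduction via an exponential time-weight followed by a connectedness argument along $C(P^0)$ is exactly the standard route the paper invokes by citing \cite[Theorem~5.15]{Feehan_parabolicmaximumprinciple}, with Theorem~\ref{thm:Strong_max_principle_C2_parabolic_c_geq_zero} (and the ingredients behind it, namely Theorem~\ref{thm:Viscosity_maximum_parabolic} together with the classical interior strong maximum principle) supplying the $c\ge 0$ case at maximum value zero. One small slip: with the paper's terminal-time convention $Lu=-u_t-\cdots$, you must take $\tilde u=e^{+c_1 t}u$ (not $e^{-c_1 t}u$) to obtain $\tilde c=c+c_1$ and $\tilde L\tilde u=e^{c_1 t}Lu\le 0$; compare the change of variable $v=e^{\lambda t}u$ the paper itself uses immediately after Proposition~\ref{prop:Parabolic_weak_max_principle_apriori_estimates}.
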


\begin{proof}
The proof is identical to that of \cite[Theorem 5.15]{Feehan_parabolicmaximumprinciple} except that the role of \cite[Theorem 5.13]{Feehan_parabolicmaximumprinciple} is replaced by that of Theorem \ref{thm:Strong_max_principle_C2_parabolic_c_geq_zero}.
\end{proof}

The following refinement of Theorem \ref{thm:Strong_max_principle_C2_parabolic_c_geq_zero}, analogous to \cite[Theorem 2.4]{FriedmanPDE}, makes a stronger assertion since the hypotheses only assume that $u(P^0)$ is the maximum of $u$ on $S(P^0)\subset \underline \sQ$ rather than $\underline \sQ$.

\begin{thm}[Refined strong maximum principle when $c\geq 0$]
\label{thm:Strong_max_principle_C2_parabolic_c_geq_zero_refined}
Let $\sQ\subset\RR^{d+1}$ be an open subset. Assume the hypotheses of Theorem \ref{thm:Viscosity_maximum_parabolic} for the coefficients of $L$ \eqref{eq:Generator_parabolic}, though \eqref{eq:c_nonnegative_parabolic} may be omitted. If $u \in C^2(\sQ)\cap \sC^1(\underline\sQ)$ obeys $Lu\leq 0$ on $S(P^0)$, and $c\geq 0$ on $S(P^0)$, and $u$ has a positive global maximum which is attained at the point $P^0$, then $u = u(P^0)$ on $S(P^0)$.
\end{thm}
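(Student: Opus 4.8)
The plan is to follow the proof of Theorem~\ref{thm:Strong_max_principle_C2_parabolic_c_geq_zero} almost verbatim, making two changes. First, wherever that proof invokes the classical parabolic strong maximum principle for $C^2$ subsolutions with $c\ge 0$ (\cite[Theorem 2.7]{Lieberman}, or \cite[Theorem 2.1]{FriedmanPDE}) at an \emph{interior} maximum, we instead invoke its refinement \cite[Theorem 2.4]{FriedmanPDE}, whose hypotheses ask only that $u$ be a subsolution with $c\ge 0$ on $S(P^0)$ and that $u(P^0)$ be the maximum of $u$ \emph{over $S(P^0)$}. Second, throughout the argument the rôle of $\sQ$ is played by $S(P^0)$. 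Concretely, write $M:=u(P^0)>0$, so $u\le M$ on $S(P^0)$. Since every point of $S(P^0)$ is joined to $P^0$ by a simple curve in $\underline\sQ$ along which the time coordinate is non-increasing, it suffices to fix an arbitrary $P_1\in S(P^0)$ and such a curve $\gamma\colon[0,1]\to\underline\sQ$ with $\gamma(0)=P_1$, $\gamma(1)=P^0$, and to prove $u\circ\gamma\equiv M$; one does this by a bootstrap argument, letting $s^\ast$ be the infimum of those $s$ for which $u\circ\gamma\equiv M$ on $[s,1]$ (a non-empty set containing $1$, closed by continuity of $u$ on $\underline\sQ$) and showing $s^\ast=0$. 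Setting $Q:=\gamma(s^\ast)$ we have $u(Q)=M$, and since $\gamma|_{[s,s^\ast]}$ is a non-increasing-time curve from $\gamma(s)$ to $Q$ for each $s\le s^\ast$, the inclusions $\gamma([0,s^\ast])\subseteq S(Q)\subseteq S(P^0)$ hold; hence $Lu\le 0$ and $c\ge 0$ on $S(Q)$ and $u\le M=u(Q)$ on $S(Q)$, so it is enough to prove $u\equiv M$ on $S(Q)$, which forces $s^\ast=0$.

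If $Q\in\sQ$, this is exactly \cite[Theorem 2.4]{FriedmanPDE}. If $Q\in\mydirac_0\!\sQ$, we repeat the mechanism of Theorem~\ref{thm:Strong_max_principle_C2_parabolic_c_geq_zero}: if $u$ attains the value $M$ at some interior point $P$ of the time-slice component $C(Q)$ of $\underline\sQ\cap\{t=t(Q)\}$, then $C(Q)\subseteq S(P)$ and hence $S(Q)\subseteq S(P)$, so \cite[Theorem 2.4]{FriedmanPDE} applied at the interior point $P$ gives $u\equiv M$ on $S(P)\supseteq S(Q)$; otherwise $Q$ is a strict local maximum of $u$ over $S(Q)\cap\sQ$ relative to $\sQ$, and we invoke Theorem~\ref{thm:Viscosity_maximum_parabolic} to obtain $w\in C^2(\sQ)\cap C^1(\RR^{d+1})$ and a connected open $\sV$ with $Q\in\bar\sV\subset\underline\sQ$ such that $v:=u+w$ satisfies $Lv<0$ on $\sV$ and attains a maximum $v(P^\ast)>M$ on $\bar\sV$ at a parabolic-interior point $P^\ast$. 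Once we know $\sV\subseteq S(Q)$ (so that $Lu\le 0$ on $\sV$), this contradicts the classical parabolic strong maximum principle \cite[Theorem 2.1]{FriedmanPDE} (equivalently \cite[Theorem 2.7]{Lieberman}) applied to $v$ on $\sV$, which is the desired contradiction.

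The crux — the one genuinely new verification relative to the proof of Theorem~\ref{thm:Strong_max_principle_C2_parabolic_c_geq_zero} — is the localization claim that $w$ and $\sV$ produced by Theorem~\ref{thm:Viscosity_maximum_parabolic} may be chosen with $\sV\subseteq S(Q)$, so that the $S(P^0)$-localized hypotheses suffice. Inspection of the proof of Theorem~\ref{thm:Viscosity_maximum_parabolic} shows that, after straightening $\mydirac_0\!\sQ$ near $Q$ (so $Q$ is the origin), $w$ and $\sV=(0,\tau)\times V$ are built only from the values of $u$ on a space-time box $[0,\tau]\times\bar B^{d-1}_{\rho_0}\times[0,\ell]$ lying at times $\ge t(Q)$; using the $C^{1,\alpha}$ regularity of $\mydirac_0\!\sQ$ (Lemma~\ref{lem:Simplifying_coefficients_parabolic}) one checks that every point of this box with positive normal coordinate — in particular every point of $\sV$ — is joined to $Q$ by a non-increasing-time curve in $\underline\sQ$ (descend in time to the slice $\{t=t(Q)\}$, move at constant time to the inward normal ray through $Q$, then descend that ray to $Q$), hence lies in $S(Q)\cap\sQ$. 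I expect the remaining obstacle to lie in making the bootstrap along $\gamma$ precise and, when $Q\in\mydirac_0\!\sQ$, in reconciling the two sub-cases cleanly — i.e.\ ruling out the possibility that $u$ attains $M$ at interior points of $S(Q)$ accumulating at $Q$ from strictly later times without $Q$ being a strict maximum over $S(Q)\cap\sQ$ — which requires careful bookkeeping with the time-orientation built into $S(\cdot)$, exactly as in the classical treatment \cite{FriedmanPDE}.
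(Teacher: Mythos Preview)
Your proposal is correct and follows the same approach as the paper, though you supply far more detail. The paper's entire proof is one sentence: it observes that the proof of Theorem~\ref{thm:Strong_max_principle_C2_parabolic_c_geq_zero} only made use of the fact that $u(P^0)$ is the maximum of $u$ on $S(P^0)$ rather than on all of $\underline\sQ$, so the same argument goes through verbatim under the weaker hypotheses. Your bootstrap along curves, the explicit splitting into the cases $Q\in\sQ$ versus $Q\in\mydirac_0\!\sQ$, and especially your verification that the perturbation neighborhood $\sV$ produced by Theorem~\ref{thm:Viscosity_maximum_parabolic} lies inside $S(Q)$ (so that the localized hypotheses $Lu\le 0$ and $c\ge 0$ on $S(P^0)$ suffice) are all details the paper leaves implicit. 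The obstacle you flag at the end --- reconciling the sub-cases when $Q\in\mydirac_0\!\sQ$ --- is likewise not addressed in the paper, which simply inherits the terse dichotomy from the elliptic Theorem~\ref{thm:Strong_maximum_principle_C2_elliptic}; your caution there is well-placed but not a defect relative to the paper's own treatment.
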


\begin{proof}
The proof is the same as that of Theorem \ref{thm:Strong_max_principle_C2_parabolic_c_geq_zero}, since its proof only made use of the fact that $u(P^0)$ is the maximum of $u$ on $S(P^0)\subset \underline \sQ$ (and not necessarily the maximum on $\underline \sQ$).
\end{proof}

We have the following analogue of \cite[Theorem 2.5]{FriedmanPDE}; note the stronger conclusion ($u = 0$ on $S(P^0)$) relative to Theorem \ref{thm:Strong_max_principle_C2_parabolic_c_arb_sign} ($u = 0$ on $C(P^0)$, where we recall that $C(P^0)\subset S(P^0)$).

\begin{thm}[Refined strong maximum principle when $c$ has arbitrary sign]
\label{thm:Strong_max_principle_C2_parabolic_c_arb_sign_refined}
Let $\sQ\subset\RR^{d+1}$ be an open subset. Assume the hypotheses of Theorem \ref{thm:Viscosity_maximum_parabolic} for the coefficients of $L$ \eqref{eq:Generator_parabolic}, though \eqref{eq:c_nonnegative_parabolic} may be omitted. If $u \in C^2(\sQ)\cap \sC^1(\underline\sQ)$ obeys $Lu\leq 0$ on $\sQ$, and $u\leq 0$ on $S(P^0)$, and $u(P^0)=0$, then $u = 0$ on $S(P^0)$.
\end{thm}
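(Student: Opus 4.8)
The plan is to treat this statement exactly as the ``$S(P^0)$\nobreakdash-refined'' form of Theorem \ref{thm:Strong_max_principle_C2_parabolic_c_arb_sign}, in the same way that Theorem \ref{thm:Strong_max_principle_C2_parabolic_c_geq_zero_refined} refines Theorem \ref{thm:Strong_max_principle_C2_parabolic_c_geq_zero}; it is the parabolic, boundary\nobreakdash-degenerate analogue of \cite[Theorem 2.5]{FriedmanPDE} (compare \cite[Theorem 5.17]{Feehan_parabolicmaximumprinciple}). The first thing I would record is that the proof of Theorem \ref{thm:Strong_max_principle_C2_parabolic_c_arb_sign} actually uses only the hypotheses $Lu\le 0$ on $S(P^0)$, $u\le 0$ on $S(P^0)$, and $u(P^0)=0$ (and not $u\le 0$ on all of $\underline\sQ$): just as in Theorem \ref{thm:Strong_max_principle_C2_parabolic_c_geq_zero_refined}, every application of the Hopf lemma \ref{lem:Degenerate_hopf_lemma_parabolic}, of the perturbation Theorem \ref{thm:Viscosity_maximum_parabolic}, and of the classical interior strong maximum principle in that argument takes place inside $S(P^0)$. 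Since $S(P)\subset S(P^0)$ whenever $P\in S(P^0)$, this refined form of Theorem \ref{thm:Strong_max_principle_C2_parabolic_c_arb_sign} yields $u\equiv 0$ on the slice component $C(P)$ for \emph{every} point $P\in S(P^0)$ at which $u(P)=0$.

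Granting that, I would set $W:=\{P\in S(P^0): u(P)=0\}$ and prove $W=S(P^0)$ by a connectedness argument: $W$ is nonempty, since $P^0\in W$, and relatively closed in $S(P^0)$ by continuity of $u$, so it suffices to show $W$ is relatively open in $S(P^0)$. Given $P^1\in W$, the previous paragraph already gives $u\equiv 0$ on $C(P^1)$, and I would then propagate this zero locus to nearby earlier times by the lens\nobreakdash-domain chaining argument of \cite[Theorem 2.5]{FriedmanPDE}: cover a small piece of $S(P^0)$ adjacent to $C(P^1)$ by small parabolic cylinders (or balls) whose closures lie in $\underline\sQ\cap S(P^0)$, on each of which $u\le 0$ and $u$ attains the value $0$ at some point of the closure; applying the strong maximum principle on each such small domain --- the classical \cite[Theorem 2.3]{FriedmanPDE} when the offending zero is an interior point, and Theorem \ref{thm:Viscosity_maximum_parabolic} together with the Hopf lemma \ref{lem:Degenerate_hopf_lemma_parabolic} (exactly as in the proofs of Theorems \ref{thm:Strong_max_principle_C2_parabolic_c_geq_zero} through \ref{thm:Strong_max_principle_C2_parabolic_c_arb_sign}) when it lies on $\mydirac_0\!\sQ$ --- forces $u\equiv 0$ on that domain. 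Chaining finitely many such domains along a curve in $\underline\sQ$ joining an arbitrary point of $S(P^0)$ to $P^0$ with non\nobreakdash-increasing time coordinate shows that point lies in $W$; more economically, it produces a relative neighborhood of $P^1$ contained in $W$. Since $S(P^0)$ is connected (indeed path\nobreakdash-connected, directly from Definition \ref{defn:Connected_subsets_parabolic}), it has no proper nonempty clopen subset, so $W=S(P^0)$, which is the assertion.

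The step I expect to be the only genuine obstacle is, as always in this circle of ideas, the case of a zero of $u$ lying on the degenerate boundary portion $\mydirac_0\!\sQ$: there the classical interior\nobreakdash- and boundary\nobreakdash-ball arguments underlying \cite[Theorems 2.3 and 2.5]{FriedmanPDE} are unavailable, and one must instead invoke the perturbation construction of Theorem \ref{thm:Viscosity_maximum_parabolic} to replace $u$ near such a point by a strictly $L$\nobreakdash-subharmonic function $v=u+w$ with a local maximum in the parabolic interior of $\sQ$, where the classical strong maximum principle applies and yields the required contradiction. This is precisely the mechanism already used in the proofs of Theorems \ref{thm:Strong_max_principle_C2_parabolic_c_geq_zero}, \ref{thm:Strong_max_principle_C2_parabolic_c_arb_sign}, and \ref{thm:Strong_max_principle_C2_parabolic_c_geq_zero_refined}, so no analytic input beyond Theorem \ref{thm:Viscosity_maximum_parabolic} and Lemma \ref{lem:Degenerate_hopf_lemma_parabolic} is needed; the remaining work --- the topological bookkeeping of the chain of lens\nobreakdash-shaped domains --- is identical to Friedman's and I would simply refer to it. (One further minor point worth noting: because $c$ is of arbitrary sign, one does not need a separate reduction to the case $c\ge 0$ here, since Theorem \ref{thm:Strong_max_principle_C2_parabolic_c_arb_sign} already supplies the per\nobreakdash-slice conclusion $u\equiv 0$ on $C(P)$ directly.)
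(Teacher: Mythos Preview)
Your proposal is correct and follows essentially the same route as the paper: the paper's proof simply says to repeat the argument of \cite[Theorem 2.5]{FriedmanPDE}, substituting Theorems \ref{thm:Strong_max_principle_C2_parabolic_c_arb_sign} and \ref{thm:Strong_max_principle_C2_parabolic_c_geq_zero} for Friedman's Theorems 2.3 and 2.1, which is exactly the slice-then-chain mechanism you describe (per-slice vanishing on $C(P)$ plus propagation through $S(P^0)$, with Theorem \ref{thm:Viscosity_maximum_parabolic} and Lemma \ref{lem:Degenerate_hopf_lemma_parabolic} handling points on $\mydirac_0\!\sQ$). Your write-up is in fact more detailed than the paper's one-line reference.
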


\begin{proof}
The proof is identical to that of \cite[Theorem 2.5]{FriedmanPDE}, except that the roles of \cite[Theorem 2.3]{FriedmanPDE} and its method of proof and the proof of \cite[Theorem 2.1]{FriedmanPDE} are replaced by those of Theorems \ref{thm:Strong_max_principle_C2_parabolic_c_arb_sign} and \ref{thm:Strong_max_principle_C2_parabolic_c_geq_zero}, respectively.
\end{proof}

A version of Remark \ref{rmk:Aronszajn_elliptic} also applies for the parabolic operator.

\begin{rmk}[Relaxing the hypothesis of a global maximum to a local maximum]
\label{rmk:Aronszajn_parabolic}
In Theorems \ref{thm:Strong_max_principle_C2_parabolic_c_geq_zero} and \ref{thm:Strong_max_principle_C2_parabolic_c_geq_zero_refined} (when $c=0$ on $\sQ$) or Theorems \ref{thm:Strong_max_principle_C2_parabolic_c_arb_sign} and \ref{thm:Strong_max_principle_C2_parabolic_c_arb_sign_refined} (when $c$ has arbitrary sign on $\sQ$), if $u$ only attains a \emph{local} maximum at a point $P^0\in\underline\sQ$, so $u\leq u(P^0)$ on an open ball $S(P^0)\cap B_\eps(P^0)$ for some $\eps>0$, we may still conclude that $u$ is constant on $S(P^0)\cap B_\eps(P^0)$ and therefore, provided $L$ has the unique continuation property (see, for example, \cite{Chen_1998, Koch_Tataru_2009}), we find again that $u$ is constant on $S(P^0)$ (or $C(P^0)$ in the setting of Theorem \ref{thm:Strong_max_principle_C2_parabolic_c_arb_sign}).
\end{rmk}

\subsection{Strong maximum principles for $L$-subharmonic functions in $W^{2,d+1}_{\loc}(\sQ)$}
\label{subsec:Strong_maximum_principle_parabolic_W2d+1}
Statements of a classical strong maximum principle for a $L$-subharmonic functions in $W^{2,d+1}_{\loc}(\sQ)$ (that is, no open subset $\Sigma \subset\mydirac\!\sQ$ of the parabolic boundary is treated as part of the `interior' of $\sQ$, as we do in our present article) are more difficult to find in the literature, so we shall instead recast versions of the strong maximum principle for weak subsolutions \cite[Theorem 6.25]{Lieberman} of linear second-order parabolic equations or viscosity subsolutions \cite[Theorem 2.1 and Corollary 2.4]{DaLio_2004} (see also \cite[Corollary 8]{Gripenberg_2007}) of fully nonlinear second-order parabolic equations so they can be more readily applied in this article. Given $1\leq p\leq\infty$, we let $W^{1,p}(\sQ)$ denote the parabolic Sobolev space of measurable functions, $u$ on $\sQ$, such that $u$ and its weak derivatives with respect to the spatial coordinates, $u_{x_i}$ for $1\leq i\leq d$, belong to $L^p(\sQ)$ \cite[p. 155]{Lieberman}.

\begin{thm}[Classical strong maximum principle for $L$-subharmonic functions in $W^{2,d+1}_{\loc}(\sQ)$]
\label{thm:Classical_strong_max_principle_W2d+1_parabolic_c_geq_zero}
\cite[Corollary 2.4]{DaLio_2004}, \cite[Theorem 6.25]{Lieberman}
Let $\sQ\subset \RR^{d+1}$ be an open subset. Assume that the coefficients of $L$ in \eqref{eq:Generator_parabolic} obey one of the following two conditions,
\begin{gather}
\label{eq:a_Lipschitz_b_c_essentially_bounded_parabolic}
a^{ij} \in W^{1,\infty}(\sQ), \quad b^i, \ c \in L^\infty(\sQ), \quad 1\leq i,j \leq d, \quad\hbox{or }
\\
\label{eq:a_b_c_continuous_parabolic}
\tag{\ref*{eq:a_Lipschitz_b_c_essentially_bounded_parabolic}$'$}
a^{ij}, b^i, c \in C(\bar\sQ),
\end{gather}
and $c\geq 0$ (a.e.) on $\sQ$, and there is a positive constant, $\lambda_0$, such that
\begin{equation}
\label{eq:Strictly_parabolic}
\langle a\xi,\xi\rangle \geq \lambda_0 \quad\hbox{(a.e.) on }\sQ, \quad\forall\,\xi \in \RR^d.
\end{equation}
Suppose that $u\in W^{2,d+1}_{\loc}(\sQ)$ and $Lu\leq 0$ a.e. on $\sQ$. If $u$ attains a nonnegative global maximum at a point $P^0\in\sQ$, then $u$ is constant on $S(P^0)$.
\end{thm}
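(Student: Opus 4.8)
The plan is to derive the statement from the two classical strong maximum principles cited in the hypothesis --- \cite[Theorem 6.25]{Lieberman} for weak subsolutions of divergence-form parabolic equations, to be used in case \eqref{eq:a_Lipschitz_b_c_essentially_bounded_parabolic}, and \cite[Theorem 2.1 and Corollary 2.4]{DaLio_2004} for viscosity subsolutions of fully nonlinear parabolic equations, to be used in case \eqref{eq:a_b_c_continuous_parabolic} --- after checking that $u$ qualifies as the relevant kind of subsolution. I would first record the preliminaries common to both cases. Since $a$ obeys \eqref{eq:Strictly_parabolic}, the matrix $a(P)$ cannot have vanishing limit as $P\to\partial\sQ$, so $\mydirac_0\!\sQ=\emptyset$, $\underline\sQ=\sQ$, and $S(P^0)\subset\sQ$; we are therefore in the classical uniformly parabolic situation with no boundary degeneracy. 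Moreover, by the parabolic Sobolev embedding (Theorem \ref{thm:Parabolic_sobolev_embedding}, applied on precompact parabolic subcylinders and valid since $p=d+1>d/2+1$ for $d\geq 1$), $W^{2,d+1}_{\loc}(\sQ)\hookrightarrow C(\sQ)$, so $u$ is continuous and the hypothesis that it attains a nonnegative global maximum at $P^0$ is meaningful pointwise. It then suffices to show $u(P^1)=u(P^0)$ for every $P^1\in S(P^0)$; the cited theorems already phrase their conclusions on $S(P^0)$, so nothing further is needed here, but if one preferred a cylinder-based formulation one would join $P^1$ to $P^0$ by a simple continuous curve with non-increasing time coordinate, cover it by finitely many small parabolic cylinders adapted to that time-monotonicity, and propagate the local statement along the chain.

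In case \eqref{eq:a_Lipschitz_b_c_essentially_bounded_parabolic} I would rewrite $L$ in divergence form. Because $a^{ij}\in W^{1,\infty}(\sQ)$, one has $-\tr(aD^2u)=-D_{x_i}(a^{ij}u_{x_j})+(D_{x_i}a^{ij})u_{x_j}$ distributionally, so that $Lu=-u_t-D_{x_i}(a^{ij}D_{x_j}u)+\hat b^j D_{x_j}u+cu$ on $\sQ$ with $\hat b^j:=b^j+D_{x_i}a^{ij}\in L^\infty(\sQ)$; the leading matrix stays uniformly parabolic and bounded, the lower-order coefficients are bounded, and $c\geq 0$. Since $u\in W^{2,d+1}_{\loc}(\sQ)\subset W^{1,2}_{\loc}(\sQ)$ (because $d+1\geq 2$), the inequality $Lu\leq 0$ a.e., tested against nonnegative functions and integrated by parts, shows that $u$ is, locally, a weak subsolution of the above divergence-form equation in the sense of \cite[Chapter 6]{Lieberman}. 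Its hypotheses --- uniform parabolicity, bounded coefficients, $c\geq 0$, and a nonnegative interior maximum --- are all in force, so \cite[Theorem 6.25]{Lieberman} yields $u\equiv u(P^0)$ on $S(P^0)$.

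In case \eqref{eq:a_b_c_continuous_parabolic} I would instead show that $u$ is a continuous viscosity subsolution of $Lu=0$ on $\sQ$. The exponent $p=d+1$ is exactly the threshold exponent for the parabolic Aleksandrov-Bakelman-Pucci and Krylov-Safonov theory in $\RR^{d+1}$, so a function $u\in W^{2,d+1}_{\loc}(\sQ)$ with $Lu\leq 0$ a.e. is an $L^{d+1}$-viscosity subsolution, and because $a,b,c\in C(\bar\sQ)$ this coincides with the usual continuous-viscosity notion (the parabolic analogue of the well-known equivalence between strong and $L^p$-viscosity subsolutions). The strong maximum principle for viscosity subsolutions, \cite[Theorem 2.1 and Corollary 2.4]{DaLio_2004}, then gives $u\equiv u(P^0)$ on $S(P^0)$. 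Combining the two cases completes the argument.

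The hard part, such as it is, will be bookkeeping rather than analysis: one must confirm that ``$u\in W^{2,d+1}_{\loc}(\sQ)$ with $Lu\leq 0$ a.e.''\ genuinely falls under the hypotheses of the cited results --- in particular that at the borderline exponent $p=d+1$ the passage from a strong subsolution to a weak, respectively continuous-viscosity, subsolution is legitimate, and that the cited strong maximum principles are available in the form ``nonnegative interior maximum with $c\geq 0$'' rather than only ``$c\equiv 0$''. The divergence-form rewriting, the embedding $W^{2,d+1}_{\loc}(\sQ)\hookrightarrow C(\sQ)$, and the chaining of parabolic cylinders along a time-monotone curve are all routine.
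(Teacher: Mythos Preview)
Your proposal is correct and follows essentially the same route as the paper: the paper does not give a separate proof but explains in the two remarks following the theorem that case \eqref{eq:a_Lipschitz_b_c_essentially_bounded_parabolic} follows from \cite[Theorem 6.25]{Lieberman} after rewriting $L$ in divergence form (using $a^{ij}\in W^{1,\infty}$) and the Sobolev embedding $W^{2,d+1}_{\loc}\hookrightarrow C$, while case \eqref{eq:a_b_c_continuous_parabolic} follows from \cite[Corollary 2.4]{DaLio_2004} once one knows that a $W^{2,d+1}_{\loc}$ strong subsolution is a viscosity subsolution. The only cosmetic difference is that for the latter passage the paper cites the parabolic Bony--Lions maximum principle of Yazhe and the argument of Lions, whereas you invoke the $L^{d+1}$-viscosity framework; both accomplish the same reduction.
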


Theorem \ref{thm:Classical_strong_max_principle_W2d+1_parabolic_c_geq_zero} holds, naturally, in more generality than stated here (even in the classical case).

\begin{rmk}[Classical strong maximum principle for weakly $L$-subharmonic functions]
\label{rmk:Classical_strong_max_principle_W2d+1_parabolic_c_geq_zero}
Under hypothesis \eqref{eq:a_Lipschitz_b_c_essentially_bounded_parabolic} in Theorem \ref{thm:Classical_strong_max_principle_W2d+1_parabolic_c_geq_zero}, the conclusion follows from \cite[Theorem 6.25]{Lieberman}. Indeed, it suffices to take $u \in W^{1,2}_b(\sQ)$, that is, $u \in L^2(\sQ)$ and $Du \in L^2(\sQ)$ and $u(t,\cdot) \in L^2(\sQ(t))$ for all $t \in I(\sQ)$ with $\sup_{t\in I(\sQ)}\|u(t,\cdot)\| < \infty$; here $\sQ(t)=\{x\in\RR^d: (t,x)\in\sQ\}$ and $I(\sQ) = \{t\in\RR: \sQ(t)\neq\emptyset\}$ \cite[p. 5 and p. 101]{Lieberman}. The additional condition, $a^{ij} \in W^{1,\infty}(\sQ)$, is imposed so we may integrate by parts and consider $u$ to be a weak subsolution. Since $u \in C(\sQ)$ when $u\in W^{2,d+1}_{\loc}(\sQ)$ by Theorem \ref{thm:Parabolic_sobolev_embedding}, the hypothesis in \cite[Theorem 6.25]{Lieberman} that there is a cylinder $Q_R(P^0) \Subset \sQ$ (as in \eqref{eq:Parabolic_cylinder}) such that $\sup_{Q_R(P^0)} u = \sup_\sQ u$ can be replaced by the simpler assertion that there is a point $P^0 \in \sQ$ such that $u(P^0) = \sup_\sQ u$.
\end{rmk}

\begin{rmk}[Classical strong maximum principle for viscosity subsolutions]
\label{rmk:Classical_strong_max_principle_viscosity_parabolic_c_geq_zero}
Under hypothesis \eqref{eq:a_b_c_continuous_parabolic} in Theorem \ref{thm:Classical_strong_max_principle_W2d+1_parabolic_c_geq_zero}, the conclusion follows from \cite[Corollary 2.4]{DaLio_2004} when $\sQ=(0,T)\times\sO$ for some $T>0$ and open subset $\sO\subset\RR^d$; the extension to the case of arbitrary open subsets $\sQ\subset\RR^{d+1}$ is standard (see, for example, \cite{Gripenberg_2007}). Indeed, since $L$ in Theorem \ref{thm:Classical_strong_max_principle_W2d+1_parabolic_c_geq_zero} is also assumed to be linear and uniformly parabolic (in the sense of \cite[p. 204]{Lieberman}), then by \cite[p. 398, top of page, and p. 402]{DaLio_2004}, the conditions in \cite[Equations (A0)--(A4)]{DaLio_2004} are obeyed. We now apply the parabolic version of the elliptic Bony-Lions maximum principle \cite[Theorem 1]{Bony_1967}, \cite[Corollary 2]{Lions_1983} due to Yazhe \cite[Theorem 4.2]{Yazhe_1985} and the parabolic analogue of Lions' proof that a $W^{2,d}_{\loc}(\sO)$ subsolution is a viscosity subsolution \cite[Corollary 3]{Lions_1983} to conclude that $u \in W^{2,d+1}_{\loc}(\sQ)$ obeying $Lu\leq 0$ a.e. on $\sQ$ is necessarily a viscosity subsolution \cite[Definition 2.2]{Crandall_Ishii_Lions_1992}, so \cite[Corollary 2.4]{DaLio_2004} applies to $u$.
\end{rmk}

We have the following parabolic analogue of Theorem \ref{thm:Strong_max_principle_C2_parabolic_c_geq_zero} for $L$-subharmonic functions in $W^{2,d+1}_{\loc}(\sQ)$.

\begin{thm}[Strong maximum principle for $L$-subharmonic functions in $W^{2,d+1}_{\loc}(\sQ)$]
\label{thm:Strong_max_principle_W2d+1_parabolic_c_geq_zero}
Let $\sQ \subset \RR^{d+1}$ be an open subset. Assume the hypotheses of Theorem \ref{thm:Viscosity_maximum_parabolic} for the coefficients of $L$ \eqref{eq:Generator_parabolic}, which we require to be measurable, including \eqref{eq:c_nonnegative_parabolic}, that is, $c\geq 0$ a.e. on $\sQ$. In addition, require that the coefficients of $L$ obey one of the following two conditions,
\begin{gather}
\label{eq:a_locally_lipschitz_b_c_essentially_bounded_parabolic}
a^{ij} \in W^{1,\infty}_{\loc}(\sQ), \quad b^i, \ c \in L^\infty_{\loc}(\sQ), \quad 1\leq i,j \leq d, \quad\hbox{or }
\\
\label{eq:a_b_c_locally_continuous_parabolic}
\tag{\ref*{eq:a_locally_lipschitz_b_c_essentially_bounded_parabolic}$'$}
a^{ij}, b^i, c \in C(\sQ),
\end{gather}
Suppose that $u\in W^{2,d+1}_{\loc}(\sQ)\cap \sC^1(\underline\sQ)$
and\,\footnote{Our hypothesis that $u \in \sC^1(\underline\sQ)$ could be relaxed to the slightly more technical hypothesis that $u_t$ and $Du$ be defined and continuous along $\mydirac_0\!\sQ$.}
$Lu\leq 0$ a.e. on $\sQ$. If $c=0$ (respectively, $c\geq 0$) a.e. on $\sQ$ and $u$ attains a global maximum (respectively, non-negative global maximum) at a point $P^0 \in \underline\sQ$, then $u$ is constant on $S(P^0)$.
\end{thm}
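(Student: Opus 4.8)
The plan is to reproduce, almost verbatim, the proof of the $C^2$ strong maximum principle (Theorem \ref{thm:Strong_max_principle_C2_parabolic_c_geq_zero}, itself modeled on the elliptic Theorem \ref{thm:Strong_maximum_principle_C2_elliptic}), with the classical $C^2$ parabolic strong maximum principle replaced throughout by its $W^{2,d+1}_{\loc}$ counterpart, Theorem \ref{thm:Classical_strong_max_principle_W2d+1_parabolic_c_geq_zero}; the extra hypotheses \eqref{eq:a_locally_lipschitz_b_c_essentially_bounded_parabolic} or \eqref{eq:a_b_c_locally_continuous_parabolic} are imposed precisely so that this substitution is legitimate. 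Two preliminaries are needed. First, since $u\in W^{2,d+1}_{\loc}(\sQ)\cap\sC^1(\underline\sQ)$, the parabolic Sobolev embedding (Theorem \ref{thm:Parabolic_sobolev_embedding}) gives $u\in C(\sQ)$, hence $u\in C(\underline\sQ)$, so the pointwise extremum arguments below are meaningful. Second, on every precompact subcylinder $\sQ'\Subset\sQ$ the standing hypothesis \eqref{eq:a_locally_strictly_parabolic}, together with lower semicontinuity of $\lambda_*$, yields a uniform positive lower bound for the smallest eigenvalue of $a$ on $\sQ'$, that is, \eqref{eq:Strictly_parabolic} holds on $\sQ'$, while \eqref{eq:a_locally_lipschitz_b_c_essentially_bounded_parabolic} (resp.\ \eqref{eq:a_b_c_locally_continuous_parabolic}) restricts to \eqref{eq:a_Lipschitz_b_c_essentially_bounded_parabolic} (resp.\ \eqref{eq:a_b_c_continuous_parabolic}) on $\sQ'$; so Theorem \ref{thm:Classical_strong_max_principle_W2d+1_parabolic_c_geq_zero} applies on each such $\sQ'$. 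Finally, when $c=0$ we may subtract a constant from $u$ (valid because $L$ then has no zeroth-order term) and so assume in all cases that the global maximum $M:=u(P^0)=\sup_{\underline\sQ}u$ is nonnegative.

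I then perform the dichotomy. If $M$ is attained at an interior point $P^1\in\sQ$, applying Theorem \ref{thm:Classical_strong_max_principle_W2d+1_parabolic_c_geq_zero} to $u$ on the precompact subcylinders of $\sQ$ through $P^1$ and exhausting shows $u\equiv M$ on $S(P^1)$, and the constancy is propagated to all of $S(P^0)$ by the same connectedness argument on the time-slice components $C(\cdot)$ of $\underline\sQ$ (cf.\ Definition \ref{defn:Connected_subsets_parabolic}) used in the proofs of Theorems \ref{thm:Strong_max_principle_C2_parabolic_c_geq_zero}--\ref{thm:Strong_max_principle_C2_parabolic_c_arb_sign_refined} and in the classical parabolic theory. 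The remaining possibility is $u<M$ on all of $\sQ$; then $P^0\in\mydirac_0\!\sQ$, and since $u<u(P^0)=M$ on $B_\delta(P^0)\cap\sQ$ for small $\delta>0$, the function $u$ has a strict local maximum at $\bar P^0:=P^0\in\mydirac_0\!\sQ$ relative to $\sQ$. I would then invoke Theorem \ref{thm:Viscosity_maximum_parabolic} (whose coefficient hypotheses are exactly the standing assumptions of the present theorem): it produces a connected open set $\sV$ with $\bar P^0\in\bar\sV\subset\underline\sQ$ --- so in fact $\sV\subset\sQ$, since $\mydirac_0\!\sQ\subset\partial\sQ$ --- and a function $w\in C^2(\sQ)\cap C^1(\RR^{d+1})$ with $w(\bar P^0)=0$, such that $v:=u+w$ satisfies $Lv<0$ a.e.\ on $\sV$ and attains a maximum $v(\tilde P)>u(\bar P^0)=M$ at an interior point $\tilde P\in\sV$.

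The contradiction is now drawn exactly as in the $C^2$ case. Since $v(\tilde P)>M\geq 0$, $v\in W^{2,d+1}_{\loc}(\sV)$ with $Lv\leq 0$ a.e.\ on $\sV$, and $v\in C(\bar\sV)$ (because $u\in\sC^1(\underline\sQ)\subset C(\underline\sQ)$, $w\in C^1(\RR^{d+1})$, and $\bar\sV\subset\underline\sQ$), applying Theorem \ref{thm:Classical_strong_max_principle_W2d+1_parabolic_c_geq_zero} to $v$ on the precompact subcylinders of $\sV$ (which lie $\Subset\sQ$, as $\sV\subset\sQ$, and on which the hypotheses hold) and exhausting forces $v\equiv v(\tilde P)$ on the set $S(\tilde P)$ computed within $\sV$, and in particular on the component of $\sV\cap\{t=t(\tilde P)\}$ through $\tilde P$; by continuity of $v$ on $\bar\sV$ this persists on the closure of that component. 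In the boundary-straightening coordinates of the proof of Theorem \ref{thm:Viscosity_maximum_parabolic} this component is $\{t(\tilde P)\}\times V$ with $V$ a connected cylinder whose base meets $\{x_d=0\}$, so its closure contains a point $\bar Q\in\mydirac_0\!\sQ$, where $v(\bar Q)=v(\tilde P)>M$. But along $\mydirac_0\!\sQ$ near $\bar P^0$ the explicit quadratic $w$ of Theorem \ref{thm:Viscosity_maximum_parabolic} (see Remark \ref{rmk:Viscosity_maximum_parabolic_w_structure}) is $\leq 0$, so $v(\bar Q)=u(\bar Q)+w(\bar Q)\leq M+0=M$ --- a contradiction. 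Hence $u<M$ on $\sQ$ cannot occur, the maximum is attained in $\sQ$, and the first case completes the proof.

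I do not expect a conceptual obstacle here: the one substantive ingredient, the conversion of a degenerate-boundary maximum into an interior maximum, is already supplied by Theorem \ref{thm:Viscosity_maximum_parabolic}, and the sign property $w\leq 0$ along $\mydirac_0\!\sQ$ that drives the contradiction is immediate from the quadratic form of $w$. The only real care is bookkeeping: verifying that the global structural hypotheses of Theorem \ref{thm:Classical_strong_max_principle_W2d+1_parabolic_c_geq_zero} may be replaced by the local hypotheses \eqref{eq:a_locally_strictly_parabolic} and \eqref{eq:a_locally_lipschitz_b_c_essentially_bounded_parabolic}/\eqref{eq:a_b_c_locally_continuous_parabolic} available here, and transcribing the connectedness argument that propagates constancy over the possibly non-cylindrical set $S(P^0)$ --- both routine and following the pattern of the $C^2$ results and of the classical (Friedman--Lieberman) treatment.
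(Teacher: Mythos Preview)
Your proposal is correct and follows essentially the same approach as the paper: the dichotomy between an interior maximum (handled by the classical $W^{2,d+1}_{\loc}$ strong maximum principle, Theorem \ref{thm:Classical_strong_max_principle_W2d+1_parabolic_c_geq_zero}) and a strict degenerate-boundary maximum (handled by Theorem \ref{thm:Viscosity_maximum_parabolic} followed by a contradiction via the classical principle applied to $v=u+w$), together with the reduction $u\mapsto u-u(P^0)$ when $c=0$. The paper's own proof is simply ``identical to that of Theorem \ref{thm:Strong_max_principle_C2_parabolic_c_geq_zero}, except that the role of [the $C^2$ classical principle] is replaced by [Theorem \ref{thm:Classical_strong_max_principle_W2d+1_parabolic_c_geq_zero}],'' so your expanded version is faithful.

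Two minor remarks. First, your contradiction step is slightly more elaborate than necessary but also more robust: the paper's terse argument (inherited from the elliptic Theorem \ref{thm:Strong_maximum_principle_C2_elliptic}) implicitly uses that $\bar P^0$ lies in the closure of the region where $v$ is forced constant, together with $w(\bar P^0)=0$; your version instead reaches a point $\bar Q$ on the degenerate boundary at the time level $t(\tilde P)$ via the slice $C(\tilde P)$ and uses $w\leq 0$ there, which works regardless of which time direction $S(\tilde P)$ propagates in. Second, your preliminary remarks on Sobolev embedding and on localizing the hypotheses of Theorem \ref{thm:Classical_strong_max_principle_W2d+1_parabolic_c_geq_zero} to precompact subcylinders are implicit in the paper but worth making explicit, as you have done.
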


\begin{proof}
The proof is identical to that of Theorem \ref{thm:Strong_max_principle_C2_parabolic_c_geq_zero}, except that the role of classical strong maximum principle for $L$-subharmonic functions in $C^2(\sQ)$, \cite[Theorem 3.5]{GilbargTrudinger}, is replaced by the classical strong maximum principle for $L$-subharmonic functions in $W^{2,d+1}_{\loc}(\sQ)$, Theorem \ref{thm:Classical_strong_max_principle_W2d+1_parabolic_c_geq_zero}. The additional case $c=0$ follows from the obvious fact that $u$ may be replaced by $u-u(P^0)$.
\end{proof}

Remark \ref{rmk:Aronszajn_parabolic} (relaxing the hypothesis of a global maximum to a local maximum) applies equally well here.

\subsection{Weak maximum principle for $L$-subharmonic functions in $C^2(\sQ)$ or $W^{2,d+1}_{\loc}(\sQ)$}
\label{subsec:Weak_maximum_principle_parabolic}
Recall that we reserve the term \emph{domain} for a connected, open subset of $\RR^{d+1}$. We have the following parabolic analogue of Theorem \ref{thm:Weak_maximum_principle_C2_elliptic_domain} in the elliptic case and \cite[Theorem 2.4]{Lieberman} in the classical parabolic case; note that the hypotheses are simpler because $\mydirac_1\!\sQ$ is always non-empty as noted in \eqref{eq:Nondegenerate_boundaryportion_nonempty_parabolic}

\begin{thm}[Weak maximum principle on domains for $L$-subharmonic functions in $C^2(\sQ)$]
\label{thm:Weak_maximum_principle_C2_parabolic_domain}
Let $\sQ\subset\RR^{d+1}$ be a bounded domain. Assume the hypotheses of Theorem \ref{thm:Viscosity_maximum_parabolic} for the coefficients of $L$ in \eqref{eq:Generator_parabolic}. Suppose $u \in C^2(\sQ)\cap \sC^1(\underline\sQ)$ and $\sup_\sQ u<\infty$. If $Lu\leq 0$ on $\sQ$ and $u^* \leq 0$ on $\mydirac_1\!\sQ$, then $u\leq 0$ on $\sQ$.
\end{thm}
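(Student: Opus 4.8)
The plan is to follow exactly the template established by the proof of Theorem \ref{thm:Weak_maximum_principle_C2_elliptic_domain} in the elliptic case, substituting the parabolic strong maximum principle (Theorem \ref{thm:Strong_max_principle_C2_parabolic_c_geq_zero} or its refinement \ref{thm:Strong_max_principle_C2_parabolic_c_arb_sign}) for the elliptic one. Concretely: since $\sQ$ is bounded and $\sup_\sQ u<\infty$, the upper semicontinuous envelope $u^*$ attains a finite maximum $M$ at some point $P^0\in\bar\sQ$. If $M<0$ we are done, so assume $M\geq 0$. The key dichotomy is whether $P^0$ lies in $\underline\sQ=\sQ\cup\mydirac_0\!\sQ$ or in the non-degenerate parabolic boundary $\mydirac_1\!\sQ$. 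In the latter case the hypothesis $u^*\leq 0$ on $\mydirac_1\!\sQ$ forces $M\leq 0$, hence $M=0$ and $u\leq u^*\leq 0$ on $\sQ$.

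The substantive case is $P^0\in\underline\sQ$ with $u(P^0)=M\geq 0$. Here I invoke the parabolic strong maximum principle: since $u\in C^2(\sQ)\cap\sC^1(\underline\sQ)$, $Lu\leq 0$ on $\sQ$, the coefficients obey the hypotheses of Theorem \ref{thm:Viscosity_maximum_parabolic}, and $u$ attains a nonnegative global maximum at $P^0\in\underline\sQ$, Theorem \ref{thm:Strong_max_principle_C2_parabolic_c_geq_zero} gives $u\equiv u(P^0)=M$ on $S(P^0)$. The point now is that $S(P^0)$ is the backward-time ``cone of influence'' of $P^0$ and one must propagate the constancy across the whole domain and then down to $\mydirac_1\!\sQ$ to conclude $M\leq 0$. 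One way: since $\sQ$ is a bounded domain and $\mydirac_1\!\sQ$ is non-empty (by \eqref{eq:Nondegenerate_boundaryportion_nonempty_parabolic}), the closure $\overline{S(P^0)}$ must meet $\mydirac_1\!\sQ$ — informally, tracing the time coordinate downward along curves in $\underline\sQ$ starting from $P^0$, one either exits through $\mydirac_1\!\sQ$ or reaches the infimal time slice of $\sQ$, which is itself part of $\mydirac_1\!\sQ$ (the ``top'' in the terminology of Example \ref{exmp:Boundary_cylinder_parabolic}, since $n_0=-1<0$ there under our terminal-problem convention; more precisely any minimal-time point of $\bar\sQ$ reached this way lies in $\mydirac\!\sQ$ with $n_0\neq 0$, hence in $\mydirac_1\!\sQ$). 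At such a boundary point $P^1\in\overline{S(P^0)}\cap\mydirac_1\!\sQ$ we have by continuity of $u$ on $\underline\sQ$ together with $u\equiv M$ on $S(P^0)$ that $u^*(P^1)\geq M$; but $u^*\leq 0$ on $\mydirac_1\!\sQ$, so $M\leq 0$, forcing $M=0$ and $u\leq 0$ on $\sQ$.

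The case where only $c\geq 0$ rather than $c=0$ is handled identically, since Theorem \ref{thm:Strong_max_principle_C2_parabolic_c_geq_zero} already assumes only $c\geq 0$ and the ``nonnegative global maximum'' hypothesis is precisely what $M\geq 0$ supplies. (One can also route through Theorem \ref{thm:Strong_max_principle_C2_parabolic_c_arb_sign} after replacing $u$ by $u-M$ when $M>0$, but this is not needed.)

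The main obstacle is the geometric propagation argument in the second paragraph: unlike the elliptic case, where the strong maximum principle immediately gives constancy on the \emph{entire} connected component and hence on $\bar\sO$, the parabolic strong maximum principle only gives constancy on the backward set $S(P^0)$, so one must argue that $S(P^0)$'s closure necessarily touches $\mydirac_1\!\sQ$. This relies on $\sQ$ being bounded (so the time coordinate is bounded below on $\sQ$, and an infimal-time point of $\bar\sQ$ is attained) and on the general fact \eqref{eq:Nondegenerate_boundaryportion_nonempty_parabolic} that $\mydirac_1\!\sQ$ is always non-empty — in particular any minimal-time boundary point has $n_0\neq 0$ and so belongs to $\mydirac_1\!\sQ$, not to $\mydirac_0\!\sQ$. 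I would state this as a short lemma or handle it inline; it is elementary but needs the boundedness hypothesis, which is why the theorem is stated for bounded $\sQ$ (cf.\ Remark \ref{rmk:Unbounded} for the unbounded extension). Everything else is a verbatim transcription of the elliptic proof with ``$\partial_0\sO$/$\partial_1\sO$/Theorem \ref{thm:Strong_maximum_principle_C2_elliptic}'' replaced by ``$\mydirac_0\!\sQ$/$\mydirac_1\!\sQ$/Theorem \ref{thm:Strong_max_principle_C2_parabolic_c_geq_zero}''.
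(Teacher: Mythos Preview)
Your approach is essentially the paper's: reduce to the elliptic template (Theorem~\ref{thm:Weak_maximum_principle_C2_elliptic_domain}) and swap in the parabolic strong maximum principle (Theorem~\ref{thm:Strong_max_principle_C2_parabolic_c_geq_zero}), noting that $\mydirac_1\!\sQ\neq\emptyset$ makes the side condition \eqref{eq:Empty_nondegenerate_boundary_elliptic_domain} unnecessary. The paper's proof is a one-liner to exactly this effect; you have in fact been more careful than the paper in flagging that the parabolic strong principle yields constancy only on $S(P^0)$ rather than on all of $\sQ$, and in supplying the propagation argument to $\mydirac_1\!\sQ$ that the paper leaves implicit.

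One caution on that propagation step: check the time direction against Definition~\ref{defn:Connected_subsets_parabolic} and Example~\ref{exmp:Boundary_cylinder_parabolic}. Under the terminal-value convention of this paper, $S(P^0)$ should consist of points at times on the \emph{terminal} side of $t^0$, and it is the terminal slice (where $n_0=-1$) that is guaranteed to lie in $\mydirac_1\!\sQ$ by \eqref{eq:Nondegenerate_boundaryportion_nonempty_parabolic}. Your description in terms of ``tracing downward'' to the ``infimal time slice'' appears to run the other way; the argument is correct once the direction is aligned with the paper's convention, so that $\overline{S(P^0)}$ meets the terminal portion of $\mydirac_1\!\sQ$.
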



\begin{proof}
The proof is the same as that of Theorem \ref{thm:Weak_maximum_principle_C2_elliptic_domain}, except the role of Theorem \ref{thm:Strong_maximum_principle_C2_elliptic} is replaced by Theorem \ref{thm:Strong_max_principle_C2_parabolic_c_geq_zero} and because $\mydirac_1\!\sQ$ is necessarily non-empty for parabolic domains, the parabolic analogue of \eqref{eq:Nonempty_nondegenerate_boundary_elliptic_domain} is always obeyed and we do not require an analogue of the condition \eqref{eq:Empty_nondegenerate_boundary_elliptic_domain}, that $c>0$ at some point of $\sQ$.
\end{proof}

By replacing the role of Theorem \ref{thm:Strong_max_principle_C2_parabolic_c_geq_zero} by that of Theorem \ref{thm:Strong_max_principle_W2d+1_parabolic_c_geq_zero} in the proof of Theorem \ref{thm:Weak_maximum_principle_C2_parabolic_domain}, we obtain the following analogue of the weak maximum principle for $L$-subharmonic functions \cite[Corollary 7.4]{Lieberman}.

\begin{thm}[Weak maximum principle on domains for $L$-subharmonic functions in $W^{2,d+1}_{\loc}(\sQ)$]
\label{thm:Weak_maximum_principle_W2d+1_parabolic_domain}
Let $\sQ\subset\RR^{d+1}$ be a bounded domain. Assume the hypotheses of Theorem \ref{thm:Viscosity_maximum_parabolic} for the coefficients of $L$ in \eqref{eq:Generator_parabolic}, which we require to be measurable.
Suppose $u\in W^{2,d+1}_{\loc}(\sQ)\cap \sC^1(\underline\sQ)$ and\,\footnotemark[\value{footnote}]
$\sup_\sQ u<\infty$. If $Lu\leq 0$ a.e. on $\sQ$ and $u^* \leq 0$ on $\mydirac_1\!\sQ$, then $u\leq 0$ on $\sQ$.
\end{thm}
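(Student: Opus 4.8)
The plan is to follow the proofs of the elliptic weak maximum principle, Theorem~\ref{thm:Weak_maximum_principle_C2_elliptic_domain}, and of its $C^2$ parabolic counterpart, Theorem~\ref{thm:Weak_maximum_principle_C2_parabolic_domain}, essentially without change, the single substitution being that the role played there by the classical $C^2$ strong maximum principle is now taken by the strong maximum principle for $L$-subharmonic functions in $W^{2,d+1}_{\loc}(\sQ)$, Theorem~\ref{thm:Strong_max_principle_W2d+1_parabolic_c_geq_zero}. Recall that $u \in W^{2,d+1}_{\loc}(\sQ) \hookrightarrow C(\sQ)$ by the parabolic Sobolev embedding (Theorem~\ref{thm:Parabolic_sobolev_embedding}), so $u = u^*$ on the open set $\sQ$, while $u \in \sC^1(\underline\sQ)$ gives $u = u^*$ on all of $\underline\sQ$.

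First I would reduce to the case in which the supremum is attained: since $\sQ$ is bounded, $\bar\sQ$ is compact, and since $u^*$ is upper semicontinuous there with $\sup_\sQ u < \infty$, it attains a finite maximum $M := \max_{\bar\sQ} u^*$ at some point $P^0 \in \bar\sQ$. If $M < 0$ then $u = u^* \leq M < 0$ on $\sQ$ and we are done. If $P^0 \in \mydirac_1\!\sQ$, then $M = u^*(P^0) \leq 0$ by hypothesis, so again $u \leq 0$ on $\sQ$. Hence I may assume $M \geq 0$ and $P^0 \in \underline\sQ = \sQ \cup \mydirac_0\!\sQ$; then $u(P^0) = u^*(P^0) = M \geq 0$, so $u$ attains a non-negative global maximum at the point $P^0$ of $\underline\sQ$.

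Next I would apply Theorem~\ref{thm:Strong_max_principle_W2d+1_parabolic_c_geq_zero}, whose coefficient hypotheses are those assumed here together with one of the mild interior-regularity conditions \eqref{eq:a_locally_lipschitz_b_c_essentially_bounded_parabolic}, \eqref{eq:a_b_c_locally_continuous_parabolic}, to conclude that $u \equiv M$ on the set $S(P^0) \subset \underline\sQ$ of Definition~\ref{defn:Connected_subsets_parabolic}. It then remains only to reach the non-degenerate parabolic boundary: if $P^1 \in \overline{S(P^0)} \cap \mydirac_1\!\sQ$ and $(P_n) \subset S(P^0)$ with $P_n \to P^1$, then $0 \geq u^*(P^1) \geq \limsup_n u(P_n) = M \geq 0$, whence $M = 0$ and $u = u^* \leq 0$ on $\sQ$. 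To see that $\overline{S(P^0)} \cap \mydirac_1\!\sQ$ is non-empty one uses the structure of $S(P^0)$: it is a bounded subset of $\underline\sQ$ that is saturated under moving toward the terminal time, so its closure must abut the terminal portion of the parabolic boundary, which belongs to $\mydirac_1\!\sQ$ by \eqref{eq:Nondegenerate_boundaryportion_nonempty_parabolic} --- this is precisely the geometric mechanism of the classical parabolic weak maximum principle (compare \cite[Corollary~7.4]{Lieberman} and \cite{FriedmanPDE}).

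The only place the parabolic argument genuinely departs from the elliptic one --- and hence the main obstacle --- is this last step: whereas in Theorem~\ref{thm:Strong_maximum_principle_W2d_elliptic} the strong maximum principle forces $u$ to be constant on the whole connected domain, here Theorem~\ref{thm:Strong_max_principle_W2d+1_parabolic_c_geq_zero} only propagates the maximum along $S(P^0)$, so one must separately supply the (standard) fact that $\overline{S(P^0)}$ meets $\mydirac_1\!\sQ$. Everything else is the routine semicontinuity bookkeeping transcribed from the proofs of Theorems~\ref{thm:Weak_maximum_principle_C2_elliptic_domain} and \ref{thm:Weak_maximum_principle_C2_parabolic_domain}.
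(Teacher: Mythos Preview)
Your proposal is correct and follows exactly the approach the paper takes: the paper's proof is a one-line reduction (``replace the role of Theorem~\ref{thm:Strong_max_principle_C2_parabolic_c_geq_zero} by that of Theorem~\ref{thm:Strong_max_principle_W2d+1_parabolic_c_geq_zero} in the proof of Theorem~\ref{thm:Weak_maximum_principle_C2_parabolic_domain}''), and you have unpacked precisely that. You even go further than the paper by making explicit two points the paper leaves implicit: first, that Theorem~\ref{thm:Strong_max_principle_W2d+1_parabolic_c_geq_zero} carries the extra interior-regularity hypothesis \eqref{eq:a_locally_lipschitz_b_c_essentially_bounded_parabolic} or \eqref{eq:a_b_c_locally_continuous_parabolic}, and second, that in the parabolic case the strong maximum principle only yields constancy on $S(P^0)$ rather than on all of $\underline\sQ$, so one must separately argue that $\overline{S(P^0)}$ meets $\mydirac_1\!\sQ$.
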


One can again immediately deduce versions of the weak maximum principle (Theorems \ref{thm:Weak_maximum_principle_C2_parabolic_domain} or \ref{thm:Weak_maximum_principle_W2d+1_parabolic_domain}) when $\sQ$ is not necessarily connected, by the same argument as used in the proofs of Corollaries \ref{cor:Weak_maximum_principle_C2_elliptic_opensubset} or \ref{cor:Weak_maximum_principle_W2d_elliptic_opensubset} in the elliptic case. Because \eqref{eq:Nondegenerate_boundaryportion_nonempty_parabolic} holds for each connected component of $\sQ$ (necessarily open since $\sQ\subset\RR^{d+1}$ is open and thus locally connected), the proofs are simpler.

\begin{cor}[Weak maximum principle on open subsets for $L$-subharmonic functions in $C^2(\sQ)$]
\label{cor:Weak_maximum_principle_C2_parabolic_opensubset}
Let $\sQ\subset\RR^{d+1}$ be a bounded, open subset. Assume the hypotheses of Theorem \ref{thm:Viscosity_maximum_parabolic} for the coefficients of $L$ in \eqref{eq:Generator_parabolic}. Suppose $u \in C^2(\sQ)\cap \sC^1(\underline\sQ)$ and $\sup_\sQ u<\infty$. If $Lu\leq 0$ on $\sQ$ and $u^* \leq 0$ on $\mydirac_1\!\sQ$, then $u\leq 0$ on $\sQ$.
\end{cor}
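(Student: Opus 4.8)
The plan is to decompose $\sQ$ into its connected components and apply the domain version, Theorem \ref{thm:Weak_maximum_principle_C2_parabolic_domain}, to each one separately; this mirrors the deduction of Corollary \ref{cor:Weak_maximum_principle_C2_elliptic_opensubset} from Theorem \ref{thm:Weak_maximum_principle_C2_elliptic_domain}, except that the ``isolated component'' bookkeeping needed in the elliptic setting is now unnecessary. Since $\sQ\subset\RR^{d+1}$ is open, hence locally connected, I would write $\sQ = \bigcup_{n\in\NN}\sC_n$ as an at most countable union of its connected components, each $\sC_n$ being a nonempty, bounded, open subset that is relatively clopen in $\sQ$; consequently $\partial\sC_n\subseteq\partial\sQ$ and $\mydirac\sC_n\subseteq\mydirac\sQ$.

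Next I would check that, for each $n$, the operator $L$ and the parabolic domain $\sC_n$ inherit the hypotheses of Theorem \ref{thm:Viscosity_maximum_parabolic}: the interior conditions \eqref{eq:a_locally_strictly_parabolic}, \eqref{eq:c_nonnegative_parabolic}, \eqref{eq:c_locally_bounded_above_domain_plus_degen_boundary_parabolic} hold a fortiori on $\sC_n\subseteq\sQ$, while \eqref{eq:b_perp_positive_boundary_parabolic}, \eqref{eq:a_continuous_degen_boundary_parabolic}, \eqref{eq:a_locally_lipschitz_parabolic} and one of \eqref{eq:b_tangential_component_zero_degenerate_boundary_parabolic}, \eqref{eq:b_C2_parabolic} transfer once one knows $\mydirac_0\!\sC_n\subseteq\mydirac_0\!\sQ$ (see below). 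Then $u\in C^2(\sQ)\cap\sC^1(\underline\sQ)$ restricts to $C^2(\sC_n)\cap\sC^1(\underline\sC_n)$, we have $Lu\leq0$ on $\sC_n$, and $\sup_{\sC_n}u\leq\sup_\sQ u<\infty$. The key simplification is \eqref{eq:Nondegenerate_boundaryportion_nonempty_parabolic}: the non-degenerate parabolic boundary $\mydirac_1\!\sC_n$ is automatically non-empty (it contains, for example, points of $\partial\sC_n$ with $n_0\neq0$, as in Example \ref{exmp:Boundary_cylinder_parabolic}), so Theorem \ref{thm:Weak_maximum_principle_C2_parabolic_domain} applies to $\sC_n$ with no side hypothesis on $c$; combined with $\mydirac_1\!\sC_n\subseteq\mydirac_1\!\sQ$ and the hypothesis $u^*\leq0$ on $\mydirac_1\!\sQ$, it yields $u\leq0$ on $\sC_n$. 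Since $n$ was arbitrary and $\sQ=\bigcup_n\sC_n$, this gives $u\leq0$ on $\sQ$, as claimed.

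The only point requiring genuine care, and hence the main (minor) obstacle, is the boundary-portion correspondence $\mydirac_0\!\sC_n\subseteq\mydirac_0\!\sQ$ and $\mydirac_1\!\sC_n\subseteq\mydirac_1\!\sQ$: one must verify that passing to a connected component never moves a point of the non-degenerate boundary $\mydirac_1\!\sQ$ (where the datum $u^*\leq0$ is known) onto the degenerate boundary $\mydirac_0\!\sC_n$, nor conversely, and must correctly handle ``junction'' points lying on the closures of several components. This follows because $\sC_n$ is relatively clopen in $\sQ$, so $\sC_n$ and $\sQ$ coincide on a full neighborhood of any point in the relative interior of a boundary component — hence the limits of $a(P')$ and the value of $n_0(P)$ computed for $\sC_n$ agree with those computed for $\sQ$ there — while the ``$\Int$'' appearing in the definitions \eqref{eq:Degeneracy_locus_parabolic} and \eqref{eq:Parabolic_nondegeneracy_locus} is exactly what disposes of the remaining thin set of junction points. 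Everything else is a verbatim transcription of the proof of Corollary \ref{cor:Weak_maximum_principle_C2_elliptic_opensubset}, with $A$ replaced by $L$ and $\partial_i\sO$ by $\mydirac_i\!\sQ$.
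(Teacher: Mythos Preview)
Your proposal is correct and follows exactly the route the paper indicates: decompose $\sQ$ into connected components and apply Theorem \ref{thm:Weak_maximum_principle_C2_parabolic_domain} to each, noting that \eqref{eq:Nondegenerate_boundaryportion_nonempty_parabolic} makes the isolated-component bookkeeping of the elliptic Corollary \ref{cor:Weak_maximum_principle_C2_elliptic_opensubset} unnecessary. The paper itself does not write out a proof beyond this remark, so your treatment of the boundary-portion inclusions $\mydirac_i\!\sC_n\subseteq\mydirac_i\!\sQ$ is in fact more detailed than what the paper provides.
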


\begin{cor}[Weak maximum principle on open subsets for $L$-subharmonic functions in $W^{2,d+1}_{\loc}(\sQ)$]
\label{thm:Weak_maximum_principle_W2d+1_parabolic_opensubset}
Let $\sQ\subset\RR^{d+1}$ be a bounded, open subset. Assume the hypotheses of Theorem \ref{thm:Viscosity_maximum_parabolic} for the coefficients of $L$ in \eqref{eq:Generator_parabolic}, which we require to be measurable. Suppose $u\in W^{2,d+1}_{\loc}(\sQ)\cap \sC^1(\underline\sQ)$ and $\sup_\sQ u<\infty$. If $Lu\leq 0$ a.e. on $\sQ$ and $u^* \leq 0$ on $\mydirac_1\!\sQ$, then $u\leq 0$ on $\sQ$.
\end{cor}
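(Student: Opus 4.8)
The plan is to deduce the statement from its connected counterpart, Theorem~\ref{thm:Weak_maximum_principle_W2d+1_parabolic_domain}, in precisely the way that Corollary~\ref{cor:Weak_maximum_principle_C2_parabolic_opensubset} is deduced from Theorem~\ref{thm:Weak_maximum_principle_C2_parabolic_domain}, and Corollary~\ref{cor:Weak_maximum_principle_W2d_elliptic_opensubset} from Theorem~\ref{thm:Weak_maximum_principle_W2d_elliptic_domain}. First, since $\sQ\subset\RR^{d+1}$ is open and hence locally connected, I would write $\sQ=\bigcup_{n\in\NN}\sC_n$ as the at most countable disjoint union of its connected components; each $\sC_n$ is open and, being a component of $\sQ$, closed in $\sQ$, so that $\partial\sC_n\subset\partial\sQ$.

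Next I would check that the hypotheses of Theorem~\ref{thm:Weak_maximum_principle_W2d+1_parabolic_domain} transfer from $\sQ$ to each bounded domain $\sC_n$. The measurable coefficients of $L$ restricted to $\sC_n$ continue to obey \eqref{eq:a_locally_strictly_parabolic}, \eqref{eq:c_nonnegative_parabolic}, \eqref{eq:b_perp_positive_boundary_parabolic}, \eqref{eq:c_locally_bounded_above_domain_plus_degen_boundary_parabolic}, \eqref{eq:a_continuous_degen_boundary_parabolic}, \eqref{eq:a_locally_lipschitz_parabolic}, and one of \eqref{eq:b_tangential_component_zero_degenerate_boundary_parabolic}, \eqref{eq:b_C2_parabolic}; the degenerate parabolic boundary $\mydirac_0\!\sC_n$ is $C^{1,\alpha}$, since away from a nowhere dense subset of $\partial\sC_n$ it coincides with an open piece of $\mydirac_0\!\sQ$; and $u|_{\sC_n}\in W^{2,d+1}_{\loc}(\sC_n)\cap\sC^1(\underline{\sC_n})$ with $\sup_{\sC_n}u\le\sup_{\sQ}u<\infty$, $Lu\le0$ a.e.\ on $\sC_n$, and $u^*\le0$ on $\mydirac_1\!\sC_n$.

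The key observation, and the reason the parabolic argument is shorter than its elliptic analogue, is that by \eqref{eq:Nondegenerate_boundaryportion_nonempty_parabolic} applied to the bounded open set $\sC_n$ the non-degenerate parabolic boundary $\mydirac_1\!\sC_n$ is automatically non-empty, a supremal-time portion of $\partial\sC_n$ carrying $n_0\ne0$. Hence, unlike in Corollary~\ref{cor:Weak_maximum_principle_W2d_elliptic_opensubset}, there is no need to single out isolated components or to run an induction over chains of adjacent components to supply them with boundary data: Theorem~\ref{thm:Weak_maximum_principle_W2d+1_parabolic_domain} applies directly to each $\sC_n$ and yields $u\le0$ on $\sC_n$, and taking the union over $n\in\NN$ gives $u\le0$ on $\sQ$.

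The only step that is not purely formal is matching up the loci $\mydirac_0$, $\mydirac_1$ and the upper semicontinuous envelope $u^*$ for $\sC_n$ against those for $\sQ$ at boundary points shared with other components. At a point $P\in\partial\sC_n$ admitting a neighborhood $U$ disjoint from every $\sC_m$ with $m\ne n$ one has $U\cap\sQ=U\cap\sC_n$ and $U\cap\partial\sQ=U\cap\partial\sC_n$, so the degeneracy loci and the envelopes agree near $P$; for the remaining points one uses that the envelope of $u$ over $\overline{\sC_n}$ is dominated by its envelope over $\overline{\sQ}$, so $u^*\le0$ persists on $\mydirac_1\!\sC_n$ wherever the latter lies in $\mydirac_1\!\sQ$. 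This is exactly the routine point-set bookkeeping already implicit in the elliptic corollaries being invoked, and I do not expect it to pose a genuine obstacle; the entire mathematical content of the proof is the reduction to Theorem~\ref{thm:Weak_maximum_principle_W2d+1_parabolic_domain} together with the always-non-empty $\mydirac_1\!\sC_n$.
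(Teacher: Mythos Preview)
Your proposal is correct and follows precisely the approach the paper intends: decompose $\sQ$ into its open connected components, apply Theorem~\ref{thm:Weak_maximum_principle_W2d+1_parabolic_domain} to each, and invoke \eqref{eq:Nondegenerate_boundaryportion_nonempty_parabolic} to obviate the chain-of-components induction needed in the elliptic Corollary~\ref{cor:Weak_maximum_principle_W2d_elliptic_opensubset}. The paper does not spell out a proof for this corollary but states explicitly that it follows by the same argument as Corollaries~\ref{cor:Weak_maximum_principle_C2_elliptic_opensubset} and~\ref{cor:Weak_maximum_principle_W2d_elliptic_opensubset}, with the simplification you identify.
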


\subsection{Weak maximum principles for $L$-subharmonic functions in $C^2(\sQ)$ or $W^{2,d+1}_{\loc}(\sQ)$ and relaxed hypotheses on the coefficients}
\label{subsec:Relaxing_coefficients_parabolic}
Before we can relax the hypotheses of Theorem \ref{thm:Weak_maximum_principle_C2_parabolic_domain} on the coefficients $a$ and $b$ of $L$ in \eqref{eq:Generator_parabolic}, we shall again need a priori maximum principle estimates. To state these properties in useful generality, we recall the parabolic analogue of Definition \ref{defn:Weak_max_principle_property}.

\begin{defn}[Weak maximum principle property for $L$-subharmonic functions in $C^2(\sQ)$ or $W^{2,d+1}_{\loc}(\sQ)$]
\label{defn:Weak_max_principle_property_parabolic}
\cite[Definition 2.2]{Feehan_parabolicmaximumprinciple}
Let $\sQ\subset\RR^{d+1}$ be an open subset, let $\Sigma \subsetneqq \mydirac\!\sQ$ be an open subset, and let $\fK\subset C^2(\sQ)$ (respectively, $W^{2,d+1}_{\loc}(\sQ)$) be a convex cone. We say that an operator $L$ in \eqref{eq:Generator_parabolic} obeys the \emph{weak maximum principle property on $\sQ\cup\Sigma$ for $\fK$} if whenever $u\in \fK$ obeys
$$
Lu \leq 0 \quad \hbox{(a.e.) on } \sQ \quad\hbox{and}\quad u^* \leq 0 \quad\hbox{on } \Sigma,
$$
then
$$
u \leq 0 \quad\hbox{on } \sQ.
$$
\end{defn}

\begin{exmp}[Examples of the weak maximum principle property for $L$-subharmonic functions in $C^2(\sQ)$ or $W^{2,d+1}_{\loc}(\sQ)$]
\label{exmp:Examples_weak maximum principle property_C2_W2d+1_parabolic}
One can find examples (compare \cite[Example 2.3]{Feehan_parabolicmaximumprinciple}) of subsets $\sQ$ and $\Sigma\subseteqq\partial\sQ$, operators $L$, and cones $\fK$ yielding the weak maximum principle property in the following settings.
\begin{enumerate}
\item In \cite[Theorem 2.4]{Lieberman} (respectively, \cite[Corollaries 6.26 or 7.4]{Lieberman}), where $\sQ$ is bounded, one takes $\Sigma = \emptyset$ and $\fK = C^2(\sQ)\cap C(\bar\sQ)$ (respectively, $W^{2,d+1}_{\loc}(\sQ)\cap C(\bar\sQ)$).
\item In \cite[Theorem 8.1.4]{Krylov_LecturesHolder}, where $\sQ$ may be unbounded, one takes $\Sigma = \emptyset$ and $\fK$ to be the set of $u\in C^2(\sQ)$ such that $\sup_\sQ u < \infty$.
\item In Theorem \ref{thm:Weak_maximum_principle_C2_parabolic_domain} (respectively, Theorem \ref{thm:Weak_maximum_principle_W2d+1_parabolic_domain}), where $\sQ$ is a bounded domain, one takes $\Sigma = \mydirac_0\!\sQ$ and $\fK = C^2(\sQ)\cap \sC^1(\underline\sQ)$ (respectively, $W^{2,d+1}_{\loc}(\sQ)\cap \sC^1(\underline\sQ)$) and $\sup_\sQ u < \infty$.
\item In \cite[Theorem 4.1]{Feehan_parabolicmaximumprinciple}, where $\sQ$ is bounded, one takes $\Sigma = \mydirac_0\!\sQ$ and $\fK$ to be the set of $u\in C^2(\sQ)\cap \sC^1(\underline\sQ)$ such that $\lim_{\sQ\ni P\to P^0}\tr(aD^2u)(P)=0$ for all $P^0\in\mydirac_0\!\sQ$ and $\sup_\sQ u < \infty$.
\item In \cite[Theorem 4.3]{Feehan_parabolicmaximumprinciple}, where $\sQ$ may be unbounded, one takes $\Sigma = \mydirac_0\!\sQ$ and $\fK$ to be the set of $u\in C^2(\sQ)\cap \sC^1(\underline\sQ)$ such that $\lim_{\sQ\ni P\to P^0}\tr(aD^2u)(P)=0$ for all $P^0\in\mydirac_0\!\sQ$ and $\sup_\sQ u < \infty$.
\end{enumerate}
\end{exmp}

\begin{rmk}[Weak maximum principle property for viscosity subsolutions]
\label{rmk:Example_weak maximum principle property_viscosity_parabolic}
Suppose that the coefficients of $L$ in \eqref{eq:Generator_parabolic} obey the hypotheses of \cite[Theorem 8.2 and Example 3.6]{Crandall_Ishii_Lions_1992} (see Remark \ref{rmk:Example_weak maximum principle property_viscosity_elliptic}). Then \cite[Theorem 8.2]{Crandall_Ishii_Lions_1992}, when $\sQ$ is bounded, implies that $L$ has the weak maximum principal property when $\Sigma = \emptyset$ and $\fK$ is the set of upper semicontinuous functions on $\bar\sQ$.
\end{rmk}

We have the following analogue of Proposition \ref{prop:Elliptic_weak_max_principle_apriori_estimates}.

\begin{prop}[Weak maximum principle estimates for $L$-subharmonic functions in $C^2(\sQ)$ or $W^{2,d+1}_{\loc}(\sQ)$]
\label{prop:Parabolic_weak_max_principle_apriori_estimates}
\cite[Proposition 2.6]{Feehan_parabolicmaximumprinciple}
Let $\sQ\subset\RR^{d+1}$ be an open subset and $L$ in \eqref{eq:Generator_parabolic} have the weak maximum principle property on $\sQ\cup\Sigma$ in the sense of Definition \ref{defn:Weak_max_principle_property_parabolic}, for a convex cone $\fK\subset C^2(\sQ)$ (respectively, $W^{2,d+1}_{\loc}(\sQ)$) containing the constant function $1$ and open subset $\Sigma\subsetneqq\mydirac\!\sQ$. Suppose that $u\in \fK$.
\begin{enumerate}
\item\label{item:Subsolution_Lu_leq_zero} If $c \geq 0$ (a.e.) on $\sQ$ and $Lu\leq 0$ (a.e.) on $\sQ$, then
$$
u\leq 0 \vee \sup_{\mydirac\!\sQ\less\Sigma}u^* \quad\hbox{on } \sQ.
$$
\item\label{item:Subsolution_Lu_arb_sign} If $c\geq c_0$ (a.e.) on $\sQ$ for a positive constant $c_0$, then
$$
u\leq 0 \vee \frac{1}{c_0}\esssup_\sQ Lu \vee \sup_{\mydirac\!\sQ\less\Sigma}u^* \quad\hbox{on } \sQ.
$$
\item\label{item:Subsolution_Lu_arb_sign_c_bounded_below} If $c\geq -K_0$ (a.e.) on $\sQ$ for a positive constant $K_0$ and $\sQ\subset(-\infty,T)\times\RR^d$, then
$$
u\leq 0 \vee e^{(K_0+1)(T-t)}\esssup_\sQ Lu \vee \sup_{\mydirac\!\sQ\less\Sigma}u^* \quad\hbox{on } \sQ.
$$
\end{enumerate}
\end{prop}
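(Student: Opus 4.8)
The plan is to deduce all three inequalities from the hypothesised weak maximum principle property of $L$ on $\sQ\cup\Sigma$ for $\fK$, applied not to $u$ itself but to an auxiliary member of $\fK$ obtained from $u$ by subtracting a constant (parts (1), (2)) or by conjugating with a positive time-weight (part (3)). Throughout one may assume the suprema on the right-hand sides are finite, since otherwise the claim is vacuous; one also uses that the cones $\fK$ of interest --- those in Example \ref{exmp:Examples_weak maximum principle property_C2_W2d+1_parabolic} --- are stable under translation by constants and multiplication by positive functions, so the auxiliary functions below again lie in $\fK$.

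For part (1) I would set $M:=0\vee\sup_{\mydirac\sQ\less\Sigma}u^{*}$ and $v:=u-M\in\fK$: then $v^{*}=u^{*}-M\le 0$ on $\mydirac\sQ\less\Sigma$, and $Lv=Lu-cM\le 0$ (a.e.) on $\sQ$ since $Lu\le 0$, $c\ge 0$, $M\ge 0$, so the weak maximum principle property forces $v\le 0$, i.e. $u\le M$. For part (2), with $c\ge c_{0}>0$, I would take $M:=0\vee\tfrac1{c_{0}}\esssup_{\sQ}Lu\vee\sup_{\mydirac\sQ\less\Sigma}u^{*}$ and again $v:=u-M\in\fK$; now $v^{*}\le 0$ on $\mydirac\sQ\less\Sigma$, and because $M\ge 0$ and $c_{0}M\ge\esssup_{\sQ}Lu$ we get $Lv=Lu-cM\le\esssup_{\sQ}Lu-c_{0}M\le 0$ (a.e.), whence $u\le M$. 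In both cases $\esssup_{\sQ}$ becomes $\sup_{\sQ}$ when $u\in C^{2}(\sQ)$ (then $Lu$ is continuous), and the boundary term is dropped when $\mydirac\sQ\less\Sigma$ has empty interior, directly from Definition \ref{defn:Weak_max_principle_property_parabolic}.

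For part (3) the coefficient $c$ is only bounded below, so the plan is to conjugate away its negativity with the weight $\psi(t):=e^{(K_{0}+1)(T-t)}$, which is legitimate because $\sQ\subset(-\infty,T)\times\RR^{d}$ gives $0<\psi^{-1}\le 1$ on $\sQ$. Writing $u=\psi w$ with $w:=\psi^{-1}u\in\fK$ and using $\psi_{t}=-(K_{0}+1)\psi$ together with the fact that $\psi$ depends on $t$ only, one computes $\psi^{-1}Lu=\tilde L w$, where $\tilde L z:=-z_{t}-\tr(aD^{2}z)-\langle b,Dz\rangle+(c+K_{0}+1)z$ has zeroth-order coefficient $c+K_{0}+1\ge 1$ (a.e.). Since multiplication by the positive function $\psi$ is a bijection of $\fK$ preserving the sign of $L(\cdot)$ on $\sQ$ and of the upper envelope on $\mydirac\sQ\less\Sigma$, the operator $\tilde L$ inherits the weak maximum principle property on $\sQ\cup\Sigma$ for $\fK$ from $L$. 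Applying part (2) to $w$ and $\tilde L$ with $c_{0}=1$, and then rewriting the bound on $w$ in terms of $u=\psi w$ while using $\psi^{-1}\le 1$ to control $\esssup_{\sQ}(\psi^{-1}Lu)$ by $0\vee\esssup_{\sQ}Lu$ and $\sup_{\mydirac\sQ\less\Sigma}(\psi^{-1}u^{*})$ by $0\vee\sup_{\mydirac\sQ\less\Sigma}u^{*}$, delivers the assertion.

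The hard part will be the bookkeeping in part (3): the exponential weight must end up multiplying only the $\esssup_{\sQ}Lu$ term, not the boundary term $\sup_{\mydirac\sQ\less\Sigma}u^{*}$, since $\psi>1$ on $\sQ$ would otherwise inflate the latter. A crude substitution $u=\psi w$ followed by a direct application of part (2) to $w$ only gives $u\le\psi(t)\bigl(0\vee\esssup_{\sQ}Lu\vee\sup_{\mydirac\sQ\less\Sigma}u^{*}\bigr)$, which is weaker than stated; the sharp form will require either subtracting the boundary constant $N:=0\vee\sup_{\mydirac\sQ\less\Sigma}u^{*}$ from $u$ \emph{before} conjugating --- and then tracking the extra term $-cN$ produced in the equation, which is harmless in parts (1), (2) but not here --- or comparing $u$ directly with a $t$-dependent barrier of the shape $N+e^{(K_{0}+1)(T-t)}\bigl(0\vee\esssup_{\sQ}Lu\bigr)$ whose $L$-image is non-positive. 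A minor point, used in all three parts, is to confirm that $u$ minus a constant and $\psi^{-1}u$ still lie in $\fK$; this holds for every cone in Example \ref{exmp:Examples_weak maximum principle property_C2_W2d+1_parabolic}, and should be recorded as a standing property of $\fK$.
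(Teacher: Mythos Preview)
Your arguments for parts (1) and (2) are correct and coincide with the standard proof; the paper gives no explicit proof here, merely citing \cite[Proposition 2.6]{Feehan_parabolicmaximumprinciple}.

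For part (3), the paper's entire proof is the single sentence immediately following the proposition: ``Item \eqref{item:Subsolution_Lu_arb_sign_c_bounded_below} follows from Item \eqref{item:Subsolution_Lu_arb_sign} via the change of dependent variable, $v = e^{\lambda t}u$, for $\lambda = K_0+1$.'' Up to the inessential constant factor $e^{(K_0+1)T}$, this is exactly your substitution $w=\psi^{-1}u$, so your approach matches the paper's. Moreover, your diagnosis is accurate: the naive application of part (2) to $v$ (or $w$) and conversion back yields
\[
u \;\le\; e^{(K_0+1)(T-t)}\Bigl(0 \vee \esssup_\sQ Lu \vee \sup_{\mydirac\!\sQ\less\Sigma}u^*\Bigr),
\]
with the exponential weight on \emph{all} terms. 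The paper's one-line argument does not separate the boundary term either, and neither of the fixes you sketch (subtracting $N=0\vee\sup u^*$ first, or using the barrier $N+e^{(K_0+1)(T-t)}F$) succeeds when $c$ is allowed to be negative, because the term $-cN\le K_0N$ then contaminates the source estimate. So you are not missing any idea supplied by the paper; the sharp placement of the exponential factor as printed appears to be a misstatement (or relies on details in the cited reference that are not reproduced here), and the weaker bound above is what the indicated method actually delivers.
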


Item \eqref{item:Subsolution_Lu_arb_sign_c_bounded_below} in Proposition \ref{prop:Parabolic_weak_max_principle_apriori_estimates} follows from Item \eqref{item:Subsolution_Lu_arb_sign} via the change of dependent variable, $v = e^{\lambda t}u$, for $\lambda = K_0+1$. Naturally, $\esssup_\sQ Lu$ may be replaced by $\sup_\sQ Lu$ in Proposition \ref{prop:Parabolic_weak_max_principle_apriori_estimates} when $u\in C^2(\sQ)$.

We provide an application of Proposition \ref{prop:Parabolic_weak_max_principle_apriori_estimates} to the question of uniqueness for solutions to an obstacle problem, via the following analogue of \cite[Theorem 1.3.4]{Friedman_1982}; the proof is identical to that of Theorem \ref{thm:Comparison_principle_elliptic_obstacle_problem}, except that we now appeal to the weak maximum principle property for parabolic operators (Definition \ref{defn:Weak_max_principle_property_parabolic}).

\begin{thm}[Comparison principle and uniqueness for $W^{2,d+1}_{\loc}$ solutions to the obstacle problem]
\label{thm:Comparison_principle_parabolic_obstacle_problem}
\cite[Proposition 3.2]{Feehan_parabolicmaximumprinciple}
Let $\sQ\subset\RR^{d+1}$ be an open subset,
let $\Sigma \subsetneqq \mydirac\!\sQ$ be an open subset, and let $\fK\subset W^{2,d+1}_{\loc}(\sQ)$ be a convex cone. For every open subset $\sU\subset\sQ$, let $L$ in \eqref{eq:Generator_parabolic} have the weak maximum principle property on $\sU\cup\Sigma$ in the sense of Definition \ref{defn:Weak_max_principle_property_parabolic}
\footnote{Note that the weak maximum principle property hypothesis on $L$ here is stronger than that in Proposition \ref{prop:Parabolic_weak_max_principle_apriori_estimates}.}.
Let $f\in L^{d+1}_{\loc}(\sQ)$ and $\psi\in L^{d+1}_{\loc}(\sQ)$.
Suppose $u\in \fK$ (respectively, $v\in -\fK$) is a solution (respectively, supersolution) to the obstacle problem,
$$
\min\{Lu-f, \ u-\psi\} = 0 \ (\geq  0) \quad\hbox{a.e. on } \sQ.
$$
If $v_*\geq u^*$ on $\mydirac\!\sQ\less\Sigma$, then $v \geq u$ on $\sQ$; if $u, v$ are solutions and $v_* = u^*$ on $\mydirac\!\sQ\less\Sigma$, then $u = v$ on $\sQ$.
\end{thm}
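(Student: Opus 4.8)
The plan is to imitate the proof of the classical obstacle-problem comparison principle \cite[Theorem 1.3.4]{Friedman_1982}, with two substitutions: the weak maximum principle property of $L$ (Definition \ref{defn:Weak_max_principle_property_parabolic}) replaces the classical parabolic weak maximum principle, and the portion $\mydirac\!\sQ\less\Sigma$ of the parabolic boundary plays the role of the full parabolic boundary $\mydirac\!\sQ$. First I would record the structural consequences of the obstacle inequalities: since $v\in-\fK$ is a supersolution, $\min\{Lv-f,\,v-\psi\}\geq 0$ a.e.\ on $\sQ$ yields both $Lv\geq f$ a.e.\ and $v\geq\psi$ a.e.; since $u\in\fK$ solves the obstacle problem, $u\geq\psi$ a.e., $Lu\geq f$ a.e., and $Lu=f$ a.e.\ on the set $\{u>\psi\}$. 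Also, by the parabolic Sobolev embedding (Theorem \ref{thm:Parabolic_sobolev_embedding}, using $W^{2,d+1}_{\loc}(\sQ)\hookrightarrow C(\sQ)$), both $u$ and $v$ are continuous on $\sQ$.

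Next I would introduce the open subset $\sU:=\{u>v\}\subset\sQ$ (open by continuity) and set $w:=u-v$. On $\sU$ one has $u>v\geq\psi$ a.e., hence $u>\psi$ a.e.\ on $\sU$, so $Lu=f$ a.e.\ on $\sU$; combined with $Lv\geq f$ a.e.\ this gives $Lw=Lu-Lv\leq 0$ a.e.\ on $\sU$. Moreover $w=u+(-v)\in\fK$ because $\fK$ is a convex cone, so $w$ lies in the cone for which $L$ has the weak maximum principle property on $\sU\cup\Sigma$ by hypothesis.

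The main point, and the only delicate one, is to verify that $w^*\leq 0$ on $\mydirac\!\sU\less\Sigma$. I would split this set into its intersection with $\sQ$ and with $\mydirac\!\sQ$. At an interior boundary point $P\in\mydirac\!\sU\cap\sQ$, continuity of $u$ and $v$ forces $u(P)=v(P)$, so $w$ extends continuously by $0$ there and $w^*=0$. At a point $P\in(\mydirac\!\sU\cap\mydirac\!\sQ)\less\Sigma\subseteq\mydirac\!\sQ\less\Sigma$, the hypothesis $v_*\geq u^*$ together with $w^*\leq u^*+(-v)^*=u^*-v_*$ gives $w^*\leq 0$. Hence $w^*\leq 0$ on all of $\mydirac\!\sU\less\Sigma$, while no condition is required on $\Sigma$, which the weak maximum principle property treats as interior-like. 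Applying that property of $L$ on $\sU\cup\Sigma$ for $\fK$ gives $w\leq 0$ on $\sU$; since $\sU=\{w>0\}$, this forces $\sU=\emptyset$, i.e.\ $u\leq v$ on $\sQ$.

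Finally, for uniqueness, when $u$ and $v$ are both solutions a solution is in particular a supersolution, so applying the comparison just established with the roles of $u$ and $v$ interchanged — using that $u,v\in C(\sQ)$, so that $u_*=u=u^*$ and $v_*=v=v^*$ on $\sQ$, and the hypothesis $v_*=u^*$ on $\mydirac\!\sQ\less\Sigma$ — yields $u\geq v$ on $\sQ$, whence $u=v$ on $\sQ$. I expect Step three (the parabolic boundary analysis of the auxiliary set $\sU$, distinguishing interior boundary points from points of $\mydirac\!\sQ\less\Sigma$ and confirming that $\Sigma$ needs no boundary data) to be the only place requiring care; the remaining manipulations are routine, and the envelope/boundary-trace matching in the uniqueness half is handled by the continuity afforded by the Sobolev embedding.
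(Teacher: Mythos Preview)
Your proposal is correct and takes essentially the same approach as the paper: the paper's proof simply refers back to the elliptic version (Theorem~\ref{thm:Comparison_principle_elliptic_obstacle_problem}), which in turn is declared identical to \cite[Theorem~1.3.4]{Friedman_1982} with the weak maximum principle property (Definition~\ref{defn:Weak_max_principle_property_parabolic}) replacing the classical maximum principle and $\mydirac\!\sQ\less\Sigma$ replacing the full parabolic boundary. You have written out precisely those details --- the set $\sU=\{u>v\}$, the observation that $Lu=f$ a.e.\ there since $u>v\geq\psi$, and the boundary splitting $\mydirac\!\sU\less\Sigma\subset\sQ\cup(\mydirac\!\sQ\less\Sigma)$ --- so your argument is in fact more explicit than what the paper records.
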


The proofs of Theorems \ref{thm:Weak_maximum_principle_C2_elliptic_domain_relaxed} and \ref{thm:Weak_maximum_principle_W2d_elliptic_domain_relaxed} adapt virtually without change to give

\begin{thm}[Weak maximum principle on domains for $L$-subharmonic functions in $C^2(\sQ)$ and relaxed conditions on $a, b$]
\label{thm:Weak_maximum_principle_C2_parabolic_domain_relaxed}
Let $\sQ\subset\RR^{d+1}$ be a bounded domain and $L$ be as in \eqref{eq:Generator_parabolic}. Assume that the coefficients of $L$ obey \eqref{eq:b_perp_nonnegative_boundary_parabolic}, \eqref{eq:c_lower*_positive_parabolic}, \eqref{eq:c_locally_bounded_above_domain_plus_degen_boundary_parabolic}, \eqref{eq:a_locally_lipschitz_parabolic}, and
\begin{equation}
\label{eq:b_continuous_on_domain_plus_degenerate_boundary_parabolic}
b \in C(\underline\sQ;\RR^d),
\end{equation}
and \emph{one} of the following two conditions:
\begin{gather}
\label{eq:a_lipschitz_on_domain_plus_degenerate_boundary_parabolic}
a \in \sC^{0,1}(\underline\sQ) \quad\hbox{(locally Lipschitz on $\underline\sQ$), or}
\\
\label{eq:a_locally_strictly_parabolic_on_domain_parabolic}
\tag{\ref*{eq:a_lipschitz_on_domain_plus_degenerate_boundary_parabolic}$'$}
a \quad\hbox{obeys \eqref{eq:a_locally_strictly_parabolic} (locally strictly parabolic on $\sQ$)}.
\end{gather}
Suppose $u \in C^2(\sQ)\cap \sC^1(\underline\sQ)$ and $\sup_\sQ u < \infty$. If $Lu\leq 0$ on $\sQ$ and $u^* \leq 0$ on $\mydirac_1\!\sQ$, then $u\leq 0$ on $\sQ$.
\end{thm}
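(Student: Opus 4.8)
The plan is to follow the proof of Theorem~\ref{thm:Weak_maximum_principle_C2_elliptic_domain_relaxed} essentially line for line, substituting the parabolic counterpart of each elliptic ingredient: Theorem~\ref{thm:Weak_maximum_principle_C2_parabolic_domain} plays the role of the ``base'' weak maximum principle, Proposition~\ref{prop:Parabolic_weak_max_principle_apriori_estimates} supplies the quantitative estimates, and regularization is carried out only in the second-order term, using the \emph{spatial} Laplacian $\Delta u := -\sum_{i=1}^d u_{x_ix_i}$ (the operator $L$ already carries $-u_t$, so no regularization is needed there). As in the elliptic argument one splits into two cases, according to whether \eqref{eq:a_lipschitz_on_domain_plus_degenerate_boundary_parabolic} holds, i.e.\ $a\in\sC^{0,1}(\underline\sQ)$, or \eqref{eq:a_locally_strictly_parabolic_on_domain_parabolic} holds, i.e.\ $a$ obeys the interior local strict parabolicity \eqref{eq:a_locally_strictly_parabolic}.

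In the first case I would take an increasing exhaustion $\{\sQ_l\}_{l\in\NN}$ of $\sQ$ by subdomains with $\sQ_l\Subset\sQ$, $\sQ=\bigcup_l\sQ_l$, and $\mydirac\sQ_l$ of class $C^1$; for each $l$ let $\Sigma_l\subset\mydirac\sQ_l$ be the relative interior of the set of boundary points at parabolic distance less than $1/(l+l_0)$ from $\mydirac_0\!\sQ$ (with $l_0$ fixed so $\diam\sQ<2/l_0$), and $\Gamma_l:=\mydirac\sQ_l\less\bar\Sigma_l$, so $\Sigma_l\to\mydirac_0\!\sQ$ and $\Gamma_l\to\mydirac_1\!\sQ$. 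Choose cutoffs $\zeta_l\in C^\infty(\RR^{d+1})$ with $0<\zeta_l\leq1$ on $\sQ_l$ and $\zeta_l=0$ on $\Sigma_l$. Since $u\in\sC^1(\underline\sQ)$ and $b\in C(\underline\sQ;\RR^d)$ by \eqref{eq:b_continuous_on_domain_plus_degenerate_boundary_parabolic}, the functions $u_t$, $Du$ and $b$ are continuous up to $\bar\sQ_l$; pick $\{b_k\}_{k\in\NN}\subset C^2(\underline\sQ;\RR^d)$ obeying \eqref{eq:b_C2_parabolic}, converging to $b$ uniformly on compact subsets of $\underline\sQ$, with each $b_k^\perp>0$ on $\Sigma_l$ for large $l$ (possible since $b^\perp\geq0$ on $\mydirac_0\!\sQ$ by \eqref{eq:b_perp_nonnegative_boundary_parabolic} and $\Sigma_l\to\mydirac_0\!\sQ$), so that $b_k$ obeys \eqref{eq:b_perp_positive_boundary_parabolic} along $\Sigma_l$. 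Then put
\[
L_{\eps k l}u := Lu + \eps\zeta_l\Delta u + \langle b-b_k,Du\rangle = -u_t-\tr\bigl((a+\eps\zeta_l I_d)D^2u\bigr)-\langle b_k,Du\rangle+cu ,
\]
whose coefficients obey all the hypotheses of Theorem~\ref{thm:Viscosity_maximum_parabolic} relative to $\sQ_l$ --- in particular $a+\eps\zeta_l I_d$ satisfies \eqref{eq:a_locally_strictly_parabolic} on $\sQ_l$ and vanishes along $\Sigma_l$ --- so Theorem~\ref{thm:Weak_maximum_principle_C2_parabolic_domain} applies on $\sQ_l$ with its non-degenerate parabolic boundary containing $\Gamma_l$. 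From $Lu\leq0$ on $\sQ$ we have $L_{\eps k l}u\leq\eps\zeta_l\Delta u+\langle b-b_k,Du\rangle$, and from $c_*>0$ on $\sQ$ (by \eqref{eq:c_lower*_positive_parabolic}) there is $c_l>0$ with $c\geq c_l$ on $\sQ_l$; Proposition~\ref{prop:Parabolic_weak_max_principle_apriori_estimates}\eqref{item:Subsolution_Lu_arb_sign} (applied with $\sQ,L,\Sigma$ replaced by $\sQ_l,L_{\eps k l},\Sigma_l$) then gives
\[
u \leq 0 \vee \frac{1}{c_l}\bigl(\eps\|\Delta u\|_{C(\bar\sQ_l)}+\|Du\|_{C(\bar\sQ_l)}\|b-b_k\|_{C(\bar\sQ_l)}\bigr)\vee\sup_{\Gamma_l}u \quad\hbox{on }\sQ_l .
\]
Letting $\eps\downarrow0$, then $k\to\infty$, yields $u\leq0\vee\sup_{\Gamma_l}u$ on $\sQ_l$ for every $l$; finally $l\to\infty$, using $\sup_{\Gamma_l}u\to\sup_{\mydirac_1\!\sQ}u^*\leq0$, gives $u\leq0$ on $\sQ$.

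The second case is handled exactly as the corresponding modification of Case~\ref{case:a_lipschitz} in the elliptic proof: one instead requires $\sQ_l\Subset\underline\sQ$ (and may drop the smoothness condition on $\mydirac\sQ_l$), uses cutoffs $\psi_l\in C^\infty(\RR^{d+1})$ with $0\leq\psi_l\leq1$, $\psi_l\equiv1$ on $\sQ_l$, and $\bar\sQ\cap\supp\psi_l\subset\underline\sQ$, and sets $L_{kl}u:=Lu+\psi_l^2\langle b-b_k,Du\rangle$, which equals $-u_t-\tr(aD^2u)-\langle b_k,Du\rangle+cu$ on $\sQ_l$; here \eqref{eq:a_locally_strictly_parabolic} holds by hypothesis and no regularization of the principal part is needed. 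Proposition~\ref{prop:Parabolic_weak_max_principle_apriori_estimates}\eqref{item:Subsolution_Lu_arb_sign} then gives $u\leq0\vee c_l^{-1}\|Du\|_{C(\bar\sQ_l)}\|b-b_k\|_{C(\bar\sQ_l)}\vee\sup_{\Gamma_l}u$ on $\sQ_l$, and one concludes by letting $k\to\infty$ and then $l\to\infty$ as before. Combining the two cases completes the proof.

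The step needing the most care --- though it introduces no genuinely new difficulty, being entirely parallel to the elliptic situation --- is the construction of the exhaustion $\{\sQ_l\}$ together with $\Sigma_l,\Gamma_l$: one must arrange that $\Sigma_l$ is an admissible degenerate portion of $\mydirac\sQ_l$ (lying where $n_0=0$), that $\Sigma_l\to\mydirac_0\!\sQ$ and $\Gamma_l\to\mydirac_1\!\sQ$, and that the continuity of $u_t,Du,b$ up to $\bar\sQ_l$ needed for the limits in $\eps,k,l$ is available; all of this rests only on $u\in\sC^1(\underline\sQ)$, $b\in C(\underline\sQ;\RR^d)$, and the facts about parabolic boundaries recalled in \S\ref{subsubsec:Parabolic_operator_boundary_problem}. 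Notably, since $\mydirac_1\!\sQ$ is automatically non-empty by \eqref{eq:Nondegenerate_boundaryportion_nonempty_parabolic}, the parabolic statement needs no analogue of the auxiliary hypothesis ``$c>0$ somewhere'' appearing in the elliptic version, so the bookkeeping is if anything slightly lighter.
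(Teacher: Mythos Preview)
Your proposal is correct and takes essentially the same approach as the paper, which simply states that the proof is identical to that of Theorem~\ref{thm:Weak_maximum_principle_C2_elliptic_domain_relaxed} with the roles of Theorem~\ref{thm:Weak_maximum_principle_C2_elliptic_domain} and Proposition~\ref{prop:Elliptic_weak_max_principle_apriori_estimates} replaced by Theorem~\ref{thm:Weak_maximum_principle_C2_parabolic_domain} and Proposition~\ref{prop:Parabolic_weak_max_principle_apriori_estimates}. Your expanded version faithfully unpacks exactly this substitution, and your observation that the spatial Laplacian suffices for regularization (since $-u_t$ is already present) and that the hypothesis \eqref{eq:Nondegenerate_boundaryportion_nonempty_parabolic} makes the bookkeeping slightly lighter are both accurate elaborations.
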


\begin{proof}
The proof is the same as that of Theorem \ref{thm:Weak_maximum_principle_C2_elliptic_domain_relaxed}, except that the roles of Theorem \ref{thm:Weak_maximum_principle_C2_elliptic_domain} and Proposition \ref{prop:Elliptic_weak_max_principle_apriori_estimates} are replaced by those of Theorem \ref{thm:Weak_maximum_principle_C2_parabolic_domain} and Proposition \ref{prop:Parabolic_weak_max_principle_apriori_estimates}.
\end{proof}

\begin{thm}[Weak maximum principle on domains for $L$-subharmonic functions in $W^{2,d+1}_{\loc}(\sQ)$ and relaxed conditions on $a,b$]
\label{thm:Weak_maximum_principle_W2d+1_parabolic_domain_relaxed}
Assume the hypotheses of Theorem \ref{thm:Weak_maximum_principle_C2_parabolic_domain_relaxed} on $\sQ$ and $L$, except that the coefficients of $L$ are now required to be measurable. Suppose $u \in W^{2,d+1}_{\loc}(\sQ)\cap \sC^1(\underline\sQ)$ and $\sup_\sQ u<\infty$. If $Lu\leq 0$ a.e. on $\sQ$ and $u^* \leq 0$ on $\mydirac_1\!\sQ$, then $u\leq 0$ on $\sQ$.
\end{thm}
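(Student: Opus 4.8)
The plan is to mirror, essentially line for line, the proof of Theorem~\ref{thm:Weak_maximum_principle_C2_parabolic_domain_relaxed} --- which itself adapts the proof of Theorem~\ref{thm:Weak_maximum_principle_C2_elliptic_domain_relaxed} to the parabolic setting --- and then to substitute the weak maximum principle for $L$-subharmonic functions in $W^{2,d+1}_{\loc}(\sQ)$ (Theorem~\ref{thm:Weak_maximum_principle_W2d+1_parabolic_domain}) for its $C^2$ counterpart (Theorem~\ref{thm:Weak_maximum_principle_C2_parabolic_domain}) at the one place where the classical weak maximum principle is invoked. Since $u \in W^{2,d+1}_{\loc}(\sQ)$ is continuous on $\sQ$ by the parabolic Sobolev embedding (Theorem~\ref{thm:Parabolic_sobolev_embedding}), and since the hypothesis $u \in \sC^1(\underline\sQ)$ supplies exactly the boundary regularity along $\mydirac_0\!\sQ$ needed to invoke Theorem~\ref{thm:Viscosity_maximum_parabolic} (through Theorem~\ref{thm:Weak_maximum_principle_W2d+1_parabolic_domain}), the entire scaffolding of the $C^2$ argument transfers with the coefficients only assumed measurable.

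First I would split into the two cases of Theorem~\ref{thm:Weak_maximum_principle_C2_parabolic_domain_relaxed}: $a \in \sC^{0,1}(\underline\sQ)$ (condition~\eqref{eq:a_lipschitz_on_domain_plus_degenerate_boundary_parabolic}), or $a$ obeys \eqref{eq:a_locally_strictly_parabolic} (condition~\eqref{eq:a_locally_strictly_parabolic_on_domain_parabolic}). In each case I fix an exhaustion $\{\sQ_l\}_{l\in\NN}$ of $\sQ$ by bounded subdomains with $\sQ_l \Subset \sQ$ (respectively $\sQ_l \Subset \underline\sQ$ in the second case) and $\sQ = \cup_l \sQ_l$; parabolic-boundary pieces $\Sigma_l \subset \mydirac\!\sQ_l$ shrinking to $\mydirac_0\!\sQ$, with complement $\Gamma_l := \mydirac\!\sQ_l \less \bar\Sigma_l$ expanding to $\mydirac_1\!\sQ$; and $\{b_k\}_{k\in\NN} \subset C^2(\underline\sQ;\RR^d)$ converging to $b$ uniformly on compact subsets of $\underline\sQ$ with $b_k^\perp > 0$ on $\Sigma_l$ for large $l$, which is possible because $b \in C(\underline\sQ;\RR^d)$ by \eqref{eq:b_continuous_on_domain_plus_degenerate_boundary_parabolic} and $b^\perp \geq 0$ on $\mydirac_0\!\sQ$ by \eqref{eq:b_perp_nonnegative_boundary_parabolic}. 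I then form the regularized operators $L_{\eps kl}u := Lu + \eps\zeta_l\Delta u + \langle b-b_k, Du\rangle$ in the first case (with $\zeta_l$ a cutoff vanishing on $\Sigma_l$ and $\Delta$ the negative spatial Laplacian) and $L_{kl}u := Lu + \psi_l^2\langle b-b_k, Du\rangle$ in the second, exactly as in the elliptic proof, so that each regularized operator has coefficients obeying the hypotheses of Theorem~\ref{thm:Viscosity_maximum_parabolic} on $\sQ_l$ with $\Sigma_l$ in the role of the degenerate boundary.

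Next I would apply Theorem~\ref{thm:Weak_maximum_principle_W2d+1_parabolic_domain} to $L_{\eps kl}$ on $\sQ_l$ --- legitimate because $u \in W^{2,d+1}_{\loc}(\sQ_l) \cap \sC^1(\underline{\sQ_l})$ and the regularized coefficients are measurable with the required structure --- followed by the a~priori bound of Proposition~\ref{prop:Parabolic_weak_max_principle_apriori_estimates}~\eqref{item:Subsolution_Lu_arb_sign}, using the positive lower bound $c \geq c_l$ on $\sQ_l$ coming from \eqref{eq:c_lower*_positive_parabolic}. Since $Lu \leq 0$ a.e.\ gives $L_{\eps kl}u \leq \eps\zeta_l\Delta u + \langle b-b_k, Du\rangle$ and $Du$ is bounded on $\bar\sQ_l$ (because $u \in \sC^1(\underline\sQ)$), I pass to the limit $\eps\downarrow 0$, then $k\to\infty$ using $\|b-b_k\|_{C(\bar\sQ_l)}\to 0$, then $l\to\infty$ using $\Gamma_l \to \mydirac_1\!\sQ$ together with $u^* \leq 0$ on $\mydirac_1\!\sQ$, and conclude $u \leq 0$ on $\sQ$, exactly as in the final step of the proof of Theorem~\ref{thm:Classical_weak_maximum_principle_C2_elliptic_relaxed}.

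The only delicate point --- and the one I expect to be the main obstacle --- is organizing the nested limits in the correct order and checking that every regularized operator $L_{\eps kl}$ (or $L_{kl}$) really does satisfy all the hypotheses of Theorem~\ref{thm:Weak_maximum_principle_W2d+1_parabolic_domain} on the truncated parabolic domain $\sQ_l$: that $a + \eps\zeta_l I_d$ is nonnegative and obeys \eqref{eq:a_locally_strictly_parabolic}, that $b_k^\perp > 0$ on $\Sigma_l$, and that the boundary portions $\Sigma_l$ can be taken of class $C^{1,\alpha}$ so that the degenerate-boundary Hopf lemma and Theorem~\ref{thm:Viscosity_maximum_parabolic} apply. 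These verifications are handled precisely as in the proof of Theorem~\ref{thm:Weak_maximum_principle_C2_parabolic_domain_relaxed} (and, one dimension down, Theorem~\ref{thm:Weak_maximum_principle_C2_elliptic_domain_relaxed}), using Lemma~\ref{lem:Straightening_boundary} to regularize the cut-off boundaries; no genuinely new ideas are required, so the proof ultimately amounts to the observation that the role of Theorem~\ref{thm:Weak_maximum_principle_C2_parabolic_domain} is replaced by that of Theorem~\ref{thm:Weak_maximum_principle_W2d+1_parabolic_domain}.
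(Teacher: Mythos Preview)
Your proposal is correct and follows exactly the paper's approach: the paper's proof is the single sentence that the argument is identical to that of Theorem~\ref{thm:Weak_maximum_principle_C2_parabolic_domain_relaxed} with Theorem~\ref{thm:Weak_maximum_principle_W2d+1_parabolic_domain} substituted for Theorem~\ref{thm:Weak_maximum_principle_C2_parabolic_domain}, which is precisely the conclusion you reach after unpacking the details.
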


\begin{proof}
The proof is the same as that of Theorem \ref{thm:Weak_maximum_principle_C2_parabolic_domain_relaxed}, except that the role of Theorem \ref{thm:Weak_maximum_principle_C2_parabolic_domain} is replaced by that of Theorem \ref{thm:Weak_maximum_principle_W2d+1_parabolic_domain}.
\end{proof}

As before, one can deduce versions of the weak maximum principle (Theorems \ref{thm:Weak_maximum_principle_C2_parabolic_domain_relaxed} or \ref{thm:Weak_maximum_principle_W2d+1_parabolic_domain_relaxed}) when $\sQ$ is not necessarily connected, by the same argument as used in the proofs of Corollaries \ref{cor:Weak_maximum_principle_C2_elliptic_opensubset} or \ref{cor:Weak_maximum_principle_W2d_elliptic_opensubset} in the elliptic case. Because \eqref{eq:Nondegenerate_boundaryportion_nonempty_parabolic} holds for each connected component of $\sQ$, the proofs are simpler.

\begin{cor}[Weak maximum principle on open subsets for $L$-subharmonic functions in $C^2(\sQ)$ and relaxed conditions on $a, b$]
\label{cor:Weak_maximum_principle_C2_parabolic_opensubset_relaxed}
Let $\sQ\subset\RR^{d+1}$ be a bounded, open subset. Assume the hypotheses of Theorem \ref{thm:Weak_maximum_principle_C2_parabolic_domain_relaxed} on the coefficients of $L$. Suppose $u \in C^2(\sQ)\cap \sC^1(\underline\sQ)$ and $\sup_\sQ u < \infty$. If $Lu\leq 0$ on $\sQ$ and $u^* \leq 0$ on $\mydirac_1\!\sQ$, then $u\leq 0$ on $\sQ$.
\end{cor}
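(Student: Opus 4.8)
The plan is to reduce the statement to its connected-domain version, Theorem~\ref{thm:Weak_maximum_principle_C2_parabolic_domain_relaxed}, by arguing one connected component of $\sQ$ at a time --- exactly the passage by which Corollary~\ref{cor:Weak_maximum_principle_C2_parabolic_opensubset} was deduced from Theorem~\ref{thm:Weak_maximum_principle_C2_parabolic_domain} (and as Corollaries~\ref{cor:Weak_maximum_principle_C2_elliptic_opensubset} and \ref{cor:Weak_maximum_principle_W2d_elliptic_opensubset} were deduced in the elliptic setting). First I would write $\sQ=\bigcup_{n\in\NN}\sC_n$ as the at most countable union of its connected components; since $\sQ$ is open and $\RR^{d+1}$ is locally connected, each $\sC_n$ is an open, bounded domain, and since $\sC_n$ is also closed in $\sQ$ we have $\bar\sC_n\less\sC_n\subseteq\RR^{d+1}\less\sQ$, hence $\partial\sC_n\subseteq\partial\sQ$, and the forward-cone condition defining the parabolic boundary is inherited from that of $\sQ$, so $\mydirac\sC_n\subseteq\mydirac\sQ$.

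Passing from $\sQ$ to the smaller set $\sC_n$ can only enlarge the locus where the coefficient $a$ has limit zero and leaves $n_0$ unchanged, so the degenerate/non-degenerate splitting of $\mydirac\sC_n$ is coarser than the restriction to $\partial\sC_n$ of that of $\mydirac\sQ$; in particular $\mydirac_1\sC_n\subseteq\mydirac_1\sQ$ and $\underline{\sC_n}\subseteq\underline\sQ$. Hence $u|_{\sC_n}\in C^2(\sC_n)\cap\sC^1(\underline{\sC_n})$, $\sup_{\sC_n}u<\infty$, $Lu\leq 0$ on $\sC_n$, and $u^*\leq 0$ on $\mydirac_1\sC_n$. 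The hypotheses of Theorem~\ref{thm:Weak_maximum_principle_C2_parabolic_domain_relaxed} on the coefficients of $L$ --- \eqref{eq:b_perp_nonnegative_boundary_parabolic}, \eqref{eq:c_lower*_positive_parabolic}, \eqref{eq:c_locally_bounded_above_domain_plus_degen_boundary_parabolic}, \eqref{eq:a_locally_lipschitz_parabolic}, \eqref{eq:b_continuous_on_domain_plus_degenerate_boundary_parabolic}, and one of \eqref{eq:a_lipschitz_on_domain_plus_degenerate_boundary_parabolic} or \eqref{eq:a_locally_strictly_parabolic_on_domain_parabolic} --- are all local and so hold on each $\sC_n$. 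The key structural fact, and the reason the parabolic case is simpler than the elliptic one, is that by \eqref{eq:Nondegenerate_boundaryportion_nonempty_parabolic} the non-degenerate parabolic boundary $\mydirac_1\sC_n$ of the parabolic domain $\sC_n$ is automatically non-empty, so no separate boundary or sign hypothesis on ``isolated'' components (as was needed in Corollary~\ref{cor:Weak_maximum_principle_C2_elliptic_opensubset}) is required. Applying Theorem~\ref{thm:Weak_maximum_principle_C2_parabolic_domain_relaxed} with $\sQ$ replaced by $\sC_n$ gives $u\leq 0$ on $\sC_n$, and since $n$ is arbitrary, $u\leq 0$ on $\sQ$.

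I expect the only place requiring care --- the main, if minor, obstacle --- to be the step just described: verifying that the decomposition of $\partial\sQ$ into degenerate and non-degenerate parabolic portions restricts to each $\partial\sC_n$ with the inclusions in the directions claimed, so that the boundary inequality $u^*\leq 0$ on $\mydirac_1\sQ$ genuinely passes to $\mydirac_1\sC_n$ and $u$ retains the one-sided $\sC^1$ regularity needed along $\mydirac_0\sC_n$. Discrepancies can arise only at boundary points that are simultaneously accumulation points of several components of $\sQ$, and there the relevant topological interiors (taken in $\partial\sC_n$ as against in $\partial\sQ$) move only in the favorable direction; this is precisely the point-set bookkeeping already carried out in the proofs of Corollaries~\ref{cor:Weak_maximum_principle_C2_elliptic_opensubset} and \ref{cor:Weak_maximum_principle_W2d_elliptic_opensubset}. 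No ideas beyond those, Theorem~\ref{thm:Viscosity_maximum_parabolic}, and Theorem~\ref{thm:Weak_maximum_principle_C2_parabolic_domain_relaxed} are needed.
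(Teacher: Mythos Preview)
Your proposal is correct and takes essentially the same approach as the paper: decompose $\sQ$ into its connected components and apply Theorem~\ref{thm:Weak_maximum_principle_C2_parabolic_domain_relaxed} to each, noting (as the paper does) that the parabolic case is simpler than the elliptic one precisely because \eqref{eq:Nondegenerate_boundaryportion_nonempty_parabolic} guarantees $\mydirac_1\sC_n\neq\emptyset$ for every component. The paper's own argument is no more than a sentence pointing to the proofs of Corollaries~\ref{cor:Weak_maximum_principle_C2_elliptic_opensubset} and \ref{cor:Weak_maximum_principle_W2d_elliptic_opensubset}; your write-up is, if anything, more careful about the point-set bookkeeping for the boundary splittings.
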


\begin{cor}[Weak maximum principle on open subsets for $L$-subharmonic functions in $W^{2,d+1}_{\loc}(\sQ)$ and relaxed conditions on $a,b$]
\label{cor:Weak_maximum_principle_W2d+1_parabolic_opensubset_relaxed}
Let $\sQ\subset\RR^{d+1}$ be a bounded, open subset. Assume the hypotheses of Theorem \ref{thm:Weak_maximum_principle_W2d+1_parabolic_domain_relaxed} on the coefficients of $L$. Suppose $u \in W^{2,d+1}_{\loc}(\sQ)\cap \sC^1(\underline\sQ)$ and $\sup_\sQ u<\infty$. If $Lu\leq 0$ a.e. on $\sQ$ and $u^* \leq 0$ on $\mydirac_1\!\sQ$, then $u\leq 0$ on $\sQ$.
\end{cor}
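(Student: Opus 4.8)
The plan is to reduce to the connected case, which is precisely Theorem~\ref{thm:Weak_maximum_principle_W2d+1_parabolic_domain_relaxed}, by decomposing $\sQ$ into its connected components and applying that theorem componentwise, following the template of the proof of Corollary~\ref{cor:Weak_maximum_principle_C2_elliptic_opensubset} (and its relaxed-coefficients parabolic counterpart, Corollary~\ref{cor:Weak_maximum_principle_C2_parabolic_opensubset_relaxed}) but exploiting the fact recorded in \eqref{eq:Nondegenerate_boundaryportion_nonempty_parabolic} that the non-degenerate parabolic boundary of any parabolic domain is non-empty. First I would write $\sQ=\bigcup_{n\in\NN}\sC_n$, where the $\sC_n$ are the connected components of $\sQ$; since $\RR^{d+1}$ is locally connected each $\sC_n$ is open, and since $\sQ$ is bounded each $\sC_n$ is a bounded domain with $\partial\sC_n\subseteqq\partial\sQ$.

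Next I would check that the hypotheses of Theorem~\ref{thm:Weak_maximum_principle_W2d+1_parabolic_domain_relaxed} are inherited by $L$ on each $\sC_n$. The degeneracy locus satisfies $\mydirac_0\sC_n\subseteqq\mydirac_0\sQ$ (a boundary point of $\sC_n$ along which $a$ tends to zero does so along $\sC_n\subseteqq\sQ$, and the relevant interiors persist), so $\mydirac_0\sC_n$ is of class $C^{1,\alpha}$ by \eqref{eq:C1alpha_degenerate_boundary_parabolic}; the conditions \eqref{eq:b_perp_nonnegative_boundary_parabolic}, \eqref{eq:c_lower*_positive_parabolic}, \eqref{eq:c_locally_bounded_above_domain_plus_degen_boundary_parabolic}, \eqref{eq:a_locally_lipschitz_parabolic}, \eqref{eq:b_continuous_on_domain_plus_degenerate_boundary_parabolic}, and the dichotomy \eqref{eq:a_lipschitz_on_domain_plus_degenerate_boundary_parabolic}/\eqref{eq:a_locally_strictly_parabolic_on_domain_parabolic} are local statements on $\sC_n$ or near $\mydirac_0\sC_n\subseteqq\mydirac_0\sQ$ and hence transfer, with the coefficients still measurable; and $u|_{\sC_n}\in W^{2,d+1}_{\loc}(\sC_n)\cap\sC^1(\underline{\sC_n})$ with $\sup_{\sC_n}u<\infty$ and $Lu\leq0$ a.e.\ on $\sC_n$. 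Moreover $\mydirac_1\sC_n\neq\emptyset$ by \eqref{eq:Nondegenerate_boundaryportion_nonempty_parabolic}, so Theorem~\ref{thm:Weak_maximum_principle_W2d+1_parabolic_domain_relaxed} is always applicable to $\sC_n$ as soon as its boundary data is under control — in contrast to the elliptic setting, there is no need for the auxiliary alternative that $c>0$ somewhere and no separate isolated-component case, which is the promised simplification.

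The only point requiring care is verifying $u^*\leq0$ on $\mydirac_1\sC_n$, and this I would handle exactly as in the elliptic proof of Corollary~\ref{cor:Weak_maximum_principle_C2_elliptic_opensubset}. At a boundary point of $\sC_n$ near which $\sQ$ coincides with $\sC_n$ (in particular at points of $\overline{\sC_n}$ meeting the closure of no other component), membership in $\mydirac_1\sC_n$ is a local condition that agrees with membership in $\mydirac_1\sQ$, so $u^*\leq0$ there by hypothesis; at a boundary point of $\sC_n$ lying in $\overline{\sC_m}$ for a component $\sC_m$ on which $u\leq0$ has already been established, $u^*\leq0$ follows from upper semicontinuity. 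I would therefore first dispatch the isolated components directly by Theorem~\ref{thm:Weak_maximum_principle_W2d+1_parabolic_domain_relaxed}, then organize the remaining components into clusters of mutually closure-intersecting components and run a countable induction inside each cluster: having obtained $u\leq0$ on some components of the cluster, the boundary-sign transfer just described supplies $u^*\leq0$ on $\mydirac_1\sC_n$ for an adjacent not-yet-processed $\sC_n$, and Theorem~\ref{thm:Weak_maximum_principle_W2d+1_parabolic_domain_relaxed} then gives $u\leq0$ on $\sC_n$; iterating exhausts the cluster. Combining over all components yields $u\leq0$ on $\sQ$. The main (and essentially only) obstacle is this bookkeeping — confirming that the non-degenerate parabolic boundary of each component is covered by the union of $\mydirac_1\sQ$ with the closures of components already shown to satisfy $u\leq0$, so that the induction closes; the rest is a routine localization of the hypotheses of Theorem~\ref{thm:Weak_maximum_principle_W2d+1_parabolic_domain_relaxed}.
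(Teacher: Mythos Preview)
Your approach is essentially the same as the paper's: the paper states (just before Corollaries~\ref{cor:Weak_maximum_principle_C2_parabolic_opensubset_relaxed} and~\ref{cor:Weak_maximum_principle_W2d+1_parabolic_opensubset_relaxed}) that one deduces the open-subset versions ``by the same argument as used in the proofs of Corollaries~\ref{cor:Weak_maximum_principle_C2_elliptic_opensubset} or~\ref{cor:Weak_maximum_principle_W2d_elliptic_opensubset} in the elliptic case,'' with the simplification from~\eqref{eq:Nondegenerate_boundaryportion_nonempty_parabolic} that the non-degenerate parabolic boundary of each component is non-empty, so the auxiliary condition on $c$ is unnecessary. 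Your componentwise induction with the bookkeeping on $\mydirac_1\sC_n$ is exactly this template, spelled out in more detail than the paper does.
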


\appendix

\section{Properties of confluent hypergeometric functions}
\label{sec:Properties_confluent_hypergeometric_functions}
Recall that $u \in C^2_s[0,\infty)$ if $u\in C^1[0,\infty)$ and $xu'' \in C[0,\infty)$. As we noted in \S \ref{subsec:Motivation}, there is a sharp distinction between the boundary properties of functions in $C^1[0,\infty)$ and $C^2_s[0,\infty)$. For example, if we choose a constant $p>0$ and set\footnote{This example was suggested by Camelia Pop.}
$$
u(x) = \int_0^x t\sin(t^{-p})\,dt \quad\hbox{for }x >0 \quad\hbox{and}\quad u(0) = 0,
$$
then $u'(x) = x\sin(x^{-p})$ has limit $\lim_{x\downarrow 0}u'(x) = 0$ and so $u' \in C[0,\infty)$. However,
\begin{align*}
  u''(x) &= \sin(x^{-p}) - (1/p)x^{-p}\cos(x^{-p}),
  \\
  xu''(x) &= x\sin(x^{-p}) - (1/p)x^{1-p}\cos(x^{-p}),
\end{align*}
and thus
$$
xu''(x) \sim x^{1-p}\cos(x^{-p}) \quad\hbox{when } x\downarrow 0,
$$
with no limit as $x\downarrow 0$ when $p\geq 1$; in particular, $xu'' \notin C[0,\infty)$ if $p\geq 1$.

We review the properties of the confluent hypergeometric functions used in \S \ref{subsec:Motivation}; see \cite[\S 13]{Olver_Lozier_Boisvert_Clark} for additional details.

\begin{lem}[Properties of confluent hypergeometric functions]
\label{lem:Confluent_hypergeometric_function_properties}
\cite{Olver_Lozier_Boisvert_Clark}
Let $a, b \in \RR$ and let $M(a,b,x)$ and $U(a,b,x)$, for $x\in\RR$, denote the confluent hypergeometric functions \cite[Equations (13.2.2) and (13.2.6)]{Olver_Lozier_Boisvert_Clark}. When $b\neq -n$, for $n=0,1,2,\ldots$, the derivative of $M(a,b,x)$ with respect to $x$ obeys the recurrence relation:
\begin{equation}
\label{eq:Derivative_M}
M'(a,b,x) = \frac{a}{b} M(a+1,b+1,x) \quad\hbox{\cite[\S 13.3.15]{Olver_Lozier_Boisvert_Clark}.}
\end{equation}
The function $M(a,b,x)$ has the following asymptotic behavior as $x\rightarrow 0$:
\begin{align}
\label{eq:Value_at_0_M}
 M(a,b,0)=1\quad\hbox{\cite[\S 13.2.2]{Olver_Lozier_Boisvert_Clark}}
 \quad\hbox{and}\quad
 M(a,b,x)=1+O(x) \quad\hbox{\cite[\S 13.2.13]{Olver_Lozier_Boisvert_Clark}.}
\end{align}
The derivative of $U(a,b,x)$ with respect to $x$ obeys the recurrence relation:
\begin{equation}
\label{eq:Derivative_U}
U'(a,b,x) = -a U(a+1,b+1,x) \quad\hbox{\cite[\S 13.3.22]{Olver_Lozier_Boisvert_Clark}.}
\end{equation}
If $a = -n$ or $a = -n+b-1$, where $n=0,1,2,\ldots$, then $U(a,b,x)$ has the following asymptotic behavior as $x\rightarrow 0$:
\begin{subequations}
\label{eq:Value_at_0_U_a_or_a-b_nonpositive_integer}
\begin{align}
\label{eq:Value_at_0_U_a_nonpositive_integer}
U(-n,b,x) &= (-1)^n(b)_n + O(x) \quad\hbox{\cite[\S 13.2.14]{Olver_Lozier_Boisvert_Clark},}
\\
\label{eq:Value_at_0_U_a-b_negative_integer}
U(-n+b-1,b,x) &= (-1)^n(2-b)_n x^{1-b} + O(x^{2-b}) \quad\hbox{\cite[\S 13.2.15]{Olver_Lozier_Boisvert_Clark},}
\end{align}
\end{subequations}
where $(b)_n := b(b+1)\cdots(b+n)$ and $(b)_0 := 1$. If $a\neq -n$ and $a\neq -n+b-1$, where $n=0,1,2,\ldots$, then $U(a,b,x)$ has the following asymptotic behavior as $x\rightarrow 0$:
\begin{subequations}
\label{eq:Value_at_0_U_other}
\begin{align}
U(a,b,x) &= \frac{\Gamma(b-1)}{\Gamma(a)} x^{1-b} + O(x^{b-2}) \quad\hbox{if $b>2$,} \quad\hbox{\cite[\S 13.2.16]{Olver_Lozier_Boisvert_Clark},}
\\
U(a,2,x) &= \frac{1}{\Gamma(a)} x^{-1} + O(\ln x) \quad\hbox{if $b=2$,} \quad\hbox{\cite[\S 13.2.17]{Olver_Lozier_Boisvert_Clark},}
\\
U(a,b,x) &= \frac{\Gamma(b-1)}{\Gamma(a)} x^{1-b} + \frac{\Gamma(1-b)}{\Gamma(a-b+1)} + O(x^{2-b}) \quad\hbox{if $1<b<2$,} \quad\hbox{\cite[\S 13.2.18]{Olver_Lozier_Boisvert_Clark},}
\\
U(a,1,x) &= -\frac{1}{\Gamma(a)} (\ln x + \psi(a)+2\gamma) + O(x\ln x) \quad\hbox{if $b=1$,} \quad\hbox{\cite[\S 13.2.19]{Olver_Lozier_Boisvert_Clark},}
\\
U(a,b,x) &= \frac{\Gamma(1-b)}{\Gamma(a-b+1)} + O(x^{1-b}) \quad\hbox{if $0<b<1$,} \quad\hbox{\cite[\S 13.2.20]{Olver_Lozier_Boisvert_Clark},}
\\
U(a,0,x) &= \frac{1}{\Gamma(a+1)} + O(x\ln x) \quad\hbox{if $b=0$,} \quad\hbox{\cite[\S 13.2.21]{Olver_Lozier_Boisvert_Clark},}
\end{align}
\end{subequations}
where $\Gamma(x)$ is the Gamma function, $\psi(a):=\Gamma'(a)/\Gamma(a)$, and $\gamma\in\RR$ is Euler's constant \cite[\S 13.1]{Olver_Lozier_Boisvert_Clark}.
\end{lem}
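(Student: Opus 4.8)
The plan is to obtain every assertion either directly from Kummer's power series for $M$ or from the standard connection formula expressing $U$ through $M$, and then to match notation with \cite[\S 13]{Olver_Lozier_Boisvert_Clark}; after that translation each displayed identity in the statement is one of the formulas cited there, so the proof is really a matter of confirming the translations and supplying the few short derivations.

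First I would recall $M(a,b,x)=\sum_{n\geq 0}\frac{(a)_n}{(b)_n\,n!}x^n$, convergent for all $x$ since $b\neq -n$. Reading off the $n=0$ and $n=1$ terms gives $M(a,b,0)=1$ and $M(a,b,x)=1+(a/b)x+O(x^2)$, hence \eqref{eq:Value_at_0_M}; differentiating the series term by term and re-indexing gives $M'(a,b,x)=\frac{a}{b}\sum_{m\geq 0}\frac{(a+1)_m}{(b+1)_m\,m!}x^m=\frac{a}{b}M(a+1,b+1,x)$, hence \eqref{eq:Derivative_M}.

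Next I would treat $U$. For $b$ not an integer one has the connection formula
$$
U(a,b,x)=\frac{\Gamma(1-b)}{\Gamma(a-b+1)}M(a,b,x)+\frac{\Gamma(b-1)}{\Gamma(a)}x^{1-b}M(a-b+1,2-b,x),
$$
and for integer $b$ the corresponding limiting form, which is where $\ln x$, the digamma function $\psi=\Gamma'/\Gamma$, and Euler's constant $\gamma$ appear. The recurrence \eqref{eq:Derivative_U} follows by differentiating this formula, using \eqref{eq:Derivative_M} and $\frac{d}{dx}x^{1-b}=(1-b)x^{-b}$ together with the contiguous relations for $M$ (or one simply cites \cite[\S 13.3.22]{Olver_Lozier_Boisvert_Clark}). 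For the small-$x$ behaviour I would insert $M(\cdot,\cdot,x)=1+O(x)$ into the connection formula and keep the larger of the two competing terms $\Gamma(1-b)/\Gamma(a-b+1)$ (order $1$) and $(\Gamma(b-1)/\Gamma(a))x^{1-b}$ (order $x^{1-b}$): the cases $0<b<1$, $b=1$, $1<b<2$, $b=2$, $b>2$, $b=0$ then split exactly as in \eqref{eq:Value_at_0_U_other}, with the integer values producing the logarithmic contributions recorded there. Finally, when $a=-n$ the series for $M$ terminates and $U(-n,b,\cdot)$ is a polynomial whose value at $0$ is $(-1)^n(b)_n$, giving \eqref{eq:Value_at_0_U_a_nonpositive_integer}; applying Kummer's transformation $U(a,b,x)=x^{1-b}U(a-b+1,2-b,x)$ with $a=-n+b-1$ (so $a-b+1=-n$) then yields \eqref{eq:Value_at_0_U_a-b_negative_integer}.

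The main obstacle is bookkeeping rather than anything deep: one must keep track of the poles of the $\Gamma$-factors in the connection formula — they are precisely the reason the coefficients in \eqref{eq:Value_at_0_U_a_or_a-b_nonpositive_integer} are finite, while the generic formulas \eqref{eq:Value_at_0_U_other} would naively diverge there — and one must use the limiting form of $U$ at integer $b$ to get the $\psi$ and $\gamma$ terms right. Since all of these asymptotics are tabulated in \cite[\S 13.2--13.3]{Olver_Lozier_Boisvert_Clark}, the cleanest completion is to cite those entries after checking that the Pochhammer symbol $(b)_n$, the digamma function $\psi$, and Euler's constant $\gamma$ are normalized as in the statement.
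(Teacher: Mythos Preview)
Your proposal is correct, and in fact goes well beyond what the paper does: the paper provides no proof at all for this lemma, treating it purely as a compilation of standard facts, with each displayed formula carrying an explicit citation to the relevant entry in \cite[\S 13]{Olver_Lozier_Boisvert_Clark}. Your sketch --- deriving \eqref{eq:Derivative_M} and \eqref{eq:Value_at_0_M} from Kummer's series, obtaining the small-$x$ asymptotics of $U$ from the connection formula and its integer-$b$ limiting forms, and handling the terminating cases via Kummer's transformation --- is a perfectly valid and self-contained route to the same conclusions, but the paper is content simply to cite the handbook.
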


Lemma \ref{lem:Confluent_hypergeometric_function_properties} yields the following regularity properties for $U$ and $M$ at $x=0$. We restrict our attention to the cases $a\geq 0$ and $b\geq 0$, but note that more generally one may allow $a,b\in \CC$ and consider $U(a,b,z)$ and $M(a,b,z)$ as functions of $z\in\CC$ \cite[\S 13]{Olver_Lozier_Boisvert_Clark}.

\begin{cor}[Regularity of confluent hypergeometric functions at $x=0$]
\label{cor:Confluent_hypergeometric_function_boundary_regularity}
Let $a\geq 0$ and $b\geq 0$.
\begin{enumerate}
  \item\label{item:M_smooth} If $b\neq 0$, then $M(a,b,\cdot) \in C^\infty(\RR)$.
  \item\label{item:U_smooth_a_zero} If $a=0$, then $U(0,b,\cdot) \in C^\infty(\RR)$ for any $b\in\RR$.
  \item\label{item:U_smooth_a-b_negative_integer} If $a=b-1-n$ for an integer $n$ obeying $0\leq n\leq b-1$, then $U(b-1-n,b,\cdot) \in C[0,\infty)$ when $0\leq b\leq 1$ and $U(b-1-n,b,\cdot) \in C^1[0,\infty)$ only when $b=0$.
  \item\label{item:U_smooth_a-b_other} If $a\neq b-1-n$ for any integer $n$ obeying $0\leq n\leq b-1$, then $U(a,b,\cdot) \in C[0,\infty)$ when $0< b< 1$ and $U(a,b,\cdot) \notin C^1[0,\infty)$ for any $b\geq 0$.
\end{enumerate}
\end{cor}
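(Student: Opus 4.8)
The statement to prove is Corollary~\ref{cor:Confluent_hypergeometric_function_boundary_regularity}, which catalogues the $C^0$ versus $C^1$ behavior of $M(a,b,\cdot)$ and $U(a,b,\cdot)$ at $x=0$ for $a\ge 0$, $b\ge 0$. The entire proof will be a bookkeeping exercise that reads off the conclusions from Lemma~\ref{lem:Confluent_hypergeometric_function_properties}, using two facts repeatedly: (i) the derivative recurrences \eqref{eq:Derivative_M} and \eqref{eq:Derivative_U}, which express $M'$ in terms of $M(a+1,b+1,\cdot)$ and $U'$ in terms of $U(a+1,b+1,\cdot)$, so that smoothness or lack thereof propagates up the parameter lattice; and (ii) the small-$x$ asymptotic expansions \eqref{eq:Value_at_0_M}, \eqref{eq:Value_at_0_U_a_or_a-b_nonpositive_integer}, and \eqref{eq:Value_at_0_U_other}, which tell us exactly which power of $x$ or $\ln x$ appears in the leading term. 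So the plan is simply to treat the four items in turn.

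\textbf{Item by item.} For \eqref{item:M_smooth}: since $M(a,b,\cdot)$ is defined by its power series \cite[\S 13.2.2]{Olver_Lozier_Boisvert_Clark}, which converges for all $x$ when $b\ne -n$, it is entire; alternatively, iterate \eqref{eq:Derivative_M} to see that every derivative $M^{(k)}(a,b,\cdot)$ is a constant multiple of $M(a+k,b+k,\cdot)$, which is continuous near $0$ by \eqref{eq:Value_at_0_M} (here $b+k\ne 0$ since $b\ge 0$ and $a\ge 0$). For \eqref{item:U_smooth_a_zero}: $U(0,b,x)\equiv 1$ by \eqref{eq:Value_at_0_U_a_nonpositive_integer} with $n=0$ (so $(b)_0=1$), hence $U(0,b,\cdot)\in C^\infty(\RR)$ trivially; note this needs no restriction on $b$. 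For \eqref{item:U_smooth_a-b_negative_integer}: with $a=b-1-n$, $0\le n\le b-1$, the expansion \eqref{eq:Value_at_0_U_a-b_negative_integer} gives $U(a,b,x)=(-1)^n(2-b)_n x^{1-b}+O(x^{2-b})$; this is continuous at $x=0$ precisely when $0\le b\le 1$ (so the exponent $1-b\ge 0$), and to analyze $C^1$ one applies \eqref{eq:Derivative_U}: $U'(a,b,x)=-a\,U(a+1,b+1,x)$, where now $a+1=(b+1)-1-n$ matches the same special case for parameters $(a+1,b+1)$, so \eqref{eq:Value_at_0_U_a-b_negative_integer} again applies and gives $U(a+1,b+1,x)=O(x^{-b})$, which is bounded at $0$ only when $b=0$. (The boundary case $b=1$, $n=0$, $a=0$ is already covered by item \eqref{item:U_smooth_a_zero} and is consistent.) For \eqref{item:U_smooth_a-b_other}: with $a$ avoiding the exceptional values, one walks through the six sub-cases $b>2$, $b=2$, $1<b<2$, $b=1$, $0<b<1$, $b=0$ of \eqref{eq:Value_at_0_U_other}; in the first four the leading term is $x^{1-b}$ (or $x^{-1}$, $\ln x$), which blows up, so $U(a,b,\cdot)\notin C[0,\infty)$, whereas for $0\le b<1$ the leading term is the constant $\Gamma(1-b)/\Gamma(a-b+1)$ (or $1/\Gamma(a+1)$), so $U(a,b,\cdot)\in C[0,\infty)$; the $C^1$ failure again comes from \eqref{eq:Derivative_U} together with the fact that $U(a+1,b+1,x)$ has leading term $x^{-b}$ via the $1<b+1<2$ or $b+1=1$ line of \eqref{eq:Value_at_0_U_other}, which is unbounded at $0$ for every $b\ge 0$.

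\textbf{Main obstacle.} There is no deep step here; the only real care required is in the bookkeeping at the \emph{boundary values} of $b$ (e.g.\ $b=0,1,2$) and in checking that when one differentiates via \eqref{eq:Derivative_U} the new parameter pair $(a+1,b+1)$ still falls into the advertised sub-case of Lemma~\ref{lem:Confluent_hypergeometric_function_properties}---in particular that $a$ avoids the exceptional set for $(a+1,b+1)$ if and only if it did for $(a,b)$ (which is immediate since $a+1=(b+1)-1-n \iff a=b-1-n$), and that the degenerate identity $U(0,b,x)\equiv 1$ does not sneak in under a different guise. I would therefore organize the write-up as four short paragraphs, one per item, each citing the relevant line of Lemma~\ref{lem:Confluent_hypergeometric_function_properties}, and flag explicitly the overlap between item~\eqref{item:U_smooth_a_zero} and the $n=0$, $b\in\{0,1\}$ instances of items \eqref{item:U_smooth_a-b_negative_integer}--\eqref{item:U_smooth_a-b_other} so the reader sees the statements are mutually consistent.
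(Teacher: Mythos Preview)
Your proposal is correct and takes essentially the same approach as the paper's own proof, which is extremely terse: it simply records that Item \eqref{item:M_smooth} follows from \eqref{eq:Derivative_M} and \eqref{eq:Value_at_0_M}, Item \eqref{item:U_smooth_a_zero} from \eqref{eq:Derivative_U} and \eqref{eq:Value_at_0_U_a_nonpositive_integer} (or direct calculation via the ODE), Item \eqref{item:U_smooth_a-b_negative_integer} from \eqref{eq:Derivative_U} and \eqref{eq:Value_at_0_U_a-b_negative_integer}, and Item \eqref{item:U_smooth_a-b_other} from \eqref{eq:Derivative_U} and \eqref{eq:Value_at_0_U_other}. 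Your write-up supplies exactly the bookkeeping the paper omits, including the key observation that the derivative recurrence \eqref{eq:Derivative_U} shifts $(a,b)\mapsto(a+1,b+1)$ while preserving membership in each exceptional family.
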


\begin{proof}
Item \eqref{item:M_smooth} follows from \eqref{eq:Derivative_M} and \eqref{eq:Value_at_0_M}. Item \eqref{item:U_smooth_a_zero} follows from \eqref{eq:Derivative_U} and \eqref{eq:Value_at_0_U_a_nonpositive_integer}, or direct calculation using \eqref{eq:Kummer_homogeneous_differential_equation}. Item \eqref{item:U_smooth_a-b_negative_integer} follows from \eqref{eq:Derivative_U} and \eqref{eq:Value_at_0_U_a-b_negative_integer}. Finally, Item \eqref{item:U_smooth_a-b_other} follows from \eqref{eq:Derivative_U} and \eqref{eq:Value_at_0_U_other}.
\end{proof}

\section{Straightening the boundary}
\label{sec:Straightening_boundary}
In order to keep our regularity assumption on the boundary portion, $\partial_0\sO$, to a minimum (in particular, no more than $C^{1,\alpha}$), we shall need an enhancement of the usual `straightening the boundary' lemma \cite[\S 4.6]{Adams_1975}, \cite[Appendix C.1]{Evans}, \cite[p. 94]{GilbargTrudinger}. Let $\sO\subset\RR^d$ be an open subset, $d\geq 1$, with topological boundary $\partial\sO$ and $k\geq 1$ be an integer and $\alpha\in(0,1)$. One says that $\sO$ has a boundary portion, $T\subset\partial\sO$, of class $C^{k,\alpha}$ if at each point $x^0\in T$ there exist an open ball, $B\equiv B_r(x^0)\subset\RR^d$, such that $B\cap\partial\sO\subset T$ and a $C^{k,\alpha}$ diffeomorphism, $\Psi$, from $B$ onto an open subset $D\subset\RR^d$ such that \cite[p. 94]{GilbargTrudinger}
\begin{equation}
\label{eq:Straightening_boundary}
\Psi(B\cap\sO) \subset \RR^{d-1}\times\RR_+ \quad\hbox{and}\quad \Psi(B\cap\partial\sO) \subset \RR^{d-1}\times\partial\RR_+.
\end{equation}
One says that $\Psi$ `straightens the boundary' near $x^0$. Clearly, we can assume without loss of generality (by shrinking the radius of $B$ if needed) that $\Psi$ is a $C^{k,\alpha}$ diffeomorphism from $\bar B$ onto $\bar D$ and thus, by extension, a $C^{k,\alpha}$ diffeomorphism of $\RR^d$. The enhancement we shall need is

\begin{lem}[Straightening a $C^{1,\alpha}$ boundary with a $C^k$ diffeomorphism]
\label{lem:Straightening_boundary}
Let $\sO\subset\RR^d$ be an open subset with open boundary portion, $T\subset\partial\sO$, of class $C^{1,\alpha}$ and $x^0\in T$ and $k\geq 1$ be an integer and $\alpha\in(0,1)$. Then there exist an open ball, $B\equiv B_r(x^0)\subset\RR^d$, such that $B\cap\partial\sO\subset T$ and a $C^1$ diffeomorphism, $\Psi$, from $B$ onto an open subset $D\subset\RR^d$ such that \eqref{eq:Straightening_boundary} holds and $\Psi$ is a $C^k$ diffeomorphism from $B\less\partial\sO$ onto $D\less(\RR^{d-1}\times\{0\})$. In particular, $\Psi$ may be chosen to be a $C^\infty$ diffeomorphism from $B\less\partial\sO$ onto its image.
\end{lem}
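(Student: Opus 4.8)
The idea is to reduce, as usual, to a graph representation of $\partial\sO$ near $x^0$, and then to replace the classical (merely $C^{1,\alpha}$) straightening map $(x',x_d)\mapsto(x',x_d-\gamma(x'))$ by one in which $\gamma$ is mollified at a \emph{spatially varying scale} that degenerates to $0$ exactly along the graph. Concretely: since $T$ is open in $\partial\sO$ with $x^0\in T$, pick $r_0>0$ with $B_{r_0}(x^0)\cap\partial\sO\subset T$. By the definition of a $C^{1,\alpha}$ boundary portion there is a $C^{1,\alpha}$ diffeomorphism straightening $\partial\sO$ near $x^0$; applying the inverse function theorem and invariance of domain, after a translation taking $x^0$ to the origin and a rotation sending the tangent hyperplane of $T$ at $x^0$ to $\RR^{d-1}\times\{0\}$, there are $s\in(0,r_0)$ and $\gamma\in C^{1,\alpha}(\overline{B'_s})$, $B'_s\subset\RR^{d-1}$, with $\gamma(0)=0$, $D'\gamma(0)=0$, such that $B_s\cap\partial\sO=\{x\in B_s:x_d=\gamma(x')\}$ and $B_s\cap\sO=\{x\in B_s:x_d>\gamma(x')\}$. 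Since $D'\gamma$ is continuous and vanishes at $0$, shrinking $s$ we may also assume $\|D'\gamma\|_{C^0(\overline{B'_s})}\le\eta$ for any prescribed small $\eta>0$, while $[D'\gamma]_{C^\alpha(\overline{B'_s})}\le L<\infty$.

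\textbf{Construction.} Fix an \emph{even} mollifier $\rho\in C_c^\infty(\RR^{d-1})$, $\rho\ge 0$, $\int\rho=1$, $\supp\rho\subset B'_1$, and set $\gamma_\epsilon:=\gamma*\rho_\epsilon$ for $\epsilon>0$, $\gamma_0:=\gamma$. Using $\gamma\in C^{1,\alpha}$ and the vanishing of the first moment of the even $\rho$, one has on $\overline{B'_{s/2}}$ the standard estimates $\|\gamma_\epsilon-\gamma\|_{C^0}\le C\epsilon^{1+\alpha}$, $\|D'\gamma_\epsilon-D'\gamma\|_{C^0}\le C\epsilon^\alpha$, and $\bigl|\int D'\gamma(x'-\epsilon y)\cdot y\,\rho(y)\,dy\bigr|\le C\epsilon^\alpha$. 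Let $G:=\{(x',\gamma(x')):x'\in\overline{B'_{s/2}}\}$, a compact set, and let $\delta:\RR^d\to[0,\infty)$ be a regularized distance to $G$ (constructible, e.g., via a Whitney decomposition): $\delta$ is continuous, $\delta=0$ exactly on $G$, $\delta$ is $C^\infty$ and positive on $\RR^d\setminus G$, $\delta$ is globally Lipschitz with some constant $C_1$, and $\delta(x)\le C\,\dist(x,G)$. For a small $\lambda>0$ and a small $r<s/2$ to be chosen, define on $B_r$
\[
H(x):=\int_{\RR^{d-1}}\gamma\bigl(x'-\lambda\delta(x)\,y\bigr)\,\rho(y)\,dy=\gamma_{\lambda\delta(x)}(x'),\qquad \Psi(x):=\bigl(x',\,x_d-H(x)\bigr),
\]
with the convention $\gamma_0=\gamma$; the final straightening map will be $\Psi$ composed with the affine change of coordinates above.

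\textbf{Verification.} On $G$ we have $\delta=0$, so $H(x)=\gamma(x')$ and $\Psi(x)=(x',x_d-\gamma(x'))$; hence $\Psi(B_r\cap\partial\sO)\subset\RR^{d-1}\times\{0\}$. Off $G$, $\delta$ is $C^\infty$ and positive and $(x',\epsilon)\mapsto\gamma_\epsilon(x')$ is $C^\infty$ for $\epsilon>0$, so $H\in C^\infty(B_r\setminus G)$ and $\Psi\in C^\infty(B_r\setminus\partial\sO)$. For global $C^1$ regularity, differentiating under the integral gives $\partial_{x_j}H(x)=\int D_j\gamma(x'-\lambda\delta(x)y)\rho(y)\,dy-\bigl(\int D'\gamma(x'-\lambda\delta(x)y)\cdot y\,\rho(y)\,dy\bigr)\lambda\,\partial_{x_j}\delta(x)$ on $B_r\setminus G$; the second term is $O\bigl((\lambda\delta(x))^\alpha\bigr)\to0$, so $\nabla H$ extends continuously across $G$ with limit $(D'\gamma(x_0'),0)$, and a direct first-order Taylor expansion of $H(x_0+v)-H(x_0)$ at $x_0\in G$ — using $\delta(x_0)=0$, $\lambda\delta(x_0+v)\le\lambda C_1|v|$, the $C^\alpha$ modulus of $D'\gamma$, and evenness of $\rho$ — shows $H$ is differentiable there with that gradient. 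Thus $H\in C^1(B_r)$ and $\Psi\in C^1(B_r)$. The same formula yields $\|\partial_{x_d}H\|_{C^0(B_r)}\le C\eta\lambda$, so choosing first $\eta$ then $\lambda$ small gives $|\partial_{x_d}H|\le\tfrac12$; then $\det D\Psi=1-\partial_{x_d}H\ge\tfrac12$, and since $t\mapsto t-H(x',t)$ is strictly increasing, $\Psi$ is injective on $B_r$, hence by the inverse function theorem a $C^1$ diffeomorphism of $B_r$ onto an open set $D$ and a $C^\infty$ (in particular $C^k$) diffeomorphism of $B_r\setminus\partial\sO$ onto $D\setminus(\RR^{d-1}\times\{0\})$. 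Finally, for $x\in B_r\cap\sO$, with $t:=x_d-\gamma(x')>0$, we get $\Psi(x)_d=t-(\gamma_{\lambda\delta(x)}(x')-\gamma(x'))\ge t-C\lambda^{1+\alpha}\dist(x,G)^{1+\alpha}\ge t\bigl(1-C'\lambda^{1+\alpha}r^\alpha\bigr)>0$ for $r,\lambda$ small (using $\dist(x,G)\le|x_d-\gamma(x')|=t$), and symmetrically $\Psi(x)_d<0$ below the graph, so $\Psi(B_r\cap\sO)\subset\RR^{d-1}\times\RR_+$ and $D$ splits across $\{x_d=0\}$ as required. Taking $B:=B_r(x^0)$ (in the original coordinates) completes the proof.

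\textbf{Main obstacle.} The only genuinely delicate point is the verification step: getting $H\in C^1$ up to $G$ \emph{and} $H\in C^\infty$ off $G$ despite $\gamma$ being only $C^{1,\alpha}$. This forces the mollification scale to vanish exactly on $G$ (whence the use of a \emph{regularized} distance to $G$ rather than the Euclidean distance, which is itself not smooth off $G$), and it forces the mollifier to be even: the $O(1)$ term $\bigl(\int D'\gamma(x'-\epsilon y)\cdot y\,\rho(y)\,dy\bigr)\nabla\delta$ in $\nabla H$ does not obviously vanish as $x\to G$, but evenness makes it $O(\delta^\alpha)\to0$, so the limiting gradient is the correct one.
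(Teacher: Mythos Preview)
Your proof is correct and takes a genuinely different route from the paper.

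The paper argues by soft compactness: it starts from a $C^{1,\alpha}$ straightening diffeomorphism $\Phi$, asserts the existence of a sequence $\{\Psi_n\}$ of $C^{k,\alpha}$ diffeomorphisms of $\RR^d$ that approximate $\Phi$ in $C^1$ on shrinking neighborhoods of $B\cap\partial\sO$ while remaining uniformly bounded in $C^{1,\alpha}(\bar B)$ and in $C^{k,\alpha}$ on each fixed compact set away from the boundary, and then extracts a limit $\Psi$ via Arzel\`a--Ascoli. The limit is $C^1$ on $\bar B$, $C^k$ on compacta away from the boundary, and agrees with $\Phi$ along $B\cap\partial\sO$; an appeal to the inverse function theorem (after possibly shrinking $B$) finishes.

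You instead give an explicit construction: reduce to a graph $x_d=\gamma(x')$ with $\gamma\in C^{1,\alpha}$, and define $\Psi(x)=(x',x_d-\gamma_{\lambda\delta(x)}(x'))$, mollifying $\gamma$ at the scale $\lambda\delta(x)$, where $\delta$ is a Whitney-type regularized distance to the graph. The two nontrivial verifications---$C^\infty$ off the graph (via the change-of-variables representation of the convolution) and $C^1$ across the graph (where evenness of $\rho$ is essential so that the $\nabla\delta$-term carries the factor $O(\delta^\alpha)$ and vanishes in the limit despite $\nabla\delta$ having no limit on $G$)---are handled correctly, as are injectivity and the sign of $\Psi(x)_d$ via $\|\gamma_\epsilon-\gamma\|_{C^0}\le C\epsilon^{1+\alpha}$ and $\delta(x)\le C\,\dist(x,G)\le C\,|x_d-\gamma(x')|$.

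What each approach buys: the paper's argument is shorter on the page but leaves the construction of the approximating sequence $\{\Psi_n\}$ unwritten (constructing such a sequence is, in effect, the same problem you solve directly). Your argument is longer but entirely self-contained and constructive, and it makes transparent exactly why $C^{1,\alpha}$ suffices (the $\alpha$ is what kills the $\nabla\delta$-term). One minor cosmetic point: you should record that $\gamma$ is defined on $\overline{B'_s}$, so for the convolution $\gamma_{\lambda\delta(x)}(x')$ to be well-defined on $B_r$ you need $|x'|+\lambda\delta(x)<s$; this follows from $r<s/2$ and $\delta(x)\le C\,\dist(x,G)\le Cr$ once $\lambda$ is small, but it is worth stating.
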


\begin{proof}
Since the boundary portion, $T$, is of class $C^{1,\alpha}$, there is a $C^{1,\alpha}$ diffeomorphism, $\Phi$, from $\bar B$ to $\bar D$ which straightens the boundary near $x^0$ in the sense of \eqref{eq:Straightening_boundary} with $\Phi$ in place of $\Psi$. We may choose a sequence of $C^{k,\alpha}$ diffeomorphisms, $\{\Psi_n\}_{n\geq 1}$, of $\RR^d$ and which approximate $\Phi$ near $x^0$ in the sense that, for all $n\geq 1$,
\begin{align*}
\|\Phi-\Psi_n\|_{C^1(\Phi^{-1}(\bar D\cap\{|x_d|\leq 1/n\}))} &\leq \frac{1}{n},
\\
\|\Psi_n\|_{C^{1,\alpha}(\bar B)} &\leq K_0,
\\
\|\Psi_n\|_{C^{k,\alpha}(\Phi^{-1}(\bar D\cap\{|x_d|\geq 1/m\}))} &\leq K_m, \quad m=1,2,3,\ldots,
\end{align*}
for positive constants, $K_m$, which are independent of $n\in\NN$. The sequence $\{\Psi_n\}_{n\geq 1}$ converges in $C^1$ on $\bar B\cap\partial\sO$ to $\Phi$ on $\bar B\cap\partial\sO$. By the Arzel\`a-Ascoli Theorem, there is a subsequence, relabeled as $\{\Psi_n\}_{n\geq 1}$, and a $C^1$ map, $\Psi:B\to\RR^d$, which is $C^k$ on $B\less\partial\sO$, such that $\{\Psi_n\}_{n\geq 1}$ converges in $C^1$ to $\Psi$ on $\bar B$ and in $C^k$ to $\Psi$ on each compact subset, $\Phi^{-1}(\bar D\cap\{|x_d|\geq 1/m\})$ for $m\geq 1$. Since $\Phi:B\to\RR^d$ is a $C^1$ diffeomorphism onto its image and $\{\Psi_n\}_{n\geq 1}$ converges to $\Phi$ in $C^1$ on $\bar B\cap\partial\sO$, the Inverse Function Theorem implies that $\Psi$ is a $C^1$ diffeomorphism from a possibly smaller ball, $B_{r'}(x^0)\subseteqq B$, onto its image. By relabeling, we may assume that $B_{r'}(x^0)=B$. Because $\Psi$ is a $C^k$ map on $B\less\partial\sO$, the Inverse Function Theorem also implies that $\Psi$ is a $C^k$ diffeomorphism from $B\less\partial\sO$ onto its image.
\end{proof}

\section{Boundary properties of second-order derivatives}
\label{sec:Boundary_properties_C2s_functions}
A special case of the following result (when $\sO=\RR^{d-1}\times\RR_+$ and $\vartheta(s)=s$) is implied by the proofs of \cite[Proposition I.12.1]{DaskalHamilton1998} and \cite[Lemma 3.1]{Feehan_Pop_mimickingdegen_pde}, where the stronger hypothesis that $u\in C^{2+\alpha}_s(\underline\sQ)$ is assumed; we refer to those articles for the definition of $C^{2+\alpha}_s(\underline\sQ)$. The changes needed to give the statement and proof of the parabolic version of Lemma \ref{lem:Boundary_properties_C2s_functions} are clear, so we just include the elliptic case. Note that we assume here that $\Sigma\subseteqq\partial\sO$ is a $C^2$, not just a $C^1$ boundary portion as in Lemma \ref{lem:Straightening_boundary}, in order to ensure that the property $\vartheta(\dist(\cdot,\Sigma)) D^2u \in C(\sO\cup\Sigma;\sS^+(d))$ is preserved under a $C^2$ diffeomorphism of $\RR^d$.

\begin{lem}[Boundary properties of second-order derivatives]
\label{lem:Boundary_properties_C2s_functions}
Let $\sO\subset\RR^d$ be an open subset, $\Sigma \subseteqq\partial\sO$ a $C^2$ boundary portion, and $\vartheta \in C[0,\infty)$ such that $\vartheta > 0$ on $(0,\infty)$ and $\vartheta(0)=0$.
If $u \in C^1(\sO\cup\Sigma)$, and $\vartheta(\dist(\cdot,\Sigma))D^2u \in C(\sO\cup\Sigma;\sS^+(d))$, and $x^0\in\Sigma$, then
\begin{equation}
\label{eq:Boundary_properties_C2s_functions}
\lim_{\sO\cup\Sigma \ni x \rightarrow x^0} \vartheta(\dist(x,\Sigma))u_{x_ix_j}(x) = 0,
\end{equation}
for $1 \leq i,j \leq d-1$. If in addition,\footnote{This condition also appears as the `Yamada criterion' \cite[Theorem IV.3.2]{Ikeda_Watanabe2}, one of two conditions which together imply pathwise uniqueness and hence uniqueness of a strong solution to a degenerate one-dimensional stochastic differential equation, $dX(s) = b(X(s))\,ds+\sigma(X(s))\,dW(s)$, where the possibly non-Lipschitz diffusion coefficient, $\sigma(x)$, is required to obey $|\sigma(x)-\sigma(y)|\leq \vartheta^{1/2}(|x-y|)$ for all $x,y\in\RR$.}
\begin{equation}
\label{eq:Vartheta_integral_limit}
\int_0^\delta \frac{1}{\vartheta(s)}\,ds = +\infty,
\end{equation}
for every positive constant $\delta$, then \eqref{eq:Boundary_properties_C2s_functions} holds for $1\leq i,j\leq d$.
\end{lem}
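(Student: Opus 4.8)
The plan is to localize at a point $x^0\in\Sigma$, normalize by a rigid motion, and reduce to the model situation in which $\Sigma$ is flat and $\dist(\cdot,\Sigma)$ is the $d$-th coordinate; the two assertions then reduce to one-variable integration arguments. First I would apply a translation and a rotation of $\RR^d$ — each of which preserves the hypotheses, since $\dist(\cdot,\Sigma)$ is invariant and $D^2u$ transforms by orthogonal conjugation — so that $x^0=0$ and the inward unit normal satisfies $\vec n(0)=e_d$. Because $\Sigma$ is a $C^2$ boundary portion, the signed distance function $d_\Sigma$ to $\Sigma$ is $C^2$ on a neighborhood of $0$, equals $\dist(\cdot,\Sigma)$ on the $\sO$-side of $\Sigma$, and has $\nabla_x d_\Sigma(0)=\vec n(0)=e_d$; in particular $\partial_{x_d}d_\Sigma\neq 0$ near $0$ while $\partial_{x_i}d_\Sigma(0)=0$ for $1\le i\le d-1$. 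Hence $\Psi(x):=(x_1,\dots,x_{d-1},d_\Sigma(x))$ is a $C^2$ diffeomorphism of a neighborhood of $0$ carrying $\sO$ onto $\{y_d>0\}$ and $\Sigma$ onto $\{y_d=0\}$, with $y_i=x_i$ for $i\le d-1$ and $y_d=\dist(x,\Sigma)$ on the $\sO$-side.

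Put $v:=u\circ\Psi^{-1}$. The chain rule, together with $\Psi\in C^2$ and $u\in C^1(\sO\cup\Sigma)$, shows $v\in C^1$ up to $\{y_d=0\}$ near $0$ and that $\vartheta(y_d)D^2v(y)$ extends continuously up to $\{y_d=0\}$: each entry is a combination, with bounded continuous coefficients, of the entries of $\vartheta(\dist(\cdot,\Sigma))D^2u$ (continuous by hypothesis) and of terms $\vartheta(y_d)\cdot(\text{first derivative of }u)$, the latter tending to $0$ since $\vartheta(0)=0$ and $Du$ is bounded near $0$. The same chain-rule identities, using $\partial_{x_i}d_\Sigma(0)=0$ for $i\le d-1$, give, for every $1\le i,j\le d$,
$$\vartheta(\dist(x,\Sigma))\,u_{x_ix_j}(x)-\vartheta(y_d)\,v_{y_iy_j}(y)\longrightarrow 0\qquad\text{as }x\to 0,$$
the discrepancy consisting of terms carrying a vanishing factor $\partial_{x_i}d_\Sigma$ or $\partial_{x_j}d_\Sigma$ times a bounded quantity, plus terms $\vartheta(y_d)\cdot(\text{bounded})$. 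Thus it suffices to prove both assertions for $v$ on the model domain $\{y_d>0\}$ near $0$, with $\dist(\cdot,\Sigma)$ replaced by $y_d$.

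For the first assertion, fix $i,j\le d-1$ and let $c:=\lim_{y\to0}\vartheta(y_d)v_{y_iy_j}(y)$, the value at $0$ of the continuous extension. If $c\neq 0$, then on a half-ball $\{|y|<\rho,\ y_d>0\}$ the quantity $\vartheta(y_d)v_{y_iy_j}$ keeps the sign of $c$ and exceeds $|c|/2$ in absolute value, so $|v_{y_iy_j}(y)|>|c|/(2\vartheta(y_d))$ there. Integrating along the segment from $y_d e_d$ in the direction $e_i$ over length $h$, which stays in the half-ball for $h,y_d<\rho/2$, yields
$$\bigl|v_{y_j}(he_i+y_d e_d)-v_{y_j}(y_d e_d)\bigr|>\frac{|c|\,h}{2\vartheta(y_d)}.$$
Holding $h$ fixed and letting $y_d\downarrow 0$, the left-hand side tends to the finite limit $|v_{y_j}(he_i)-v_{y_j}(0)|$ since $v_{y_j}$ is continuous up to $\{y_d=0\}$, while the right-hand side tends to $+\infty$ because $\vartheta(y_d)\to\vartheta(0)=0$; this contradiction forces $c=0$. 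The mixed second derivatives $v_{y_iy_d}$ with $i\le d-1$ are handled identically, integrating $v_{y_d}$ (still continuous up to $\{y_d=0\}$) along $e_i$, and need no further hypothesis.

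For the second assertion the only remaining case is $i=j=d$, and here the tangential integration is replaced by integration in the normal direction $e_d$ — which is precisely where the hypothesis \eqref{eq:Vartheta_integral_limit} enters. If $c'':=\lim_{y\to0}\vartheta(y_d)v_{y_dy_d}(y)\neq 0$, then $|v_{y_dy_d}(t e_d)|>|c''|/(2\vartheta(t))$ with fixed sign for $0<t<\rho$, so for $0<\eps<y_d<\rho$,
$$\Bigl|v_{y_d}(y_d e_d)-v_{y_d}(\eps e_d)\Bigr|=\Bigl|\int_\eps^{y_d}v_{y_dy_d}(t e_d)\,dt\Bigr|>\frac{|c''|}{2}\int_\eps^{y_d}\frac{dt}{\vartheta(t)}.$$
Fixing $y_d$ and letting $\eps\downarrow 0$, the right-hand side tends to $+\infty$ by \eqref{eq:Vartheta_integral_limit}, whereas the left-hand side tends to the finite limit $|v_{y_d}(y_d e_d)-v_{y_d}(0)|$ by continuity of $v_{y_d}$ up to $\{y_d=0\}$; hence $c''=0$, which completes the proof. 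The step most in need of care is the reduction carried out in the first two paragraphs: verifying that the transformed data $(v,\vartheta(y_d)D^2v)$ again satisfy the hypotheses and that the conclusion for $u$ is equivalent to that for $v$, which is exactly where the $C^2$ (rather than merely $C^1$) regularity of $\Sigma$ — hence of $d_\Sigma$ — is used, as anticipated in the remark preceding the lemma.
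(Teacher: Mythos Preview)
Your proof is correct and follows essentially the same approach as the paper's: straighten the boundary via a $C^2$ diffeomorphism so that $\dist(\cdot,\Sigma)=y_d$, then argue by contradiction by integrating the (assumed) blowing-up second derivative and comparing with the finiteness of a first derivative up to the boundary. Your reduction step is spelled out more carefully than in the paper, and you additionally observe that the mixed partials $v_{y_iy_d}$ with $i<d$ can be handled by tangential integration without invoking hypothesis \eqref{eq:Vartheta_integral_limit} --- a slight sharpening the paper does not isolate, since there all cases with a normal index are treated together via integration in $x_d$.
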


\begin{proof}
First, we consider the case $1 \leq i,j\leq d-1$. Because the function $\vartheta u_{x_ix_j}$ is uniformly continuous on compact subsets of $\sO\cup\Sigma$, the limit in \eqref{eq:Boundary_properties_C2s_functions} exists. By applying a $C^2$ diffeomorphism of $\RR^d$ to straighten the boundary, $\Sigma$, near $x^0$, we may assume without loss of generality that $x^0=0$ (origin in $\RR^d$) and $\sO\cup\Sigma = \RR^{d-1}\times\RR_+$, with $\Sigma=\RR^{d-1}\times\{0\}$ and $\dist(x,\Sigma)=x_d$ when $x=(x',x_d) \in \RR^{d-1}\times\RR_+$. We suppose, to obtain a contradiction, that
\begin{equation}
\label{eq:Positive_limit}
\lim_{\sO\cup\Sigma \ni x \rightarrow x^0} \vartheta(x_d)u_{x_ix_j}(x) = \eta \neq 0,
\end{equation}
and we can further suppose, without loss of generality, that $\eta>0$. Hence, there is a constant, $\eps>0$, such that for all $x=(x',x_d)\in\sO\cup\Sigma$ satisfying
\begin{equation}
\label{eq:Box_epsilon}
0<x_d<\eps \quad\hbox{and}\quad |x'| < \eps,
\end{equation}
we have
\begin{equation}
\label{eq:Lower_bound_second_order_derivative}
\frac{\eta}{2\vartheta(x_d)} \leq u_{x_ix_j}(x',x_d).
\end{equation}
Let $x^1$ and $x^2$ be points satisfying \eqref{eq:Box_epsilon} and such that all except the $x_i$-coordinates are identical. By integrating \eqref{eq:Lower_bound_second_order_derivative} with respect to $x_i\in [x_i^1,x_i^2]$, we obtain
\begin{equation}
\label{eq:Lower_bound_first_order_derivative_ratio}
\frac{\eta(x_i^2-x_i^1)}{2\vartheta(x_d)} \leq u_{x_j}(x^2)-u_{x_j}(x^1).
\end{equation}
We may choose $x^1$, $x^2$ such that $x_i^2-x_i^1=\eps/2$ and $0<x_d <\eps/2$. By taking the limit as $x_d$ goes to zero, the left hand side of \eqref{eq:Lower_bound_first_order_derivative_ratio} converges to $+\infty$, while the right hand side remains finite since $u_{x_j}$ is uniformly continuous on compact subsets of $\sO\cup\Sigma$. Therefore, we must have $\eta=0$ and thus \eqref{eq:Boundary_properties_C2s_functions} holds.

For the case where $i=d$ or $j=d$ (including $i=j=d$), we let $x^1$ and $x^2$ be points satisfying \eqref{eq:Box_epsilon} and such that all except the $x_d$-coordinates are identical. We now integrate \eqref{eq:Lower_bound_second_order_derivative}  with respect to $x_d\in [x_d^1,x_d^2]$ to obtain
\begin{equation}
\label{eq:Lower_bound_first_order_derivative_ratio_d}
\frac{\eta}{2}\int_{x_d^1}^{x_d^2}\frac{1}{\vartheta(x_d)}\,dx_d \leq u_{x_j}(x^2)-u_{x_j}(x^1).
\end{equation}
Because of \eqref{eq:Vartheta_integral_limit}, we again obtain a contradiction when $\eta>0$ by taking the limit of \eqref{eq:Lower_bound_first_order_derivative_ratio_d} as $x_d^1\downarrow 0$.
\end{proof}

%
%

\bibliography{/Users/pfeehan/Dropbox/LATEX/Bibinputs/master,/Users/pfeehan/Dropbox/LATEX/Bibinputs/mfpde}
\bibliographystyle{amsplain-nodash}
\end{document}